\newcommand{\be}{\begin{equation}}
\newcommand{\ee}{\end{equation}}
\newcommand{\ba}{\begin{eqnarray}}
\newcommand{\ea}{\end{eqnarray}}
\newcommand{\nn}{\nonumber}
\newcommand{\bal}{\begin{align}}
\newcommand{\eal}{\end{align}}
\newcommand{\baln}{\begin{align*}}
\newcommand{\ealn}{\end{align*}}
\newcommand{\bi}{\begin{itemize}}
\newcommand{\ei}{\end{itemize}}
\newcommand{\bn}{\begin{enumerate}}
\newcommand{\en}{\end{enumerate}}
\newcommand{\bbm}{\begin{bmatrix}}
\newcommand{\ebm}{\end{bmatrix}}
\newcommand{\bpm}{\begin{pNiceMatrix}}
\newcommand{\epm}{\end{pNiceMatrix}}
\newcommand{\bsm}{\left ( \begin{smallmatrix}}
\newcommand{\esm}{\end{smallmatrix} \right) }
\newcommand{\mr}{\ensuremath{\mathrm}}
\newcommand{\scr}{\ensuremath{\mathscr}}
\newcommand{\mbf}{\ensuremath{\boldsymbol}}
\newcommand{\mc}{\ensuremath{\mathcal}}
\newcommand{\mf}{\ensuremath{\mathfrak}}
\newcommand{\ov}{\ensuremath{\overline}}
\newcommand{\sm}{\ensuremath{\setminus}}
\newcommand{\wt}{\ensuremath{\widetilde}}
\def\tr{\mathrm{tr} \, }
\newcommand{\Ga}{\ensuremath{\Gamma}}
\newcommand{\ga}{\ensuremath{\gamma}}
\newcommand{\Om}{\ensuremath{\Omega}}
\newcommand{\la}{\ensuremath{\lambda }}
\newcommand{\om}{\ensuremath{\omega}}
\newcommand{\eps}{\ensuremath{\epsilon }}
\def\C{\mathbb{C}}
\def\R{\mathbb{R}}
\def\D{\mathbb{D}}
\def\N{\mathbb{N}}
\def\B{\mathbb{B}}
\def\F{\mathbb{F}}
\def\bH{\mathbb{H}}
\newcommand{\cH}{\ensuremath{\mathcal{H}}}
\newcommand{\cJ}{\ensuremath{\mathcal{J} }}
\newcommand{\cK}{\ensuremath{\mathcal{K} }}
\newcommand{\ff}{\mathfrak{f}}
\def\wvec{\overrightarrow}
\def\hardy{\mathbb{H} ^2 _d}
\def\mult{\mathbb{H} ^\infty _d}
\def\rmult{\mathbb{H} ^{\infty; \mrt} _d}
\def\mrt{\mathrm{t}}
\def\frt{\mathfrak{r} ^\mathrm{t}}
\def\fbt{\mathfrak{b} ^\mathrm{t}}
\def\fr{\mathfrak{r}}
\def\fb{\mathfrak{b}}
\def\fa{\mathfrak{a}}
\def\fc{\mathfrak{c}}
\def\vfb{\vec{\mathfrak{b}}}
\def\posncm{ \left( \scr{A} _d   \right) ^\dag _+ }
\def\nbre{\mathrm{Re} \, }
\def\nbim{\mathrm{Im} \, }
\def\nbdom{\mathrm{Dom} \, }
\def\nbran{\mathrm{Ran} \, }
\def\nbker{\mathrm{Ker} \, }
\def\nbdim{\mathrm{dim} \, }
\def\A{\mathbb{A} _d}
\def\fp{\mathbb{C} \{ \mathbb{\mathfrak{z}} \} }
\newcommand{\Addresses}{{% additional braces for segregating \footnotesize
  \bigskip
  \footnotesize

  M.~T.~Jury, \textsc{Department of Mathematics, University of Florida}\par\nopagebreak
  \textit{E-mail address:} \texttt{mjury@ad.ufl.edu}

  \medskip

  R.~T.~W.~Martin, \textsc{Department of Mathematics, University of Manitoba}\par\nopagebreak
  \textit{E-mail address:} \texttt{Robert.Martin@umanitoba.ca}
    
\medskip

E.~Shamovich, \textsc{Department of Mathematics, Ben-Gurion University of the Negev}\par\nopagebreak
\textit{E-mail address:} \texttt{shamovic@bgu.ac.il}

\vspace{1cm}

}}
\newcommand{\ip}[2]{\ensuremath{\langle {#1} , {#2} \rangle}}
\newcommand{\ipcn}[2]{\ensuremath{\left( {#1} , {#2} \right) _{\C ^n}}}
\newcommand{\ran}[1]{\ensuremath{\mathrm{Ran} \left( {#1} \right) }}
\renewcommand{\ker}[1]{\ensuremath{\mathrm{Ker} \left( {#1} \right) }}
\numberwithin{equation}{section}
\numberwithin{subsection}{section}
\newtheorem{thm}{Theorem}
\newtheorem{lemma}{Lemma}
\newtheorem{prop}{Proposition}
\newtheorem{cor}{Corollary}
\newtheorem*{thm*}{Theorem}
\newtheorem{thmA}{Theorem}
\theoremstyle{definition}
\newtheorem{defn}{Definition}
\newtheorem{remark}{Remark}
\newtheorem{eg}{Example}
\title{Non-commutative rational Clark measures}
\author{Michael T. Jury\thanks{Supported by NSF grant DMS-1900364}}
\affil[1]{\footnotesize University of Florida}
\author[2]{Robert T.W. Martin\thanks{Supported by NSERC grant 2020-05683}}
\affil[2]{\footnotesize University of Manitoba}
\author[3]{Eli Shamovich}
\affil[3]{ \footnotesize Ben-Gurion University of the Negev}
\date{}
\begin{document}

\NiceMatrixOptions{cell-space-top-limit=1pt}
\NiceMatrixOptions{cell-space-bottom-limit=1pt}

\bibliographystyle{abbrv}
\maketitle

\begin{abstract}
We characterize the non-commutative Aleksandrov--Clark measures and the minimal realization formulas of contractive and, in particular, isometric non-commutative rational multipliers of the Fock space.  Here, the full Fock space over $\C ^d$ is defined as the Hilbert space of square--summable power series in several non-commuting formal variables, and we interpret this space as the non-commutative and multi-variable analogue of the Hardy space of square--summable Taylor series in the complex unit disk.  We further obtain analogues of several classical results in Aleksandrov--Clark measure theory for non-commutative and contractive rational multipliers.

%, including results on the existence of Carath\'eodory angular derivatives and the Aronszajn--Donoghue Theorem, which asserts that the singular parts of the one--parameter family of Clark measures of $\ov{\zeta} b$, where $\zeta$ belongs to the complex unit circle and $b$ is a contractive analytic function in the unit disk, are mutually singular.

Non-commutative measures are defined as positive linear functionals on a certain self-adjoint subspace of the Cuntz--Toeplitz algebra, the unital $C^*-$algebra generated by the left creation operators on the full Fock space. \iffalse Any cyclic representation of the Cuntz--Toeplitz algebra is obtained, up to unitary equivalence, by applying a Gelfand--Naimark--Segal construction to a positive non-commutative measure. \fi Our results demonstrate that there is a fundamental relationship between NC Hardy space theory, representation theory of the Cuntz--Toeplitz and Cuntz algebras and the emerging field of non-commutative rational functions. 
\end{abstract}

\section{Introduction}

The full Fock space over $\C ^d$, $\hardy$, can be defined as the Hilbert space of square--summable power series in several non-commuting (NC) formal variables, $\mf{z} := ( \mf{z} _1 , \cdots , \mf{z} _d)$. As such, the Fock space is an obvious NC and multi-variable generalization of the Hardy space $H^2$ of square-summable Taylor series in the complex unit disk, $\D$. Namely, any $h \in \hardy$ is a power series of the form
$$ h (\mf{z} ) := \sum _{\om \in \F ^d} \hat{h} _\om \mf{z} ^\om; \quad \quad \hat{h} _\om \in \C, $$ where $\F ^d$ is the \emph{free monoid}, the set of all words in the $d$ letters $\{1 , 2 , ... , d \}$, and if $\om = i_1 \cdots i_n \in \F ^d$, $1 \leq i_k \leq d$, the free monomials are defined in the obvious way as $\mf{z} ^\om := \mf{z} _{i_1} \cdots \mf{z} _{i_d}$. (This is a monoid with product given by concatenation of words, and the unit is the \emph{empty word}, $\emptyset$, containing no letters.) The Fock space is a Hilbert space when equipped with the $\ell ^2-$inner product of its power series coefficients. Remarkably, elements of $\hardy$ are bona fide functions on the \emph{NC unit row-ball} of all strict row contractions acting on a separable Hilbert space. That is, a $d-$tuple of $n\times n$ complex matrices, $Z := (Z_1 , \cdots , Z_d )$ can be viewed as a linear map $Z : \C ^n \otimes \C ^d \rightarrow \C ^n$ from $d$ copies of $\C ^n$ into one copy. If this linear map is a (strict) contraction, then $Z$ is said to be a (strict) \emph{row contraction}. The formal power series of any $h \in \hardy$ converges absolutely in operator norm when evaluated at any such $Z$, and also uniformly on compacta in a suitable sense. Moreover, such NC functions are \emph{free non-commutative functions} in the NC unit row-ball, $\B ^d _\N$, in the sense of NC Function Theory: They are graded, preserve direct sums and the joint similarities which respect their NC domain, $\B ^d _\N$. Here we write $\B ^d _\N = \bigsqcup _{n=1} ^\infty \B ^d _n$, where $\B ^d _n$ is the set of all strict row contractions on $\C ^n$. 

The Hardy algebra, $H ^\infty$, of all uniformly bounded analytic functions in the complex unit disk is the \emph{multiplier algebra} of $H^2$. That is, if $g \in H^\infty$ and $h \in H^2$, the linear map $h \mapsto g\cdot h$ defines a bounded linear multiplication operator and $H^\infty \subset H^2$. The Hardy algebra contains many rational functions; any rational function in $\C$ with poles in $\C \sm \ov{\D}$ automatically belongs to $H^\infty$ and is in fact analytic in a disk of radius greater than one. This is similarly the case for the Fock space, or \emph{NC Hardy space}, $\hardy$ \cite[Theorem A]{JMS-ncrat}. We define the \emph{NC Hardy algebra}, $\mult$, as the unital algebra of all uniformly bounded free NC functions in the unit row-ball, and this can be identified with the (left) multiplier algebra of the NC Hardy space. Rational functions can also be defined in several NC variables, and this yields a rich sub-discipline of NC function theory which has deep and novel connections to several branches of algebra and analysis including NC Algebra, Free Probability Theory, Multi-variable Operator Theory, Control Theory and Free Algebraic Geometry  \cite{Taylor,Taylor2,Voic,KVV,KVV-ncrat,KVV-ncratdiff,Volcic,Pro-Hilb,Volcic-Hilb,HKV-ncpolyfact,HMV-ncconvex,HMc-ncratconvex,HKMS,HMS-realize,Williams,Ball-control,Ball-robust,FreeAG,HelMc-annals}.

A complex NC rational expression is any valid linear combination of NC polynomials, inverses and products. The domain, $\nbdom \mr{r}$, of such an expression is the collection of all $d$-tuples of matrices of all sizes, $X = (X _1 , \cdots , X_d ) \in \C ^{n\times n} \otimes \C ^{1 \times d}$, $n \in \N$, for which $\mr{r} (X) \in \C ^{n \times n}$ is defined. An NC rational function, $\fr$, is an equivalence class of NC rational expressions with respect to the relation $\mr{r} _1 \equiv \mr{r} _2$, if $\mr{r} _1$ and $\mr{r} _2$ agree on the intersection of their domains and the domain of the equivalence class, $\fr$, is the union of the domains of every $\mr{r} \in \fr $. We say that $\fr$ is \emph{regular at $0$} if $0 = (0 , \cdots , 0) \in \C ^d$ belongs to $\nbdom \fr$. Any NC rational function in $d-$variables, $\mf{r}$, which is regular at $0$ has a finite--dimensional \emph{realization}: There is a triple $(A, b, c)$ with $A \in \C ^d _n := \C ^{n \times n} \otimes \C ^{1\times d}$ and $b,c \in \C ^n$, so that for any $X \in \C ^d _m$,
$$ \mf{r} (X) = b^* L_A (X) ^{-1} c; \quad \quad L_A (X) := I_n \otimes I_m - \sum A_j \otimes X_j.$$ Here, $L_A (\cdot ) $ is called a (monic, affine) \emph{linear pencil}. Realizations of NC rational functions have been studied extensively and have numerous applications. 

In this paper we seek NC multi-variable analogues of classical results for contractive rational multipliers of the Hardy space. Any contractive multiplier, $b \in [H^\infty ] _1$, of $H^2$ corresponds, essentially uniquely, to a positive, finite and regular Borel measure, $\mu _b$, on the complex unit circle, $\partial \D$. Here, we use the notations $[X] _1$ and $(X) _1$ to denote the closed and open unit balls, respectively, of a Banach space, $X$. This measure is called the \emph{Aleksandrov--Clark} or \emph{Clark measure} of $b$ \cite{Clark,Aleks1,Aleks2}. Fatou's theorem implies that any contractive analytic function in the disk, $b$, is inner if and only if its Clark measure is singular with respect to Lebesgue measure \cite{Fatou,Hoff}. If $b = \fb$ is a contractive rational multiplier then $\mu _\fb$ is either a singular, finite positive sum of weighted point masses on the circle, in which case $\fb$ is an inner, finite Blaschke product, or, $\mu _\fb$ has non-zero absolutely continuous part with respect to Lebesgue measure with log--integrable Radon--Nikodym derivative and this implies that $\fb$ is not an extreme point of $[H^\infty ] _1$. One can define a one-parameter family of Clark measures, $\mu _\xi$, associated to $b \in [H^\infty ] _1$ as the Clark measures of $\ov{\xi} b$ for any $\xi$ on the unit circle, $\partial \D$. Defining $H^2 (\mu )$ as the closure of the analytic polynomials in $L^2 (\mu )$, for any $\xi \in \partial \D$, there is a natural, unitary `weighted Cauchy transform' from $H^2 (\mu _{\ov{\xi} b} )$ onto the \emph{de Branges--Rovnyak space}, $\scr{H} (b)$, of $b$. This space is a Hilbert space of analytic functions which is contractively contained in $H^2$, and in the case where $b$ is inner, this is simply $\scr{H} (b) = (bH ^2 ) ^\perp$, the orthogonal complement of the range of $b$ as an isometric multiplier. In this inner case, $H^2 (\mu _{\ov{\xi} b}) = L^2 (\mu _{\ov{\xi} b} )$, so that multiplication by the independent variable, $M_\zeta ^{(\xi )} : H^2 (\mu _{\ov{\xi} b} ) \rightarrow H^2 (\mu _{\ov{\xi} b} )$, is unitary. As discovered by D.N. Clark, the images of the adjoints of this one-parameter family of unitary operators, $M_\zeta ^{(\xi)}$, under weighted Cauchy transform is a family of rank--one unitary perturbations of the restricted \emph{backward shift}, $S^* | _{(bH^2)^\perp}$ \cite{Clark}. Here, recall that the \emph{shift}, $S:= M_z$, is the isometry of multiplication by the independent variable in $H^2$. Analysis of the shift plays a central role in Hardy space theory, and in operator theory in general \cite{Nik-shift,NF}. A fundamental result, due to Aronszajn and Donoghue, in the theory of Aleksandrov--Clark measures, is that the singular parts of the family $\mu _\alpha = \mu _{b \ov{\alpha}}$, $\alpha \in \partial \D$, are mutually singular \cite{Aronszajn,Dono}. Moreover, point masses of $\mu _{\ov{\alpha} b}$ on the unit circle correspond to points where $b$ has a finite Carath\'eodory angular derivative \cite{Caratheodory,Nevanlinna}. We will obtain natural and convincing analogues of these results in the non-commutative setting for NC rational multipliers of the Fock space.  

\subsection{Reader's Guide}

The subsequent section will provide some basic background on the Fock space and NC rational functions. In Section \ref{sect:ncratClark} we study the NC Clark measures of contractive NC rational multpliers. Classically, positive measures on the circle can be identified with positive linear functionals via the Riesz--Markov Theorem, and the appropriate NC analogue of a positive measure on the circle is then a positive linear functional on a certain operator system, the \emph{free disk system}. Theorem \ref{fincorthm} identifies the NC Clark measures of contractive NC rational multipliers as the finitely--correlated positive linear functionals on the free disk system. Finitely--correlated Cuntz states were originally introduced by Bratteli and J\o rgensen in their studies of representations of the celebrated Cuntz algebra \cite{BraJorg}, the universal $C^*-$algebra of a surjective row isometry \cite{Cuntz}. Here, a row isometry is an isometry from several copies of a Hilbert space into itself.  We further prove that an NC rational multiplier is inner, \emph{i.e.} isometric, if and only if its NC Clark measure is singular with respect to NC Lebesgue measure in the sense of the NC Lebesgue decomposition of \cite{JM-ncld,JM-ncFatou}, see Corollary \ref{singinner}. This is the analogue of a classical corollary to Fatou's theorem in the special case of rational multipliers: A contractive multiplier of $H^2$ is inner if and only if its Clark measure is singular. Theorem \ref{fincorstructure} provides a detailed characterization of the finitely--correlated positive NC measures, including a concrete formula for the NC Radon--Nikodym derivative of any finitely--correlated NC measure with respect to a canonical NC Lebesgue measure. Theorem \ref{minratreal} provides a complete description of the minimal realization of any contractive NC rational multiplier and, in particular, any inner (isometric) NC rational multiplier.

Proposition \ref{evalues} shows that if $\fb \in [ \mult ]_1$ is NC rational and inner, then there are certain row co-isometries, $A_\zeta$, on the boundary of the unit row--ball, $\partial \B ^d _\N$, so that $\fb (A _\zeta ^\mrt )$ has $\zeta \in \partial \D$ as an eigenvalue, where $\mrt$ denotes matrix transpose of each component. If $v$ is the eigenvector of $\fb (A _\zeta ^\mrt )^*$ to eigenvalue $\ov{\zeta}$, Theorem \ref{boundary} shows that $v^*\fb (Z )$ has a Carath\'eodory angular derivative at $A_\zeta ^\mrt$, and that the point evaluation $h \mapsto y^* h(A_\zeta ^\mrt) v$ is a bounded linear functional on the de Branges--Rovnyak space of $\fb$, for any vector $y$ of the appropriate size. Theorem \ref{ncAD} then partially extends the Aronszajn--Donoghue theorem to NC rational multipliers of Fock space: Under certain assumptions we show that the singular parts of the family of NC Clark measures of an inner rational multiplier of Fock space are mutually singular and we provide a finite upper bound on the number of distinct NC Clark measures which are not mutually singular. Here, a Gelfand--Naimark--Segal (GNS) construction applied to any positive NC measure produces a GNS Hilbert space, and a row isometry acting on this space. (This space and this row isometry are the multi-variable analogues of $H^2 (\mu )$ and $M_\zeta | _{H^2 _\mu }$ in the case where $\mu$ is a positive measure on the circle.) Two NC Clark measures $\mu _{\fb } , \mu _{\fb '}$ are then said to be mutually singular if their GNS row isometries are mutually singular in the sense that they have no unitarily equivalent direct summands. 

Any row isometry uniquely determines and is uniquely determined by a $*-$representation of the \emph{Cuntz--Toeplitz algebra}, $C^* \{ I , L_1 , \cdots , L_d \}$ \cite{Cuntz}. Here, $L := (L_1 , \cdots , L_d )$ is the \emph{left free shift}, the row isometry of left multiplications by the $d$ indpendent NC variables, $L_k := M^L _{\mf{z} _k}$, on the Fock space, and this plays the role of the shift operator, $S=M_z : H^2 \rightarrow H^2$, in this multi-variable NC Hardy space theory. This reveals a fundamental connection between the representation theory of the Cuntz and Cuntz--Toeplitz $C^*-$algebras and the study of positive NC measures. In fact any cyclic row isometry (or representation) can be obtained, up to unitary equivalence, as the GNS row isometry of a positive NC measure \cite[Lemma 2.2]{JMT-ncFMRiesz}. The Cuntz and Cuntz--Toeplitz $C^*-$algebras are important objects in $C^*-$algebra theory and they also play a central role in the dilation theory of row contractions \cite{Pop-dil}.

\section{Background}

\subsection{Multipliers of Fock space}

Left multipications by the $d$ independent NC variables $\mf{z} = ( \mf{z} _1 , \cdots , \mf{z} _d )$ define isometries on the Fock space with pairwise orthogonal ranges,
$$ L_k := M^L _{\mf{z} _k}, \quad \quad L_j ^* L_k = \delta _{j,k} I. $$ It follows that the row $d-$tuple $L := (L_1 , \cdots , L_d ) : \hardy \otimes \C ^d \rightarrow \hardy$ is an isometry from several copies of $\hardy$ into itself. Such an isometry is called a \emph{row isometry}, we call this row isometry of left multiplications on the Fock space the \emph{left free shift} and its components left free shifts. Similarly, one can define the right free shifts, $R_k := M^R _{\mf{z} _k}$, as right multiplication by the independent NC variables as well as the row isometric right free shift, $R = (R _1 , \cdots , R _d)$. The letter reversal map $\mrt : \F ^d \rightarrow \F ^d$, which reverses the order of letters in any word $\om  \in \F ^d$ defines an involution on the free monoid,
$$  \om = i_1 \cdots i_n \mapsto \om ^\mrt := i_n \cdots i_1. $$ Given a word, $\om = i_1 \cdots i_n \in \F ^d$, the \emph{length} of $\om$ is $|\om | =n$. The free monomials $ \{ e_\om := \mf{z} ^\om | \ \om \in \F ^d \}$ define a standard orthonormal basis of $\hardy \simeq \ell ^2 (\F ^d )$, and the letter reversal map gives rise to a unitary involution of the Fock space, $U_\mrt$, defined by $U_\mrt \mf{z} ^\om = \mf{z} ^{\om ^\mrt}$. Here $e_\emptyset = \mf{z} ^\emptyset =: 1$ is called the vacuum vector of the Fock space. It is straightforward to verify that $U_\mrt L_k U_\mrt = R_k$, so that the left shifts are isomorphic to the right shifts.

The NC Hardy algebra, $\mult$, of uniformly bounded NC functions can be identified, completely isometrically, with the unital Banach algebra of left multipliers of the NC Hardy space, $\hardy$. That is, given any NC function, $F \in \mult$ and $h \in \hardy$, the left multiplication operator $M^L _F : \hardy \rightarrow \hardy$, defined by 
$$ h(Z) \mapsto F(Z) \cdot h(Z), $$ is bounded, and $\| M ^L _F \|  = \| F \| _\infty$, where $\| \cdot \| _\infty$ denotes the supremum norm over $\B ^d _\N$. For any free polynomial $p \in \C \{ \mf{z} \}$, one can check that $p(L) = M^L _p$, and so we employ the notation $F(L) := M^L _F$. Similarly, if $p \in \fp$, then, $M^R _p = p ^\mrt (R) = U_\mrt p(L) U_\mrt$, acts as right multiplication by $p$, where if $h$ is a formal power series, $h(\mf{z} ) = \sum \hat{h} _\om \mf{z} ^\om$,
$$ h^\mrt (\mf{z} ) := \sum \hat{h} _\om \mf{z} ^{\om ^\mrt} = \sum \hat{h} _{\om ^\mrt} \mf{z} ^\om. $$ In particular, if $h \in \hardy$, $h^\mrt = U_\mrt h$. The left and right multiplier algebras of $\hardy$ are unitarily equivalent via the unitary letter reversal involution $U_\mrt$ and can be identified with the left and right analytic Toeplitz algebras, $L^\infty _d := \mr{Alg} \{ I , L_1 , \cdots , L_d \} ^{-WOT}$ and $R^\infty _d =\{ I , R_1 , \cdots , R_d \} ^{-WOT} = U_\mrt L^\infty _d U_\mrt$. Since $p (R) = M^R _{p ^\mrt}$ for any $p \in \fp$, we will write $G (R) = M^R _{G ^\mrt}$ for any $G \in \mult$. Namely $G \in \mult$ if and only if $G^\mrt \in \rmult := \mrt \circ \mult$.  We will use the following terminology: A left or right multiplier is \emph{inner} if it is isometric and \emph{outer} if it has dense range.

\subsection{Non-commutative reproducing kernel Hilbert spaces}

In syzygy with classical Hardy space theory, the Fock space is a (non-commutative) reproducing kernel Hilbert space, in the sense that for any $Z \in \B ^d _n$ and vectors $y,v \in \C ^n$, the \emph{matrix-entry point evaluation}, $\ell _{Z,y,v} : \hardy \rightarrow \C$,
$$ h \mapsto y^* h(Z) v, $$ is a bounded linear functional. Equivalently the linear map $h \mapsto h(Z)$ is bounded as a map from $\hardy$ into the Hilbert space, $\C ^{n \times n}$, equipped with the Hilbert--Schmidt inner product. By the Riesz lemma, $\ell _{Z,y,v}$ is implemented by inner products against vectors $K \{ Z , y , v \} \in \hardy$ which we call \emph{NC Szeg\"o kernel vectors}. 

In greater generality, let $\C ^d _\N := \bigsqcup _{n=1} ^\infty  \C ^d _n$ denote the $d-$dimensional \emph{complex NC universe}. Here, recall that we define $\C ^d _n := \C ^{n\times n} \otimes \C ^{1\times d}.$ A subset $\Om \subseteq \C ^d _\N$, is an \emph{NC set} if it is closed under direct sums, and we write $\Om = \bigsqcup \Om _n$ where $\Om _n := \Om \bigcap \C ^d _n$. 

A  Hilbert space, $\cH$, of free non-commutative functions on $\Om$ is a \emph{non-commutative reproducing kernel Hilbert space} (NC-RKHS), if for any $n\in \N$, $Z \in \Om _n$ and $y,v \in \C ^n$, the linear point evaluation functional 
$$ h \mapsto y^* h(Z) v, $$ is bounded on $\cH$ \cite{BMV}. As before, the Riesz lemma implies that these functionals are implemented by taking inner products against \emph{point evaluation} or \emph{NC kernel vectors} $k \{ Z, y ,v \} \in \cH$. Given any such NC-RKHS, $Z \in \Om _n$ and $W \in \Om _m$, one can define a completely bounded linear map on $n \times m$ complex matrices by: $k(Z,W) [ \cdot ] : \C ^{n\times m} \rightarrow \C ^{n\times m}$,
$$  y^* k(Z,W) [vu^*] x := \ip{k\{ Z,y,v \}}{k \{ W,x ,u \} }_{\cH}, $$ and this map is completely positive if $Z=W$ \cite{BMV}. Following \cite{BMV}, we call $k(Z,W)[\cdot ]$ the \emph{completely positive non-commutative (CPNC) reproducing kernel} of $\cH$, and we write $\cH = \cH _{nc} (k)$. One can check that adjoints of left and right multipliers of an NC-RKHS have a familiar action on NC kernel vectors:
$$ (M^L _F ) ^* k\{ Z , y ,v \} = k\{ Z , F(Z) ^* y , v \} \quad \mbox{and} \quad (M^R _G ) ^* k \{ Z , y ,v \} = k \{ Z , y , G (Z) v \}. $$ 

All NC-RKHS in this paper will be Hilbert spaces of free NC functions in the unit row-ball $\B ^d _\N$, 
$$ \B ^d _\N = \bigsqcup _{n=1} ^\infty \B ^d _n, \quad \quad \B ^d _n := \left\{ \left. Z \in \C ^{n\times n} \otimes \C  ^{1\times d} \right| \, ZZ^* = Z_1 Z_1 ^* + \cdots + Z_d Z_d ^* < I_n \right\}. $$ In the case of the Fock space, $\hardy = \cH _{nc} (K)$, where $K$ is the \emph{NC Szeg\"o kernel}: Given $Z \in \B ^d _n, W \in \B ^d _m$ and $P \in \C ^{n \times m}$,
$$ K(Z,W) := \left( \mr{id} _{n,m} [ \cdot ] - \mr{Ad} _{Z, W^*} [\cdot ] \right)  ^{-1} \circ P = \sum _{\om \in \F ^d} Z^\om P W^{*\om}, $$ 
 $\mr{Ad} _{Z,W^*} [P] := Z_1 P W_1 ^* + \cdots + Z_d P W_d ^*$.

\subsection{NC rational functions}

As described in the introduction, a complex NC rational expression is any combination of the several NC variables $\mf{z} _1 , \cdots , \mf{z} _d$, the complex scalars, $\C$, the operations $+ , \cdot , ^{-1}$ and parentheses $\left( , \right)$ with domain
$$\nbdom \mr{r} = \bigsqcup _{n=1} ^\infty \mr{Dom} _n \, \mr{r}, \quad  \mr{Dom} _n \,  \mr{r} := \bigsqcup _{n=1} ^\infty \left\{ \left. X = (X _1 , \cdots , X_d ) \in \C ^{n\times n} \otimes \C ^{1 \times d}  \right| \ \mr{r} (X) \ \mbox{is defined.} \right\}. $$ We will use the notation $\C ^d _n := \C ^{n\times n} \otimes \C ^{1\times d}$ for a row $d-$tuple of complex $n\times n$ matrices. An NC rational expression is \emph{valid}, if its domain is non-empty. An NC rational function, $\fr$, is then an equivalence class of valid NC rational expressions with respect to the relation $\mr{r} _1 \equiv \mr{r} _2$ if  $\mr{r} _1 (X) = \mr{r} _2 (X)$ for all $X \in \nbdom \mr{r} _1 \bigcap \nbdom \mr{r} _2$. (By \cite[Footnote, page 52]{KVV-ncratdiff}, given any two valid NC rational expressions, $\mr{r} _k$, their domains at level $n$, $\mr{Dom} _n \, \mr{r} _k := \nbdom \mr{r} _k \cap \C ^d _n$ have non-trivial intersection for sufficiently large $n$.) The set of all NC rational functions in $d$ variables with coefficients in $\C$ is a division ring or skew field, and is, in fact, the universal skew field of fractions for the ring $\C \{ \mf{z} \}  = \C \{ \mf{z} _1 , \cdots , \mf{z} _d \}$ of complex NC polynomials \cite{Amitsur,Cohn},\cite[Proposition 2.2]{KVV-ncratdiff}. 

Any NC rational function in $d-$variables, $\mf{r}$, which is regular at $0$ has a unique (up to joint similarity) \emph{minimal descriptor realization}. Namely, there is a triple $(A, b, c)$ with $A \in \C ^d _n$ and $b,c \in \C ^n$, so that for any $X \in \C ^d _m$,
$$ \mf{r} (X) = \left( b^* \otimes I_m \right) L_A (X) ^{-1} \left( c \otimes I_m \right); \quad \quad L_A (X) := I_n \otimes I_m - \sum A_j \otimes X_j, $$ and this realization is minimal in the sense that $n$ is as small as possible. Here, $L_A (\cdot ) $ is called a (monic, affine) \emph{linear pencil} and we will employ the simplified notations 
$$ ZA = Z \otimes A := \sum _{j=1} ^d Z_j \otimes A_j, $$ for any $Z, A \in \C ^d _\N$. Minimality implies that that the realization is both \emph{observable},
$$ \bigvee A^{*\om} b = \C ^{n}$$  and \emph{controllable} 
$$ \bigvee A^\om c = \C ^n, $$ see \emph{e.g.} \cite[Subsection 3.1.2]{HMS-realize}. Minimal realizations are unique up to joint similarity \cite[Theorem 2.4]{Berstel-ncrat}. The domains of NC rational functions which are regular at $0$ have a convenient description:
\begin{thm*}{ (\cite[Theorem 3.10]{Volcic}, \cite[Theorem 3.1]{KVV-ncrat})}
If $\mf{r}$ is an NC rational function which is regular at $0$ with minimal realization $(A,b,c)$, then,
$$ \nbdom \mf{r} = \bigsqcup _{n \in \N} \left\{ \left. X \in \C ^d _n \right| \ \mr{det} \, L_A (X) \neq 0 \right\}.$$
\end{thm*}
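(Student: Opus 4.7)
I would argue the two inclusions separately, with the nontrivial work concentrated in the hard direction, which forces the use of minimality.

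For the easy inclusion $\supseteq$, assume $X \in \C ^d _m$ satisfies $\det L_A(X) \neq 0$, so that the realization formula produces a well-defined element $(b^* \otimes I_m) L_A(X)^{-1} (c \otimes I_m) \in \C ^{m \times m}$. To see that $X$ lies in the domain of the equivalence class $\fr$, I would exhibit an explicit NC rational expression $\mr{r} _{\mr{real}}$ obtained by expanding $L_A(X)^{-1}$ through Schur-complement / Cramer computations in the entries of the $A_j$, yielding an expression in the variables $X_1, \ldots, X_d$ whose domain contains $\{X : \det L_A(X) \neq 0\}$. Near $0$, the Neumann series $L_A(X)^{-1} = \sum _{\om \in \F ^d} A^\om \otimes X^\om$ converges, and the resulting sum equals the power series expansion of $\fr$ at $0$ (this is, after all, what makes $(A,b,c)$ a realization). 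Hence $\mr{r} _{\mr{real}}$ and $\fr$ agree on a neighborhood of $0$, and the identity theorem for NC rational functions places $\mr{r} _{\mr{real}}$ in the equivalence class $\fr$.

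For the hard inclusion $\subseteq$, I would fix $X \in \nbdom \fr$ and pick some valid NC rational expression $\mr{r} ' \in \fr$ with $X \in \nbdom \mr{r}'$. The key intermediate step is to build, by induction on the syntactic complexity of $\mr{r}'$, some (not necessarily minimal) realization $(A', b', c')$ of $\fr$ with the property that $\det L_{A'}(Y) \neq 0$ for every $Y \in \nbdom \mr{r}'$; this is the standard Kleene--Sch\"utzenberger--Fliess recipe, where the atomic cases of scalars and the $\mf{z}_j$ admit obvious realizations, and sums, products and inverses are combined via block-matrix constructions whose pencils are invertible precisely where the ingredient pencils are and the required scalar inverses exist. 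In particular $\det L_{A'}(X) \neq 0$. I would then reduce $(A', b', c')$ to a minimal realization $(\wt A, \wt b, \wt c)$ by quotienting out the common kernel of the observability maps $\bigvee A'{}^{*\om} b'$ and restricting to the controllable subspace $\bigvee A'{}^{\om} c'$. Under such a similarity-and-compression, $L_{\wt A}$ appears as a block of $L_{A'}$ in an appropriate basis, so invertibility of $L_{A'}(X)$ transfers to $L_{\wt A}(X)$. Finally, by the uniqueness of minimal realizations up to joint similarity (Berstel--Reutenauer, cited as \cite[Theorem 2.4]{Berstel-ncrat}), there is an invertible $S$ with $\wt A _j = S^{-1} A_j S$; this joint similarity gives $L_{\wt A}(X) = (S^{-1}\otimes I_m) L_A(X) (S \otimes I_m)$, so $\det L_A(X) = (\det S)^{-m} \det L_{\wt A}(X) \neq 0$.

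The main obstacle, and the place where I would spend most of the write-up, is the inductive construction of $(A', b', c')$ together with the claim that the invertibility locus of $L_{A'}$ at every stage tracks the domain of the corresponding subexpression; the inverse step in particular requires the standard Schur-complement identity expressing $\mr{r} ^{-1}$'s realization pencil as a $2 \times 2$ block incorporating $\mr{r}$'s pencil together with a coupling that is invertible exactly when $\mr{r}$ is nonzero. Once that lemma is in place, the compression-to-minimality step and the similarity conclusion are both routine linear algebra, and the easy inclusion is essentially the definition of a descriptor realization combined with the identity theorem.
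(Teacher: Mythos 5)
The paper does not prove this result --- it is stated as a cited background theorem of Vol\v{c}i\v{c} and of Kaliuzhnyi--Verbovetskyi--Vinnikov --- so there is no in-paper argument to compare against; your high-level architecture (build a realization from an expression, compress to the minimal one, invoke uniqueness of minimal realizations up to joint similarity) is the standard skeleton, but two steps of the sketch do not go through as written.

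In the inclusion $\supseteq$ you assert that a single Schur-complement expansion of $L_A(X)^{-1}$ produces one NC rational expression whose domain already contains all $X$ with $\det L_A(X) \neq 0$. This is not true: a fixed pivoting order for the recursive Schur-complement inversion has domain equal to the set where all the nested intermediate Schur complements are invertible, which is in general a proper subset of $\{ \det L_A \neq 0 \}$. One needs a \emph{finite family} of expressions, one per pivoting pattern, and must then argue that each is in the equivalence class $\fr$; this last point is not automatic, because the non-identity pivoting orders are \emph{not} regular at $0$ (there is no $\pi$-LU factorization of $L_A(0)=I$ for $\pi \neq \mathrm{id}$), so you cannot simply match Taylor--Taylor series at the origin. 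You would instead have to invoke the fact cited in the paper from \cite{KVV-ncratdiff} that the domains of any two valid expressions intersect nontrivially at sufficiently large matrix size, and that agreement there suffices for equivalence.

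In the inclusion $\subseteq$ the Kleene--Sch\"utzenberger--Fliess induction you propose implicitly requires that the chosen expression $\mr{r}'$ with $X \in \nbdom \mr{r}'$ is itself regular at $0$: the inverse step needs the sub-expression $\mr{s}$ being inverted to satisfy $\mr{s}(0)\neq 0$, or the $2\times 2$ Schur-complement block realization of $\mr{s}^{-1}$ has no well-defined $D^{-1}$. But an expression in the class $\fr$ is only guaranteed to be defined at $X$, not at $0$; for instance $\mf{z}_1 \mf{z}_2 \mf{z}_2^{-1}$ is in the class of $\fr = \mf{z}_1$ yet is not regular at $0$. You therefore need either a lemma asserting that every $X\in\nbdom\fr$ lies in $\nbdom \mr{r}'$ for some $\mr{r}'\in\fr$ that \emph{is} regular at $0$ --- which is very close in spirit to the theorem being proved and cannot be taken for granted --- or a theory of matrix-centered (Porat--Vinnikov-type) realizations around $X$, related back to the scalar minimal realization at $0$. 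Neither is routine, and this is precisely where the content of the cited proofs lives. The remaining steps you describe (the block-triangular determinant factorization of a non-minimal pencil over the controllable/observable compression, and the transfer of invertibility under joint similarity of minimal realizations via \cite[Theorem 2.4]{Berstel-ncrat}) are correct and routine once the above are filled in.
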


Any NC rational $\fr \in \hardy$ is necessarily defined on $\B ^d _\N$ and hence is regular at $0$. As proven in \cite{JMS-ncrat}, an NC rational function belongs to the Fock space if and only if it is regular at $0$ and has minimal realization $(A,b,c)$ so that the joint spectral radius of the $d-$tuple $A$ is less than one \cite[Theorem A]{JMS-ncrat}. Here, if $A := (A_1 , \cdots , A_d ) : \C ^n \otimes \C ^d \rightarrow \C ^n$ is any row $d-$tuple of $n\times n$ matrices, we define the completely positive map $\mr{Ad} _{A, A^*} : \C ^{n\times n } \rightarrow \C ^{n \times n}$ by 
$$ \mr{Ad} _{A,A^*} (P) := A_1  P A_1 ^* + \cdots + A_d  P A_d ^*. $$ The joint spectral radius, $\mr{spr} (A)$, of $A$ is then defined by the Beurling formula:
$$ \mr{spr} (A) := \lim _k \sqrt[2k]{\| \mr{Ad} _{A, A^*}  ^{(k)} (I_n) \|}. $$ By the multivariable Rota--Strang theorem, $A$ is jointly similar to a strict row contraction if and only if $\mr{spr} (A) <1$ \cite[Theorem 3.8]{PopRota} (see also \cite[Proposition 2.3, Remark 2.6]{SSS}). In particular, $A$ is said to be \emph{pure} if $\mr{Ad} ^{(k)} _{A , A^*} (I_n) \rightarrow 0$, and a finite--dimensional row $d-$tuple, $A \in \C ^d _n$, is pure if and only if $\mr{spr} (A) <1$ \cite[Lemma 2.5]{SSS}. If $\fr \in \hardy$ has minimal descriptor realization $(A,b,c)$, then $A$ is jointly similar to a finite strict row contraction $\ov{Z} \in \B ^d _n$, where $\ov{Z}$ denotes entry--wise complex conjugation, and one can verify that $\fr = K \{ Z , y , v \}$ is an NC Szeg\"o kernel vector in the Fock space, where $y,v$ are the image of $\ov{b}, \ov{c}$ under the (conjugate of the) similarity and its inverse that intertwine $A$ and $\ov{Z}$ \cite[Proposition 3.2]{JMS-ncrat}. That is, 
$$ \ip{\fr}{h}_{\bH ^2} = y^* h(Z) v $$ and 
$$ K \{Z , y ,v \} (W) = \sum _{\om \in \F ^d} \ov{y^* Z^\om v} \, W^\om; \quad \quad W \in \B ^d _\N. $$ 
In \cite[Theorem A]{JMS-ncrat}, we established (a more general version of) the following theorem which characterizes when an NC rational function belongs to $\hardy$.     
\begin{thmA} \label{ncratinH2}
Let $\fr$ be an NC rational function in $d$ variables with minimal realization $(A,b,c)$, $A \in \C ^d _n$. The following are equivalent: 
\bn
    \item[(i)] $\fr \in \hardy$.
    \item[(ii)] $\fr \in \mult$.
    \item[(iii)] $r \cdot \B ^d _\N \subseteq \nbdom \fr$ for some $r >1$. 
    \item[(iv)] $\mr{spr} (A) < 1$.
    \item[(v)] $\fr = K \{ Z , y ,v \}$ for some $Z \in \B ^d _n$ and $y,v \in \C ^n$. 
\en
\end{thmA}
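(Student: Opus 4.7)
The plan is to prove the six-way equivalence by combining the multi-variable Rota--Strang theorem with an analysis of the shift-coinvariant subspace generated by $\fr$ in $\hardy$. Two directions are immediate: $(ii)\Rightarrow(i)$ since $\fr = M^L_\fr \cdot 1 \in \hardy$, and $(v)\Rightarrow(i)$ since any NC Szeg\"o kernel vector $K\{Z,y,v\}$ with $Z \in \B^d_n$ lies in $\hardy = \cH _{nc}(K)$ by definition. For $(iv)\Leftrightarrow(v)$ one invokes the multi-variable Rota--Strang theorem: $\mr{spr}(A)<1$ if and only if $A$ is jointly similar to a strict row contraction $\ov{Z} \in \B^d_n$, in which case the coefficient formula $\fr(W) = \sum_\om (b^* A^\om c)\,W^\om$ rearranges into $K\{Z,y,v\}(W)$ for suitable $y,v$ obtained from the similarity, exactly as recorded in the excerpt just before the statement. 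Conversely, if $\fr = K\{Z,y,v\}$ with $Z\in\B^d_n$, then $(\ov{Z},\ov{y},\ov{v})$ is a realization of $\fr$ with spectral radius $<1$, and since the minimal realization arises from it by restricting to observable and controllable parts --- an operation which cannot increase the joint spectral radius --- $\mr{spr}(A)\le\mr{spr}(\ov{Z})<1$.

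For $(iv)\Leftrightarrow(iii)$ one uses the domain formula $\nbdom \fr = \{X : \det L_A(X)\ne 0\}$. If $\mr{spr}(A)<1$, then $A$ is similar to a row contraction of norm $s<1$, so for any $r\in(1,1/s)$ and any $X\in r\B^d_\N$ a direct norm estimate gives $\|\sum A_j\otimes X_j\|<1$ after similarity, hence invertibility of $L_A(X)$. Conversely, if $L_A$ is invertible on $r\B^d_\N$ for some $r>1$, the standard spectral characterization of joint spectral radius forces $\mr{spr}(A)\le1/r<1$. For $(iv)\Rightarrow(ii)$, note that $\mr{Ad}_{A,A^*}$ and $\mr{Ad}_{A^*,A}$ are Hilbert--Schmidt adjoints, hence share spectral radius; combined with the column version of Rota--Strang, $\mr{spr}(A)<1$ implies $A$ is also jointly similar to a strict column contraction. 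Using the Cuntz--Toeplitz relations $L_j^*L_k=\delta_{j,k}I$,
$$ \Bigl\|\sum_j A_j\otimes L_j\Bigr\|^2 \;=\; \Bigl\|\sum_j A_j^* A_j\Bigr\| \;<\; 1 $$
after similarity, so $L_A(L)$ is boundedly invertible, and the Neumann series yields
$$ b^* L_A(L)^{-1} c \;=\; \sum_\om b^* A^\om c\cdot L^\om \;=\; M^L_\fr\in\mult. $$

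The main obstacle is $(i)\Rightarrow(iv)$, which must convert the summability $\sum_\om|b^*A^\om c|^2<\infty$ of the power-series coefficients of $\fr$ into the operator-theoretic bound $\mr{spr}(A)<1$. The plan is to work with the coinvariant subspace
$$ \mc{M} := \mr{span}\bigl\{L^{*\a}\fr \,:\, \a\in\F^d\bigr\}\subseteq\hardy. $$
Since the $\mf{z}^\om$-coefficient of $L^{*\a}\fr$ equals $b^*A^{\a\om}c=(A^{*\a}b)^*A^\om c$, every $L^{*\a}\fr$ is the NC rational function with realization $(A,A^{*\a}b,c)$, and observability of $(A,b,c)$ gives $\mr{span}\{A^{*\a}b:\a\in\F^d\}=\C^n$; consequently $\dim\mc{M}\le n$, so $\mc{M}$ is closed and finite-dimensional. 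Setting $\tilde L_j:=P_\mc{M}L_j|_\mc{M}$ we have $\tilde L^{*\om}=L^{*\om}|_\mc{M}$, and for each $v\in\mc{M}$
$$ \bigl\langle\mr{Ad}_{\tilde L,\tilde L^*}^k(I_\mc{M})\,v,v\bigr\rangle \;=\; \sum_{|\om|=k}\|L^{*\om}v\|^2 \;\longrightarrow\;0 $$
because $L$ is pure ($\sum_{|\om|=k}L^\om L^{*\om}\to0$ strongly). On the finite-dimensional space $\mc{M}$ this convergence is uniform, so $\tilde L$ is a pure finite-dimensional row contraction and $\mr{spr}(\tilde L)<1$ by the multi-variable Rota--Strang theorem. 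Reading off a descriptor realization of $\fr$ from the action of $\tilde L^*$ on the cyclic vector $\fr\in\mc{M}$ yields a realization of dimension at most $n$ whose state tuple has spectral radius strictly less than $1$, and uniqueness of minimal realizations up to joint similarity forces $\mr{spr}(A)<1$.
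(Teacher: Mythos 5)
The paper does not actually prove this theorem — it records it and cites \cite[Theorem A]{JMS-ncrat}. Your blind argument is, as far as I can tell, correct and follows the strategy that the paper itself sketches (the similarity-to-$\ov{Z}$ discussion just above the theorem statement and the backward-shift construction of a minimal realization in Section \ref{ss:minFM}): Rota--Strang plus the coefficient rearrangement for $(iv)\Leftrightarrow(v)$; the finite-dimensional shift-coinvariant space $\mc{M}=\bigvee L^{*\a}\fr$ together with purity of $L$ for $(i)\Rightarrow(iv)$ (note you use $L^*$, which correctly gives $\hat{\fr}_\om = \ip{1}{L^{*\om}\fr}$, whereas the paper's sketch in Section \ref{ss:minFM} uses $R^*$ and in fact realizes $\frt$ rather than $\fr$); the Cuntz--Toeplitz norm identity and Neumann series for $(iv)\Rightarrow(ii)$; the domain formula for $(iii)$. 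The core computation $\sum_{|\om|=k}\|L^{*\om}v\|^2\to 0$ plus finite-dimensionality of $\mc{M}$, and the dimension count $\dim\mc{M}\le n$ from observability, are exactly the right moves, and your observation that passing to observable/controllable cut-downs can only decrease $\mr{spr}$ (via the block-triangular form and the trace formula) closes the loop cleanly.

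The one step I would not wave through is $(iii)\Rightarrow(iv)$. You write that invertibility of $L_A$ on $r\B^d_\N$ forces $\mr{spr}(A)\le 1/r$ "by the standard spectral characterization of joint spectral radius." For $d=1$ this is immediate from the spectral mapping, but for $d\ge 2$ one must actually produce a \emph{finite-dimensional} $X$ with $\|X\|_{row}$ close to $1/\mr{spr}(A)$ at which the pencil $L_A(X)$ degenerates. That requires either a Berger--Wang-type argument — pick $\om$ with $\mr{spr}(A^\om)^{1/|\om|}$ close to $\mr{spr}(A)$, form the periodic cyclic shift on $\C^{|\om|}$ associated to $\om$, and locate $1$ in the spectrum of the resulting tensor — or else a different chain entirely, e.g.\ $(iii)\Rightarrow(ii)$ via the identification of $\mult$ with the bounded free NC functions on $\B^d_\N$. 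As stated, this step is a citation to a fact that is true but not elementary, and you should either name a reference for it (the result does sit in \cite{JMS-ncrat}/\cite{Volcic}) or sketch the Berger--Wang reduction. The rest of the proposal is sound.
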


Given $A \in \C ^d _m$ and $Z \in \C ^d _n$, consider the linear pencil,
$$ L_A (Z) = I_n \otimes I_m - Z \otimes A = I_n \otimes I_m - \sum _{j=1} ^d Z_j \otimes A_j. $$
Observe that 
$$ \| Z \otimes A \|  \leq  \| Z \| _{row} \| A \| _{col}  :=   \left\| (Z_1 , \cdots , Z _d ) \right\| _{\scr{L} (\C ^n \otimes \C ^d , \C ^n )} \left\| \bsm A_1 \\ \vdots \\ A_d \esm \right\|_{\scr{L} (\C ^m , \C ^m \otimes \C ^d )}. $$ Given $A \in \C ^d _m$, we will also write 
$$ \mr{col} (A) := \bsm A_1 \\ \vdots \\ A _d \esm  \in \C ^{m\times m} \otimes \C ^d, \quad \mbox{so that} \quad \| A \| _{col} = \| \mr{col} (A) \| _{\scr{L} (\C ^m , \C ^m \otimes \C ^d )}. $$ 
It follows that $Z\otimes A$ will be similar to a contraction if $Z$ is jointly similar to a row contraction and $A$ is jointly similar to a column contraction, and $Z \otimes A$ will further be similar to a strict contraction if, in addition, at least one of $Z, A$ is jointly similar to a strict row or column contraction, respectively. If $Z \otimes A$ is similar to a strict contraction, then $L_A (Z) ^{-1}$ can be expanded as a convergent geometric sum. A row $d-$tuple, $Z \in \C ^d _n$, is said to be \emph{irreducible}, if it has no non-trivial jointly invariant subspace, \emph{i.e.} there is no non-trivial subspace which is invariant for every $Z_k$, $1 \leq k \leq d$. The following lemma will be useful in the sequel:
\begin{lemma} \label{rowvscol}
If $Z = (Z _1 , \cdots , Z_d ) \in \C ^d _n$ then the column, $\mr{col} (Z)$, is jointly similar to a column $d-$tuple, $\mr{col} (W)$, with column--norm at most $\| W \| _{col} \leq \| Z \| _{row} +\eps$, for any $\eps >0$. Conversely, for any $\eps >0$, $Z$ is jointly similar to a row $d-$tuple $W' \in \C ^d _n$ with $\| W ' \| _{row} \leq \| Z \| _{col} + \eps$. If $Z$ is irreducible then one can take $\eps =0$. In particular, any strict row contraction is jointly similar to a strict column contraction and vice versa.   
\end{lemma}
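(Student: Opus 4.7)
The plan is to translate joint similarity into a Lyapunov--type matrix inequality and solve it via a geometric series. For invertible $S$ and $W := SZS^{-1}$, a direct computation gives
\[ \sum _j W_j^* W_j = S^{-*} \mr{Ad} _{Z^*, Z}(Q) S^{-1}, \qquad Q := S^*S > 0, \]
so $\| W \|_{col} \leq \lambda$ is equivalent to the existence of some $Q > 0$ satisfying $\mr{Ad} _{Z^*, Z}(Q) \leq \lambda^2 Q$; symmetrically, $\| W' \|_{row} \leq \lambda$ corresponds to some $P > 0$ with $\mr{Ad} _{Z, Z^*}(P) \leq \lambda^2 P$ (choosing $S$ with $S^{-1} S^{-*} = P$). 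The task reduces to producing such positive $Q$ or $P$ with $\lambda^2$ as close as possible to $\| Z \|_{row}^2$ or $\| Z \|_{col}^2$ respectively.

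For the first statement I first bound the joint spectral radius. The map $\mr{Ad} _{Z, Z^*}$ is completely positive, so its operator norm on $M_n(\C)$ equals $\| \mr{Ad} _{Z, Z^*}(I_n) \| = \| Z \|_{row}^2$; since $\mr{Ad} _{Z^*, Z}$ is its Hilbert--Schmidt adjoint, the two maps share a common spectral radius $\mr{spr}(Z)^2 \leq \| Z \|_{row}^2$. With $\lambda := \| Z \|_{row} + \epsilon$, the Neumann--type series
\[ Q := \sum _{k=0}^\infty \lambda^{-2k}\, \mr{Ad} _{Z^*, Z}^{(k)}(I_n) \]
converges in norm (as $\| \mr{Ad} _{Z^*, Z}^{(k)}(I_n) \|^{1/k} \to \mr{spr}(Z)^2 < \lambda^2$), satisfies $Q \geq I_n > 0$, and telescopes to $\mr{Ad} _{Z^*, Z}(Q) = \lambda^2 (Q - I_n) \leq \lambda^2 Q$. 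Taking $S := Q^{1/2}$ yields the desired similarity. The second statement is entirely symmetric, using that $\mr{Ad} _{Z^*, Z}$ is itself CP of norm $\| Z \|_{col}^2$ to get $\mr{spr}(Z) \leq \| Z \|_{col}$ and forming the analogous series for $\mr{Ad} _{Z, Z^*}$. The ``in particular'' claim about strict contractions is then immediate by shrinking $\epsilon$ below the slack in $\| Z \|_{row} < 1$ or $\| Z \|_{col} < 1$.

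For the irreducible case one needs equality in the Lyapunov inequality at $\lambda = \| Z \|_{row}$, so I would replace the geometric series by the Perron--Frobenius theorem for positive linear maps on $M_n(\C)$: if a positive map $\Phi$ is irreducible in the sense that no non-trivial orthogonal projection $p$ satisfies $\Phi(p) \leq \lambda p$, then the spectral radius $\mr{spr}(\Phi)$ is attained by a strictly positive--definite eigenvector. Applied to $\Phi = \mr{Ad} _{Z^*, Z}$ this yields $Q > 0$ with $\mr{Ad} _{Z^*, Z}(Q) = \mr{spr}(Z)^2 Q \leq \| Z \|_{row}^2 Q$, and conjugating by $S = Q^{1/2}$ gives the sharp $\| SZS^{-1} \|_{col} = \mr{spr}(Z) \leq \| Z \|_{row}$ at $\epsilon = 0$. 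The main technical point is matching the paper's notion of irreducibility (no non-trivial jointly $Z$--invariant subspace) to PF--irreducibility of $\mr{Ad} _{Z^*, Z}$: the inequality $\mr{Ad} _{Z^*, Z}(p) \leq \lambda p$ forces $(I - p)\sum _j Z_j^* p Z_j(I - p) = 0$, and positivity of each summand then gives $p Z_j (I - p) = 0$ for every $j$, so the range of $I - p$ is jointly $Z$--invariant; the converse implication is a direct check using $Z_j^* p = p Z_j^* p$. Once this equivalence is in hand the $\epsilon = 0$ conclusion follows, and the corresponding ``row'' statement for irreducible $Z$ follows by applying PF to $\mr{Ad} _{Z, Z^*}$ instead, noting that $Z$ is irreducible if and only if $Z^*$ is.
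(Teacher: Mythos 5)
Your proof is correct, and it takes a genuinely different route from the paper's. The paper first bounds the joint spectral radius via a trace argument, showing $\mr{spr}(Z^*) \leq \|Z\|_{row}$, and then invokes an external result, \cite[Lemma 2.4]{SSS}, which asserts that the closure of the joint similarity orbit of $\mr{row}(Z^*)$ contains a tuple of row-norm equal to $\mr{spr}(Z^*)$, and that the orbit is closed when $Z$ is irreducible; a short duality step ($\|W^*\|_{col} = \|W\|_{row}$, and $\mr{row}(Z^*) \sim \mr{row}(W)$ forces $\mr{col}(Z) \sim \mr{col}(W^*)$) then converts the row statement into the column one. You instead make the argument fully self-contained: you reformulate the target norm bound as a Lyapunov inequality $\mr{Ad}_{Z^*,Z}(Q) \leq \lambda^2 Q$ with $Q>0$, solve it for $\lambda > \mr{spr}(Z)$ by a telescoping Neumann series, and for the sharp $\eps = 0$ case you replace the Neumann series by a Perron--Frobenius eigenvector for the CP map $\mr{Ad}_{Z^*,Z}$, after observing that irreducibility of $Z$ (no jointly invariant subspace) is equivalent to Perron--Frobenius irreducibility of $\mr{Ad}_{Z^*,Z}$. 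Your version of the spectral radius bound (equality of spectral radii of the Hilbert--Schmidt adjoint pair $\mr{Ad}_{Z,Z^*}$, $\mr{Ad}_{Z^*,Z}$, and Russo--Dye for positive maps to get $\|\mr{Ad}_{Z,Z^*}\| = \|Z\|_{row}^2$) is a clean alternative to the paper's trace estimate. In effect you are inlining, in a transparent operator-inequality form, the content of the similarity-orbit lemma the paper cites; the paper's proof is shorter but leans on \cite{SSS}, whereas yours reproves the needed pieces from scratch at the cost of invoking the finite-dimensional Perron--Frobenius theorem for positive maps.
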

\begin{proof}
The joint spectral radius of $Z$ obeys $\mr{spr} (Z) \leq \| Z \| _{row}$. Consider the row $d-$tuple $Z^* = \mr{row} (Z^*) := (Z_1 ^* , \cdots , Z_d ^* )$. Then,
\ba \mr{spr} (Z ^* ) & = & \lim _{k \uparrow \infty} \| \mr{Ad} _{Z ^* , Z} ^{(k)} (I_n ) \| ^{\frac{1}{2k}} \nn \\
& \leq & \lim  \sqrt[\leftroot{-2}\uproot{10}2k]{ \tr  \mr{Ad} _{Z ^*, Z } ^{(k)} (I_n ) } \nn \\
& = &  \lim \sqrt[\leftroot{-2}\uproot{10}2k]{ \sum _{|\om | = k} \tr Z ^{*\om} Z^{\om}} \nn \\
& = & \lim  \sqrt[\leftroot{-2}\uproot{10}2k]{ \sum _{|\om | = k} \tr Z ^{\om }  Z^{ * \om  }} \nn \\
& = & \lim  \sqrt[\leftroot{-2}\uproot{10}2k]{ \tr \mr{Ad} ^{(k)} _{Z , Z^* } (I_n ) } \nn \\
& \leq & \lim \sqrt[\leftroot{-2}\uproot{10}2k]{ n \cdot \| Z \| _{row} ^{2k}  }  \nn \\
& = & \| Z \| _{row}. \nn \ea 
By \cite[Lemma 2.4]{SSS}, the closure of the joint similarity orbit of the row $d-$tuple, $\mr{row} ( Z^* )$, contains a $d-$tuple, $W'$, with norm at most $\| W ' \| _{row} = \mr{spr} (Z^* ) \leq  \| Z \| _{row}$, and if $Z$ is irreducible then its joint similarity orbit is closed so that $W'$ is in the joint similarity orbit of $Z^*$. In particular, given any $\eps >0$, $Z ^*$ is jointly similar to some $W \in \C ^d _n$ with $\| W \| _{row} \leq \| Z \| _{row} + \eps$. Viewing $W : \C ^n \otimes \C ^d \rightarrow \C ^n$ as a linear map, its Hilbert space adjoint is $\mr{col} (W^*) : \C ^n \rightarrow \C ^n \otimes \C ^d$ with norm
$$ \| W^* \|_{col} ^2  = \| W  \| _{row} ^2 \leq (\| Z \| _{row} + \eps  ) ^2.$$ 
Since $\mr{row} (Z^*)$ is jointly similar to $\mr{row} (W)$, it follows that $\mr{col} (Z)$ is jointly similar to $\mr{col} (W ^*)$ where 
$\| W ^* \| _{col} \leq \| Z \| _{row} + \eps$. Proof of the other half of the claim is analogous. 
\end{proof}

\begin{lemma} \label{transkernel}
If $\fr \in \hardy$, then $\frt := U_\mrt \fr$ is also an NC rational function in $\hardy$. In particular, the transpose of any NC Szeg\"o kernel is an NC Szeg\"o kernel.
\end{lemma}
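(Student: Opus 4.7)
The plan is to pass to minimal descriptor realizations and track the effect of $U_\mrt$ on their data. By Theorem \ref{ncratinH2}, $\fr$ admits a minimal descriptor realization $(A,b,c)$ with $A \in \C ^d _n$ and $\mr{spr}(A) < 1$. Expanding the linear pencil as a geometric series yields $\fr (\mf{z}) = \sum _{\om \in \F ^d} (b^* A^\om c) \, \mf{z} ^\om$. Since $U_\mrt$ acts on monomials by $\mf{z} ^\om \mapsto \mf{z} ^{\om ^\mrt}$, the power series of $\frt$ is $\frt (\mf{z}) = \sum _\om (b^* A ^{\om ^\mrt} c) \, \mf{z} ^\om$. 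As $b^* A ^{\om ^\mrt} c$ is a scalar, it equals its own transpose, and using $(A^{\om ^\mrt})^\mrt = (A ^\mrt )^\om$, where $A ^\mrt := (A_1 ^\mrt , \ldots , A_d ^\mrt )$ is the entry-wise transpose, one obtains $b^* A^{\om ^\mrt} c = c ^\mrt (A ^\mrt )^\om \, \ov{b}$. This identifies $(A^\mrt , \ov{c}, \ov{b})$ as a candidate descriptor realization for $\frt$. Its minimality is immediate: $\bigvee (A ^\mrt )^\om \ov{b} = \ov{\bigvee A ^{*\om} b}$ and $\bigvee (A ^\mrt )^{*\om} \ov{c} = \ov{\bigvee A ^\om c}$, so controllability and observability of the new realization follow from observability and controllability of $(A,b,c)$, respectively.

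The main technical obstacle is to verify $\mr{spr}(A ^\mrt ) < 1$, since entry-wise transposition of a row tuple can alter the row norm in an uncontrolled way, so the Beurling formula cannot be applied naively. The plan is to use cyclicity of the trace via the elementary identity $\mr{tr}(B ^\mrt \ov{B}) = \| B \| _F ^2 = \mr{tr}(B B^*)$, valid for every $B \in \C ^{n \times n}$. Applying this with $B = A ^{\om ^\mrt}$ and summing over words of length $k$ gives
\[
\mr{tr}\bigl( \mr{Ad} _{A ^\mrt , (A ^\mrt )^*} ^{(k)} (I_n ) \bigr) \;=\; \sum _{|\om | = k} \mr{tr}\bigl( A ^{\om ^\mrt} A ^{\om ^\mrt *} \bigr) \;=\; \sum _{|\om | = k} \mr{tr}\bigl( A ^\om A ^{\om *} \bigr) \;=\; \mr{tr}\bigl( \mr{Ad} _{A , A^*} ^{(k)} (I_n) \bigr),
\]
after reindexing $\om \mapsto \om ^\mrt$ on words of length $k$. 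Combining this with the standard bounds $\tfrac{1}{n} \mr{tr}(P) \leq \| P \| \leq \mr{tr}(P)$ for positive semidefinite $P$ and taking $2k$-th roots yields $\mr{spr}(A ^\mrt ) = \mr{spr}(A) < 1$.

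Finally, the implication (iv) $\Rightarrow$ (i) of Theorem \ref{ncratinH2} applied to the realization $(A ^\mrt , \ov{c}, \ov{b})$ gives $\frt \in \hardy$ and $\frt$ NC rational, and the implication (i) $\Rightarrow$ (v) of the same theorem yields the ``in particular'' statement that $\frt$ is an NC Szeg\"o kernel vector.
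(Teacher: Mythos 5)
Your proof is correct and takes essentially the same approach as the paper: both identify $(A^\mrt, \ov{c}, \ov{b})$ as a realization of $\frt$ and control its joint spectral radius via the trace-cyclicity argument $\mr{tr}(B^\mrt\ov{B}) = \mr{tr}(BB^*)$ (which in the paper is carried out inside the proof of Lemma~\ref{rowvscol} and then invoked). Your version is marginally cleaner in that it establishes the sharper equality $\mr{spr}(A^\mrt) = \mr{spr}(A)$ directly and explicitly checks minimality of the transposed realization before applying Theorem~\ref{ncratinH2}, whereas the paper routes through the Szeg\H{o} kernel representation $\fr = K\{Z,y,v\}$ and the one-sided bound $\mr{spr}(Z^*) \le \|Z\|_{row}$.
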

\begin{proof}
If $\fr \in \hardy$, then by \cite[Theorem A]{JMS-ncrat}, $\fr = K \{ Z , y , v \}$ for some $Z \in \B ^d _n$ and $y,v \in \C ^n$. Let $\mf{C}$ denote the conjugation (anti-linear isometric involution) with respect to the standard basis $\{ e_k \} _{k=1} ^n$ defined by $\mf{C} y = \ov{y}$, the entry--wise (with respect to the standard basis) complex conjugation of $y$. Given $A \in \C ^d _n$, let $A^\mrt := (A_1 ^\mrt , \cdots , A_d ^\mrt )$ where $A_j ^\mrt$ denotes matrix--transpose and let $\ov{A} = \mf{C} A \mf{C}$ denote complex conjugation applied entry--wise to $A$ in the standard basis. As described in \cite{JMS-ncrat}, since $\fr = K\{ Z , y ,v \}$ for some $Z \in \B ^d _\N$, if we define $A := \ov{Z}$, $b = \ov{y}$ and $c = \ov{v}$, then $(A,b,c)$ is a finite-dimensional realization of $\fr$. 
Recall that
\ba  \ip{L^\om 1}{K \{ Z , y, v \}}_{\bH ^2} & = &  \ov{y^* Z^\om v} \nn \\
& = & b^* A^\om c, \nn \ea 
and calculate,
\ba \ip{L^\om 1}{U_\mrt K \{ Z , y , v \}}_{\bH ^2}
& = & \ip{L^{\om ^\mrt} 1}{ K \{ Z , y , v \}}_{\bH ^2} \nn \\
& = & \ipcn{b}{A^{\om ^\mrt}c} \nn \\
& = & \ipcn{b}{ ((A^\mrt) ^\om) ^\mrt  c} \nn \\
& = & \ipcn{\ov{A} ^{\mrt \om} b }{c} \nn \\
& = & \ipcn{\mf{C} c}{ \mf{C} \ov{A} ^{\mrt \om} b}  \nn \\
& = & \ipcn{\ov{c}}{A^{\mrt \om} \ov{b}}. \nn \ea 
By the previous lemma, the $d-$tuple $A^\mrt \in \C ^d _n$ is pure, $\mr{spr} (A^\mrt ) <1$ and $A^\mrt$ is jointly similar to a strict column contraction. It follows that $\frt$ has a finite--dimensional realization $(A^\mrt , \ov{c} , \ov{b} )$, where $A^\mrt = \left( A_1 ^\mrt , \cdots , A_d ^\mrt \right)$ and hence $\frt \in \hardy$ is also NC rational.
\end{proof}

\begin{remark}
The domain of any NC rational function, $\fr$, which is regular at $0$, is open with respect to the uniform topology on $\C ^d _\N$. Namely, given any $X \in \C ^d _n$ and $Y \in \C ^d _m$, we define the \emph{row pseudo-metric},
\ba d_{row} (X,Y ) ^2 & := & \| X \otimes I_m - Y \otimes I_n \| ^2 _{row} = \| (X \otimes I_m - Y\otimes I_n ) ( X \otimes I_m - Y\otimes I_n) ^* \|  \nn \\
& = &  \left\| \sum _{k=1} ^d (X_k \otimes I_m - Y_k \otimes I_n ) ( X_k ^* \otimes I_m - Y_k ^* \otimes I_n )  \right\|. \nn \ea Since multiplication, summation and inversion are all jointly continuous in operator norm, and any NC rational function can be constructed by applying finitely many arithmetic operations applied to free polynomials, it follows that $\nbdom \fr$ is a uniformly open NC set, and hence contains some row--ball, $r \B ^d _\N$, of non-zero radius $r>0$. Hence, by re-scaling the argument, $\fr _r (Z) := \fr (rZ)$, we obtain an NC rational function $\fr _r$ with $\B ^d _\N \subseteq \nbdom \fr _r$ so that $\fr _r \in \mult$. It follows that given any NC rational function $\fr$, which is regular at $0$, there is essentially no loss in generality in assuming that $\fr \in \mult$, or even $\fr \in [ \mult ] _1$. Alternatively, if $\fr$ has minimal realization $(A,b,c)$, then the joint spectral radius of the $d-$tuple, $A \in \C ^d _n$, is bounded above by the row-norm of $A$. Hence $A' := r \cdot A$, $r ^{-1} := (1 +\eps ) \| A \|$ has $\mr{spr} (A' ) <1$, and if $\fr '$ has minimal realization $(A ' , b , c )$, then $\fr ' (Z) = \fr \left( r \cdot Z \right)$ is a re-scaling of $\fr$ so that $\fr ' \in \mult$.
\end{remark}

\subsection{Minimal realizations of $\fr \in \mult$} \label{ss:minFM}

The minimal realization of any $\fr \in \hardy$ is easily constructed as follows. Let $c = \fr$, set 
$$ \scr{M}  := \bigvee R^{*\om} \fr, \quad A_k := R^* _k | _{\scr{M}}, $$ and $b:= P_{\scr{M}} 1$. Since $\fr = K \{ Z ,y , v \}$ is an NC Szeg\"o kernel vector at a finite point $Z \in \B ^d _n$,
$$ \scr{M}  = \bigvee K \{ Z , y , Z^\om v \}, $$ is finite--dimensional. It is easily checked that the triple $(A,b,c)$ is a realization of $\fr$. The realization $(A,b,c)$ is controllable by construction, and it is also straightforward to check that it is observable. Alternatively, a minimal realization of $\fr$ can also be constructed by applying backward left shifts to $\frt$.

It will be convenient to also consider \emph{Fornasini--Marchesini (FM) realizations} of $\fr \in \hardy$. Here, an FM realization of $\fr \in \hardy$ is a quadruple $(A,B,C,D)$, where $A \in \C ^d _n$, $B \in \C ^n \otimes \C ^d$, $C \in \C  ^{1\times n}$ and $D \in \C$, so that for any $Z \in \nbdom \fr \supseteq \B ^d _\N$,
$$ \fr (Z) = D + C (I-ZA) ^{-1} B. $$ The NC rational function, $\fr$, is called the \emph{transfer function} of the \emph{FM colligation}:
$$ U_\fr := \bpm A & B \\ C & D \epm : \bpm \C ^m \\ \C \epm \rightarrow \bpm \C ^m \otimes \C ^d \\ \C \epm. $$ 
As before, such a realization is called \emph{controllable} if 
$$ \C ^m = \bigvee _{\substack{\om \in \F ^d \\ 1 \leq j \leq d }} A^\om B_j, $$ \emph{observable} if 
$$ \C ^m = \bigvee A^{*\om} C ^*, $$ and \emph{minimal} if $N$ is as small as possible. Again, an FM realization is minimal if and only if it is both observable and controllable, and minimal FM realizations are unique up to joint similarity \cite[Theorem 2.1]{Ball-control}. It is straightforward to pass back and forth between minimal descriptor and FM realizations. For example, beginning with a minimal descriptor realization $(A,b,c)$, set $\scr{M} _0 := \bigvee _{\om \neq \emptyset} A^\om c$, with projector $P_0$ and $A ^{(0)} := A | _{\scr{M} _0}$. Then define $B_k = A_k c$, $C := (P_0 b )^*$ and $D:= \fr (0)$. Then $( A^{(0)}, B , C , D )$ is a minimal FM realization of $\fr$. Similarly, a minimal descriptor realization can be constructed from a minimal FM realization.  

Any $b \in [ \mult ] _1$ has a (generally not finite--dimensional) \emph{de Branges--Rovnyak realization} \cite{BBF}. This is a Fornasini--Marchesini--type realization constructed using free de Branges--Rovnyak spaces. Here, given $b \in [ \mult ] _1$, the right \emph{free de Branges--Rovnyak space}, $\scr{H} ^\mrt (b)$, is the operator--range space of the operator $\sqrt{I - b(R)  b(R) ^*}$. That is, $\scr{H} ^\mrt (b) = \nbran \sqrt{I - b(R)b(R) ^*}$ as a vector space, and the norm on $\scr{H} ^\mrt (b)$ is defined so that $\sqrt{I - b(R) b(R) ^*}$ is a co-isometry onto its range. Equivalently, $\scr{H} ^\mrt (b)$ is the NC-RKHS $\cH _{nc} (K^b)$ with CPNC kernel:
$$ K ^b (Z,W) [ \cdot ] := K(Z,W) [ \cdot ] - K(Z,W) [b ^\mrt (Z) ( \cdot ) b^\mrt (W) ^*], $$ and NC kernel vectors
$$ K^b \{ Z, y ,v \} := (I - b(R) b(R) ^* ) K\{ Z , y ,v \} = K \{ Z , y ,v \} - b(R) K \{ Z ,y , b^\mrt (Z) v \}, $$
where $K(Z,W)$ is the CPNC Szeg\"o kernel of the free Hardy space and $K\{Z ,y , v \}$ is an NC Szeg\"o kernel vector. Any free right de Branges--Rovnyak space is contractively contained in $\hardy$, and it is always co-invariant for the left free shifts. While $b ^\mrt = b(R) 1$ does not belong to $\scr{H} ^\mrt (b)$ in general, $L^* _k b ^\mrt$ always belongs to $\scr{H} ^\mrt (b)$ \cite[Proposition 4.2]{BBF}.
One then defines the co-isometric \emph{de Branges--Rovnyak colligation}
$$ \mbf{U} _b := \bpm \mbf{A} & \mbf{B} \\ \mbf{C} & \mbf{D} \epm : \bpm \scr{H} ^\mrt (b) \\ \C \epm \rightarrow \bpm \scr{H} ^\mrt (b) \otimes \C ^d \\ \C \epm, $$ where
$$ \mbf{A} := L^* | _{\scr{H} ^\mrt (b)}, \quad \mbf{B} := L^* b^\mrt, \quad \mbf{C} := (K_0 ^b)^* \quad \mbox{and} \quad \mbf{D} := b(0). $$ One can then check that $b$ is realized as the transfer--function of this colligation.  In particular, if $b=\fb \in [\mult ] _1$ is an NC rational multiplier, one can cut--down this realization to obtain a finite--dimensional and minimal \emph{de Branges--Rovnyak FM realization} by setting
$$ \scr{M} _0 (\fb ) := \bigvee _{\om \neq \emptyset} L^{*\om} \fb ^\mrt, $$ with projector $P_0$ and then 
$$  U_\fb := \mbf{U} _\fb \bpm P_0 & 0 \\ 0 & 1 \epm  =  \bpm \mf{A} & \mf{B} \\ \mf{C}  & \mf{D}  \epm, $$ where 
$$ \mf{A} := \mbf{A} | _{\scr{M} _0 (\fb )}, \quad \mf{B} := \mbf{B}, \quad \mf{C} := \mbf{C} P_0 \quad \mbox{and} \quad \mf{D} := \mbf{D} = \fb (0). $$ 
Here note that if $\fb = K\{ Z, y , v \} \in \hardy$ is NC rational, then $\fb ^\mrt$ is also an NC Szeg\"o kernel at some finite point $W \in \B ^d _n$ by Lemma \ref{transkernel} and Theorem \ref{ncratinH2}. It immediately follows that $\scr{M} _0 (\fb )$ is finite--dimensional. 

\begin{lemma} \label{min}
The above finite de Branges--Rovnyak FM realization $(\mf{A} ,\mf{B} , \mf{C} , \mf{D} )$ of $\fb \in [\mult ] _1$ is minimal.
\end{lemma}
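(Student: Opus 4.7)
The plan is to verify that this FM realization is both \emph{controllable} and \emph{observable}; by the cited result \cite[Theorem 2.1]{Ball-control}, this is equivalent to minimality.

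For controllability, I will simply expand the defining formulas. Since $\scr{M}_0(\fb)$ is $L^*-$invariant by construction, $\mf{A}^\om = \mf{A}_{i_1} \cdots \mf{A}_{i_n} = L^*_{i_1} \cdots L^*_{i_n} | _{\scr{M}_0(\fb)}$ for $\om = i_1 \cdots i_n$, which is (using $(L^\mu)^* L^*_k = L^{*(k\mu)}$) a product of left backward shifts of the form $L^{*\mu}|_{\scr{M}_0(\fb)}$ for some $\mu \in \F^d$. Composing with $\mf{B}_k = L^*_k \fb^\mrt$, the vectors $\mf{A}^\om \mf{B}_k$ are therefore of the form $L^{*\om'} \fb^\mrt$ with $\om' \in \F^d$ nonempty, and as $\om$ ranges over $\F^d$ and $k$ over $\{1, \ldots, d\}$, every nonempty word $\om'$ is obtained. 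Hence $\bigvee_{\om,k} \mf{A}^\om \mf{B}_k = \bigvee_{\om' \neq \emptyset} L^{*\om'} \fb^\mrt = \scr{M}_0(\fb)$, as required.

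For observability, the key identification is that $\mf{C}$ is point evaluation at the origin. Indeed, $\mf{C} = (K_0^\fb)^* P_0$, and $K_0^\fb = K^\fb \{0, 1, 1\}$ is the NC reproducing kernel vector at the origin in $\scr{H}^\mrt(\fb)$; since $\scr{M}_0(\fb) \subseteq \scr{H}^\mrt(\fb)$ and $P_0$ acts as the identity on this subspace, for any $h \in \scr{M}_0(\fb)$
$$\mf{C} h = \ip{h}{K_0^\fb}_{\scr{H}^\mrt(\fb)} = h(0),$$
by the reproducing property. Combining with $\mf{A}^\nu = L^{*\nu^\mrt}|_{\scr{M}_0(\fb)}$ (modulo the letter-reversal from taking adjoints), I obtain
$$ \mf{C} \mf{A}^\nu h = (L^{*\nu^\mrt} h)(0) = \hat{h}_{\nu^\mrt}, $$
the power-series coefficient of $\mf{z}^{\nu^\mrt}$ in $h$. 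If $h \in \scr{M}_0(\fb)$ lies in $\bigcap_\nu \ker(\mf{C} \mf{A}^\nu)$, then $\hat{h}_\mu = 0$ for every $\mu \in \F^d$, whence $h = 0$. Since $\scr{M}_0(\fb)$ is finite-dimensional, this is equivalent to $\bigvee_\nu \mf{A}^{*\nu} \mf{C}^* = \scr{M}_0(\fb)$, i.e., observability.

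The one subtle point is the identification of $\mf{C}$ with evaluation at $0$, since this requires using the inner product of $\scr{H}^\mrt(\fb)$ (not that of $\hardy$) and the fact that point-evaluation at $0$ is bounded on the de Branges--Rovnyak space and implemented by the kernel vector $K_0^\fb$; once this is in place, both the controllability and observability computations reduce to familiar bookkeeping with left shifts on the Fock space.
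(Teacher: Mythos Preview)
Your proposal is correct and follows essentially the same route the paper has in mind (the paper states ``The proof is routine and omitted''): controllability is immediate from the very definition $\scr{M}_0(\fb) = \bigvee_{\om \neq \emptyset} L^{*\om}\fb^\mrt$, and observability is obtained by noting that $\mf{C}\mf{A}^\nu$ picks off the Taylor--Taylor coefficient $\hat{h}_{\nu^\mrt}$, so that $h \perp \mf{A}^{*\nu}\mf{C}^*$ for all $\nu$ forces $h \equiv 0$.
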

The proof is routine and omitted. As before one can alternatively construct a de Branges--Rovnyak realization of any $b \in [ \mult ] _1$ (or minimal de Branges--Rovnyak FM realization of an NC rational $\fb \in [ \mult ] _1$) by considering the left free de Branges--Rovnyak space $\scr{H} (b)$, the operator--range space of $\sqrt{I - b(L) b(L) ^*}$. This space is right-shift co-invariant and $R^{*\om} b \in \scr{H} (b)$ for any $\om \neq \emptyset$.  

\subsection{Clark measures} \label{ss:Clark}

In classical Hardy space theory, there are (essentially) bijections between contractive analytic functions in the disk, Herglotz functions, \emph{i.e.} analytic functions in $\D$ with positive harmonic real part and positive, finite and regular Borel measures on the unit circle. Namely, beginning with a positive measure on the circle, $\mu$, one can define its \emph{Herglotz--Riesz integral transform}:
$$ H_\mu (z) := \int _{\partial \D} \frac{1 + z\ov{\zeta}}{1-z\ov{\zeta}} \, \mu (d\zeta ), \quad z \in \D,$$ and this produces a Herglotz function in the disk. Note that $\nbre H_\mu (0) = \mu (\partial \D ) > 0$. Since $\nbre H_\mu (z) \geq 0$, applying the so-called \emph{Cayley Transform}, a fractional linear transformation which takes the complex right half-plane onto the unit disk, yields a contractive analytic function:
$$ b_\mu (z) = \frac{H_\mu (z) -1}{H_\mu (z) +1}. $$ Each of these steps is essentially reversible.
Beginning with a contractive analytic function $b \in [H^\infty ] _1$, its inverse Cayley transform, 
$$ H_b (z) := \frac{1+b(z)}{1-b(z)}, $$ is a Herglotz function. Moreover, given any Herglotz function, $H$, in the disk, the Herglotz representation theorem implies that there is a unique positive measure $\mu$ so that
$$ H(z) = i \nbim H(0) + H_\mu (z) = i \nbim H(0) + \int _{\partial \D} \frac{1 + z\ov{\zeta}}{1-z\ov{\zeta}} \, \mu (d\zeta ). $$ That is, the Herglotz function corresponding to a positive measure is unique modulo imaginary constants. If $H = H_b$, this concomitant measure is called the \emph{Aleksandrov--Clark measure} or \emph{Clark measure} of $b$ \cite{Clark,Aleks1,Aleks2}. Hence any two contractive multipliers $b_1, b_2 \in [ \mult ] _1$ whose Herglotz functions $H_k := H_{b_k}$ differ by an imaginary constant have the same Clark measure. In this case if $H_2 = H_1 +it$ for some $t \in \R$, then one can check that $$ b_2 = \frac{\ov{z (t)} }{ z(t)} \cdot \mu _{z(t)} \circ b_1, $$ is, up to a unimodular constant, a M\"obius transformation of $b_1$ corresponding to the point
$$ z(t) := \frac{t}{2i +t} \in \D, $$ so that the contractive analytic functions corresponding to a given positive measure are unique up to such transformations.

By the Riesz--Markov theorem, any positive, finite and regular Borel measure on $\partial \D$ can be viewed as a positive linear functional on the $C^*-$algebra of continuous functions on the unit circle, $\scr{C} (\partial \D )$. Recall that the \emph{disk algebra}, $A (\D )$, is the unital Banach algebra of analytic functions in the disk which extend continuously to the boundary and that this algebra is isomorphic to the operator algebra $\mr{Alg} \{ I , S \} ^{-\| \cdot \|}$, where $S=M_z : H^2 \rightarrow H^2$ is the shift. By the Weierstrass approximation theorem, $\scr{C} (\partial \D)$ is the supremum norm-closed linear span of the \emph{disk algebra} and its conjugates. That is, $\scr{C} (\partial \D) = \left( A(\D ) + A(\D ) ^* \right) ^{-\| \cdot \|}$. In the NC multi-variable setting of Fock space, the immediate analogue of a positive measure is then any positive linear functional on the norm-closed operator system of the \emph{free disk algebra}, $\A := \mr{Alg} \{ I , L_1 , \cdots , L_d \} ^{-\| \cdot \|}$. We will use the notation,
$$ \scr{A} _d := \left( \A + \A ^* \right) ^{-\| \cdot \|}, $$ for the \emph{free disk system}, and $\posncm$ will denote the set of positive \emph{NC measures}, \emph{i.e.} the set of all positive linear functionals on the free disk system.

As in the single-variable setting, one can define a \emph{free Herglotz--Riesz transform} of any positive NC measure $\mu \in \posncm$ and this produces an NC Herglotz function, $H_\mu$, which has positive semi-definite real part in the NC unit row-ball, $\B ^d _\N$. Namely, given $\mu \in \posncm$ and $Z \in \B ^d _n$, the Herglotz--Riesz transform of $\mu$ is:
$$ H_\mu (Z) := \mr{id} _n \otimes \mu \left( (I +ZL^*) (I - ZL^* ) ^{-1} \right); \quad \quad ZL^* := Z_1 \otimes L_1 ^* + \cdots + Z_d \otimes L_d ^*. $$  As before, the Cayley Transform of any such $H_\mu$ defines a bijection between NC Herglotz functions and contractive left multipliers, $b_\mu$, of the Fock space. Also as before, the correspondence $\mu \leftrightarrow H_\mu$ is bijective modulo imaginary constants, and if a positive NC measure, $\mu$, corresponds to a contractive left multiplier $b \in [ \mult ] _1$, we write $\mu = \mu _b$, and we call $\mu$ the NC Clark measure of $b$ \cite{JM-freeAC,JM-freeCE}.

\section{NC rational Clark measures} \label{sect:ncratClark}

One can apply a Gelfand--Naimark--Segal (GNS) construction to any $\mu \in \posncm$ to obtain a GNS--Hilbert space, $\hardy (\mu)$, the completion of the free polynomials, $\fp = \C \{ \mf{z} _1 , \cdots , \mf{z} _d \}$, modulo vectors of zero length, with respect to the GNS pre-inner product:
$$ \ip{p}{q}_\mu := \mu (p(L) ^* q(L) ). $$ Equivalence classes, $p + N_\mu \in \hardy (\mu )$, where $p \in \fp$ is a free polynomial and $N_\mu$ denotes the left ideal of zero-length vectors with respect to the $\mu-$pre-inner product, are dense in $\hardy (\mu )$.  This construction also comes equipped with a left regular representation of the free disk algebra,
$$ \pi _\mu (L_k) p + N_\mu := \mf{z}_k p + N_\mu. $$ This representation is unital, completely isometric and extends to a $*-$representation of the Cuntz--Toeplitz algebra so that $\Pi _\mu = \left( \Pi _{\mu ; 1 } , \cdots , \Pi _{\mu ;d } \right) := \pi _\mu (L)$ is a \emph{GNS row isometry} acting on $\hardy (\mu )$. For details, see \cite{JM-freeCE,JM-freeAC,JM-ncFatou,JM-ncld}. Any cyclic $*-$representation of the Cuntz--Toeplitz algebra can be obtained, up to unitary equivalence, as the GNS representation of a positive NC measure \cite[Lemma 2.2]{JMT-ncFMRiesz}. 

\begin{defn}
A positive NC measure, $\mu \in \posncm$, is a \emph{finitely--correlated Cuntz--Toeplitz functional} if the subspace, 
$$ \cH _\mu := \bigvee _{\om \in \F ^d} \Pi _\mu ^{*\om} \left(I + N _\mu\right), $$ is finite dimensional. If $\Pi _\mu$ is also a Cuntz row isometry, \emph{i.e.} a surjective row isometry, we say that $\mu$ is a finitely--correlated Cuntz functional. 
\end{defn}

\begin{remark}
In \cite{BraJorg}, finitely--correlated Cuntz states were defined as unital and positive linear functionals on the Cuntz algebra with the above property. However, if $\mu \in \posncm$ is a finitely--correlated Cuntz state according to our definition, \emph{i.e.} if $\mu$ is a unital, finitely--correlated Cuntz functional on the free disk system, then $\Pi _\mu$ is Cuntz, and in this case, $\mu$ has a unique positive extension to the Cuntz--Toeplitz algebra \cite[Proposition 5.11]{JMT-ncFMRiesz}. Moreover since $\Pi _\mu$ is Cuntz, this defines a unique finitely--correlated Cuntz state in the sense of Bratteli and J\o rgensen \cite{BraJorg}.
\end{remark}

\begin{thm} \label{fincorthm}
An NC measure, $\mu \in \posncm$, is the NC Clark measure of a contractive NC rational multiplier of Fock space, $\fb \in [\mult ] _1$, if and only if it is a finitely--correlated Cuntz--Toeplitz functional. 
\end{thm}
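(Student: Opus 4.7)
The proof is an equivalence, and both directions reduce to a common idea: finite-dimensionality of the cyclic subspace $\cH_\mu$ is equivalent to NC rationality of the Herglotz--Riesz transform $H_\mu$, which in turn is equivalent via the Cayley transform to NC rationality of $\fb_\mu$.

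\medskip
\noindent\emph{Direction $(\Leftarrow)$.} Suppose $\cH_\mu$ is finite-dimensional. Since $\cH_\mu \subseteq \hardy(\mu)$ is co-invariant for the GNS row isometry $\Pi_\mu$, setting $b_0 := I + N_\mu \in \cH_\mu$ and $A_k := (\Pi_{\mu,k}^*|_{\cH_\mu})^*$ produces a finite-dimensional row contraction $A = (A_1,\dots,A_d)$ on $\cH_\mu$ with $\Pi_\mu^{*\om}(I+N_\mu) = A^{*\om} b_0$ for every $\om \in \F^d$. The $\mu$-moments then factor through this finite model as
\[
\mu(L^{*\om}) \;=\; \ip{I+N_\mu}{\Pi_\mu^{*\om}(I+N_\mu)}_\mu \;=\; b_0^* A^{*\om} b_0.
\]
Inserting this into the NC Herglotz--Riesz series and summing the resulting geometric series, which converges on $\B^d_\N$ because $\|Z \otimes A^*\| \leq \|Z\|_{\mr{row}} \|A\|_{\mr{row}} < 1$, yields a finite-dimensional descriptor formula for $H_\mu$ built from $A$ and $b_0$ (after appropriate transpose/conjugation to bring it to standard pencil form). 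Hence $H_\mu$ is NC rational, and the Cayley transform $\fb_\mu = (H_\mu - I)(H_\mu + I)^{-1}$ (well-defined since $\re{H_\mu} \geq 0$) followed by routine state-space manipulations gives an explicit finite-dimensional realization of $\fb_\mu$; by Theorem \ref{ncratinH2} this places $\fb_\mu$ in $[\mult]_1$.

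\medskip
\noindent\emph{Direction $(\Rightarrow)$.} Let $\fb \in [\mult]_1$ be NC rational and set $\mu := \mu_\fb$. Lemma \ref{min} gives a finite-dimensional minimal de Branges--Rovnyak FM realization of $\fb$ on $\scr{M}_0(\fb)$, and state-space arithmetic applied to $H_\mu = (I+\fb)(I-\fb)^{-1}$ then furnishes a finite-dimensional descriptor realization of $H_\mu$; in particular the moments admit a finite factorization $\mu(L^{*\om}) = b_0^* \wt{A}^{*\om} b_0$ for some $\wt{A} \in \C^d_m$ and $b_0 \in \C^m$. I would then invoke the GNS-uniqueness principle \cite[Lemma 2.2]{JMT-ncFMRiesz}: the data $(\wt{A}, b_0)$, after being dilated (via Popescu's construction) to a row isometry with a corresponding cyclic vector, realizes a cyclic $*$-representation of the Cuntz--Toeplitz algebra whose vector state is $\mu$, hence is unitarily equivalent to $(\hardy(\mu), \Pi_\mu, I+N_\mu)$. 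Under this identification the orbit $\{\Pi_\mu^{*\om}(I+N_\mu)\}$ corresponds to $\{\wt{A}^{*\om} b_0\} \subseteq \C^m$, yielding $\dim \cH_\mu \leq m < \infty$.

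\medskip
The main obstacle is the GNS-uniqueness step in the forward direction: one must carefully reconstruct a row-isometric dilation of $\wt{A}$ together with the correct cyclic vector, and then verify that the resulting state matches $\mu$ moment-by-moment before appealing to uniqueness. A more self-contained alternative would bypass the dilation entirely via an NC Kronecker-type argument: the Gram matrix $[\mu(L^\sigma L^{*\om})]_{\sigma,\om} = [\ip{\Pi_\mu^{*\om}(I+N_\mu)}{\Pi_\mu^{*\sigma}(I+N_\mu)}_\mu]$ has rank equal to $\dim \cH_\mu$, and one shows that this rank is finite precisely when $H_\mu$ is NC rational (since finite Hankel rank is equivalent to a finite descriptor realization of the generating series). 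Once the compression construction of the $(\Leftarrow)$ direction is in place, the forward direction is essentially a matter of translating this rank characterization correctly.
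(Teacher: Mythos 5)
Your $(\Leftarrow)$ direction is correct and is essentially the paper's own argument: compress $\Pi_\mu^*$ to the finite-dimensional $\Pi_\mu$-co-invariant subspace $\cH_\mu$ to obtain the row contraction $T_\mu$, read off the finite descriptor realization $(T_\mu^*, 1+N_\mu, 1+N_\mu)$ of $\mf{G}_\mu = \scr{C}_\mu(1+N_\mu)$ (hence of $\mf{H}_\mu$), and apply the Cayley transform. The only minor inaccuracy is the citation: Theorem \ref{ncratinH2} gives $\fb_\mu \in \mult$ from the finite realization, but contractivity $\fb_\mu \in [\mult]_1$ comes from the Herglotz--Cayley correspondence, not from Theorem \ref{ncratinH2}.

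Your primary $(\Rightarrow)$ route has a genuine gap, which you yourself flag. From a minimal FM realization of $\fb$, state-space arithmetic gives you a minimal descriptor realization $(A,b,c)$ of $\mf{G}_\mu$ with $\mr{spr}(A)<1$ and hence $\mu(L^{*\om}) = b^*A^\om c$ — but this is an asymmetric two-sided form, not the quadratic form $b_0^* \wt{A}^{*\om} b_0$ with $\wt{A}$ a row contraction that your dilation/GNS-uniqueness step requires. Producing the symmetric row-contractive factorization is not routine: it is exactly where the \emph{positivity} of $\mu$ must enter (minimality and uniqueness of realizations force $b = S^*Sc$ for some invertible $S$, and the positivity makes $S^*S > 0$), which amounts to an NC Kalman--Yakubovich--Popov-type step you have not carried out. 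Without $\wt{A}$ contractive and $b_0$ suitably cyclic, Popescu's dilation is unavailable and the uniqueness lemma has nothing to act on. Your secondary "NC Kronecker" alternative is the right idea, but as written it conflates the Gram matrix $[\ip{\Pi_\mu^{*\om}(1+N_\mu)}{\Pi_\mu^{*\sigma}(1+N_\mu)}]$ (rank $=\dim\cH_\mu$ by definition of a Gram matrix, and not literally equal to $[\mu(L^\sigma L^{*\om})]$ without an index convention) with the NC Hankel matrix of Taylor coefficients of $\mf{G}_\mu$, which is what the Kronecker theorem actually controls. The paper's proof of $(\Rightarrow)$ is precisely the Hankel-rank argument carried out cleanly and without the positivity detour: the free Cauchy transform $\scr{C}_\mu$ is an isometry taking $\cH_\mu$ onto $\scr{M}_\mu = \bigvee V_\mu^{*\om}\mf{G}_\mu$, the $V_{\mu,j}^*$ act as backward left shifts (Lemma \ref{bwshift}), and the backward shifts of the NC rational $\mf{G}_\mu$ all have descriptor realizations $(A, A^{*\om}b, c)$ whose varying input vectors lie in the $m$-dimensional observability space $\bigvee A^{*\om}b$; since the map sending the input vector to the resulting NC rational function is linear, $\dim\cH_\mu = \dim\scr{M}_\mu \leq m < \infty$. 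To repair your main route you would need to supply the NC KYP step; otherwise, executing your Kronecker alternative in detail simply reproduces the paper's proof.
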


It will be convenient to recall the construction of the free Cauchy transform of elements of the GNS space $\hardy (\mu)$ \cite{JM-freeCE,JM-ncld,JM-ncFatou,JM-freeAC}. Given any $p \in \fp$, $p + N_\mu \in \hardy (\mu )$ and $Z \in \B ^d _n$, the right \emph{free Cauchy transform} of $p + N_\mu$ is the NC function $\scr{C} _\mu p \in \scr{O} (\B ^d _\N )$, 
\ba (\scr{C} _\mu p)  (Z) & := & \mrt \circ \mr{id} _n \otimes \mu \left( (I_n \otimes I - Z \otimes L^* ) ^{-1} p(L) \right) \nn \\
& = & \sum _{\om} Z^{\om ^\mrt}  \mu \left( L^{*\om} p(L) \right) \nn \\
& = & \sum Z^\om \ip{ \mf{z} ^{\om }  + N_\mu}{p + N_\mu}_\mu, \nn \ea and this final formula extends to any $x \in \hardy (\mu )$. Equipping this vector space of free Cauchy transforms with the inner product that makes $\scr{C} _\mu$ an onto isometry produces an NC-RKHS in $\B ^d _\N$, $\scr{H} ^+ (H _\mu )$, the right \emph{NC Herglotz space} of $\mu$, with CPNC kernel $K^\mu$: For any $Z \in \B ^d _n $ and $W \in \B ^d _m$,
$$ K^\mu (Z,W) [\cdot ] =    K (Z,W) \left[ \frac{1}{2} H _\mu ^\mrt (Z)  (\cdot)  + \frac{1}{2}  (\cdot ) H_\mu ^\mrt (W) ^* \right], $$ where $K$ denotes the CPNC Szeg\"o
kernel of the Fock space, and $H_\mu (Z)$ is the (left) NC \emph{Herglotz--Riesz transform} of $\mu$: For any $Z \in \B ^d _n$,
\ba  H_\mu (Z) & := & \mr{id} _n \otimes \mu \left( (I_n \otimes I + Z L ^* ) (I _n \otimes I - ZL ^* ) ^{-1}  \right) \nn \\
& = & 2 (\scr{C} _\mu 1 ) (Z)  - \mu (I) I_n. \nn \ea Any such $H_\mu$ is an NC Herglotz function in $\B ^d _\N$ as described in Section \ref{ss:Clark}. That is, $\nbre H_\mu (Z) \geq 0$. 

The image of the GNS row isometry, $\Pi _\mu$, under right free Cauchy transform is a row isometry, $V_\mu$, acting on $\scr{H} ^+ (H _\mu )$:
\be V _\mu = \scr{C} _\mu \Pi _\mu \scr{C}  _\mu  ^* := \scr{C} _\mu \left( \Pi _{\mu ; 1} , \cdots , \Pi _{\mu ; d } \right) \scr{C} _\mu ^* \otimes I_d : \scr{H} ^+ (H _\mu ) \otimes \C ^d \rightarrow \scr{H} ^+ (H _\mu ), \ee where  $\Pi _{\mu ; k } = \pi _\mu (L_k)$. The range of the row isometry $V _\mu $ is,
\be \nbran V_\mu =\bigvee _{\substack{(Z,y,v) \in \\  \B ^d _n \times \C ^n \times \C ^n;  \ n \in \N }} \left( K^{\mu} \{ Z , y , v \} - K^{\mu} \{ 0 _n , y, v \} \right) \ee and
for any $Z \in \B ^d _n, \ y, v \in \C^n$, 
\be 
V_\mu   ^* \left( K^{\mu} \{ Z , y , v \} - K^{\mu} \{ 0 _n , y, v \} \right) = K ^\mu \{ Z , Z^* y , v \} := \bsm K ^{\mu} \{ Z , Z_1 ^* y ,  v \}  \\ \vdots \\ K ^{\mu} \{ Z ,  Z_d ^* y ,  v \} \esm \in \scr{H} ^+ (H _\mu ) \otimes \C ^d. \label{Vmuaction} \ee
The linear span of all such vectors is dense in $\scr{H} ^+  (H_\mu) \otimes \C ^d$ since $V_\mu ^*$ is a co-isometry. See \cite[Section 4.4]{JM-freeCE} for details. 

\begin{lemma} \label{bwshift}
Each $V_{\mu ; k} ^*$ acts as a backward left shift on $\scr{H} ^+ (H _\mu )$, \emph{i.e.} given any $h \in \scr{H} ^+ (H _\mu )$ with Taylor--Taylor series at $ 0 \in \B ^d _1 $, 
$$ h(Z) = \sum _\om \hat{h} _\om Z^\om \quad \mbox{and} \quad (V_k ^* h ) (Z) = \sum _\om \hat{h} _{kw} Z^\om. $$
\end{lemma}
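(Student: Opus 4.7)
The plan is to pull the computation back through the free Cauchy transform. By construction $\scr{C}_\mu$ is a unitary from $\hardy(\mu)$ onto $\scr{H}^+(H_\mu)$ that intertwines the GNS row isometry with $V_\mu$, so $V_{\mu;k}^* = \scr{C}_\mu\,\Pi_{\mu;k}^*\,\scr{C}_\mu^*$. Given $h \in \scr{H}^+(H_\mu)$, set $x := \scr{C}_\mu^* h \in \hardy(\mu)$. Then $V_k^* h = \scr{C}_\mu\Pi_{\mu;k}^* x$, and the task reduces to reading off the Taylor--Taylor coefficients at the origin of the right-hand side using the series expansion of the free Cauchy transform already recorded in the excerpt,
\[
(\scr{C}_\mu y)(Z) \;=\; \sum_{\om \in \F^d} Z^\om \,\ip{\mf{z}^\om + N_\mu}{y}_\mu,
\]
valid for all $y \in \hardy(\mu)$.

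Applied with $y = \Pi_{\mu;k}^* x$, the $\om$-th coefficient of $(V_k^* h)(Z)$ equals
\[
\ip{\mf{z}^\om + N_\mu}{\Pi_{\mu;k}^* x}_\mu \;=\; \ip{\Pi_{\mu;k}(\mf{z}^\om + N_\mu)}{x}_\mu \;=\; \ip{\mf{z}_k\mf{z}^\om + N_\mu}{x}_\mu \;=\; \ip{\mf{z}^{k\om} + N_\mu}{x}_\mu,
\]
where the middle equality uses the defining left-regular action $\Pi_{\mu;k}(p+N_\mu) = \mf{z}_k p + N_\mu$ of the GNS construction. But this last inner product is, by the same Cauchy-transform formula applied to $h = \scr{C}_\mu x$, precisely the Taylor coefficient $\hat{h}_{k\om}$ of $h$ at the word $k\om$. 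Summing over $\om$ then yields the claimed identity $(V_k^* h)(Z) = \sum_\om \hat{h}_{k\om}Z^\om$.

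No step is genuinely delicate; the only point demanding care is the notational convention, namely that in these conventions $\mf{z}_k \mf{z}^\om = \mf{z}^{k\om}$ prepends the letter $k$, so that the action of $V_k^*$ does indeed correspond to stripping the leading letter $k$ from each word and thus deserves the name ``backward left shift.'' Strictly speaking, once the coefficient identity is established for a dense set of $h$ (e.g.\ those with $x$ a polynomial class in $\hardy(\mu)$), boundedness of $V_k^*$ together with the fact that Taylor--Taylor coefficients at the origin are bounded linear functionals on any NC-RKHS in $\B^d_\N$ extends the identity to all $h \in \scr{H}^+(H_\mu)$.
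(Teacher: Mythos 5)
Your proof is correct and takes essentially the same route as the paper: pull back through $\scr{C}_\mu$, use the series formula for the free Cauchy transform to read off coefficients, and move $\Pi_{\mu;k}^*$ across the GNS inner product to obtain the prepended word $k\om$. The only cosmetic difference is that you spell out the density/boundedness remark, which isn't strictly necessary since the Cauchy-transform coefficient formula already holds for all $x\in\hardy(\mu)$.
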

Here, note that if $h (Z) = \sum \hat{h} _\om Z^\om$ and $h \in \hardy$, that $(L_k ^* h) (Z) = \sum _\om \hat{h} _{k\om} Z^\om$. This motivates the terminology `backward left shift'.
\begin{proof}
The right free Cauchy transform of any $x \in \hardy (\mu)$ is $h := \scr{C} _\mu x$,
$$ h (Z) = \sum _\om Z^\om \underbrace{\ip{\mf{z} ^\om + N _\mu}{x}_\mu}_{=: \hat{h} _\om },$$ so that 
\ba (V_{\mu ; j}  ^* h ) (Z) & = & \sum _\om  Z^\om \ip{\mf{z} ^\om + N_\mu}{\Pi _{\mu ;j} ^* x } _\mu \nn \\
& = & \sum Z^\om \ip{\mf{z} _j \mf{z} ^{\om} + N_\mu}{x}_\mu = \sum _\om \hat{h} _{j\om} Z^\om. \nn \ea 
\end{proof}

\begin{proof}{ (of Theorem \ref{fincorthm})}
First, assume that $\mu \in \posncm$ is a finitely--correlated Cuntz--Toeplitz functional. Set $T_{\mu}^* = \Pi_{\mu}^*|_{\cH _\mu}$. Let $\mf{H} _\mu (Z)$ be the (left) NC Herglotz--Riesz transform of $\mu$ \cite[Theorem 3.4]{JM-freeCE}: For any $Z \in \B ^d _n$,
\ba \mf{H}_\mu (Z)  &= &  \mr{id} _n \otimes \mu \left( (I + Z  L^* ) (I - Z  L^* ) ^{-1} \right) \nn \\
& = & 2 \,  \mr{id} _n \otimes \mu \left( (I \otimes I -Z \otimes L^* ) ^{-1} \right) - \mu (I) I_n \nn \\
& = & 2 \sum _{\om \in \F ^d} Z^\om \ip{ 1 + N_\mu}{\Pi _\mu ^{*\om} 1 + N_\mu }_\mu - \mu (I) I_n \nn \\
& = & 2 \sum _{\om \in \F ^d} Z^\om \ip{ 1 + N_\mu}{T_\mu ^{*\om} 1 + N_\mu }_\mu - \mu (I) I_n \nn \\
& =: & 2 \mf{G}_\mu (Z) - \mf{G}_\mu (0_n) I_n, \nn
\ea where $\mf{G}_\mu (Z)  := (\scr{C} _\mu 1 ) (Z)$. Hence $(A,b,c) := (T^* _\mu , 1+ N_\mu , 1 + N_\mu )$ is a finite dimensional realization of $\mf{G}_\mu (Z)$. Moreover, clearly $1+N_\mu$ is cyclic for $T^* _\mu$ by definition of $\cH _\mu$, so that this realization is controllable. Similarly it is observable since $1+N_\mu$ is cyclic for $\Pi _\mu$. Indeed, since 
$$ \hardy (\mu ) = \bigvee \Pi _\mu ^\om \left( 1 + N_\mu \right), $$ it follows that if $P_\mu$ is the orthogonal projection onto $\cH _\mu$, then 
$$ \cH _\mu = P_\mu \hardy (\mu) = \bigvee T_\mu ^\om \left(1 + N_\mu\right), $$ and $(A,b,c)$ is the minimal realization of $\mf{G} _\mu$. Since $\mf{G} _\mu$ has a finite descriptor realization, it is an NC rational function with $\nbdom \mf{G} _\mu \supseteq \B ^d _\N$, and so $\mf{H} _\mu$ is also an NC rational function in $\B ^d _\N$. Applying Cayley transform,
$$ \fb _\mu (Z) := ( \mf{H} _\mu (Z) - I_n ) ( \mf{H} _\mu (Z) + I_n ) ^{-1} \in [ \mult ] _1, $$ is a contractive, NC rational left multiplier of Fock space with NC Clark measure $\mu$. \\

Conversely, if $\fb \in [\mult ] _1$ is NC rational, then we can reverse the above argument to see that $\mf{G} _{\mu}$ is NC rational. Moreover, by Lemma \ref{bwshift}, we have that for every $j=1,\ldots,d$,
\[
(V_{\mu,j}^* \mf{G} _{\mu})(Z) =  \sum_{\omega} Z^{\omega} \langle \mf{z}^{j \omega} + N_{\mu}, 1 + N_{\mu} \rangle _\mu.
\]
If  $(A,b,c)$ is a minimal realization of $\mf{G} _{\mu} (Z) = \sum _\om \hat{\mf{G}} _{\mu ; \om } Z^\om$, then for every word $\omega$, 
$$ \hat{\mf{G}} _{\mu ; \om} = \langle \mf{z}^{\omega} + N_{\mu}, 1 + N_{\mu} \rangle =  b^* A^{\omega} c, $$ and it follows that the minimal realization of $V_{\mu,j}^* \mf{G} _{\mu}$ is $(A,A_j^* b,c)$. Such `backward left shifts' of NC rational functions were studied in \cite[Section 2]{KVV-ncrat}.
To show that $\mu$ is finitely--correlated, we need to show that 
 $\cH _\mu$ is finite dimensional. Equivalently, we can show that 
 $$ \scr{M} _\mu := \bigvee V_\mu ^{*\om} \mf{G} _\mu, \quad \scr{M} _\mu = \scr{C} _\mu \cH _\mu, $$ is a finite dimensional subspace of the NC Herglotz space $\scr{H} ^+ (\mf{H} _\mu )$ of $\mu-$Cauchy transforms. It follows that $\scr{M} _\mu$ is finite dimensional since $\bigvee A^{*\om} b$ is finite dimensional, by assumption.

\end{proof}
\begin{remark}
It also follows from the above proof that $( T _\mu ^* , 1 + N_\mu , 1 + N_\mu )$ is a minimal descriptor realization of $\mf{G} _\mu$, the free Cauchy transform of $1 + N _\mu$. 
\end{remark}

\subsection{Row isometric dilations of finite row contractions}

Let $\mu \in \posncm$ be a finitely--correlated Cuntz--Toeplitz functional, and $\Pi = \Pi _\mu$. Then $1 + N_\mu$ is $\Pi-$cyclic so that by Popescu's NC Wold decomposition, $\Pi = \Pi _L \oplus \Pi _{Cuntz}$ where $\Pi _L$ is unitarily equivalent to $L$, and $\Pi _{Cuntz}$ is a cyclic Cuntz (surjective) row isometry \cite[Theorem 1.3]{Pop-dil}. If we define the finite dimensional space, 
$$ \cH _\mu := \bigvee \Pi ^{*\mu} (1 + N_\mu ), $$ with projection $P_\mu$, then $\Pi _\mu$ is the minimal row isometric dilation of the finite row contraction
$$ T_\mu := P_\mu \Pi _\mu | _{\cH _\mu \otimes \C ^d}. $$ In particular, $\Pi _\mu$ will be Cuntz if and only if $T_\mu$ is a row co-isometry by \cite[Proposition 2.5]{Pop-dil}. Let $A := (A_1 , \cdots , A _d ) : \cH \otimes \C ^d \rightarrow \cH$ be any row contraction on a finite dimensional Hilbert space $\cH \simeq \C ^n$. Let $V = V = (V_1 , \cdots , V_d ) : \cK \otimes \C ^d \rightarrow \cK$ be the minimal row isometric dilation of $A$ on $\cK \supsetneqq \cH$. Such row isometries, $V$, as well as the structure of the unital, weak operator topology (WOT) closed algebras they generate, were completely characterized and classified up to unitary equivalence by Davidson, Kribs and Shpigel in \cite{DKS-finrow}. We will have occasion to apply several results of \cite{DKS-finrow} and so we will record some of the main results of this paper here for future reference:  

Given $A, V, \cH$ and $cK$ as above, let $V = V_p \oplus V'$ be the Wold decomposition of $V$ concomitant to $\cK = \cK _p \oplus \cK '$, where $V_p \simeq L \otimes I_\cJ$ is pure and $V'$ is a Cuntz row isometry on $\cK '$. Further let $\wt{\cH}$ be the span of all minimal $A-$co-invariant subspaces $\wt{\cJ} $ of $\cH$ so that $\wt{B} := (A ^* | _{\wt{\cJ}  } ) ^*$ is a row co-isometry. Then, $\wt{\cH} = \bigoplus \wt{\cH}  _k$, where $\{ \wt{\cH} _k \} _{k=1} ^N$ is a maximal family of mutually orthogonal and minimal $A^*-$invariant subspaces so that $( A^* | _{\wt{\cH}  _k}) ^*$ is a row co-isometry. Note that if $\wt{\cJ} $ is minimal, then $\wt{B}$ is necessarily an irreducible row co-isometry. The following theorem is part of the statement of \cite[Theorem 6.5]{DKS-finrow}.

\begin{thmA}[Davidson--Kribs--Shpigel] \label{DKSdiltype}
Let $A$ be a finite--dimensional row contraction on $\cH$ with minimal row isometric dilation $V = V_p \oplus V'$ on $\cK = \cK _p \oplus \cK ' \supsetneq \cH$, with notations as above.  Then $V_p$ is unitarily equivalent to $L \otimes I_\cJ$ where $\nbdim \cJ = \mr{rank} \, I - A A^*$ and $V'$ is the minimal row isometric dilation of the row co-isometry $\wt{A} := (A^* | _{\wt{\cH}} ) ^*$. Moreover, $V ' = \bigoplus _{k=1} ^N V' _k $, where each $V' _k$ is an irreducible Cuntz row isometry and $V' _k$ is the minimal row isometric dilation of the irreducible row co-isometry $\wt{A} ^{(k)} := ( A ^* | _{\wt{\cH } _k} ) ^*$.
\end{thmA}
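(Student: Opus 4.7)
\textbf{Proof proposal for Theorem \ref{DKSdiltype}.}

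My plan is to analyze the two summands $V_p$ and $V'$ of the minimal row isometric dilation separately, using Popescu's NC Wold decomposition as the organizing principle. Any row isometry $V$ on $\cK$ splits canonically as $V_p \oplus V'$ on $\cK_p \oplus \cK'$, where $V_p$ is unitarily equivalent to $L \otimes I_{\cJ_0}$ for the wandering space $\cJ_0 := \cK_p \ominus \overline{V_p V_p^* \cK_p} = (I_{\cK_p} - V_p V_p^*)\cK_p$, and $V'$ is a Cuntz row isometry on $\cK'$. The task is therefore to identify $\cJ_0$ and $V'$ intrinsically from $A$.

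For the pure part, first I would use the defining relation $A^* = V^*|_{\cH}$ together with minimality, $\cK = \bigvee _{\om \in \F ^d} V^\om \cH$, to show that the wandering subspace $(I - VV^*)\cK$ actually lies inside $\cH$. The reason is that $V^{*\om}(I - VV^*) = 0$ for any $\om \neq \emptyset$, so the wandering vectors are orthogonal to every $V^\om \cH$ with $\om \neq \emptyset$ unless they already sit in $\cH$; combined with minimality this forces $(I - VV^*)\cK \subseteq \cH$. On $\cH$, however, $(I - VV^*)|_\cH = I_\cH - P_\cH V V^* |_\cH = I_\cH - AA^*$. Consequently $\cJ_0 \simeq \overline{\nbran (I_\cH - AA^*)}$, giving $\nbdim \cJ_0 = \mr{rank}\,(I - AA^*)$, which proves the first assertion.

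For the Cuntz part, I would argue that $V'$ is itself the minimal row isometric dilation of the compression $\wt A := (A^*|_{\wt\cH})^*$. First, observe that $\wt\cH$ is $A^*$-invariant by construction, so $\wt A$ is a well-defined row co-isometry on $\wt\cH$. Let $\cK_1 := \bigvee V^\om \wt\cH \subseteq \cK$. Since $\wt A$ is a row co-isometry, $V|_{\cK_1}$ has no pure Wold summand (the wandering defect $I - \wt A \wt A^* = 0$ propagates, by the same argument used in the previous paragraph, to show $(I - VV^*)P_{\cK_1} = 0$), hence $\cK_1 \subseteq \cK'$. Conversely, the subspace $\cH \cap \cK' \subseteq \cH$ supports a compression of $A$ which is a row co-isometry (because $V'$ is Cuntz and compressions of Cuntz isometries onto co-invariant subspaces lying in their support are co-isometries), so by maximality of $\wt\cH$, $\cH \cap \cK' \subseteq \wt\cH$, and minimality of the dilation then forces $\cK' \subseteq \cK_1$. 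Thus $\cK' = \cK_1$ and $V' = V|_{\cK_1}$ is the minimal row isometric dilation of $\wt A$. The decomposition $\wt\cH = \bigoplus_{k=1}^N \wt\cH_k$ into minimal $A^*$-invariant pieces on which $A$ restricts to a row co-isometry induces an orthogonal decomposition of $\cK_1$ into $V$-reducing subspaces $\bigvee_\om V^\om \wt\cH_k$, yielding $V' = \bigoplus_k V'_k$ with each $V'_k$ irreducible Cuntz.

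The main obstacle, I expect, is the converse inclusion $\cK' \subseteq \cK_1$: one needs to rule out that the Cuntz part of the dilation carries information not already present as a co-isometric compression of $A$. The cleanest route is probably to show directly that $P_{\cH \cap \cK'}$ intertwines $A$ with its own compression to $\cH \cap \cK'$ in a way that produces a row co-isometry, using that $V'|_{\cK'}$ is surjective from $\cK' \otimes \C^d$ to $\cK'$ and that $P_\cH V|_{\cH \otimes \C^d} = A$. Once this is established, maximality of $\wt\cH$ closes the argument, and irreducibility of each $V'_k$ follows from irreducibility of $\wt A^{(k)}$ together with the fact that passing from a row co-isometry to its minimal row isometric dilation preserves irreducibility (every $V$-reducing subspace of $\cK_1^{(k)} := \bigvee_\om V^\om \wt\cH_k$ would, upon intersection with $\wt\cH_k$, produce an $A^*$-invariant subspace on which the restriction is a row co-isometry).
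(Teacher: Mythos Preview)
The paper does not prove this statement: it is quoted verbatim as part of \cite[Theorem 6.5]{DKS-finrow} and is used as a black box throughout. There is therefore no proof in the paper to compare your proposal against.

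That said, since you have sketched an argument, a few remarks on its content. Your identification of the wandering subspace with $\nbran (I - AA^*)$ is the standard one and is correct; this is exactly how the multiplicity of the pure part is computed in the Davidson--Kribs--Shpigel paper. For the Cuntz part, your strategy is sound in outline, but the converse inclusion $\cK' \subseteq \cK_1$ is more delicate than you indicate. The key technical input from \cite{DKS-finrow} is their Corollary 4.2 (cited elsewhere in the present paper), which guarantees that any $V$-reducing subspace of $\cK$ has nontrivial intersection with $\cH$; your argument implicitly uses something like this to ensure $\cH \cap \cK'$ is large enough. You would also need to justify that $\wt\cH = \bigoplus_k \wt\cH_k$ really is an orthogonal direct sum of the minimal pieces (this is Lemma 5.8 in \cite{DKS-finrow}), and that the induced decomposition of $\cK_1$ into $\bigvee_\om V^\om \wt\cH_k$ is genuinely $V$-reducing and orthogonal, which again requires the structural results of \cite{DKS-finrow} on the free semigroup algebra of $V$. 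So while your outline captures the right architecture, a complete proof would essentially reproduce several of the lemmas from the cited paper.
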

In the above statement, recall that a finite row contraction, $A$, on $\cH$ is said to be irreducible, if it has no non-trivial jointly invariant subspace. This is equivalent to $\mr{Alg} \{ I, A_1 , \cdots , A_d \} = \scr{L} (\cH )$. If $A$ is irreducible, then $A$ cannot have any non-trivial jointly co-invariant subspace either, so that also $\mr{Alg} \{ I, A _1 ^* , \cdots , A_d ^* \} = \scr{L} (\cH )$. In this case, any vector $x \in \cH$ is cyclic for both $A$ and $A^*$. On the other hand, we say that a row isometry, $V = (V_1, \cdots , V_d )$, is \emph{irreducible} if and only if the $V_k$, $1\leq k \leq d$, have no non-trivial jointly reducing subspace, \emph{i.e.} a subspace which is both invariant and co-invariant for each $V_k$.

The following theorem characterizes when the minimal row isometric dilations of two finite--dimensional row contractions are unitarily equivalent:
\begin{thmA}{ (\cite[Theorem 6.8]{DKS-finrow})} \label{DKSue}
Let $A := ( A_1 , \cdots , A _d )$ and $B := (B_1 , \cdots , B_d )$ be finite--dimensional row contractions acting on finite-dimensional Hilbert spaces $\cH _A$ and  $\cH _B$, respectively. Let $\Pi _A$, $\Pi _B$ be their minimal row isometric dilations acting on $\cK _A \supseteq \cH _A$ and $\cK _B \supseteq \cH _B$. Let $\wt{\cH} _A \subseteq \cH _A$ be the subspace spanned by all minimal $A-$co-invariant subspaces, $\cH$, of $\cH _A$ on which $A^*| _\cH$ is a column isometry and similarly define $\wt{\cH} _B$. Then $\Pi _A$ and $\Pi _B$ are unitarily equivalent if and only if:
\bn
    \item $\mr{rank} \, (I - A A^*) = \mr{rank} \, (I - BB^*)$ and 
    \item $A^* | _{\wt{\cH} _A}$ is jointly unitarily equivalent to $B^* | _{\wt{\cH} _B}$.
\en
\end{thmA}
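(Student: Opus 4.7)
The plan is to reduce the claim to the Wold decomposition of a row isometry, combined with Theorem \ref{DKSdiltype}. For any finite row contraction $A$, Theorem \ref{DKSdiltype} gives $\Pi_A \cong (L \otimes I_{\cJ_A}) \oplus V_A'$, with $\dim \cJ_A = \mr{rank}(I - A A^*)$, where $V_A'$ is the minimal row isometric dilation of the row co-isometry $\wt{A} := (A^*|_{\wt{\cH}_A})^*$; and the analogous splitting holds for $B$. Because Popescu's noncommutative Wold decomposition is unique up to unitary equivalence, $\Pi_A \cong \Pi_B$ is equivalent to the conjunction of pure-part equivalence $L \otimes I_{\cJ_A} \cong L \otimes I_{\cJ_B}$ and Cuntz-part equivalence $V_A' \cong V_B'$.

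The ($\Leftarrow$) direction is then immediate. Condition (1) gives $\dim \cJ_A = \dim \cJ_B$, so the pure parts are unitarily equivalent. Condition (2) provides a unitary $u : \wt{\cH}_A \to \wt{\cH}_B$ intertwining the two row co-isometries $\wt{A}$ and $\wt{B}$; by the uniqueness (up to unitary equivalence) of the minimal row isometric dilation of a row contraction, $u$ extends to a unitary equivalence $V_A' \cong V_B'$, and taking the direct sum yields $\Pi_A \cong \Pi_B$.

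For the ($\Rightarrow$) direction, Wold uniqueness matches pure parts and Cuntz parts separately. Matching the multiplicities of the pure parts, together with Theorem \ref{DKSdiltype}, yields condition (1). The substance is then to show that if $V_A' \cong V_B'$, then the row co-isometries $\wt{A}$ and $\wt{B}$ that they minimally dilate must themselves be unitarily equivalent. Applying Theorem \ref{DKSdiltype} once more, decompose $V_A' = \bigoplus_k V_A^{(k)}$ and $V_B' = \bigoplus_l V_B^{(l)}$ into irreducible Cuntz row isometries, each the minimal dilation of the corresponding irreducible row co-isometry $\wt{A}^{(k)}$, $\wt{B}^{(l)}$. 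A unitary equivalence of direct sums of irreducible $*$-representations of the Cuntz--Toeplitz algebra matches isotypic components and multiplicities, so it suffices to prove the key lemma: \emph{an irreducible Cuntz row isometry $V$ that is the minimal row isometric dilation of a finite-dimensional irreducible row co-isometry $C$ determines $C$ up to unitary equivalence.} This is the main obstacle. The strategy is to argue that any two minimal co-invariant finite-dimensional subspaces $\wt{\cH}, \wt{\cH}' \subseteq \cK$ on which $V^*$ is column-isometric and which generate $\cK$ under the forward orbit of $V$ yield unitarily equivalent compressions: one uses irreducibility (the self-intertwiners of $V$ are scalars) together with the rigidity of finite-dimensional column-isometric compressions of a fixed Cuntz representation to construct an intertwining unitary $\wt{\cH} \to \wt{\cH}'$ relating the two compressed row co-isometries.
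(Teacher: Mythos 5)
This theorem is quoted verbatim from \cite[Theorem 6.8]{DKS-finrow}; the present paper does not prove it, so there is no in-text argument to compare against, and the review below is on intrinsic correctness.

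Your ($\Leftarrow$) direction is sound. Popescu's Wold decomposition together with Theorem \ref{DKSdiltype} give $\Pi_A \cong (L \otimes I_{\cJ_A}) \oplus V_A'$, where $\dim{\cJ_A} = \mr{rank}\,(I - AA^*)$ and $V_A'$ is the minimal row isometric dilation of $\wt{A}$; condition (1) matches the pure parts, and condition (2) together with the uniqueness, up to unitary equivalence, of minimal row isometric dilations lifts the unitary on $\wt{\cH}_A$ to one on the Cuntz summand.

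The ($\Rightarrow$) direction has a genuine gap at what you yourself flag as ``the main obstacle.'' The reduction is fine: Wold uniqueness separates pure and Cuntz parts, Theorem \ref{DKSdiltype} splits the Cuntz part into irreducibles, and isotypic matching passes the question to a single irreducible summand. But the resulting key lemma --- that an irreducible dilation-type Cuntz row isometry $V$ determines, up to joint unitary equivalence, the finite-dimensional irreducible row co-isometry it minimally dilates --- is the substantive content of the theorem, and your sketch does not prove it. ``Rigidity of finite-dimensional column-isometric compressions of a fixed Cuntz representation'' is not an available tool; it is precisely the assertion at issue. Nor does the fact that the self-intertwiners of an irreducible $V$ are scalars give any leverage: scalar intertwiners carry no information relating two candidate co-invariant subspaces $\wt{\cH}, \wt{\cH}' \subseteq \cK$ to one another. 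What closes this gap in \cite{DKS-finrow} is their free semigroup algebra structure theory: for an irreducible Cuntz row isometry of this type, the projection onto the canonical finite-dimensional co-invariant subspace actually lies in the WOT-closed unital algebra generated by $V$ (\cite[Theorem 5.2]{DKS-finrow}, used in Lemma \ref{irredlemma}), and the minimal column-isometric cyclic co-invariant subspace is in fact unique (\cite[Corollary 5.5]{DKS-finrow}, used in the second example after Theorem \ref{minratreal}). That intrinsic recoverability of the subspace --- and hence of the compression --- from the operator algebra of $V$ is what your argument needs and does not supply; until the key lemma is established along these or equivalent lines, the forward implication remains unproven.
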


We will apply these results to study and characterize the GNS row isometry, $\Pi _\mu$, arising from a finitely--correlated positive NC measure, $\mu$.

\begin{lemma} \label{Tcyclic}
Let $\mu \in \posncm$ be a finitely--correlated Cuntz--Toeplitz functional. Then the $\Pi _\mu-$cyclic vector $1+N_\mu$ is cyclic for both $T_\mu$ and $T_\mu ^*$. 
\end{lemma}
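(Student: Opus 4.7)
The plan is to leverage the co-invariance of $\cH_\mu$ under the adjoint row isometry $\Pi_\mu^*$ to reduce both cyclicity assertions to the already-known fact that $1 + N_\mu$ is cyclic for $\Pi_\mu$. The two halves of the statement are quite asymmetric: cyclicity of $1+N_\mu$ for $T_\mu^*$ is essentially a tautology, while cyclicity for $T_\mu$ requires one small extra step using $\Pi_\mu$-cyclicity.

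First I would dispatch the $T_\mu^*$ claim. By construction $\cH_\mu$ is $\Pi_\mu^*$-invariant, so for each index $j$ the component $T_{\mu,j}^* = \Pi_{\mu,j}^*|_{\cH_\mu}$ (no projection is needed, since $P_\mu$ acts as the identity on $\Pi_{\mu,j}^*\cH_\mu$). Consequently $T_\mu^{*\omega}(1+N_\mu) = \Pi_\mu^{*\omega}(1+N_\mu)$ for every $\omega \in \F^d$, and the span of these vectors is precisely the definition of $\cH_\mu$.

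The $T_\mu$-cyclicity is the main content. The key observation is that since $\cH_\mu$ is $\Pi_\mu^*$-invariant, its orthogonal complement $\cH_\mu^\perp$ in $\hardy(\mu)$ is $\Pi_\mu$-invariant. I would then argue by induction on $|\omega|$ that
\[
P_\mu \, \Pi_\mu^\omega (1+N_\mu) \;=\; T_\mu^\omega (1+N_\mu)
\]
for every word $\omega \in \F^d$. The base case $|\omega|=0$ is trivial. For the inductive step, write $\omega = j \omega'$ and decompose $\Pi_\mu^{\omega'}(1+N_\mu) = x + \eta$ with $x \in \cH_\mu$ and $\eta \in \cH_\mu^\perp$; by hypothesis $x = T_\mu^{\omega'}(1+N_\mu)$. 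Since $\cH_\mu^\perp$ is $\Pi_{\mu,j}$-invariant, $\Pi_{\mu,j}\eta \in \cH_\mu^\perp$, so
\[
P_\mu \Pi_\mu^\omega (1+N_\mu) \;=\; P_\mu \Pi_{\mu,j} x \;=\; T_{\mu,j} T_\mu^{\omega'}(1+N_\mu) \;=\; T_\mu^\omega(1+N_\mu).
\]

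With the induction in hand, the conclusion is immediate: $1+N_\mu$ is $\Pi_\mu$-cyclic in $\hardy(\mu)$, so
\[
\cH_\mu \;=\; P_\mu \hardy(\mu) \;=\; \bigvee_{\omega \in \F^d} P_\mu \Pi_\mu^\omega (1+N_\mu) \;=\; \bigvee_{\omega \in \F^d} T_\mu^\omega (1+N_\mu),
\]
which is exactly the $T_\mu$-cyclicity of $1+N_\mu$. I do not anticipate any serious obstacle; the only potentially subtle point is the asymmetry between $T_\mu$ and $T_\mu^*$ (cyclicity for one does not generally imply cyclicity for the other on a finite-dimensional space), which is resolved here because we have the extra input that $1+N_\mu$ is cyclic for the \emph{dilation} $\Pi_\mu$, not merely for $T_\mu^*$.
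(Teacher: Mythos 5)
Your proof is correct and takes essentially the same approach as the paper: both arguments reduce $T_\mu$-cyclicity to $\Pi_\mu$-cyclicity via the co-invariance of $\cH_\mu$, with your induction on $|\omega|$ spelling out in word-by-word detail the identity $P_{\cH_\mu} p(\Pi_\mu)(1+N_\mu) = p(T_\mu)(1+N_\mu)$ that the paper invokes more compactly. The $T_\mu^*$-cyclicity half is likewise dispatched in the same way, directly from the definition of $\cH_\mu$.
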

\begin{proof}
 By definition, the finite-dimensional subspace
$$ \cH _\mu = \bigvee \Pi _\mu ^{*\om} \left(1 + N_\mu\right) = \bigvee T_\mu ^{*\om} \left(1 + N_\mu\right), $$ is $T_\mu ^*-$cyclic. Also, by the GNS construction of $\hardy (\mu )$, $1+N_\mu$ is $\Pi _\mu -$cyclic. However, since $1 + N_\mu \in \cH _\mu$ is $\Pi _\mu-$cyclic, given any $h \in \cH _\mu$, there is a sequence of polynomials $p_n \in \fp$ so that $p_n (\Pi _\mu ) 1 + N _\mu \rightarrow h$.
Hence,
\ba h & = & P _{\cH _\mu} h \nn \\
& = &  P _{\cH _\mu} \lim p_n (\Pi _\mu ) 1 + N _\mu \nn \\
& = & \lim p_n (P_{\cH _\mu} \Pi _\mu P _{\cH _\mu}) 1 + N_\mu \nn \\
& = & \lim p_n (T_\mu ) 1 + N _\mu, \nn \ea since $\cH _\mu$ is $\Pi _\mu-$co-invariant. It follows that $1 +N_\mu$ is also cyclic for $T_\mu$.
\end{proof}

Let $T := (T_1 , \cdots , T_d ) : \cH \otimes \C ^d \rightarrow \cH$ be any row contraction on a finite dimensional Hilbert space, $\cH$.
Given any $x \in \cH$, define 
$$ \cH ' := \bigvee T^{*\om} x \subseteq \cH \quad \mbox{and} \quad T' := \left( T^* | _{\cH '} \right) ^*. $$ 
Finally, define 
$$ \check{\cH}  := \bigvee T^{ ' \om} x \subseteq \cH', $$ with projector $\check{P}$  and $\check{T} := T' | _{\check\cH \otimes \C ^d }.$
Observe that $\check{\cH}$ is $T'-$invariant and $T-$semi-invariant, \emph{i.e.} it is the direct difference of the nested, $T-$co-invariant subspaces
$$ \cH ' \quad \mbox{and} \quad \cH ' \ominus \check\cH.$$  Let $\cK := \bigvee V^\om \cH$ be the Hilbert space of the minimal row isometric dilation, $V$, of $T$, set 
$$ \cK _x := \bigvee V^\om x, $$ with projection, $P_x$, and let $V_x := V| _{\cK _x}$. 

\begin{prop} \label{confincor}
Given $T,x$ as above, the linear functional, $\mu := \mu _{T,x} \in \posncm$, defined by
$$ \mu _{T,x} (L^\om ) := \ip{x}{T^\om x}_{\cH} = \ip{x}{\check{T} ^\om x}_{\check\cH}, $$ is a finitely--correlated positive NC measure. The vector $x$ is both $\check{T}$ and $\check{T} ^*-$cyclic. The map 
$$ \Pi _\mu ^\om \left(1 + N_\mu\right) \stackrel{U_x}{\mapsto} V^\om x, $$ is an isometry of $\hardy (\mu )$ onto $\cK _x$, and 
$ U_x p(T_\mu ) ^* \left(1 + N_\mu\right) = P_x p(T) ^*x$. If $x$ is $V-$cyclic, then $V \simeq \Pi _\mu$ and $\check{T} \simeq T_\mu$ are unitarily equivalent. 
\end{prop}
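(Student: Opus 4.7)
The plan is to split the proof into three components: establishing the two expressions for $\mu$ together with its positivity, constructing $U_x$ and showing $\mu$ is finitely--correlated, and finally proving cyclicity of $x$, the formula, and the $V$-cyclic specialization.

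First I would verify $\ip{x}{T^\om x}_\cH = \ip{x}{\check{T}^\om x}_{\check{\cH}}$ using the Sarason formula for co-invariant subspaces: since $\cH' = \bigvee T^{*\om} x$ is $T^*$-invariant (hence $T$-co-invariant), the compression $T' = (T^*|_{\cH'})^*$ satisfies $T'^\om y = P_{\cH'} T^\om y$ for $y \in \cH'$; combined with $x \in \cH'$, $T'$-invariance of $\check{\cH}$, and $\check{T} = T'|_{\check{\cH} \otimes \C^d}$, the chain of equalities follows. Positivity is obtained by extending $\mu$ from $L^\om$ to $C^*\{I, L_1, \ldots, L_d\}$ via the $*$-representation $\rho : L_k \mapsto V_k$, which is well-defined because $V$ is a row isometry; then $\mu(a) := \ip{x}{\rho(a) x}_{\cK}$ is a vector state, and its restriction to $\scr{A}_d$ yields $\mu \in \posncm$ with $\mu(L^\om) = \ip{x}{V^\om x}_{\cK} = \ip{x}{T^\om x}_\cH$ (by the dilation property $P_\cH V^\om|_\cH = T^\om$).

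Next I would construct $U_x$ on the dense subspace $\fp + N_\mu$ by $U_x(p + N_\mu) := p(V) x$, which is isometric via
\begin{align*}
\|p + N_\mu\|_\mu^2 = \mu(p(L)^* p(L)) = \ip{x}{p(V)^* p(V) x}_\cK = \|p(V) x\|_\cK^2,
\end{align*}
and hence extends to a unitary iso onto $\cK_x = \bigvee V^\om x$, intertwining $\Pi_\mu$ with $V_x$ and sending $1 + N_\mu \mapsto x$. For finite--correlatedness, I would observe that the free Cauchy transform $\mf{G}_\mu(Z) = \sum_\om Z^\om \ip{T^\om x}{x}_\cH$ admits a finite-dimensional descriptor realization built directly from the finite-dimensional data $(T, x)$ and is therefore NC rational; by Theorem \ref{fincorthm}, $\mu$ is then finitely--correlated, so $\cH_\mu$ is finite-dimensional.

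The $\check{T}$-cyclicity of $x$ is immediate from $\check{\cH} := \bigvee T'^\om x$ (which coincides with $\bigvee \check{T}^\om x$ since $T'^\om x = \check{T}^\om x$ for $x \in \check{\cH}$). For $\check{T}^*$-cyclicity, the plan is to introduce an auxiliary iso $\check{U}_x : \hardy(\mu) \to \check{\cK}$, where $\check{V}$ is the minimal row isometric dilation of $\check{T}$ on $\check{\cK} \supseteq \check{\cH}$; since $x$ is $\check{T}$-cyclic in $\check{\cH}$ it is $\check{V}$-cyclic in $\check{\cK}$, so $\check{U}_x(p + N_\mu) := p(\check{V}) x$ is an isometric iso intertwining $\Pi_\mu$ with $\check{V}$, under which $\cH_\mu$ is identified with $\bigvee \check{V}^{*\om} x = \bigvee \check{T}^{*\om} x$ (using $\check{V}^*|_{\check{\cH}} = \check{T}^*$ and that $\check{\cH}$ is $\check{V}^*$-invariant in $\check{\cK}$). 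Applying Lemma \ref{Tcyclic} together with the Sarason formula on the co-invariant subspace $\bigvee \check{T}^{*\om} x \subseteq \check{\cK}$ then forces $\bigvee \check{T}^{*\om} x = \check{\cH}$. The formula $U_x p(T_\mu)^*(1 + N_\mu) = P_x p(T)^* x$ reduces to $p(\Pi_\mu)^*(1 + N_\mu) \mapsto p(V_x)^* x = P_{\cK_x} p(V)^* x = P_x p(T)^* x$, using that $\cH_\mu$ is $\Pi_\mu^*$-invariant and $V^*|_\cH = T^*$. In the $V$-cyclic case, $\cK_x = \cK$ gives $V_x = V$ and $\Pi_\mu \simeq V$ via $U_x$; the image $U_x \cH_\mu = \bigvee V^{*\om} x = \cH'$, and the $T_\mu$-cyclicity of $1 + N_\mu$ from Lemma \ref{Tcyclic} transfers to $T'$-cyclicity of $x$ in $\cH'$, forcing $\check{\cH} = \cH'$ and hence $\check{T} = T'|_{\check{\cH}} \simeq T_\mu$. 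The main obstacle is the $\check{T}^*$-cyclicity, which requires simultaneously managing the two identifications of $\hardy(\mu)$ (via $U_x$ using $V$ and via $\check{U}_x$ using $\check{V}$) and carefully tracking which cyclic subspace of $\hardy(\mu)$ maps to $\check{\cH}$ under the auxiliary identification.
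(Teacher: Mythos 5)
Your positivity argument (extending $\mu$ to the Cuntz--Toeplitz algebra via the $*$-representation $L_k\mapsto V_k$), your isometry of $U_x$, your $\check{T}$-cyclicity observation, and your treatment of the $V$-cyclic case are all sound and run essentially parallel to the paper's proof. Your approach to finite--correlatedness is more roundabout than necessary: you pass through the free Cauchy transform and Theorem \ref{fincorthm}, whereas it already drops out of the formula $U_x p(T_\mu)^*(1+N_\mu) = P_x p(T)^*x$ you establish later, since this exhibits $U_x\cH_\mu$ as a subspace of the finite--dimensional $P_x\cH'$.

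The genuine problem is the $\check{T}^*$-cyclicity argument. Tracing your plan through: under the isometry $\check{U}_x$, $\cH_\mu$ is identified with $\cN := \bigvee\check{V}^{*\om}x = \bigvee\check{T}^{*\om}x$ and $T_\mu$ with the compression $\check{S}$ of $\check{V}$ to $\cN$. Lemma \ref{Tcyclic} then says $x$ is $\check{S}$- and $\check{S}^*$-cyclic. But $\check{S}^* = \check{V}^*|_\cN$ (since $\cN$ is $\check{V}^*$-invariant), so $\check{S}^*$-cyclicity reads $\bigvee\check{T}^{*\om}x = \cN$ -- a tautology. Likewise $\check{S}$-cyclicity gives $P_\cN\bigvee\check{T}^\om x = P_\cN\check\cH = \cN$, again automatic because $\cN\subseteq\check\cH$. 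Nothing in this chain forces $\cN$ to equal all of $\check\cH$. Separately, your step ``since $x$ is $\check{T}$-cyclic it is $\check{V}$-cyclic'' is unjustified: Lemma \ref{Tcyclic2} only covers row co-isometries, and $\check{T}$ is merely a row contraction here. The paper's argument avoids all of this and is far shorter: since $\cH' = \bigvee T^{*\om}x$ and $T'^* = T^*|_{\cH'}$, the vector $x$ is $T'^*$-cyclic in $\cH'$ by definition; then semi-invariance gives $\check{T}^{*\om}x = \check{P}\,T'^{*\om}x$, whence $\bigvee\check{T}^{*\om}x = \check{P}\cH' = \check\cH$. You should replace your auxiliary-dilation machinery with this two-line compression argument.
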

\begin{proof}
By \cite[Theorem 2.1]{Pop-ncdisk}, the map $L^\om \mapsto T^\om$ is completely contractive and unital, and hence extends to a completely positive and unital map of the free disk system into $\scr{L} (\cH )$. In particular, $\mu = \mu _{T,x} \in \posncm$. The vector $x$ is, by definition $T^{'*}-$cyclic, so that 
$$ \bigvee \check{T} ^{*\om} x = \check{P} \bigvee T^{'*\om} x = \check{P} \cH ' = \check\cH. $$ This proves that $x$ is $\check{T} ^*-$cyclic. Since $\check{\cH}$ is $T-$semi-invariant, it follows that $\check{P} T^\om \check{P} = \check{T} ^\om$. Hence,
\ba \check{\cH} & = & \bigvee T^{'\om} x =  P' \bigvee T^{\om} x \nn \\
& = & \check{P} \bigvee  T ^\om  \check{P} x = \bigvee \check{T} ^\om x, \nn \ea so that $x$ also is $\check{T}-$cyclic. Semi-invariance further implies that $\mu = \mu _{T,x} = \mu _{\check{T} , x}$. 

To see that $\mf{z}^\om + N_\mu \mapsto V^\om x$ is an isometry, note that for any free polynomial $p \in \fp$, we can write
$$ p(L) ^* p(L) = 2 \nbre q(L) = q(L) + q(L) ^*, $$ for some $q \in \fp$, and then $p(\Pi ) ^* p (\Pi ) = 2 \nbre q(\Pi )$ for any row isometry $\Pi$. Hence, 
\ba \| p  + N_\mu \| ^2 _\mu & = & 2 \nbre \ip{1+ N_\mu}{q + N_\mu}_\mu \nn \\
& = & 2 \nbre \ip{x}{q(V) x}_{\cK} = 2 \nbre \ip{x}{q(T) x}_{\cH} \nn \\
& = & \| p(V) x \| ^2 _{\cK} = \| U_x \left(p + N_\mu\right) \| ^2. \nn \ea 
Given any $p(T_\mu ) ^* \left(1 + N_\mu\right) \in \cH _\mu$, consider 
\ba \ip{\mf{z} ^\om + N_\mu}{ p(T_\mu ) ^* \left(1 + N_\mu\right)}_\mu & = & \ip{ p \mf{z}^{\om} + N_\mu}{1+N_\mu}_\mu \nn \\
& = & \ip{p(V) V^{\om} x}{x}_{\cK} \nn \\
& = & \ip{V^\om x}{P_x p(T) ^* x}_{\cK} \nn \\
& = & \ip{\mf{z}^\om + N_\mu}{U_x ^* P_x p(T) ^* x}_\mu. \nn \ea 
It follows that 
\be U_x p(T_\mu ) ^* \left(1 + N_\mu\right) = P_x p(T) ^* x. \label{Uxue} \ee Finally, if $x$ is $V-$cyclic, then $\cK _x = \cK$ and $U_x \Pi _\mu ^\om = V^\om U_x$, so that $U_x$ is an onto isometry, and $\Pi _\mu$ and $V$ are unitarily equivalent. Hence,
$$ U_x \cH _\mu = U_x \bigvee \Pi _\mu ^{*\om } \left(1+N_\mu\right) = \bigvee V^{*\om} x = \bigvee T^{*\om} x = \cH '. $$ Moreover, since $x$ is $V-$cyclic, given any $h \in \cH'$, there is a sequence of polynomials $p_n \in \fp$ so that $p_n (V) x \rightarrow h$, and then
$$ h = \lim p_n (V) x = P ' h = \lim P' p_n (V) x = \lim p_n (T' ) x, $$ so that $x$ is also $T'-$cyclic, $\check\cH = \cH '$ and $\check{T} = T'$. It further follows that $T' = \check{T}$ and $T_\mu$ are unitarily equivalent via $U_x$. Indeed, if $\cK _x = \cK$ so that $P_x = I$, then Equation (\ref{Uxue}) becomes
$$ U_x p(T _\mu ) ^*  ( 1 + N_\mu ) = p(T) ^* x. $$ In particular, since $U_x$ restricts to a unitary map from $\cH _\mu$ onto $\cH '$, we obtain that for any $y _\mu := q (T_\mu ) ^*  (1 + N _\mu ) \in \cH _\mu$, $q \in \fp$ and $1\leq k \leq d$, 
$$ U_x T_{\mu ; k} ^* y _\mu = T_k ^* q(T) ^* x = T_k ^* U_x y_\mu. $$ This proves that $T_\mu$ is unitarily equivalent to $T '$.
\end{proof}
\begin{remark}
By the previous proposition, all examples of finitely--correlated Cuntz--Toeplitz functionals can be constructed from finite row contractions.
\end{remark}

\begin{lemma} \label{irredlemma}
Let $T$ be a finite and irreducible row co-isometry on $\cH$, and let $V$ be its minimal (Cuntz) row isometric dilation on $\cK \supseteq \cH$. Then any non-zero $x\in \cH$ is $V-$cyclic and $V$ is irreducible.
\end{lemma}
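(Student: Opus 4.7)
The plan is to treat the two claims separately. For the irreducibility of $V$, I would apply Theorem \ref{DKSdiltype} with $A = T$. Since $T$ is a row co-isometry, $\mathrm{rank}(I - TT^*) = 0$, so the pure part of the Wold decomposition of $V$ vanishes. Since $T$ is irreducible on the finite-dimensional $\cH$, $\cH$ itself is a minimal $T^*$-invariant subspace on which $T^*$ restricts to a column isometry; thus $\wt{\cH} = \cH$, the index $N = 1$ in the notation of Theorem \ref{DKSdiltype}, and $V = V'_1$ is an irreducible Cuntz row isometry.

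For the $V$-cyclicity of any nonzero $x \in \cH$, I would set $\cK_x := \bigvee V^\omega x$ as in Proposition \ref{confincor}; this is automatically $V$-invariant, and the goal is to show $\cK_x = \cK$. Since $V$ is irreducible by the previous step, it suffices to prove that $\cK_x$ is also $V^*$-invariant, for then it is a nonzero reducing subspace of $\cK$ (containing $x \neq 0$) and equals $\cK$. The Cuntz relation $V^*_j V_k = \delta_{jk} I$ handles $V^*_j V^\omega x$ for $\omega \neq \emptyset$ (these are either $0$ or of the form $V^{\omega '} x \in \cK_x$), so the substantive task is to show $V^*_j x = T^*_j x \in \cK_x$ for each $j$.

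To do this, I would invoke the positive NC measure $\mu := \mu_{T, x}$ and the isometric intertwiner $U_x : \hardy (\mu) \to \cK_x$ furnished by Proposition \ref{confincor}, so that $U_x (1 + N_\mu) = x$ and $U_x \Pi_\mu^\omega (1 + N_\mu) = V^\omega x$. By Lemma \ref{Tcyclic}, $1 + N_\mu$ is cyclic for both $T_\mu$ and $T_\mu^*$, and the moments match those of $(T, x)$: $\langle 1 + N_\mu, T_\mu^\omega (1 + N_\mu) \rangle_\mu = \mu(L^\omega) = \langle x, T^\omega x \rangle$. Combined with the irreducibility of $T$, which forces every nonzero vector of $\cH$ to be cyclic for both $T$ and $T^*$, this moment information yields a unitary $\psi : \cH_\mu \to \cH$ intertwining $T_\mu$ with $T$ and sending $1 + N_\mu$ to $x$. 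Popescu's uniqueness of the minimal row isometric dilation then extends $\psi$ to a unitary $\Phi : \hardy (\mu) \to \cK$ intertwining $\Pi_\mu$ with $V$, with $\Phi(1 + N_\mu) = x$. Since $\Phi$ and $U_x$ both send $\Pi_\mu^\omega (1 + N_\mu)$ to $V^\omega x$, they agree on the dense subspace $\{\Pi_\mu^\omega (1 + N_\mu) : \omega \in \F^d\}$ and hence on all of $\hardy (\mu)$; but $\Phi$ is unitary onto $\cK$ while $U_x$ has range $\cK_x$, forcing $\cK_x = \cK$ and finishing the proof.

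The main obstacle I anticipate is the identification $T_\mu \simeq T$ via $1 + N_\mu \leftrightarrow x$: the one-sided moments match by definition of $\mu$, but the full unitary equivalence of row contractions with cyclic vectors requires matching the mixed-moment evaluations $\langle 1 + N_\mu, T_\mu^{*\omega_1} T_\mu^{\alpha_1} \cdots (1 + N_\mu) \rangle_\mu$ with $\langle x, T^{*\omega_1} T^{\alpha_1} \cdots x \rangle$. This should follow from the Cuntz--Toeplitz normal form $L_j^* L_k = \delta_{jk} I$, which reduces every such product to a sum of monomials already determined by $\mu|_{\scr{A}_d}$, together with Sarason-type relations for the compressions to co-invariant subspaces; this is where I would expect the bulk of the technical bookkeeping to lie.
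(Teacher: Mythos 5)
Your treatment of the first claim (irreducibility of $V$) matches the paper's in spirit: both arguments pass through the Davidson--Kribs--Shpigel structure theory, using that the pure part of $V$ vanishes (since $I - TT^* = 0$) and that $\cH$ is the unique minimal $T$-co-invariant subspace because $T$ is irreducible. The paper cites \cite[Lemma 5.8]{DKS-finrow} for the last step where you invoke Theorem \ref{DKSdiltype}, but the content is the same.

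The cyclicity argument, however, has a genuine gap, and it is precisely at the point you flagged as the ``main obstacle.'' Your reduction to showing $V_j^* x = T_j^* x \in \cK_x$ is correct, but the proposed route through a unitary $\psi : \cH_\mu \to \cH$ intertwining $T_\mu$ with $T$ does not close. The one-sided moments $\mu_{T,x}(L^\omega) = \langle x, T^\omega x\rangle$ do \emph{not} determine the pair $(T,x)$ up to unitary equivalence; the mixed moments $\langle x, T^{*\alpha} T^{\beta} x\rangle = \langle P_{\cH} V^{\alpha} x, P_{\cH} V^{\beta} x\rangle$ involve the compression $P_{\cH}$ and are genuinely extra data, since $V^{\alpha} x \notin \cH$ in general. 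Your suggested fix via $L_j^* L_k = \delta_{jk} I$ cannot help, because that relation holds for the row isometry $V$ and fails for the compression $T$. Worse, the identification $T_\mu \simeq T$ with $1+N_\mu \leftrightarrow x$ is exactly the \emph{conclusion} of Proposition \ref{confincor} under the hypothesis that $x$ is $V$-cyclic (for irreducible $T$ one has $\check{T} = T$), so invoking it here is circular.

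The paper's proof of cyclicity goes through an entirely different mechanism that you have not used: by \cite[Theorem 5.2]{DKS-finrow}, since $T$ is irreducible the projection $P_{\cH}$ lies in the free semigroup algebra $\mathfrak{V} = \mathrm{Alg}\{I, V_1, \ldots, V_d\}^{-\mathrm{WOT}}$. Irreducibility of $T$ also gives that $x$ is $T$-cyclic, so every $h \in \cH$ has the form $h = p(T)x = P_{\cH}\, p(V)\, x$ for some free polynomial $p$, and $P_{\cH}\, p(V) \in \mathfrak{V}$. Since $\cK_x$ is a closed (hence weakly closed) subspace containing $p(V)x$ for all polynomials $p$, it contains $\mathfrak{V} x$, hence $\cH$, and then $\cK_x \supseteq \bigvee V^\omega \cH = \cK$ by minimality of the dilation. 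This is the key lemma your argument is missing; without $P_\cH \in \mathfrak{V}$ (or the essentially equivalent intersection result \cite[Corollary 4.2]{DKS-finrow} used in Lemma \ref{Tcyclic2}), the passage from $T$-cyclicity to $V$-cyclicity does not go through.
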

By \cite[Proposition 2.5]{Pop-dil}, a row contraction is a row co-isometry if and only if its minimal row isometric dilation is a Cuntz row isometry. If $V := \left( V_1 , \cdots , V _d \right)$ is a row isometry on a separable Hilbert space, $\cK$, recall that $\mf{V} := \mr{Alg} \{ I , V _1 , \cdots , V_d  \} ^{-WOT}$ is called the \emph{free semigroup algebra} of $V$ \cite{KRD-survey}.
\begin{proof}
If $T$ is an irreducible row co-isometry, then $\cH$ is the unique, minimal $T-$co-invariant subspace of $\cH$ so that $V$ is irreducible by \cite[Lemma 5.8]{DKS-finrow}. Moreover, by \cite[Theorem 5.2]{DKS-finrow}, since $T$ is irreducible, the weakly-closed unital algebra of $V$ contains $P _{\cH}$, the projection onto $\cH$.  Again, since $T$ is irreducible, given any fixed non-zero $x \in \cH$, any $h \in \cH$ can be written as $h=p(T) x$ for some $p \in \fp$, so that
\ba h & = & p(T) x \nn \\
& = & \underbrace{P _\cH p(V)}_{\in \mf{V}} x. \nn \ea Hence any $h \in \cH$ belongs to the weak and hence Hilbert space norm closure of $\mf{V} x$, so that 
$$ \bigvee \mf{V} x = \bigvee \mf{V} \cH = \cK, $$ since $V$ is the minimal row isometric dilation of $T$. 
\end{proof}
\begin{lemma} \label{Tcyclic2}
Let $T$ be a finite--dimensional row co-isometry on $\cH$ with minimal row isometric dilation $V$ on $\cK$. Any vector $h \in \cH$ is $T-$cyclic
if and only if it is $V-$cyclic.
\end{lemma}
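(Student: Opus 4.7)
The plan is to handle the two implications separately: the direction $V$-cyclic $\Rightarrow$ $T$-cyclic holds for any row contraction, while the converse uses the co-isometric hypothesis essentially.

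For $V$-cyclic $\Rightarrow$ $T$-cyclic, a short induction on $|\om|$ using $V_j \cH^\perp \subseteq \cH^\perp$ (i.e.\ co-invariance of $\cH$) gives the compression identity $P_\cH V^\om|_\cH = T^\om$ for every $\om \in \F^d$. Given $V$-cyclic $h \in \cH$ and any $g \in \cH$, choose polynomials $p_n \in \fp$ with $p_n(V) h \to g$; then $p_n(T) h = P_\cH p_n(V) h \to g$, so $h$ is $T$-cyclic.

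For the converse, the critical input is that the projection $P_\cH$ lies in the WOT-closed unital algebra $\mf{V}$ generated by $V$. In the irreducible case this is \cite[Theorem 5.2]{DKS-finrow} (used in the proof of Lemma \ref{irredlemma}); the general co-isometric statement follows by applying Theorem \ref{DKSdiltype} to decompose $V = \bigoplus_k V^{(k)}$ into irreducible Cuntz summands, grouping them by unitary equivalence into multiplicity blocks $V^{[\alpha]} \otimes I_{n_\alpha}$, applying the irreducible fact to each $V^{[\alpha]}$, and assembling. Granted $P_\cH \in \mf{V}$, suppose $h$ is $T$-cyclic and set $\cN := \bigvee V^\om h$. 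As a norm-closed, convex, hence weakly-closed, $V$-invariant subspace of $\cK$, $\cN$ is stable under every WOT-limit of polynomials in $V$; in particular $P_\cH \cN \subseteq \cN$. The compression identity then gives $T^\om h = P_\cH V^\om h \in \cN$ for all $\om$, whence $\cH = \bigvee T^\om h \subseteq \cN$, and therefore $\cN \supseteq \bigvee V^\om \cH = \cK$ by minimality of $V$.

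The main obstacle is the promotion of $P_\cH \in \mf{V}$ from the irreducible setting to the general co-isometric one, which is where the hypothesis that $T$ is a row co-isometry (equivalently, $V$ is Cuntz) genuinely enters; once that membership is in hand, the remainder of the argument is a formal consequence of the compression identity and minimality of the dilation.
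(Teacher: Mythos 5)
Your argument is correct in outline but takes a genuinely different and heavier route than the paper's. The paper establishes the hard direction with a short orthogonality argument: if $h$ is $T$-cyclic but not $V$-cyclic, set $\cK(h) := \bigvee V^\om h \subsetneqq \cK$, choose $0 \neq x \in \cK(h)^\perp$, and invoke \cite[Corollary 4.2]{DKS-finrow} to produce a nonzero $g \in \cH \bigcap \bigvee V^{*\om}x \subseteq \cK(h)^\perp$. Since $\cH$ is finite-dimensional, $T$-cyclicity gives $g = p(T)h$ exactly for some $p \in \fp$, and then $\|g\|^2 = \ip{h}{p(T)^*g} = 0$ because $p(T)^*g$ is a linear combination of the vectors $T^{*\alpha}g = V^{*\alpha}g$, which lie in the $V$-co-invariant space $\cK(h)^\perp$ and are hence orthogonal to $h$. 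That argument needs only the intersection lemma and never touches the free semigroup algebra.

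You instead rely on the membership $P_\cH \in \mf{V}$. As you note yourself, \cite[Theorem 5.2]{DKS-finrow} supplies this only when $T$ is irreducible, and the ``assembly'' step you sketch for the general co-isometric case is exactly where the real work sits. Two points there deserve to be spelled out rather than waved at: (i) that the WOT-closed algebra $\mf{V}$ is block diagonal with respect to the multiplicity decomposition $\cK \simeq \bigoplus_\alpha \cK^{[\alpha]} \otimes \C^{n_\alpha}$, and that a WOT-limit of $p_n(V^{[\alpha]}) \otimes I_{n_\alpha}$ remains of the form $B \otimes I_{n_\alpha}$ with $B \in \mf{V}^{[\alpha]}$; and (ii) that the unitaries identifying the unitarily equivalent irreducible summands $V_k$ inside one multiplicity class can be taken to carry the respective subspaces $\cH_k$ onto a common subspace of $\cK^{[\alpha]}$ --- this needs Theorem \ref{DKSue} (to pass from $V_k \simeq V_{k'}$ to $T_k \simeq T_{k'}$) together with the uniqueness of minimal row isometric dilations, and is not automatic from the equivalence of the $V_k$ alone. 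Once those gaps are filled, your closing deduction via weak-closedness of $\cN$, the compression identity, and minimality is fine. On balance, however, the paper's route through the intersection lemma is substantially shorter and requires no structural information about $\mf{V}$ at all; it is worth internalizing as the more economical argument.
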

\begin{proof}
If $h$ is $V-$cyclic, then it is clearly also $T-$cyclic. Conversely, if $h$ is $T-$cyclic, consider the space 
$$ \cK (h) := \bigvee V^\om h \subseteq \cK. $$ If $h$ is not $V-$cyclic then $\cK (h) \subsetneqq \cK$, and there is a non-zero $x \in \cK (h) ^\perp$ and $\cK (h) ^\perp$ is $V-$co-invariant. By \cite[Corollary 4.2]{DKS-finrow}, there is a non-zero $g \in \cH \bigcap \bigvee V^{*\om } x \subseteq \cK (h) ^\perp$. Hence, $V^{*\om} g = T^{*\om} g \perp \cK (h)$ for any $\om \in \F ^d$. However, by assumption $h$ is $T-$cyclic so that $g=p(T) h$ for some $p \in \fp$ and 
$$ \| g\| ^2 = \ip{p(T) h}{g} = \ip{h}{p(T) ^* g} =0, $$ contradicting that $g\neq 0$.
\end{proof}

\begin{remark} \label{irredfincor}
If $T$ is an irreducible row co-isometry, then any $x \in \cH$ will be $T^*,T$ and $V-$cyclic by the previous lemma. Proposition \ref{confincor} then implies that if $\mu = \mu _{T,x}$, that $T \simeq T_\mu$ and $V \simeq \Pi _\mu$. 
\end{remark}

In \cite{MK-rowiso}, M. Kennedy refined the Wold decomposition of any row isometry by further decomposing any Cuntz row isometry into the direct sum of three types: Cuntz type-L (or absolutely continuous Cuntz), von Neumann type and dilation type. 
\begin{defn}
A row isometry $\Pi : \cH \otimes \C ^d \rightarrow \cH$ on a separable Hilbert space, $\cH$ is \emph{type--L} or \emph{pure} if $\Pi$ is unitarily equivalent to $L \otimes I _{\cJ}$ for some separable Hilbert space $\cJ$. A Cuntz row isometry, $\Pi$ on $\cH$ is:
\bn
    \item \emph{Cuntz type--L} if the free semigroup algebra, $\mf{S} (\Pi)$, of $\Pi$, is completely isometrically isomorphic and weak$-*$ homeomorphic to the unital $WOT-$closed algebra of $L$, $L^\infty _d$.
    \item \emph{von Neumann type} if $\mf{S} (\Pi )$ is self-adjoint, \emph{i.e.} a von Neumann algebra.
    \item \emph{dilation type} if $\Pi$ has no direct summand of the previous two types. 
\en
\end{defn}

\begin{remark}
Any dilation type row isometry, $\Pi$ has an upper triangular decomposition of the form,
$$ \Pi \simeq \bpm L \otimes I & * \\ & T \epm, $$ so that $\Pi$ has a restriction to an invariant subspace which is unitarily equivalent to a pure row isometry and $\Pi$ is the minimal row isometric dilation of its compression, $T$, to the orthogonal complement of this invariant space. Since $\Pi$ is of Cuntz type, $T$ is necessarily a row co-isometry \cite[Proposition 2.5]{Pop-dil}. 
\end{remark}

\begin{lemma} \label{wandlem}
Let $T$ be a finite--dimensional row contraction on $\cH$ with minimal row isometric dilation $V$ on $\cK \supsetneqq \cH$. Any $V-$reducing subspace, $\cK ' \subseteq \cK$ contains wandering vectors for $V$.
\end{lemma}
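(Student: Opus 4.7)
The plan is to realize any nonzero $V$-reducing subspace $\cK'$ as itself the minimal row isometric dilation of a compression of $T$ to $\cH' := \cK' \cap \cH$, and then show that the ``new dilation piece'' $\cK' \ominus \cH'$ carries a nontrivial pure row isometry whose wandering subspace supplies wandering vectors for $V$.

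First I would show $\cH' := \cK' \cap \cH \neq \{0\}$. Since $\cK'$ is $V$-reducing it is, in particular, $V^*$-invariant, so for any nonzero $x \in \cK'$ the $V^*$-cyclic subspace $\bigvee_{\omega} V^{*\omega} x$ lies inside $\cK'$; by the intersection property of \cite[Corollary~4.2]{DKS-finrow} (used already in the proof of Lemma~\ref{Tcyclic2}), this cyclic subspace meets $\cH$ nontrivially, forcing $\cH' \neq \{0\}$. Since $V^*|_{\cH} = T^*$, the subspace $\cH'$ is $T^*$-invariant. A second application of the same intersection property to the $V$-reducing subspace $\cK' \ominus \bigvee_{\omega} V^\omega \cH'$ (which has zero intersection with $\cH$, being orthogonal to $\cH' = \cK' \cap \cH$ and contained in $\cK'$) forces that complement to vanish, so that $\cK' = \bigvee_{\omega} V^\omega \cH'$. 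By uniqueness of minimal dilations $V|_{\cK'}$ is the minimal row isometric dilation of the compression $T' := P_{\cH'} T|_{\cH' \otimes \C^d}$, and $\cK' \ominus \cH'$ is $V$-invariant.

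The central step is to show that $V|_{\cK' \ominus \cH'}$ is a pure row isometry. Suppose instead it has a Cuntz part, i.e., there exists a nonzero $V$-invariant $\mathcal{N} \subseteq \cK' \ominus \cH'$ with $\mathcal{N} = \sum_k V_k \mathcal{N}$. Writing any $n \in \mathcal{N}$ as $n = \sum_k V_k n_k$ with $n_k \in \mathcal{N}$ and applying $V_j^*$ yields $V_j^* n = n_j \in \mathcal{N}$, so $\mathcal{N}$ is automatically $V^*$-invariant and hence $V$-reducing in $\cK$. Applying the first step to $\mathcal{N}$ gives $\mathcal{N} \cap \cH \neq \{0\}$, which must be contained in $\cK' \cap \cH = \cH'$; but $\mathcal{N} \perp \cH'$, a contradiction. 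Therefore $V|_{\cK' \ominus \cH'}$ is pure. For $d \geq 2$ no nonzero finite--dimensional row isometry exists on a finite--dimensional space (the orthogonality $V_k \cH' \perp V_j \cH'$ for $k \neq j$ would force $\dim \cH' \geq d \dim \cH'$), so $\cH'$ is not $V$-invariant and $\cK' \supsetneqq \cH'$; the pure row isometry $V|_{\cK' \ominus \cH'}$ is then nontrivial with nonzero wandering subspace $\mathcal{L} \subseteq \cK'$. Vectors $w \in \mathcal{L}$ are wandering for $V|_{\cK' \ominus \cH'}$ and, since $\cK' \ominus \cH'$ is $V$-invariant and the restriction agrees with $V$ on it, they are wandering for $V$ itself.

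The main obstacle is the purity step: one must recognize that any Cuntz invariant subspace is automatically $V^*$-invariant and therefore $V$-reducing, so that the DKS intersection result can be invoked a second time to derive the contradiction from $\mathcal{N} \subseteq \cK' \ominus \cH'$.
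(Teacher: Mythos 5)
Your proof is correct, but it takes a noticeably different and longer route than the paper's. Both arguments begin by invoking \cite[Corollary 4.2]{DKS-finrow} to get $\cH' := \cK' \cap \cH \neq \{0\}$. From there the paper simply writes down the candidate subspace $\scr{W}' := \bigl(\cH' + \bigvee_{j=1}^d V_j \cH'\bigr) \ominus \cH'$ and appeals to the off-the-shelf criterion \cite[Lemma 3.1]{DKS-finrow}, which says precisely that such a subspace attached to a $V$-co-invariant $\cH'$ is wandering; nontriviality comes from the impossibility of a nonzero finite-dimensional row isometry when $d \geq 2$. You instead establish $\cK' = \bigvee_\omega V^\omega \cH'$, note that $\cK' \ominus \cH'$ is $V$-invariant, and prove purity of $V|_{\cK'\ominus\cH'}$ by observing that a Cuntz summand would be automatically $V^*$-invariant (hence $V$-reducing in $\cK$), and then applying the DKS intersection corollary a second time to produce a contradiction with $\mathcal{N} \perp \cH'$; the wandering vectors then come from the Wold wandering subspace of this pure restriction. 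Your purity step is a genuine additional idea not needed in the paper's argument, and it essentially reproves in this special setting what \cite[Lemma 3.1]{DKS-finrow} delivers directly. The payoff of your route is a little more structural information about $\cK'$ (that it is the minimal dilation space of the compression to $\cH'$), at the cost of length. Both proofs implicitly use $d \geq 2$: for $d=1$, a unitary direct summand of $T$ gives a $V$-reducing subspace with no wandering vectors, so the statement as written should be read with that standing convention.
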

\begin{proof}
By \cite[Corollary 4.2]{DKS-finrow} $\cH ' := \cK ' \bigcap \cH \neq \{ 0 \}$. Define the subspace:
$$ \scr{W} ' := \left( \cH' + \bigvee _{j=1} ^d V_j \cH' \right) \ominus \cH '. $$ By \cite[Lemma 3.1]{DKS-finrow}, this is a non-trivial wandering subspace for $V$.
\iffalse
By definition, $\cH$ is $V-$co-invariant, and $\cK '$ is $V-$reducing, so that $\cH '$ is a non-trivial $V'-$co-invariant subspace of $\cK '$, where $V' := V| _{\cK '}$. Moreover, 
$$ \cH' + \bigvee _{j=1} ^d V_j \cH' \supsetneqq \cH', $$ as if this were contained in $\cH '$, then $\cH'$ would be a finite, non-trivial $V'-$reducing subspace, and finite dimensional row isometries do not exist. It follows that $\scr{W} ' \subsetneq \cH ^{' \perp}$ is a non-trivial subspace contained in the $V'-$invariant subspace $\cH ^{' \perp} \subseteq \cK '$. Hence, for any $\om \in \F ^d$,
$$ \cH ' \perp V^\om \scr{W} ', $$ and for any $\om \in \F ^d$ of length at least $1$,
$$ V^\om \scr{W} ' \perp V_j \cH ', \quad \quad 1 \leq j \leq d, $$ so that for $|\om | \geq 1$,
$$ V^\om \scr{W} ' \perp \cH' + \bigvee _{j=1} ^d V_j \cH ' \supseteq \scr{W} '. $$ This proves that $\scr{W} ' \neq \{ 0 \}$ is a non-trivial wandering subspace for $V' = V| _{\cK '}$. 
\fi
\end{proof}

\begin{thm} \label{GNStype}
Let $V : \cK \otimes \C ^d \rightarrow \cK$ be the minimal row isometric dilation of a finite row contraction $T : \cH \otimes \C ^d \rightarrow \cH$, $\cH \simeq \C ^n$. Then $V$ contains no absolutely continuous Cuntz or von Neumann type direct summand so that $V = V_L \oplus V_{dil}$ is the direct sum of a pure type$-L$ and a dilation-type row isometry.
\end{thm}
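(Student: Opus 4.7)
The plan is to apply Theorem A of Davidson--Kribs--Shpigel (stated earlier in the paper) to decompose $V = V_p \oplus V'$, where $V_p \simeq L \otimes I _\cJ$ is pure (type-L) with $\nbdim \cJ = \rank{I - TT^*}$, and $V' = \bigoplus _{k=1}^N V'_k$ is an orthogonal direct sum of irreducible Cuntz row isometries. Each $V'_k$ acts on a Hilbert space $\cK' _k$ and is the minimal Cuntz row isometric dilation of an irreducible row co-isometry $\wt{T}^{(k)}$ on a non-zero finite-dimensional subspace $\wt\cH _k \subseteq \cH$. Setting $V_L := V_p$ provides the pure type-L summand of $V$, so it remains to show that $V' = V_{dil}$ has no Cuntz type-L and no von Neumann type direct summand. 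Since the $V'_k$ are irreducible, any direct summand of $V'$ is a direct sum of some subset of the $V'_k$, and it suffices to rule out these two types for each individual $V'_k$.

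For the von Neumann case: if some $V'_k$ were of von Neumann type, then its free semigroup algebra $\mf{S}(V'_k)$ would be self-adjoint, so the adjoints of the components of $V'_k$ would belong to $\mf{S}(V'_k)$, forcing every $V'_k$-invariant subspace to be $V'_k$-reducing. However, $V'_k$ is irreducible, whereas the orthogonal complement of $\wt\cH _k$ in $\cK'_k$ is a non-trivial $V'_k$-invariant subspace: by construction $\wt\cH _k$ is $V'_k$-co-invariant, and it is strictly contained in $\cK'_k$ because $V'_k$ is a Cuntz row isometry acting on an infinite-dimensional space (no finite-dimensional Cuntz row isometries exist in the NC setting) while $\wt\cH _k$ is finite-dimensional. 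This contradicts irreducibility. As a more abstract alternative, Lemma \ref{wandlem} guarantees wandering vectors in every reducing subspace, and these generate pure invariant subspaces incompatible with the invariant-equals-reducing property forced by a self-adjoint algebra.

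For the Cuntz type-L case I would invoke \cite[Theorem 5.2]{DKS-finrow} (as in the proof of Lemma \ref{irredlemma}): since each $\wt{T}^{(k)}$ is irreducible, $\mf{S}(V'_k)$ contains the projection $P_{\wt\cH _k}$ onto $\wt\cH _k$, which is non-trivial since $\wt\cH _k$ is a non-trivial proper subspace of $\cK'_k$. If $V'_k$ were Cuntz type-L, then $\mf{S}(V'_k)$ would be completely isometrically and weak-$*$ homeomorphically isomorphic to $L^\infty_d$; but the non-commutative analytic Toeplitz algebra $L^\infty_d$ contains no non-trivial projections --- its self-adjoint part reducing to the scalars by the Davidson--Pitts analysis --- yielding a contradiction. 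The hard part of the argument is justifying this last claim about $L^\infty_d$, which is the main obstacle and must either be cited from the literature or verified directly (e.g.\ by a short Fourier argument on vacuum coefficients); the rest is a structural consequence of Theorem A, irreducibility, and standard properties of Cuntz row isometries.
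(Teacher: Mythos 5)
Your proof is correct, but it takes a route that differs from the paper's in both halves, so a brief comparison is in order. You invoke the Davidson--Kribs--Shpigel decomposition (Theorem~\ref{DKSdiltype}) up front to write $V = V_p \oplus \bigoplus_k V'_k$ with each $V'_k$ an irreducible Cuntz summand which is the minimal dilation of an irreducible finite row co-isometry $\wt T^{(k)}$, and then you rule out the two forbidden types directly for each $V'_k$. For the von Neumann type you argue structurally: a self-adjoint free semigroup algebra forces every invariant subspace to be reducing, whereas the DKS construction hands $V'_k$ a non-trivial invariant subspace $\cK'_k \ominus \wt\cH_k$ (non-trivial since $\wt\cH_k$ is finite-dimensional but a Cuntz row isometry in $d \geq 2$ variables lives on an infinite-dimensional space), contradicting irreducibility. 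The paper instead rules out the von Neumann summand by combining Lemma~\ref{wandlem} (every direct summand contains wandering vectors) with Kennedy's \cite[Corollary~4.13]{MK-wand}; your argument is more self-contained but leans harder on the full strength of Theorem~\ref{DKSdiltype}. For the Cuntz type-$L$ (absolutely continuous) summand, your argument --- $\mf{S}(V'_k)$ contains the non-trivial finite-rank projection $P_{\wt\cH_k}$ by \cite[Theorem~5.2]{DKS-finrow}, yet $L^\infty_d$ has no non-trivial idempotents --- is precisely the paper's secondary argument, citing \cite[Corollary~1.5]{DP-inv} for the last claim (so the ``obstacle'' you flag at the end is resolved by that citation). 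The paper's primary argument for this half is heavier: it uses \cite[Theorem~2.7]{DLP-ncld} to place a vector in the range of a bounded intertwiner $X : \hardy \to \cK_{ac}$ with $X^*$ bounded below on a minimal co-invariant piece, then pits purity of $L$ against the row co-isometry $T_{ac}$ to force that piece to vanish. Your route is cleaner and more uniform (one structural fact about the DKS irreducibles kills both cases), at the cost of being careful that irreducible summands carry a single Kennedy type --- which they do, since a non-trivial type decomposition would be a non-trivial reducing subspace.
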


\begin{proof}
Lemma \ref{wandlem} implies that any direct summand of $V$ has wandering vectors. It is then an immediate consequence of \cite[Corollary 4.13]{MK-wand} that $V$ has no direct summand of von Neumann type.

Suppose that $V$ had a non-trivial absolutely continuous Cuntz direct summand, $V_{ac}$, acting on the $V-$reducing subspace $\cK _{ac}$. Then by \cite[Corollary 4.2]{DKS-finrow}, $\cH _{ac} := \cK _{ac} \bigcap \cH \neq \{ 0 \}$, and $\cH _{ac}$ is $V_{ac}-$co-invariant. Let $T_{ac} := \left( V_{ac} ^* | _{\cH _{ac} } \right) ^*$, then $V_{ac}$ is the minimal row isometric dilation of $T_{ac}$. Indeed, 
$$ \wt{\cK} _{ac} := \bigvee _\om V_{ac} ^\om \cH _{ac}, $$ is $V_{ac}-$reducing, so that $\cK _{ac} \ominus \wt{\cK} _{ac} =: \cK ' _{ac}$ is also $V_{ac}-$reducing. Then by \cite[Corollary 4.2]{DKS-finrow}, $\cK ' _{ac} \bigcap \cH = \cH ' _{ac} \subseteq \cH _{ac}$ is non-trivial. This contradicts that $\cK ' _{ac} \perp \wt{\cK} _{ac}$, and we conclude that $\wt{\cK} _{ac} = \cK _{ac}$. 

Let $\{ \cH _{ac ; k} \}$ be a maximal family of minimal, pairwise orthogonal and $V_{ac}-$co-invariant subspaces of $\cH _{ac}$. Fix some $k$ and choose any non-zero $h \in \cH _{ac ; k}$. Then since $V_{ac}$ is absolutely continuous, every $y \in \cK _{ac}$ is an \emph{absolutely continuous vector} for $V_{ac}$ in the sense of \cite[Definition 2.4]{DLP-ncld}. In particular, by \cite[Theorem 2.7]{DLP-ncld}, $h \in \cH _{ac ;k} \subsetneq \cK _{ac}$ is in the range of a bounded intertwiner, $X: \hardy \rightarrow \cK _{ac}$. That is, $X L_k = V_{ac; k} X$, and there is a $g \in \hardy$ so that $Xg =h$. Note that $X^*$ must be bounded  below on $\cH _{ac ; k}$. First, $\nbker X^* $ is $V_{ac}-$co-invariant since $X^* x =0$ implies that 
$$ X^* V_{ac} ^{*\om} x = L^{*\om} X^* x =0. $$ The subspace $\cH _{ac ; k}$ is finite dimensional, so that if $X^*$ is not bounded below on this space, then it has non-trivial kernel. However, if $$ \nbker X^* \bigcap \cH _{ac ; k } \neq \{ 0 \}, $$ then this is a proper, non-trivial $V_{ac}-$co-invariant subspace of $\cH _{ac ; k}$, contradicting the minimality of $\cH _{ac ; k}$. Hence, $X^*$ is bounded below by say $\eps >0$ on $\cH _{ac ; k}$. Also note that since $V_{ac}$ is Cuntz, $T_{ac}$ is a row co-isometry. Then, for any $n \in \N$,
\ba \| h \| ^2 & = & \sum _{|\om| = n} \| T_{ac} ^{*\om} h \| ^2 \nn \\
& \leq & \eps ^{-2} \sum _{|\om| = n} \| X^* T_{ac} ^{*\om} h \| ^2 \nn \\
& = & \eps ^{-2} \sum _{|\om| = n} \| X^* V_{ac} ^{*\om} h \| ^2 \nn \\
& = & \eps ^{-2} \sum _{|\om| = n} \| L^{*\om} X^* h \| ^2 \nn \\
&\rightarrow & 0, \nn \ea since $L$ is pure. This contradiction proves the claim.

Alternatively, if $V_{ac}$ is an absolutely continuous direct summand of $V$ acting on $\cK _{ac}$, then the free semigroup algebra, $\mf{V} _{ac} = \mr{Alg} \{ I , V_{ac;1} , \cdots , V_{ac ;d } \} ^{-WOT}$, contains the projection, $P_{ac}$, onto the non-trivial and finite--dimensional subspace $\cH _{ac} := \cH \cap \cK _{ac}$. However, $V_{ac}$ is absolutely continuous so that $\mf{V} _{ac}$ is completely isometrically isomorphic and weak$-*$ homeomorphic to $L^\infty _d$, the left analytic Toeplitz algebra. This produces a contradiction as $L^\infty _d$ contains no non-trival projections by \cite[Corollary 1.5]{DP-inv}.

Any row isometry, $V$, has the Kennedy--Lebesgue--von Neumann--Wold decomposition $V = V_L \oplus V_{C-L} \oplus V_{dil} \oplus V_{vN}$, and we have shown that if $V$ is the minimal row isometric dilation of a finite row contraction, then the Cuntz type$-L$ and von Neumann type direct summands are absent.
\end{proof}

The analogue of normalized Lebesgue measure in this setting is \emph{NC Lebesgue measure}, $m (L^\om) := \ip{1}{L^\om 1}_{\bH ^2}$, the so--called vacuum state of the Fock space. (This NC measure is the NC Clark measure of the identically $0$ multiplier, just as normalized Lebesgue measure on the circle is the Clark measure of the identically $0$ function in the disk.) In \cite{JM-ncFatou} and \cite{JM-ncld}, the first two authors have constructed the Lebesgue decomposition of any positive NC measure $\mu \in \posncm$ with respect to NC Lebesgue measure, $m$. In particular, $\mu$ is \emph{singular} with respect to NC Lebesgue measure in the sense of \cite{JM-ncld,JM-ncFatou} if and only if its GNS row isometry is the direct sum of dilation type and von Neumann type Cuntz row isometries \cite[Corollary 8.13]{JM-ncld}. We say a positive NC measure $\mu \in \posncm$ is of a given type if its GNS row isometry is of that corresponding type. The GNS space of $\mu$ decomposes as the direct sum, 
$$ \hardy (\mu ) = \hardy (\mu _{ac} ) \oplus \hardy (\mu _s ), $$ and $\Pi _{\mu } = \Pi _{\mu _{ac}} \oplus \Pi _{\mu _s }$ with respect to this direct sum. Here,   
$$ \Pi _{\mu _{ac}} = \Pi _{\mu ; L} \oplus \Pi _{\mu ; C-L} \quad \mbox{and} \quad \Pi _{\mu _s} = \Pi _{\mu ; dil} \oplus \Pi _{\mu ; vN}, $$ 
see \cite[Section 8]{JM-ncld}.

A bounded operator $T \in \scr{L} (\hardy )$ is called \emph{left Toeplitz} if $L_j ^* T L_k = \delta _{j,k} I$. Such operators are called multi-Toeplitz in \cite{Pop-entropy}. Here, recall that a bounded operator, $T$, on the Hardy space, $H^2 (\D )$, is called \emph{Toeplitz} if $T = T_f = P_{H^2} M_f | _{H^2}$ for some $f \in L^\infty (\partial \D )$. A result of Brown and Halmos identifies the bounded Toeplitz operators as the set of all bounded opeators $T \in \scr{L} (H^2 )$ with the \emph{Toeplitz property}:
$$ S^* T S = T, $$ where $S = M_z$ is the shift on $H^2$ \cite[Theorem 6]{BrownHalmos}. If $b \in [H^\infty ] _1$, then $T := I - b(S) ^* b(S) \geq 0$ is a positive semi-definite Toeplitz operator, and $b$ is not an extreme point of the closed convex set $[ H^\infty ] _1$ if and only if there is a unique, outer $a \in [ H^\infty ]_1$, the \emph{Sarason function of $b$}, so that $a(0) >0$ and the column $c := \bsm b \\ a \esm$ is inner. A contractive left multiplier of Fock space, $b \in [\mult ]_1$, is said to be \emph{column--extreme (CE)}, if contractivity of the column left multiplier, $c := \bsm b \\ a \esm$, for $a \in \mult$ implies $a \equiv 0$ \cite{JM-freeCE}. In \cite{JM-freeCE} we observed that any CE $b$ is necessarily an extreme point, and that if $b$ is non-CE, then one can define a unique \emph{Sarason function} $a \in [\mult ] _1$ so that $a(0) >0$ and $c := \bsm b \\ a \esm$ is column--extreme. In \cite{JMS-bwshift} we proved that $a$ is outer and that if $b = \fb$ is NC rational and non-CE, then $a = \fa$ is NC rational and the column $\fc := \bsm \fb \\ \fa \esm$ is inner. 

\begin{thm} \label{fincorstructure}
Let $\mu \in \posncm$ be a finitely--correlated Cuntz--Toeplitz functional with NC Lebesgue decomposition $\mu = \mu _{ac} + \mu _s$. The absolutely continuous part of $\mu$, $\mu _{ac} = \mu _L$ is purely of type$-L$ and $\Pi _{\mu _L} \simeq L$. If $\fb \in [\mult ] _1$ is the contractive NC rational left multiplier so that $\mu = \mu _\fb$ is the NC Clark measure of $\fb$ then 
$$ \mu _{ac} (L^\om ) = \ip{1}{(I -\fb (R) ^* ) ^{-1} \fa (R) ^* \fa (R) (I -\fb (R) ) ^{-1} L^\om 1}_{\bH ^2}, $$ where $\fa \in [ \mult ] _1$ is the contractive outer NC rational Sarason function of $\fb$,
$$ T := (I -\fb (R) ^* ) ^{-1} \fa (R) ^* \fa (R) (I -\fb (R) ) ^{-1}, $$ is a bounded left Toeplitz operator and $\fa (1 - \fb ) ^{-1} \in \mult$.

The singular part $\mu _{s} = \mu _{dil}$ is purely of dilation type and $\Pi _{\mu ; s} = \Pi _{\mu ; dil} = \bigoplus _{j=1} ^N \Pi ^{(j)}$ acting on $\hardy (\mu _{dil} ) = \bigoplus _{j=1} ^N \cK _j$ is the direct sum of at most finitely many irreducible Cuntz row isometries of dilation type. If $T^{(j) *} := (\Pi  ^{(j)}) ^* | _{\cH _\mu \cap \cK _j}$ then each $T^{(j)}$ is a finite and irreducible row co-isometry with irreducible and minimal row isometric dilation $\Pi ^{(j)}$. 
\end{thm}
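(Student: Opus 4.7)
The plan is to combine the structural results already established in the paper. By Theorem~\ref{fincorthm}, I may assume $\mu = \mu_\fb$ for some NC rational $\fb \in [\mult]_1$. Lemma~\ref{Tcyclic} and Proposition~\ref{confincor} then identify $\Pi_\mu$ with the minimal row isometric dilation of the finite row contraction $T_\mu$ on $\cH_\mu$. Theorem~\ref{GNStype} gives $\Pi_\mu = V_L \oplus V_{dil}$ with no Cuntz type-L or von Neumann summand present. To promote $V_L \simeq L \otimes I_\cJ$ to $V_L \simeq L$, I use that $1+N_\mu$ is $\Pi_\mu$-cyclic: its projection onto the $V_L$-reducing subspace is cyclic for $V_L$, but a cyclic vector has at most one-dimensional projection onto the wandering subspace $\C \otimes \cJ$ of $L \otimes I_\cJ$, forcing $\dim \cJ \leq 1$. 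The characterization of singularity recalled just before the theorem (singular NC measures are exactly those whose GNS row isometry is a direct sum of dilation and von Neumann type summands) then yields $\mu_{ac} = \mu_L$ with $\Pi_{\mu_L} \simeq L$, and $\mu_s = \mu_{dil}$.

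For the finer structure of the singular part I apply Theorem~\ref{DKSdiltype} to $T_\mu$: its Cuntz summand decomposes as $V_{dil} = \bigoplus_{j=1}^N \Pi^{(j)}$, where each $\Pi^{(j)}$ is irreducible Cuntz and is the minimal row isometric dilation of the irreducible row co-isometry $T^{(j)} := (T_\mu^*|_{\tilde\cH_\mu^{(j)}})^*$ on a minimal $T_\mu$-co-invariant subspace $\tilde\cH_\mu^{(j)} \subseteq \cH_\mu$. Finiteness of $N$ is forced by $\dim \cH_\mu < \infty$, and each $\Pi^{(j)}$ inherits the dilation-type property from $V_{dil}$, because any Cuntz type-L or von Neumann direct summand of $\Pi^{(j)}$ would contribute such a summand to $V_{dil}$. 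Minimality of the dilation and \cite[Corollary~4.2]{DKS-finrow} together identify $\cH_\mu \cap \cK_j$ with $\tilde\cH_\mu^{(j)}$, which gives $T^{(j)*} = \Pi^{(j)*}|_{\cH_\mu \cap \cK_j}$ as claimed.

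For the Radon--Nikodym-type formula, let $\fa \in [\mult]_1$ be the outer NC rational Sarason function of $\fb$, so that the column $\bsm \fb \\ \fa \esm$ is inner; by the main result of \cite{JMS-bwshift}, $h := \fa(1-\fb)^{-1}$ belongs to $\mult$, even though $(1-\fb)$ and $\fa$ need not be individually invertible as multipliers. The column-inner relation $\fb(R)^*\fb(R) + \fa(R)^*\fa(R) = I$ and a short manipulation identify $T := h(R)^* h(R)$ with the operator $(I-\fb(R)^*)^{-1}\fa(R)^*\fa(R)(I-\fb(R))^{-1}$ displayed in the statement (the latter is rigorously interpreted as the former via the bounded multiplier $h$). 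Since each $L_k$ commutes with every right multiplier, the row-isometric identity $L_j^*L_k = \delta_{jk} I$ yields the multi-Toeplitz relation for $T$, so $T$ is bounded and left Toeplitz. The functional $\tau(L^\om) := \ip{1}{TL^\om 1}_{\bH^2}$ is a positive NC measure whose GNS row isometry is the restriction of $L$ to the $L$-cyclic subspace generated by $h(R)1$, hence purely type-L, so $\tau$ is absolutely continuous. A Herglotz-transform comparison shows $\mu_\fb - \tau$ is positive and has dilation-type GNS row isometry, and uniqueness of the NC Lebesgue decomposition from \cite{JM-ncld} forces $\tau = \mu_{ac}$.

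The main obstacle is the final identification $\tau = \mu_{ac}$: it requires reconciling the left-multiplier expression $h(R)^*h(R)$ with the right-multiplier Herglotz transform appearing in the NC Clark correspondence, and throughout one must avoid inverting $I - \fb(R)$ in isolation and instead work only with the single bounded multiplier $h(R) \in \mult$ guaranteed by \cite{JMS-bwshift}.
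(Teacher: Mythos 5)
Your structural argument matches the paper's almost step for step: Theorem~\ref{GNStype} gives the absence of Cuntz type-L and von Neumann summands; the results of \cite{JM-ncld} identify $\Pi_L$ and $\Pi_{dil}$ with the GNS row isometries of $\mu_{ac}$ and $\mu_s$; cyclicity of $1+N_\mu$ forces the wandering subspace of the pure part to be at most one-dimensional (your elaboration via the projection-onto-wandering argument is a correct filling-in of the paper's terse remark); and Theorem~\ref{DKSdiltype} supplies the finite decomposition $\Pi_{dil} = \bigoplus_{j=1}^N \Pi^{(j)}$ into irreducible Cuntz row isometries of dilation type together with the associated irreducible row co-isometries $T^{(j)}$. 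All of that is sound and is essentially the paper's proof.

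Where you genuinely diverge is the Radon--Nikodym formula for $\mu_{ac}$. The paper handles this entirely by citation: it is \cite[Theorem~6]{JMS-bwshift}, itself a consequence of an NC rational Fej\'er--Riesz theorem \cite[Theorem~5]{JMS-bwshift} together with the NC Fatou theorem of \cite{JM-ncFatou}. You instead try to rederive it. Your first three steps are fine: $h := \fa(1-\fb)^{-1} \in \mult$ (granted by \cite{JMS-bwshift}), the identification $T = h(R)^*h(R)$ and the multi-Toeplitz property are correct and elementary, and your GNS argument showing that $\tau(L^\om) := \ip{1}{TL^\om 1}$ has a pure type-L GNS row isometry (via the isometric embedding $p + N_\tau \mapsto h(R)p(L)1$) is sound. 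But the crucial step, that $\mu_\fb - \tau \geq 0$ with singular (dilation-type) GNS row isometry so that uniqueness of the NC Lebesgue decomposition forces $\tau = \mu_{ac}$, is only asserted via a ``Herglotz-transform comparison'' that you do not carry out, and you acknowledge this is the obstacle. That comparison is in fact the nontrivial content of \cite[Theorem~6]{JMS-bwshift} (and it relies on the NC rational Fej\'er--Riesz factorization, which does not appear in your sketch). So either supply that argument or cite the external theorem as the paper does; as written, that portion is a genuine gap.

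One small typographical note: the left-Toeplitz relation you verify should be $L_j^*TL_k = \delta_{jk}T$; the definition in the paper reads $L_j^*TL_k = \delta_{jk}I$, which is a misprint, and your computation (pulling $L_j^*, L_k$ past the right multipliers $h(R)^*, h(R)$ and using $L_j^*L_k = \delta_{jk}I$) gives the correct $\delta_{jk}T$.
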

Of course it may be that either $\mu _{ac} =0$ or $\mu _s =0$. Recall that we say that a row isometry, $V$, is irreducible, if the $V_k$ have no jointly reducing subspace. 
\begin{proof}
If $\Pi = \Pi _\mu$ is the GNS row isometry of a finitely--correlated NC measure, $\mu$, then $\Pi _\mu$ is the minimal row isometric dilation of the finite row contraction $T_\mu = (\Pi _\mu ^* | _{\cH _\mu } ) ^*$, and $1 + N_\mu$ is cyclic for $\Pi _\mu$. By Theorem \ref{GNStype}, 
$ \Pi = \Pi _L \oplus \Pi _{dil}$, is the direct sum of a pure row isometry and a Cuntz row isometry of dilation type. By \cite[Section 8, Corollary 8.12, Corollary 8.13]{JM-ncld}, $\Pi _L = \Pi _{ac} $ and $\Pi _{dil} = \Pi _s$ are the GNS row isometries of the absolutely continuous and singular parts of $\mu$, respectively. The fact that $\Pi _\mu$ is cyclic implies that the wandering space of its pure part is at most one dimensional, so that $\Pi _\mu \simeq L \oplus \Pi _{dil}$ where either direct summand may be absent and $\Pi _{dil}$ is cyclic.

The Radon--Nikodym formula for the absolutely continuous (and pure) part of $\mu = \mu _\fb$ in the theorem statement is established in \cite[Theorem 6]{JMS-bwshift}, and is a consequence of an NC rational Fej\'er--Riesz Theorem \cite[Theorem 5]{JMS-bwshift} and the NC Fatou Theorem of \cite{JM-ncFatou}. It further follows from \cite[Theorem 6.5]{DKS-finrow} (see Theorem \ref{DKSdiltype}) that $\Pi _{dil} = \oplus _{k=1} ^N \Pi ^{(k)} _{dil}$ is the direct sum of finitely many irreducible Cuntz row isometries of dilation type. The remaining claim follows from Theorem \ref{DKSdiltype}.
\end{proof}

\begin{cor} \label{singinner}
Let $\mu \in \posncm$ be a finitely--correlated Cuntz--Toeplitz functional. The following are equivalent:
\bn
\item The NC rational multiplier $\fb \in [\mult ] _1$ such that $\mu = \mu _\fb$ is inner.
\item $\Pi _\mu$ is purely Cuntz.
\item $T_\mu$ is a finite row co-isometry.
\item $\Pi _\mu$ is purely of dilation type.
\item $\mu$ is a singular NC measure.
\en
\end{cor}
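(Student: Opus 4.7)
The plan is to combine Theorem \ref{fincorstructure}, Popescu's dilation theorem, and the NC rational Sarason/Fej\'er--Riesz theory of \cite{JMS-bwshift} to establish the five-way equivalence, with $(1) \Leftrightarrow (5)$ being the analytically substantive step and the remaining equivalences being essentially structural.

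First, $(2) \Leftrightarrow (3)$ follows immediately from \cite[Proposition 2.5]{Pop-dil}: since $\Pi_\mu$ is the minimal row isometric dilation of the finite row contraction $T_\mu$, $\Pi_\mu$ is Cuntz exactly when $T_\mu$ is a row co-isometry. Next, $(2) \Leftrightarrow (4) \Leftrightarrow (5)$ can be read off from Theorem \ref{fincorstructure}, which produces the decomposition $\Pi_\mu = \Pi_{\mu_{ac}} \oplus \Pi_{\mu_s}$ with $\Pi_{\mu_{ac}} \simeq L$ pure of type $L$ and $\Pi_{\mu_s}$ a Cuntz row isometry of dilation type. Since any pure type-$L$ row isometry admits a nontrivial wandering subspace and is therefore never Cuntz, $\Pi_\mu$ is purely Cuntz iff the type-$L$ summand is absent iff $\mu_{ac} = 0$ iff $\mu$ is singular iff $\Pi_\mu = \Pi_{\mu_s}$ is purely of dilation type.

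For $(1) \Leftrightarrow (5)$ I would exploit the Radon--Nikodym formula from Theorem \ref{fincorstructure}:
$$ \mu_{ac}(L^\om) = \ip{1}{h(R)^* h(R) L^\om 1}_{\hardy}, \qquad h := \fa(1 - \fb)^{-1} \in \mult, $$
where $\fa$ is the NC Sarason function of $\fb$. Specializing to $\om = \emptyset$ and invoking positivity of $\mu_{ac}$, one has $\mu_{ac} \equiv 0$ iff $\|h^\mrt\|^2 = \ip{1}{h(R)^* h(R) \cdot 1}_{\hardy} = 0$ iff $h \equiv 0$ iff $\fa \equiv 0$; the last step uses that $(1 - \fb)^{-1}$ is an invertible NC rational function and hence not a zero divisor in the NC function skew field. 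It then remains to verify $\fa \equiv 0 \Leftrightarrow \fb$ inner: the forward direction is immediate from the column-inner identity $\fb(L)^*\fb(L) + \fa(L)^*\fa(L) = I$ given by the Sarason construction, while the reverse direction uses the NC rational Fej\'er--Riesz theorem \cite[Theorem 5]{JMS-bwshift} to factor the defect $I - \fb(L)^*\fb(L)$, whenever $\fb$ is not inner, as $\fa(L)^*\fa(L)$ for a nonzero NC rational outer $\fa$.

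The main technical input is the NC rational Fej\'er--Riesz theorem underpinning $\fa \equiv 0 \Leftrightarrow \fb$ inner, but this was already established in \cite{JMS-bwshift}. With it in hand, the remaining verifications are essentially bookkeeping within the algebra of NC rational functions and the structural results collected in Theorem \ref{fincorstructure}.
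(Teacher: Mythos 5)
Your proof is correct in substance, but the equivalence of (1) with the remaining conditions is reached by a different path than the one the paper takes. The paper links (1) to (2) directly through the column--extremity bridge: $\fb$ is inner if and only if it is column--extreme by \cite[Theorem 4]{JMS-bwshift}, and $\Pi_\mu$ is Cuntz if and only if $\fb_\mu$ is column--extreme by \cite[Theorem 6.4]{JM-freeCE}; the chain (2) $\Leftrightarrow$ (3) $\Leftrightarrow$ (4) $\Leftrightarrow$ (5) is then handled much as you do, citing Theorem \ref{GNStype} and \cite[Corollary 8.13]{JM-ncld} directly where you repackage the same facts through Theorem \ref{fincorstructure}. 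Your route to (1) instead goes straight to (5) via the explicit Radon--Nikodym formula, which is more quantitative but trades a single appeal to \cite[Theorem 6.4]{JM-freeCE} for the NC rational Fej\'er--Riesz/Sarason theory; both arguments ultimately bottom out in \cite{JMS-bwshift}.

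There is one small slip in your last step. The implication $\fa \equiv 0 \Rightarrow \fb$ inner cannot be read off the column--inner identity $\fb(L)^*\fb(L) + \fa(L)^*\fa(L) = I$: that identity is only established when $\fb$ is non--CE, in which case the Sarason function is normalized so that $\fa(0) > 0$, and so invoking the identity with $\fa \equiv 0$ is circular. In fact what you describe as the ``reverse direction'' --- factoring $I - \fb(L)^*\fb(L)$ as $\fa(L)^*\fa(L)$ for a nonzero rational $\fa$ whenever $\fb$ is not inner --- is exactly the contrapositive of $\fa \equiv 0 \Rightarrow \fb$ inner, so your forward and reverse labels are swapped. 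The genuine reverse implication, $\fb$ inner $\Rightarrow \fa \equiv 0$, is the elementary observation that an inner multiplier is automatically CE: contractivity of the column $\bsm \fb \\ a \esm$ together with $\fb(L)^*\fb(L) = I$ forces $a(L)^*a(L) \leq 0$, hence $a \equiv 0$. With those two adjustments your argument is complete.
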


\begin{remark}
Classically, a contractive multiplier of Hardy space is inner if and only if its Clark measure is singular. In the NC setting we were able to prove one half of this fact in \cite[Corollary 6.29]{JM-ncFatou}. Namely, if $b \in [ \mult ] _1$ is inner, then $\mu _b \in \posncm$ is singular.  By Corollary \ref{singinner}, we see that if $\fb \in [\mult ] _1$ is such that $\mu _\fb$ is a singular finitely--correlated Cuntz--Toeplitz functional, then it is Cuntz (and of dilation type) and $\fb$ is an NC rational inner. Hence the NC analogue of this classical corollary to Fatou's theorem holds, at least for NC rational multipliers.
\end{remark}

\begin{proof}
 By \cite[Theorem 4]{JMS-bwshift}, a contractive NC rational multiplier of Fock space is inner if and only if it is column--extreme. By \cite[Theorem 6.4]{JM-freeCE}, $\Pi _\mu$ is Cuntz if and only if $\fb _\mu$ is CE. By \cite[Proposition 2.5]{Pop-dil}, we know that a finitely--correlated Cuntz--Toeplitz functional $\mu = \mu _\fb$ is such that $\Pi _\mu$ is Cuntz if and only if $T_\mu$ is a finite row co-isometry.  By Theorem \ref{GNStype}, $\Pi _\mu$ is Cuntz if and only if it is a cyclic row isometry purely of dilation type.   By \cite[Corollary 8.13]{JM-ncld}, this happens if and only if $\mu$ is a singular NC measure.
\end{proof}

\begin{remark}
If $\mu \in \posncm$ is any positive NC measure, we can define a positive extension of $\mu$ from the free disk system to the Cuntz--Toeplitz $C^*-$algebra, $\mc{E} _d = C^* \{ I , L_1 , \cdots , L_d \}$, by 
$$ \hat{\mu} ( a_1 ( L ) a_2 (L) ^* ) := \ip{1 + N_\mu}{\pi _\mu (a_1) \pi _\mu (a_2 ) ^* \left(1 + N_\mu\right)}_\mu, $$ where $\pi _\mu : \mc{E} _d \rightarrow \scr{L} (\hardy (\mu ) )$ is the GNS $*-$representation obtained from $\mu$. By well known results in $C^*-$algebra theory, $\hat{\mu}$ will be an extreme point in the state space of $\mc{E} _d$, \emph{i.e.} a \emph{pure state}, if and only if $\pi _\mu$ is irreducible \cite[Theorem I.9.8]{KRD-eg}. Equivalently, $H_\mu$ will be an extreme point in the set of all NC Herglotz functions obeying $H(0) =1$.
\end{remark}

\begin{remark}
Let $\fb$ be NC rational inner. That $\mu _\fb \in \posncm$ is a finitely--correlated Cuntz functional is an analogue of classical theory. Indeed, any rational inner $\fb \in H^2$ is a finite Blaschke product,
$$ \fb (z) = \zeta \, \prod _{k=1} ^N \frac{z-w_k}{1-\ov{w}_k z}; \quad \quad w_k \in \D, \ \zeta \in \partial \D. $$ In this case, the Clark measure, $\mu _\fb$, is a finite positive linear combination of exactly $N$ Dirac point masses. This singular measure, $\mu _\fb$, is supported on the set of points, $\zeta \in \partial \D$, at which $\fb (\zeta ) =1$, so that the point masses are located at the $N$ roots of the degree $N$ polynomial,
$$ \prod _{k=1} ^N (z-w_k) - \prod _{k=1} ^N (1 - \ov{w} _k z). $$ A singular finitely--correlated functional can then be thought of as an analogue of a positive linear combination of finitely many point masses. If the GNS representation of the functional is irreducible, this can be interpreted as the analogue of a single atom. In this case where $d=1$, if $\mu$ is such a finite linear combination of point masses, then $H^2 (\mu ) = L^2 (\mu )$ so that $\Pi _\mu := M_\zeta | _{H^2 (\mu)}$ is unitary, and, 
$$ \cH _\mu := \bigvee _{k\geq 0} M_\zeta ^{*k} 1 = H^2 (\mu ). $$ In this case $\cH _\mu = H^2 (\mu )$ is finite dimensional (of dimension $=N$ if $\mu$ is a linear combination of $N$ point masses) and $\Pi _\mu$ is a finite dimensional unitary. Indeed, since $T_\mu$ is then a finite co-isometry, it must be unitary, so that $T _\mu = \Pi _\mu$ in this single-variable case. In this regard, the theory becomes more complicated when $d>1$ as there are no finite--dimensional row isometries.

Finally, suppose that a contractive rational multiplier $\fb \in [ H^\infty ] _1$ is not an extreme point. Then $1 - |\fb (\zeta ) | ^2 = |\fa (\zeta) | ^2$, $ a.e. \ \partial \D$ where $\fa$ is the rational Sarason function of $\fb$, and it follows that $M_\zeta | _{H^2 (\mu _{\fb ; ac})}$ is a pure cyclic isometry unitarily equivalent to the shift. In particular, $H^2 ( \mu _{\fb } )$ is infinite--dimensional. Recall that there is onto isometry, the \emph{weighted Cauchy transform}, $\scr{F} _\fb : H^2 (\mu _\fb ) \rightarrow \scr{H} (\fb )$ of $H^2 (\mu _b )$ onto the de Branges--Rovnyak space of $\fb$, and that the image of $\Pi _\fb ^*$, where $\Pi _\fb := M_\zeta | _{H^2 (\mu _\fb )}$, under the weighted Cauchy transform is a rank--one perturbation of the restricted backward shift,
$$ X(1)  := \underbrace{S^* | _{\scr{H} (\fb )}}_{=: X} + \ip{K_0 ^\fb}{\cdot}_{\scr{H} (\fb )} S^* \fb, $$ $X(1) = \scr{F} _\fb \Pi _\fb ^* \scr{F} _\fb ^*$ \cite{Clark}. In this case, since $\fb$ is rational, we obtain that $$ \cH _\mu := \bigvee \Pi _\fb ^{*k} 1 \simeq \bigvee X(1) ^{k} K_0 ^\fb =: \scr{M} (\fb ) \subsetneqq \scr{H} (\fb ), $$ is a finite--dimensional subspace of $\scr{H} (\fb )$.
 Hence, in this case if $P$ is the projection onto $\scr{M} (\fb )$, then 
$$ T (1) :=  \left( X(1)  | _{\scr{M} (\fb ) } \right) ^*, $$ is a finite dimensional contraction, and $X(1) ^* \simeq \Pi _\fb = M _\zeta | _{H^2 (\mu _\fb )}$ is its minimal isometric dilation. This again shows that our results are natural extensions of classical theory.
\end{remark}

\begin{remark}
Let $\mu \in \posncm$ be a state, \emph{i.e.} a positive NC measure such that $\mu(I) = 1$. Let $b$ be the associated contractive NC function so that $H_\mu = H_b$. Let $b_n \in \fp$ be the $n$th Ces\`aro sum of $b$. By \cite{DP-alg}, $\|b_n(L)\| \leq \|b (L) \| \leq 1$. Moreover $b_n(L) \stackrel{SOT-*}{\longrightarrow} b(L)$. Since $\bH^2_d$ is an NC-RKHS, we obtain that $b_n$ converges uniformly to $b$ on sub-balls. This implies that the inverse Cayley transforms of $b_n$, $H_{b_n}$, converge uniformly on sub-balls to the NC Herglotz function $H_{\mu}$. Since the Taylor--Taylor coefficients of $H_{\mu}$ are essentially the moments of $\mu$, the states $\mu_n := \mu _{b_n}$ converge weak$-*$ to $\mu$ (these are states since $H_{\mu}(0) = 1$ and thus $b(0) = 0 = b_n (0)$ and conversely). In other words, the finitely--correlated Cuntz--Toeplitz functionals are weak$-*$ dense in $\posncm$. This is consistent with our interpretation of finitely--correlated NC measures as NC analogues of finite positive sums of point masses. 

%This suggests that the Kennedy--Lebesgue--von Neumann--Wold decomposition of a Cuntz row isometry can be viewed as a refinement of the classical Lebesgue decomposition, where the von Neumann type part corresponds to the singular continuous measure and the dilation type part corresponds to the discrete measure.

\end{remark}

\section{Minimal realizations of rational multipliers} \label{minreal}

The results of the previous section show that any NC rational multiplier of Fock space is determined by a positive and finitely--correlated NC Clark measure. Moreover, any such NC rational Clark measure can be constructed from a finite--dimensional row contraction, $T$ on $\cH$, and a vector $x \in \cH$ which is $T^*-$cyclic and $V-$cyclic, where $V$ is the minimal row isometric dilation of $T$. Namely, if $\mu = \mu _\fb$ is the finitely--correlated NC Clark measure of a contractive NC rational multiplier $\fb \in [\mult ] _1$, then $\mu (L^\om ) = \mu _{T,x} (L^\om ) = \ip{x}{T^\om x}_{\cH}$ where the pair $(T,x)$ has the above properties. Given such a finite row contraction $T$ and vector $x$ so that $\mu = \mu _\fb = \mu _{T,x}$, our goal now is to express the minimal Fornasini--Marchesini realization of $\fb$ soley in terms of $T$ and $x$. We will accomplish this by determining the relationship between $T,x$ and the minimal de Branges--Rovnyak FM realization of $\fb$ as described in Section \ref{ss:minFM}.

Assume that $T$ is a finite row contraction on $\cH$, $V$ is its minimal row-isometric dilation on $\cK \supsetneqq \cH$, and $x \in \cH$ is cyclic for $T^*$, $V$ and hence $T$. As in the previous section we define the positive NC measure $\mu := \mu _{T,x}$, $\mu _{T,x} (L ^\om) = \ip{x}{T^\om x} _{\cH}$. By Proposition \ref{confincor}, $T\simeq T_\mu$ and $V \simeq \Pi _\mu$ via the unitary $U_x \left(\mf{z}^\om + N_\mu\right) = V^\om x$. Recall that the right free Cauchy transform is an isometric map from $\hardy (\mu )$ onto the right free Herglotz space $\scr{H} ^+ ( \mf{H} _\mu )$, where $\mf{H} _\mu$ is the NC rational Herglotz--Riesz transform of the finitely--correlated NC measure, $\mu$. If $\mu = \mu _\fb$ is the NC Clark measure of an NC rational $\fb \in [ \mult ] _1$, $\mf{H} _\fb (Z) = \mf{H} _\mu (Z) + it I_n$ where $t:= \nbim \mf{H} _\fb (0) \in \R$, and $\mf{H} _\fb (Z) = (I_n + \fb (Z) ) (I_n - \fb (Z) ) ^{-1}$.  Observe that the CPNC kernel for the Herglotz space $\scr{H} ^+ (\mf{H} _\mu )$ is:
\ba K^\mu (Z,W) [\cdot ]& = & \frac{1}{2} K(Z,W) [ \mf{H} _\mu ^\mrt (Z) (\cdot )] + \frac{1}{2} K(Z, W)  [ (\cdot) \mf{H}  _\mu ^\mrt (W) ^*] \nn \\
& = & \frac{1}{2} K(Z,W) [ (\mf{H} _\mu ^\mrt (Z) +it I_n) (\cdot )] + \frac{1}{2} K(Z,W) [ (\cdot) (\mf{H} _\mu ^\mrt (W) ^* -it I_m )] \nn \\
& = & \frac{1}{2} K(Z,W) [ \mf{H} _\fb ^\mrt (Z) (\cdot )] + \frac{1}{2} K(Z,W) [ (\cdot) \mf{H} _\fb ^\mrt (W) ^* ]. \nn \ea 
That is, the Herglotz space of any two NC Herglotz functions which differ by an imaginary constant is the same. Hence, if we define 
$$ \fb _\mu (Z) := (\mf{H} _\mu (Z) - I_n) (\mf{H} _\mu (Z) + I_n ) ^{-1}, $$ then as described in the background section, $\fb = \frac{\ov{z (t)}}{z(t)} \cdot \la _{z(t)} \circ \fb _\mu$,
where $\la _{z(t)}$ is the M\"obius transformation
$$ \la _{z(t)} = \frac{z - z(t)}{1-\ov{z(t)}z} \quad \mbox{and} \quad z(t) = \frac{t}{2i +t} \in \D. $$ Moreover, we have that both
\ba K^\mu (Z,W) [P] & = & K(Z,W)\left[ (I - \fb _\mu  ^\mrt (Z) ) ^{-1} \left( P - \fb _\mu ^\mrt(Z) P \fb _\mu ^\mrt (W) ^*    \right) (I - \fb _\mu ^\mrt (W) ^* ) ^{-1}  \right] \quad \mbox{and} \nn \\
& = & K(Z,W)\left[ (I - \fb  ^\mrt (Z) ) ^{-1} \left( P - \fb  ^\mrt(Z) P \fb  ^\mrt (W) ^*    \right) (I - \fb  ^\mrt (W) ^* ) ^{-1} \right]  \nn \ea for any $Z \in \B ^d _n$, $W \in \B ^d _m$ and $P \in \C ^{n \times m}$. This identity shows that $M^R _{(I -\fb _\mu ^\mrt) }$ is an isometric right multiplier of $\scr{H} ^+ ( \mf{H} _\mu ) = \scr{H} ^+ ( \mf{H} _\fb )$ onto the right free de Branges--Rovnyak space $\scr{H} ^\mrt (\fb _\mu )$ and that $M^R _{(I-\fb ^\mrt)}$ is an isometric right multiplier of $\scr{H} ^+ (\mf{H}  _\mu )$ onto $\scr{H} ^\mrt (\fb )$. The \emph{weighted free Cauchy transform}, $\scr{F} _\fb := M^R _{I_n - \fb ^\mrt (Z)} \circ \scr{C} _\mu : \hardy (\mu ) \rightarrow \scr{H} ^\mrt (\fb  )$ is then an onto isometry \cite{JM-freeAC,JM-freeCE}, and $\scr{U} _x := \scr{F} _\fb \circ U_x ^* : \cK \rightarrow \scr{H} ^\mrt (\fb  )$ will be an onto isometry. Further recall from \cite{JM-freeAC,JM-freeCE}, that the weighted Cauchy transform intertwines the adjoint of the GNS row isometry, $\Pi _\mu$, with a rank--one (co-isometric) Clark perturbation, $X(1)$, of the restricted backward shift $X := L^* | _{\scr{H} ^\mrt (\fb )}$:
$$ X(1) _k := \underbrace{X_k}_{=L_k^*| _{\scr{H} ^\mrt (\fb )}} + \frac{1}{1-\fb (0)} L_k^* \fb ^\mrt \ip{K_0 ^\fb}{\cdot}_{\fb}, \quad \quad 1 \leq k \leq d.$$ Hence $\scr{U} _x V ^* _k = X(1) _k \scr{U} _x$. In the above, $K_0 ^\fb = K ^\fb \{ 0 , 1 ,1 \}$, is the point evaluation vector at the point $0 \in \B ^d _1$ for $\scr{H} ^\mrt (\fb )$. Observe that  
\ba X(1) _k K_0 ^\fb & = & -L_k ^* \fb ^\mrt \ov{\fb (0)} + L_k ^* \fb^{\mrt} \frac{1 - | \fb (0) | ^2}{1 - \fb (0)} \nn \\
& = & L^* _k \fb ^\mrt  \, \frac{-\ov{\fb (0)} + |\fb (0) | ^2 + 1 - |\fb (0) | ^2 }{1 - \fb (0) }  \nn \\
& = & \frac{1 - \ov{\fb (0)}}{1- \fb (0)} \,  L_k ^* \fbt. \label{Xoneonone} \ea 
That is, we can write:
\be X_k = L_k ^* | _{\scr{H} ^\mrt (\fb )} = X(1) _k \left( I -  \frac{1}{1-\ov{\fb (0)} } K_0 ^\fb \ip{K_0 ^\fb}{\cdot} \right). \label{Clarkform} \ee 
Here, note that 
$$ \fb (0) = \frac{ \mf{H} _\fb (0) -1}{\mf{H} _\fb (0) +1} = \frac{ \| x \| ^2 +it -1}{\|x \| ^2 +it +1} \in \D. $$ 
If we define,
$$ \scr{M} (\fb ) := \bigvee K_0 ^\fb + \bigvee _{\om \neq \emptyset} L^{*\om} \fbt, $$ then by Equation (\ref{Xoneonone}), 
$$ \scr{M} (\fb ) = \bigvee  X(1) ^\om K_0 ^\fb, $$ is both $X$ and $X(1)-$invariant. Let $T(1) ^* := X(1) | _{\scr{M} (\fb )}$ and $T(0) ^* := X | _{\scr{M}  (\fb )}$. Observe that the image of $x \in \cH$ under $\scr{U} _x$ is:
\ba \scr{U} _x x & = &  \left( M^R _{(I - \fbt (Z) ) ^{-1}} \right) ^* \scr{C} _\mu 1 + N_\mu \nn \\
& = & \left( M^R _{(I - \fbt (Z) ) ^{-1}} \right) ^* K_0 ^\mu \nn \\
& = & \frac{1}{1 - \ov{\fb (0)} } K_0 ^\fb. \label{freeCTofone} \ea It follows that $\scr{M}  (\fb ) = \scr{U} _x \cH $ and that $T(1)^*$ is unitarily equivalent via $\scr{U} _x$ to $T ^* = V ^* | _{\cH}$.
It then follows from Equations (\ref{Clarkform}) and (\ref{freeCTofone})  that $T (0) ^* _k$ is unitarily equivalent to 
\be T_k ^* \left( I - (1 - \fb (0)) x \ip{x}{\cdot}_{\cH} \right). \ee
If we define, as in Section \ref{ss:minFM}, 
$$ \scr{M} _0 (\fb ) := \bigvee _{\om \neq \emptyset} L^{*\om} \fb ^\mrt, $$ with projector $Q_0$ then $\scr{M} _0 (\fb)$ is $T(0) ^*$ and $T(1)^*-$invariant, and if we set $A := T(0) ^* | _{\scr{M} _0 (\fb )}$,  then the minimal de Branges--Rovnyak FM realization is given by $(A,B,C,D)$ where 
$$ A = L^* | _{\scr{M} _0 (\fb )} = T(0) ^* | _{\scr{M} _0 (\fb )}, \quad B= L^* \fbt = \frac{1 - \fb (0) }{1-\ov{\fb (0)}} T(1) ^* K_0 ^\fb, \quad C = \ip{Q_0 K_0 ^\fb}{\cdot}_{\fb} \quad \mbox{and} \quad D= \fb (0). $$ 
It follows that $\scr{M} _0 (\fb ) = \scr{U} _x \cH _0$ where 
$$ \cH _0 := \bigvee _{\om \neq \emptyset} T^{*\om} x, $$ with projector $P_0$. A minimal FM realization of $\fb$, where $\mu _\fb = \mu _{T,x}$, is then $(\hat{A} , \hat{B} , \hat{C} , \hat{D} )$ where
$$ \hat{A} := \left. T^* \left( I - (1 - \fb (0 ) ) x \ip{ x}{\cdot}_{\cH} \right) \right| _{\cH _0}, $$ 
$$ \hat{B} _k := (1 - \fb (0) ) T^* _k x, \quad \hat{C} := (1 - \fb (0) ) \ip{P_0 x}{\cdot}_{\cH _0} $$ and 
$$ \hat{D} = \fb (0) = \frac{ \| x \| ^2 +it -1 }{\| x \| ^2 +it +1}. $$ 

In summary, we have proven the following theorem:

\begin{thm} \label{minratreal}
An NC rational function, $\fb$, with $0 \in \nbdom \fb$ belongs to $[ \mult ] _1$ if and only if it is realized as the transfer--function of the minimal and finite--dimensional Fornasini--Marchesini colligation: 
$$ U_{T,x} := \bpm T_0 ^* & (1 - \fb (0) ) T^* x \\ (1 -\fb (0) ) \ip{P_{\cH _0} x}{\cdot}_{\cH _0} &  \fb (0)  \epm, \quad \quad \fb (0) = \frac{\| x \| ^2 +it - 1 }{ \| x \| ^2 +it +1 }, \ t \in \R, $$ where $T$ is a finite--dimensional row contraction on $\cH$, $x \in \cH$ is cyclic for both $T^*$ and the minimal row isometric dilation, $V$, of $T$, 
$$ T_0 ^* := \left. T^* \left( I - (1 - \fb (0) ) \ip{x}{\cdot} x \right) \right| _{\cH _0}  \quad \mbox{and} \quad \cH _0 := \bigvee _{\om \neq \emptyset} T^{*\om} x.$$
Moreover, $\fb$ is inner if and only if $T$ is also a row co-isometry.
\end{thm}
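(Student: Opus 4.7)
The plan is to derive the explicit realization by transporting the minimal de Branges--Rovnyak FM realization from $\scr{H}^\mrt(\fb)$ back to the finite--dimensional space $\cH$ via the weighted free Cauchy transform. Concretely, given NC rational $\fb \in [\mult]_1$, Theorem \ref{fincorthm} guarantees that $\mu := \mu_\fb$ is a finitely--correlated Cuntz--Toeplitz functional, and Proposition \ref{confincor} shows that $\mu = \mu_{T,x}$ for some finite row contraction $T$ on $\cH$ with a vector $x$ that is jointly cyclic for $T^*$, for $T$, and for the minimal row isometric dilation $V$ of $T$. One takes $\cH = \cH_\mu$, $T = T_\mu$, $x = 1 + N_\mu$, and then $U_x:\hardy(\mu) \to \cK$ unitarily intertwines $(\Pi_\mu, T_\mu, 1+N_\mu)$ with $(V, T, x)$.

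Next, I would compose $U_x^*$ with the weighted free Cauchy transform $\scr{F}_\fb = M^R_{I-\fbt}\circ \scr{C}_\mu : \hardy(\mu) \to \scr{H}^\mrt(\fb)$ to obtain an onto isometry $\scr{U}_x : \cK \to \scr{H}^\mrt(\fb)$. A short computation shows $\scr{U}_x x = (1-\ov{\fb(0)})^{-1} K_0^\fb$, and that $\scr{U}_x V_k^*\scr{U}_x^* = X(1)_k$, where $X(1)$ is the rank--one Clark perturbation of $X = L^*|_{\scr{H}^\mrt(\fb)}$. Using $\scr{U}_x$ I would then identify the finite--dimensional subspace $\scr{M}(\fb) = \bigvee X(1)^\om K_0^\fb$ with $\scr{U}_x \cH$, and $\scr{M}_0(\fb) = \bigvee_{\om\neq\emptyset} L^{*\om}\fbt$ with $\scr{U}_x \cH_0$ where $\cH_0 = \bigvee_{\om\neq\emptyset} T^{*\om} x$. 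The Clark identity (\ref{Clarkform}), combined with the computation (\ref{Xoneonone}) and the formula for $\scr{U}_x x$, converts $A = L^*|_{\scr{M}_0(\fb)}$, $B = L^*\fbt$, $C = \ip{Q_0 K_0^\fb}{\cdot}_\fb$, and $D = \fb(0)$ into the asserted expressions for $\hat A = T_0^*$, $\hat B$, $\hat C$, $\hat D$ on $\cH_0$. Minimality of the resulting FM colligation follows from Lemma \ref{min} transported through the unitary $\scr{U}_x$.

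For the converse, beginning from any finite row contraction $T$ on $\cH$ with cyclic vector $x$ for $T^*$ and $V$, Proposition \ref{confincor} produces a finitely--correlated $\mu_{T,x} \in \posncm$; by the reverse direction of Theorem \ref{fincorthm} this is the NC Clark measure of a contractive NC rational $\fb \in [\mult]_1$ (with $\fb(0)$ fixed by the normalization $\|x\|^2$ and the imaginary constant $t = \nbim \mf{H}_\fb(0)$), and then running the argument above shows that the FM colligation $U_{T,x}$ realizes $\fb$. For the final assertion, Corollary \ref{singinner} characterizes NC rational inner multipliers by the condition that $T_\mu$ is a finite row co--isometry; since $T \simeq T_\mu$ under $U_x$ by Proposition \ref{confincor}, this translates directly to $T$ being a row co--isometry.

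The main technical obstacle is the bookkeeping in the third paragraph: the minimal de Branges--Rovnyak A--block uses the bare restricted backward shift $X$, whereas the intertwining through $\scr{U}_x$ naturally pairs $V^*$ with the perturbed operator $X(1)$, so the rank--one correction $I - (1-\fb(0))\,x\ip{x}{\cdot}$ must be extracted carefully via (\ref{Clarkform}), and one must verify that compressing to $\cH_0$ (rather than all of $\cH$) indeed reproduces the de Branges--Rovnyak compression to $\scr{M}_0(\fb)$ rather than $\scr{M}(\fb)$. The scalar factor $(1-\fb(0))/(1-\ov{\fb(0)})$ appearing in (\ref{Xoneonone}) has to be tracked through both the $B$ and $C$ blocks to produce the stated symmetric $(1-\fb(0))$ weights.
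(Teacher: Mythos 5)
Your proposal is correct and follows essentially the same route as the paper: transporting the minimal de Branges--Rovnyak FM colligation from $\scr{H}^\mrt(\fb)$ to $\cH$ via $\scr{U}_x = \scr{F}_\fb \circ U_x^*$, using $\scr{U}_x x = (1-\ov{\fb(0)})^{-1}K_0^\fb$, the intertwining $\scr{U}_x V_k^*\scr{U}_x^* = X(1)_k$, and the rank--one identity (\ref{Clarkform}) to extract $T_0^*$, with the converse and the inner characterization via Proposition \ref{confincor}, Theorem \ref{fincorthm}, and Corollary \ref{singinner}. You have also correctly flagged the genuine technical bookkeeping points (tracking the scalar $(1-\fb(0))/(1-\ov{\fb(0)})$ and restricting to $\cH_0$ rather than $\cH$), which is exactly where the paper's computation concentrates.
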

If $T$ is an irreducible finite row-coisometry on $\cH$, Lemma \ref{irredlemma} implies that given any $x \in \cH$, the pair $T,x$ satisfies the conditions of Theorem \ref{minratreal} and hence generates an NC rational inner. Further recall that Lemma \ref{Tcyclic2} implies that if $T$ is a finite row co-isometry, then $x \in \cH$ is $T-$cyclic if and only if it is $V-$cyclic. 
\begin{remark}
Observe that $\fb  (0) =0$ if and only if $\mf{H} _\fb = \mf{H} _{\mu _\fb}$ and $\mu (I) = \| x \| ^2 =1$. Also note that 
$$ \| x \| ^2 + it = \mf{H} _\fb (0) = \frac{1 + \fb (0)}{1 - \fb (0) }. $$ 
\end{remark}
\begin{eg}
Let $q : \C ^{n\times n } \rightarrow \C ^{n\times n}$ be a completely positive and completely contractive linear map. By Choi's Theorem \cite{Choi}, $q$ has Kraus operators or quantum effects, $q_k \in \C ^{n \times n}$, $1\leq k \leq d$ (for some $d \in \N$) so that 
$$ q (A) := \sum _{j=1} ^d q_j A q_j ^*; \quad \quad A \in \C ^{n\times n}. $$ 
Since $q$ is completely contractive, the $d-$tuple $Q:= \left( q_1 , \cdots , q_d \right)$ is a row contraction, and $Q$ will be a row co-isometry if and only if $q$ is unital. Provided there is a vector $x \in \C ^n$ which is cyclic for both $Q^*$ and the minimal row isometric dilation of $Q$, we can associate a unique contractive NC rational multiplier, $\mf{q}$, to the CP map $q$.
\end{eg}
\begin{eg}
Consider the row co-isometry $T : \C ^2 \otimes \C ^2 \rightarrow \C ^2$ defined by 
$$ T_1 := \bpm 0 & 1 \\ 0 & 0 \epm, \quad \mbox{and} \quad T_2 := \bpm 0 & 0 \\ 1 & 0 \epm. $$ 
It is not difficult to check that this is irreducible. Hence we can choose any non-zero $x \in \C ^2$, and apply Theorem \ref{minratreal} to construct an NC rational inner function.

First consider $x = \bsm 1 \\ 0 \esm$. Since $\| x \| =1$, we will have $\fr (0) =0$. Then, 
\ba   I_2 - x x^*  =   \bpm 0 & 0 \\ 0 & 1 \epm,  \quad \quad   & \mbox{so that} & \nn \\
  T_{0;1} ^* =  \bpm 0 & 0 \\ 1 & 0 \epm  \bpm 0 & 0 \\ 0 & 1 \epm = 0 _2  \quad \quad    & \mbox{and} & \quad \quad
 T_{0;2} ^* = \bpm 0 & 1 \\ 0 & 0 \epm \bpm 0 & 0 \\ 0 & 1 \epm = \bpm 0 & 1 \\ 0 & 0 \epm.  \nn \ea
Then,
\begin{align*}  &   I - Z \otimes T_0 ^*  =  \bpm I & - Z _2 \\ 0 & I \epm, \quad \quad \left( I - Z \otimes T_0 ^* \right) ^{-1} = \bpm I & Z_2 \\ 0 & I \epm  \\
\mbox{and} \quad \quad  & Z \otimes T^* I \otimes x  =  \bpm 0 \\ Z_1  \epm,  \\
\mbox{so that} \quad \quad & \fb _{T,x} (Z)  =  \bpm I, & 0 \epm  \bpm I & Z_2 \\ 0 & I \epm \bpm 0 \\ Z_1 \epm  \\
&  =  Z_2 Z_1. \end{align*} This is clearly inner. Note that $T$ is an irreducible row co-isometry, and yet $\fb (Z) = Z_1 Z_2$ is reducible as an inner left multiplier of the Fock space. That is, $\fb$ is the product of two NC rational inner left multipliers. For this $\fb$,
$$ T_{0;1} ^* = 0 _2 \quad \mbox{and} \quad T_{0;2} ^* = \bpm 0 & 1 \\ 0 & 0 \epm, $$ so that $T_0 ^*$ is a reducible $2-$tuple. 

%This suggests that reducibility of $\fb$ may be related to reducibility of $T_0 ^*$. \\

Similarly, taking $x = \bsm 0 \\ 1 \esm$ gives: 
$$ T^* _{0; 1} = \bpm 0 & 0 \\ 1 & 0 \epm, \quad T^* _{0;2} = 0 _2,$$ $$ (I - Z \otimes T_0 ^* ) ^{-1} = \bpm I & 0 \\ Z_1 & I \epm, $$ and $$ Z\otimes T^* (I_n \otimes x ) = \bpm Z_2 \\ 0_n \epm. $$ Putting this together gives, for $Z \in \B ^2 _n$,
\ba \fb (Z) & = & \underbrace{\fb (0)}_{\equiv 0} I_n + I_n \otimes x ^* \left( I -Z \otimes T_0 ^* \right) ^{-1} Z\otimes T^* I_n \otimes x \nn \\
& = & (0_n , I_n ) \bpm I_n & 0 _n \\ Z_1 & I_n \epm \bpm Z_2 \\ 0_n \epm \nn \\
& = & Z_1 Z_2. \nn \ea Again, this is inner. To obtain a less trivial example, take $x = \frac{1}{\sqrt{2}} \bsm 1 \\ 1 \esm$.  In this case,
$$ I_2 - x x^*  = \frac{1}{2} \bpm 1 & -1 \\ -1 & 1 \epm, $$
$$ T_{0;1} ^* = \frac{1}{2} \bpm 0 & 0 \\ 1 & 0 \epm  \bpm 1 & -1 \\ -1 & 1 \epm = \frac{1}{2} \bpm 0 & 0 \\ 1 & -1 \epm $$ and 
$$ T_{0;2} ^* = \frac{1}{2} \bpm -1 & 1 \\ 0 & 0 \epm. $$ Then, 
$$ I - Z \otimes T_0 ^* = \frac{1}{2} \bpm 2I +  Z_2 & -Z_2 \\ -Z_1 & 2I  +Z_1\epm. $$ The inverse can be computed using Schur complements. If 
$$ S := I + \frac{Z_2}{2} - \frac{1}{4} Z_2 \left(I + \frac{Z_1}{2} \right) ^{-1} Z_1, $$ is the Schur complement of the upper left block, then
$$ \left( I - Z \otimes T_0 ^* \right) ^{-1} = \bpm S^{-1} & S^{-1} \frac{Z_2}{2} \left( I + \frac{Z_1}{2} \right) ^{-1} \\ \left( I +\frac{Z_1}{2} \right) ^{-1} \frac{Z_1}{2} S^{-1} & \quad \left( I+\frac{Z_1}{2} \right) ^{-1}  + \frac{1}{4} \left(I + \frac{Z_1}{2} \right) ^{-1} Z_1 S^{-1} Z_2  \left(I + \frac{Z_1}{2} \right) ^{-1} \epm. $$ 
Finally,
$$ Z \otimes T^* x =  \frac{1}{\sqrt{2}} \bpm Z_2 \\ Z_1  \epm. $$ Hence, 
\ba \fb _{T,x} (Z) & = &  \frac{1}{2} \bpm I, & I \epm  \left( I -Z \otimes T_0 ^* \right) ^{-1} \bpm Z_2 \\ Z_1 \epm \nn \\ 
& = & \frac{1}{2} S^{-1} Z_2  + \left( I +\frac{Z_1}{2} \right) ^{-1} \frac{Z_1}{4} S^{-1}  Z_2 +
S^{-1} \frac{Z_2}{4} \left( I + \frac{Z_1}{2} \right) ^{-1}Z_1 + \frac{1}{2} \left( I+\frac{Z_1}{2} \right) ^{-1} Z_1 \nn \\
& & + \frac{1}{8} \left(I + \frac{Z_1}{2} \right) ^{-1} Z_1 S^{-1} Z_2  \left(I + \frac{Z_1}{2} \right) ^{-1} Z_1.\nn \ea This must be an NC rational inner.
\end{eg}

\begin{eg}
Consider an irreducible point arising from anti-commuting unitaries
\[
T = \frac{1}{\sqrt{2}} \left( \begin{pmatrix} 1 & 0 \\ 0 & -1 \end{pmatrix},\, \begin{pmatrix} 0 & -1 \\ 1 & 0 \end{pmatrix} \right).
\]
Let $x = \bsm \alpha \\ \beta \esm$ and assume that $|\alpha|^2 + |\beta|^2 = 1$. Then 
\[
I_2 - x x^* = \begin{pmatrix} |\beta|^2 & -\alpha \bar{\beta} \\ - \bar{\alpha} \beta & |\alpha|^2 \end{pmatrix}.
\]
Therefore,
\[
I - Z \otimes T_0^* = I - \frac{1}{\sqrt{2}} \begin{pmatrix} \beta( \bar{\beta} Z_1 + \bar{\alpha} Z_2) & -\alpha (\bar{\beta} Z_1 + \bar{\alpha} Z_2) \\ \beta( \bar{\alpha} Z_1  + \bar{\beta} Z_2) & -\alpha (\bar{\alpha} Z_1 + \bar{\beta} Z_2) \end{pmatrix}
\]
If we set $\alpha = \beta = \frac{1}{\sqrt{2}}$, we obtain
\[
I - Z \otimes T_0^* = I - \frac{1}{2 \sqrt{2}} (Z_1 + Z_2) \otimes \begin{pmatrix} 1 & - 1 \\ 1 & -1 \end{pmatrix}.
\]
Hence, 
\ba
\fb _{T,x}(Z) * & = & \frac{1}{2 \sqrt{2}} \begin{pmatrix} I & I \end{pmatrix} \left( I + \frac{1}{2 \sqrt{2}} (Z_1 + Z_2) \otimes \begin{pmatrix} 1 & - 1 \\ 1 & -1 \end{pmatrix} \right) \left((Z_1 - Z_2) \otimes \begin{pmatrix} 1 \\ -1 \end{pmatrix}\right) \nn \\  & = &  \frac{1}{2} (Z_1 + Z_2) (Z_1 - Z_2), \nn \ea
which is obviously inner. On the other hand, if we set $\alpha = 0$ and $\beta = 1$, then
\[
\fb _{T,x}(Z) = \frac{1}{\sqrt{2}} \begin{pmatrix} 0 & I \end{pmatrix} \begin{pmatrix} \left( I - \frac{1}{\sqrt{2}} Z_1 \right)^{-1} & 0 \\ \frac{1}{\sqrt{2}} Z_2 \left( I - \frac{1}{\sqrt{2}} Z_1 \right) ^{-1} & I  \end{pmatrix} \begin{pmatrix} -Z_2 \\ -Z_1 \end{pmatrix} = - \frac{1}{2} Z_2 \left( I - \frac{1}{\sqrt{2}} Z_1 \right)^{-1} Z_2 - \frac{1}{\sqrt{2}} Z_1 .
\] One can verify directly that this $\fb _{T,x} (Z)$ is also inner: 
\ba \fb _{T,x} (L ) ^* \fb _{T,x} (L) & = & \frac{1}{2} I + \frac{1}{4} L_2 ^* \sum _{j,k =0} ^\infty \frac{1}{\sqrt{2} ^{k+j}} L_1 ^{*j} L_2 ^* L_2 L_1 ^k \, L_2 \nn \\
& = & \frac{1}{2} I + I \, \frac{1}{4} \sum _{j=0} ^\infty \frac{1}{2^j} = I. \nn \ea 
\end{eg}

\section{Rank--one Clark--Cuntz peturbations}

Let $\fb \in [ \mult ] _1$ be a contractive NC rational multiplier and choose any $\zeta \in \partial \D$. Consider the one-parameter family of NC Clark measures $\mu _\zeta := \mu _{\fb \ov{\zeta}}$ indexed by the unit circle. Every $\mu _\zeta$ is a finitely--correlated Cuntz--Toeplitz functional, so that if $\Pi (\zeta) := \Pi _{\mu _\zeta}$ then, 
$$ \Pi (\zeta) = \bigoplus _{n=1} ^{N_\zeta} \Pi _\zeta ^{(n)}, $$ is a direct sum of finitely many irreducible representations by Theorem \ref{fincorstructure}. Let $\scr{F} _\zeta : \hardy (\mu _\zeta ) \rightarrow \scr{H} ^\mrt (\fb )$ be the onto, isometric, NC weighted Cauchy transform onto the right free de Branges--Rovnyak space of $\fb$ \cite{JM-freeCE,JM-freeAC}. Here, given any positive NC Clark measure $\mu = \mu _b \in \posncm$ for $b \in [\mult ] _1$, the weighted (right) free Cauchy transform $\scr{F} _\mu : \hardy (\mu ) \rightarrow \scr{H} ^\mrt (b)$ is the onto isometry defined by 
$$ \scr{F} _\mu := M^R _{1 - b^\mrt} \circ \scr{C} _\mu, $$ where $\scr{C} _\mu : \hardy (\mu ) \rightarrow \scr{H} ^+ (H _\mu)$ is the free Cauchy transform as described following the statement of Theorem \ref{fincorthm}. Recall that the image of $\Pi (\zeta) ^*$ under this unitary transformation is the rank--one co-isometric Clark pertubation of $X := L^* | _{\scr{H} ^\mrt (\fb )}$,
\be X (\zeta ) = X + \frac{\ov{\zeta}}{ 1 - \fb (0) \ov{\zeta} }  \ip{K_0 ^\fb}{\cdot} _{\fb} \, \vfb;  \label{CCuntzfam} \ee where $\vfb := L^* \fbt$ and $\ip{\cdot}{\cdot}_\fb := \ip{\cdot}{\cdot}_{\scr{H} ^\mrt (\fb )}$, see \cite[Theorem 6.3]{JM-freeAC} and \cite[Section 6.17]{JM-freeCE}. More generally, we will also consider the Clark perturbations $X(\zeta) $ for any $\zeta \in \C$ defined by the above formula. In particular, $X = X(0)$. 

\begin{prop}
Either every $\Pi _{\mu _{\ov{\zeta} \fb}} = : \Pi (\zeta ) = \Pi (\zeta ) _L \oplus \Pi (\zeta) _{dil}$, $\zeta \in \partial \D$, has a non-zero direct summand of type$-L$ or every $\Pi (\zeta)$ has no type$-L$ direct summand.
\end{prop}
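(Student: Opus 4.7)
The plan is to reduce the statement to the dichotomy ``$\fb$ is inner or $\fb$ is not inner'' using Corollary \ref{singinner} together with Theorem \ref{fincorstructure}. The crucial observation is that the inner (equivalently, isometric multiplier) property is invariant under multiplication by a unimodular constant: for any $\zeta \in \partial \D$, $(\ov{\zeta}\fb)(L) = \ov{\zeta} \fb(L)$ is an isometry if and only if $\fb(L)$ is an isometry, and likewise $\ov{\zeta}\fb$ is a contractive NC rational multiplier whenever $\fb$ is. Hence the family $\{\ov{\zeta}\fb\}_{\zeta \in \partial \D}$ is either entirely inner or entirely non-inner.

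First I would treat the inner case. If $\fb$ is inner, then each $\ov{\zeta}\fb \in [\mult]_1$ is an NC rational inner multiplier, and Corollary \ref{singinner} implies that $\mu_{\ov{\zeta}\fb}$ is a singular NC measure and that $\Pi(\zeta) = \Pi_{\mu_{\ov{\zeta}\fb}}$ is purely of dilation type. In particular $\Pi(\zeta)_L = 0$ for every $\zeta \in \partial \D$, which is one side of the dichotomy.

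Conversely, suppose $\fb$ is not inner. Then no $\ov{\zeta}\fb$ is inner, so another application of Corollary \ref{singinner} shows that $\mu_{\ov{\zeta}\fb}$ is never singular. Writing the NC Lebesgue decomposition $\mu_{\ov{\zeta}\fb} = (\mu_{\ov{\zeta}\fb})_{ac} + (\mu_{\ov{\zeta}\fb})_s$, this means the absolutely continuous part is non-zero. By Theorem \ref{fincorstructure}, the absolutely continuous part of a finitely--correlated Cuntz--Toeplitz functional is purely of type--L (in fact, $\Pi_{(\mu_{\ov{\zeta}\fb})_{ac}} \simeq L$), so $\Pi(\zeta)_L$ is a non-zero type--L direct summand of $\Pi(\zeta)$ for every $\zeta \in \partial \D$. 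This establishes the other side of the dichotomy. The only step requiring any care is the initial observation that rotation by $\ov{\zeta}$ preserves innerness, but this is immediate from $|\ov{\zeta}|=1$; everything else is a direct appeal to the cited results.
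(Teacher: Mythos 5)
Your proof is correct and it works, but it takes a genuinely different route from the one the paper uses. The paper argues by contradiction: it cites \cite[Theorem 6.4]{JM-freeCE}, which characterizes the presence of a non-zero type--$L$ summand of $\Pi(\zeta)$ by the condition $\ov{\zeta}\fb^\mrt \in \scr{H}^\mrt(\fb)$, and then notes that since $\scr{H}^\mrt(\ov{\zeta}\fb) = \scr{H}^\mrt(\fb)$ and $\ov{\zeta}\fb^\mrt$ is a scalar multiple of $\fb^\mrt$, this membership condition is manifestly independent of $\zeta$. You instead route through Corollary~\ref{singinner} (inner $\Leftrightarrow$ singular $\Leftrightarrow$ purely dilation type) and Theorem~\ref{fincorstructure} (the absolutely continuous part of a finitely--correlated measure is exactly one copy of $L$), and reduce the dichotomy to the fact that innerness is invariant under multiplication by a unimodular scalar. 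Both arguments turn on finding a $\zeta$-invariant property of $\fb$ that controls the type--$L$ summand; yours is internal to the paper (it only uses results already proved in Section~3, which precede the proposition), and it is more informative in that it pins down \emph{which} alternative holds — every $\Pi(\zeta)$ has a type--$L$ summand precisely when $\fb$ is not inner — and identifies the summand as $\simeq L$ with multiplicity one. The paper's proof is a touch shorter and needs only the external membership criterion, not the full finitely--correlated structure theory.
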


\begin{proof}
If $\Pi (\zeta ) = \Pi (\zeta ) _{L} \oplus \Pi (\zeta ) _{dil}$ has a non-zero pure type$-L$ direct summand, then since $\Pi (\zeta )$ has a cyclic vector, $\Pi (\zeta ) _L \simeq L$ is unitarily equivalent to exactly one copy of $L$. If $\Pi (\zeta )$ has non-zero type$-L$ direct summand and $\Pi ( \xi )$ does not, for some $\xi \neq \zeta$, $\xi ,\zeta \in \partial \D$, then $\Pi (\xi )$ is a Cuntz row isometry purely of dilation--type. However by \cite[Theorem 6.4]{JM-freeCE}, this would imply that $\ov{\xi} \fb ^\mrt \notin \scr{H} ^\mrt ( \fb )$. Since $\Pi ( \zeta )$ has non-zero type$-L$ summand, the same result would imply that $\ov{\zeta } \fb ^\mrt \in \scr{H} ^\mrt (\fb )$. This contradiction shows that either all Clark perturbations have non-zero pure type$-L$ direct summands of multiplicity one or none do. 
\end{proof}

Given any NC rational $\fb \in [ \mult ] _1$, recall that we can define the finite--dimensional subspaces
\be \scr{M} _0 (\fb ) = \bigvee _{\om \neq \emptyset} L^ {*\om} \fbt \quad \quad  \mbox{and}  \quad \quad \scr{M} (\fb ) = \scr{M} _0 (\fb ) + \bigvee \{ K_0 ^\fb \} \subseteq \scr{H} ^\mrt (\fb ). \label{finitespaces} \ee 

\begin{lemma} \label{samespace}
For any $\zeta \in \C$, $\scr{M} (\fb )$ and $\scr{M} _0 (\fb )$ are $X (\zeta )-$invariant.
\end{lemma}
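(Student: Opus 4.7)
The plan is to verify invariance directly from the rank-one perturbation formula (\ref{CCuntzfam}). For any $h \in \scr{H}^\mrt(\fb)$ and each $1 \le k \le d$,
\[
X(\zeta)_k h \;=\; X_k h \;+\; \tfrac{\ov{\zeta}}{1-\fb(0)\ov{\zeta}}\,\ip{K_0^\fb}{h}_\fb \cdot L_k^* \fbt,
\]
and the rank-one correction term is a scalar multiple of $L_k^*\fbt$. Since $L_k^*\fbt$ corresponds to the single letter word $\om = k \neq \emptyset$ in (\ref{finitespaces}), this term already lies in $\scr{M}_0(\fb) \subseteq \scr{M}(\fb)$. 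Consequently it suffices to check that the unperturbed backward shift restriction $X_k = L_k^*|_{\scr{H}^\mrt(\fb)}$ preserves both subspaces.

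For $\scr{M}_0(\fb)$: I would argue on the spanning set. For any generator $L^{*\om}\fbt$ with $\om \neq \emptyset$,
\[
X_k L^{*\om}\fbt \;=\; L_k^* L^{*\om}\fbt \;=\; L^{*(k\om)}\fbt,
\]
where the word $k\om$ is still nonempty, hence remains a generator of $\scr{M}_0(\fb)$. Combined with the rank-one correction, $X(\zeta)_k$ maps the spanning set of $\scr{M}_0(\fb)$ into $\scr{M}_0(\fb)$; boundedness of $X(\zeta)_k$ then promotes this to invariance of the closed subspace.

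For $\scr{M}(\fb)$: the only new thing to check is $X(\zeta)_k K_0^\fb \in \scr{M}(\fb)$. I would use the identification of the de~Branges--Rovnyak kernel vector at $0$ as an element of $\hardy$, namely $K_0^\fb = 1 - \ov{\fb(0)}\,\fbt$ (this follows from $(I - \fb(R)\fb(R)^*)K\{0,1,1\} = 1 - \ov{\fb(0)}\fb(R)1$ together with $\fb(R)1 = \fbt$). Applying $L_k^*$, the vacuum vector is annihilated and we obtain
\[
X_k K_0^\fb \;=\; -\ov{\fb(0)}\, L_k^*\fbt \;\in\; \scr{M}_0(\fb).
\]
The rank-one correction contributes another scalar multiple of $L_k^*\fbt$, so $X(\zeta)_k K_0^\fb \in \scr{M}_0(\fb) \subseteq \scr{M}(\fb)$, which finishes the proof.

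There is no real obstacle; the argument is essentially the observation that $L_k^*$ annihilates the vacuum component of $K_0^\fb$, leaving only a multiple of $L_k^*\fbt \in \scr{M}_0(\fb)$, and that the rank-one term of $X(\zeta)$ is always aligned with $L_k^*\fbt$. The only modest care required is in extending invariance from the dense spanning sets in (\ref{finitespaces}) to the closures, which follows from boundedness of $X(\zeta)_k$ on $\scr{H}^\mrt(\fb)$.
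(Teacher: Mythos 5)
Your proof is correct and takes the same route the paper has in mind: the paper declares the lemma an immediate consequence of formulas (\ref{finitespaces}) and (\ref{CCuntzfam}), and you are simply filling in the verification on generators, using that the rank--one correction always lands on $L_k^*\fbt\in\scr{M}_0(\fb)$ and that $L_k^*K_0^\fb=-\ov{\fb(0)}\,L_k^*\fbt$. One cosmetic remark: since $\fb$ is NC rational, $\scr{M}_0(\fb)$ and $\scr{M}(\fb)$ are finite--dimensional, so finite linear spans are automatically closed and the appeal to boundedness of $X(\zeta)_k$ to pass to closures is unnecessary.
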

\begin{proof}
This follows immediately from the formulas (\ref{finitespaces}) and (\ref{CCuntzfam}). 
\end{proof}
For any $\zeta \in \C $, let 
\be T(\zeta) ^* := X(\zeta) | _{\scr{M} (\fb )}. \label{Tzetadef} \ee It follows that for any $\zeta \in \partial \D$, $X(\zeta ) ^*$ is the minimal row isometric dilation of $T(\zeta ) $ and $X(\zeta ) ^*$ is a cyclic Cuntz row isometry if and only if $T(\zeta) $ is a row co-isometry.

It will be convenient to assume that $\fb (0) = 0$. There is no loss in generality in making this assumption, as if $w :=\fb (0) \neq 0$, we can apply the M\"obius transformation 
$$ \la _{w}  (z) = \frac{z - w }{1 - \ov{w} z}, $$ to $\fb$ to obtain a new contractive NC rational multiplier, $\fb _0$, so that $\fb _0 (0) = 0$. Moreover, the composition of an isometry with a M\"obius transformation is again an isometry, so that $\fb$ is inner if and only if $\fb _0$ is. As in \cite[Proposition 6.6]{JMS-ncBSO}, right multiplication by
$$ C ^\mrt _w (Z) := \sqrt{1 - |w|^2} (I + \ov{w} \fbt _0 (Z) ) ^{-1} = \sqrt{1 -|w| ^2 } ^{-1} (I - \ov{w} \fbt (Z) ); \quad w = \fb (0), $$ is an isometric right multiplier from $\scr{H} ^\mrt ( \fb _0  )$ onto $\scr{H} ^\mrt (\fb )$. We will denote this right multiplier by $\mf{C} _w = M^R _{C^\mrt _w}$, and this is the NC analogue of a \emph{Crofoot transformation} \cite{Crofoot}. As in the commutative setting of \cite[Theorem 5.7, Proposition 5.8]{MMdBR}, one can verify that 
\be \frac{1}{\sqrt{1-|w| ^2}} \mf{C} _w ^* \otimes I_d \vfb = \vfb _0, \label{Crofoot1} \ee where $\vfb := L^* \fbt \in \scr{H} ^\mrt (\fb ) \otimes \C ^d$ and that if $X := L^* | _{\scr{H} ^\mrt (\fb )}$, $X^{(0)} := L^* | _{\scr{H} ^\mrt (\fb _0 )}$, then 
\be \mf{C} _w \otimes I_d X^{(0)} \mf{C} _w ^*  = X + \frac{\ov{w}}{1-|w| ^2} \vfb  \ip{K_0 ^{\fb}}{\cdot}_\fb. \label{Crofoot2} \ee 
\begin{prop}
Any Clark--Cuntz perturbation, $X (\zeta)$, $\zeta \in \partial \D$, of $\fb $ is unitarily equivalent to the corresponding Clark--Cuntz perturbation, $X ^{(0)} (\zeta )$, of $\fb _0$ via the NC Crofoot transformation. 
\end{prop}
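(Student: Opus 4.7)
The plan is a direct computation using identities \eqref{Crofoot1} and \eqref{Crofoot2} to conjugate $X^{(0)}(\zeta)$ by the Crofoot unitary $\mf{C}_w \otimes I_d$ and identify the result as $X(\eta)$ for the reparametrized point $\eta = (w+\zeta)/(1+\ov{w}\zeta) = \la_{-w}(\zeta) \in \partial\D$. Since $\la_{-w}$ is a bijection of $\partial\D$, this establishes that the families $\{X^{(0)}(\zeta)\}_{\zeta\in\partial\D}$ and $\{X(\eta)\}_{\eta\in\partial\D}$ are matched pairwise under the Crofoot unitary, so that $X(\zeta)$ is unitarily equivalent to the ``corresponding'' $X^{(0)}(\la_w(\zeta))$.

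First I would pin down the relevant point-evaluation vectors. Applying the formula $K_0^b = (I - b(R)b(R)^*)\cdot 1 = 1 - \ov{b(0)} b^\mrt$ with $\fb_0(0) = 0$ gives $K_0^{\fb_0} = 1$ (the vacuum), while $K_0^\fb = 1 - \ov{w}\fbt$. Since $\mf{C}_w$ is right multiplication by $C_w^\mrt = (1-\ov{w}\fbt)/\sqrt{1-|w|^2}$, it sends the vacuum to $C_w^\mrt$ itself, so
\[
\mf{C}_w K_0^{\fb_0} = \frac{1}{\sqrt{1-|w|^2}}\, K_0^\fb.
\]
Combined with \eqref{Crofoot1}, which gives $\mf{C}_w \otimes I_d\, \vfb_0 = (1-|w|^2)^{-1/2}\vfb$, the Crofoot unitary intertwines the rank-one operator $\vfb_0 \ip{K_0^{\fb_0}}{\,\cdot\,}_{\fb_0}$ into $(1-|w|^2)^{-1}\vfb \ip{K_0^\fb}{\,\cdot\,}_\fb$.

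Adding this to \eqref{Crofoot2} for the unperturbed part yields
\[
\mf{C}_w \otimes I_d\, X^{(0)}(\zeta)\, \mf{C}_w^* = X + \frac{\ov{w}+\ov{\zeta}}{1-|w|^2}\, \vfb \ip{K_0^\fb}{\,\cdot\,}_\fb.
\]
To finish, I would verify the elementary scalar identity $(\ov{w}+\ov{\zeta})/(1-|w|^2) = \ov{\eta}/(1-w\ov{\eta})$ for $\eta = \la_{-w}(\zeta)$, which rearranges to $\ov{\eta}(1+w\ov{\zeta}) = \ov{w}+\ov{\zeta}$ and is immediate. This shows the displayed operator equals $X(\eta)$, and the bijectivity of $\la_{-w} : \partial\D \to \partial\D$ completes the correspondence.

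There is no substantive technical obstacle; the only real subtlety is conceptual, and it is where the main care is required. The Crofoot transformation does not intertwine $X^{(0)}(\zeta)$ with $X(\zeta)$ for the same parameter: one sees this already at $\zeta=0$, since \eqref{Crofoot2} directly identifies $\mf{C}_w \otimes I_d\, X^{(0)} \mf{C}_w^*$ not with $X = X(0)$ but with $X(w)$. The Crofoot unitary matches the two Cuntz families via the Möbius self-map $\la_{-w}$ of $\partial\D$, and ``corresponding'' in the statement must be interpreted in this reparametrized sense.
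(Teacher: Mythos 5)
Your computation is correct and proceeds along the same lines as the paper's own proof: conjugate $X^{(0)}(\zeta)$ by $\mf{C}_w \otimes I_d$, use $\mf{C}_w K_0^{\fb_0} = (1-|w|^2)^{-1/2}K_0^\fb$ together with identities (\ref{Crofoot1}) and (\ref{Crofoot2}), and collect the rank-one terms. Both arguments reach the intermediate expression
\[
\mf{C}_w \otimes I_d\, X^{(0)}(\zeta)\, \mf{C}_w^* \;=\; X + \frac{\ov{w}+\ov{\zeta}}{1-|w|^2}\, \vfb\, \ip{K_0^\fb}{\cdot}_\fb .
\]
However, you have in fact caught a genuine slip in the paper's final step. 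The paper asserts that $\frac{\ov{w}+\ov{\zeta}}{1-|w|^2} = \frac{\ov{\zeta}}{1-\ov{\zeta}w}$ and concludes that the right-hand side above is $X(\zeta)$, but this scalar identity fails for general $w = \fb(0) \in \D$: with $w = i/2$ and $\zeta = 1$, the left side is $\tfrac{4}{3}-\tfrac{2i}{3}$ while the right side is $\tfrac{4}{5}+\tfrac{2i}{5}$. Your sanity check at $\zeta = 0$ already makes this visible: (\ref{Crofoot2}) sends the unperturbed $X^{(0)} = X^{(0)}(0)$ not to $X(0)=X$ but to $X(w)$, so the parameter cannot be preserved unless $w=0$.

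Your Möbius reparametrization is the correct repair. The identity
\[
\frac{\ov{w}+\ov{\zeta}}{1-|w|^2} \;=\; \frac{\ov{\eta}}{1-w\ov{\eta}}, \qquad \eta := \la_{-w}(\zeta) = \frac{\zeta + w}{1+\ov{w}\zeta},
\]
does hold (one verifies directly that $1 - w\ov{\eta} = (1-|w|^2)/(1+w\ov{\zeta})$ and $\ov{\eta} = (\ov{\zeta}+\ov{w})/(1+w\ov{\zeta})$). Since $\la_{-w}$ is a Möbius self-map of $\partial\D$, the Crofoot unitary therefore matches the two Clark--Cuntz families bijectively, $\mf{C}_w \otimes I_d\, X^{(0)}(\zeta)\, \mf{C}_w^* = X\big(\la_{-w}(\zeta)\big)$, and the word ``corresponding'' in the Proposition should be read in this reparametrized sense. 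This does not disturb anything downstream: the subsequent ``without loss of generality assume $\fb(0)=0$'' reduction only needs the two families to be in unitary correspondence under the Crofoot transformation, not parameter-preserving correspondence.
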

\begin{proof}
Given $\zeta \in \partial \D$ and $w = \fb (0)$ we apply Equation (\ref{Crofoot1}) and Equation (\ref{Crofoot2}) to obtain
\ba \mf{C} _w \otimes I_d X^{(0)} (\zeta ) \mf{C} _w ^* & = & \mf{C} _w \otimes I_d X^{(0)} \mf{C} _w ^* + \ov{\zeta}  \mf{C} _w \otimes I_d  \vfb _0 \ip{\mf{C}_w 1}{\cdot}_\fb \nn \\
& = &  X + \frac{\ov{w}}{1-|w| ^2} \vfb  \ip{K_0 ^{\fb }}{\cdot}_\fb +\ov{\zeta} \sqrt{1 -|w| ^2} \vfb \mf{C} ^{-1} (0) \ip{K_0 ^\fb}{\cdot}_\fb \nn \\
& = & X + \frac{\ov{w} +\ov{\zeta} }{1-|w| ^2} \vfb \ip{K_0 ^{\fb }}{\cdot}_\fb \nn \\
& = & X + \frac{\ov{\zeta}}{1 - \ov{\zeta} w} \vfb \ip{K_0 ^{\fb }}{\cdot}_\fb = X (\zeta). \nn \ea 
\end{proof}

By the above proposition, we can and will assume, without loss in generality, that $\fb (0) =0$ for the remainder of this section.
Note that if $\fb (0) =0$, then $K_0 ^\fb = 1 - \fbt \ov{\fb (0)} = 1$, and our formulas for the finite--dimensional Clark perturbations $T(\zeta) ^*$ simplify:
\be T(\zeta ) ^* = \left.  T(0) ^* + \ov{\zeta} L^* \fbt \ip{1}{\cdot}_\fb  \right| _{\scr{M} (\fb) }, \quad \quad \zeta \in \C. \label{bzerofinClark} \ee 

\subsection{Boundary values}

Let $\fb \in [ \mult ] _1$ be NC rational. We further assume, without loss in generality, that $\fb (0) =0$.

\begin{lemma} \label{sprisone}
For any $\zeta \in \partial \D$, the $d-$tuple $T (\zeta ) ^\mrt$ has joint spectral radius $\mr{spr} \, T (\zeta ) ^\mrt \leq 1$.
\end{lemma}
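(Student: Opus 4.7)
The plan is to combine two observations: (i) $T(\zeta)$ is a row contraction on the finite--dimensional space $\scr{M}(\fb)$, and (ii) entrywise matrix--transpose of a $d$--tuple preserves the joint spectral radius.

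For (i), Lemma~\ref{samespace} says $\scr{M}(\fb)$ is $X(\zeta)$--invariant, i.e.\ each $X(\zeta)_j$ preserves $\scr{M}(\fb)$. Writing $V := X(\zeta)^*$, this means each $V_j^* = X(\zeta)_j$ preserves $\scr{M}(\fb)$, so $\scr{M}(\fb)$ is co--invariant for the row isometry $V$. A direct computation from $T(\zeta)_j = P_{\scr{M}(\fb)} V_j|_{\scr{M}(\fb)}$ and $T(\zeta)_j^* = V_j^*|_{\scr{M}(\fb)}$ gives
\[
T(\zeta) T(\zeta)^* \;=\; P_{\scr{M}(\fb)} \Bigl( \sum_{j=1}^d V_j V_j^* \Bigr) \Bigr|_{\scr{M}(\fb)} \;\le\; I_{\scr{M}(\fb)},
\]
since $\sum_j V_j V_j^* = VV^*$ is the range projection of the row isometry $V$, hence bounded by $I$. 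Thus $\|T(\zeta)\|_{row} \le 1$, and the Beurling formula immediately yields $\mr{spr}(T(\zeta)) \le 1$.

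For (ii), following the style of Lemma~\ref{rowvscol}, I would prove $\mr{spr}(A^\mrt) = \mr{spr}(A)$ for any $d$--tuple $A = (A_1, \ldots, A_d) \in \C_n^d$. The key identity is $(A^\mrt)^\om = (A^{\om^\mrt})^\mrt$, from which $((A^\mrt)^\om)^* = \ov{A^{\om^\mrt}}$. Applying $\tr X = \tr X^\mrt$ and cyclicity of the trace, then re--indexing $\om' := \om^\mrt$, one finds
\[
\tr \mr{Ad}^{(k)}_{A^\mrt,(A^\mrt)^*}(I_n) \;=\; \sum_{|\om'|=k} \tr\bigl[(A^{\om'})^* A^{\om'}\bigr] \;=\; \tr \mr{Ad}^{(k)}_{A,A^*}(I_n).
\]
Sandwiching $\|P\| \le \tr P \le n\|P\|$ for positive semidefinite $P \in \C^{n\times n}$ and taking $2k$--th roots as $k \to \infty$ gives $\mr{spr}(A^\mrt) = \mr{spr}(A)$.

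Applying (ii) with $A = T(\zeta)$ and combining with (i) yields $\mr{spr}(T(\zeta)^\mrt) = \mr{spr}(T(\zeta)) \le 1$. The only genuine bookkeeping is in (ii), keeping straight transpose versus Hilbert adjoint through the cyclic--trace manipulation; this is a direct analogue of the argument already worked out in Lemma~\ref{rowvscol}, and I expect no real obstacle.
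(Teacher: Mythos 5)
Your proof is correct, and it takes a somewhat different route from the paper's own argument. The paper proves the lemma in three steps: (a) asserts that $T(1)$ (after a WLOG reduction to $\zeta=1$) is a finite row contraction; (b) transposes the operator inequality $T(1)T(1)^*\leq I$ to deduce $\|T(1)^\mrt\|_{col}\leq 1$; and (c) invokes Lemma~\ref{rowvscol} to conclude that $T(1)^\mrt$ is jointly similar to a row $d$--tuple of row--norm $\leq 1+\eps$ for every $\eps>0$, whence $\mr{spr}(T(1)^\mrt)\leq 1+\eps$ for all $\eps$. Your version replaces steps (b)--(c) with a clean, self--contained lemma, $\mr{spr}(A^\mrt)=\mr{spr}(A)$, proved by the same cyclic--trace and norm--sandwiching technique that the paper itself uses inside the proof of Lemma~\ref{rowvscol}, but deployed symmetrically so that the equality comes out directly without passing through the similarity--orbit result of \cite[Lemma 2.4]{SSS} or any $\eps$--approximation. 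You also make step (a) explicit by writing $T(\zeta)$ as a compression of the row co--isometry $X(\zeta)$ to the $X(\zeta)$--invariant subspace $\scr{M}(\fb)$ furnished by Lemma~\ref{samespace}, whereas the paper takes this as established. The trade--off: the paper's route recycles an already--proved lemma at the cost of an epsilon, while yours isolates the genuinely reusable fact (transpose invariance of joint spectral radius), which is arguably the cleaner statement and might profitably be recorded as a standalone lemma beside Lemma~\ref{rowvscol}.
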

In the above statement, $T (\zeta ) ^\mrt = \left( T (\zeta ) _{ 1} ^\mrt, \cdots T (\zeta ) _{d } ^\mrt \right)$, where $\mrt$ denotes matrix transpose of each matrix entry of $T(\zeta)$ with respect to a choice of orthonormal basis of $\scr{M} (\fb )$.
\begin{proof}
Without any loss in generality, assume $\zeta =1$. The $d-$tuple, $T:= T(1)$, is a finite--dimensional row contraction. If we identify $T$ with a row contraction acting on $\C ^m \otimes \C ^d$, let $\ov{T} = ( \ov{T} _1 , \cdots , \ov{T} _d )$ denote the entry--wise complex conjugation of the matrix of each $T_j$ with respect to the standard orthonormal basis, so that $\ov{T_k} = (T_k ^\mrt) ^*$. Since $T$ is a row contraction, $T T^* \leq I$. Taking transposes shows that $\ov{T} T ^\mrt \leq I$, so that $\ov{T}$ is also a row contraction, $\| \ov{T} \| _{row} \leq 1$ and so $\| T ^\mrt \| _{col} \leq 1$. By Lemma \ref{rowvscol}, we have that for any $\eps >0$, $T ^\mrt$ is jointly similar to a row $d-$tuple, $W  \in \C ^d _m$, with $\| W \| \leq \| T^\mrt \| _{col} + \eps = 1 + \eps$. Hence $\mr{spr} (T ^\mrt ) \leq 1 + \eps $ for any $\eps >0$ and the claim follows.
\end{proof}

Since $T (\zeta )$ is a row contraction for $\zeta \in \partial \D$, $T (\zeta ) ^\mrt = \ov{T} (\zeta ) ^*$ is a column contraction. Hence, for any $Z \in \B ^d _n$ and $\zeta \in \partial \D$, 
$$ I_n \otimes I - Z \otimes T (\zeta ) ^\mrt, $$ is invertible.  Since we are assuming that $\fb (0) =0$, a finite FM transfer--function formula for $\ov{\zeta} \cdot \fb$ is:
\be \ov{\zeta} \fb (Z) = I_n \otimes 1^* \left( I_n \otimes I - Z \otimes T (0) ^* \right) ^{-1} I_n \otimes T (\zeta ) ^* 1; \quad \quad Z \in \B ^d _n. \label{finFM} \ee 
By the previous lemma and \cite[Lemma 2.4]{SSS}, the closure of the joint similarity orbit of $T (\zeta ) ^\mrt$ contains a row contraction. 
Moreover, since the minimal de Branges--Rovnyak FM realization, $(A,B,C,D)$, of $\fb$ is such that $A = T(0) ^* | _{\scr{M} _0 (\fb )}$, Theorem \ref{ncratinH2} implies that $A$ and hence $T(0)$ is a pure and finite dimensional row contraction. Hence $T(0)$ it is jointly similar to a strict row contraction by Lemma \ref{rowvscol}, $T (0) ^*$ is jointly similar to a strict column contraction and
$$ I_n \otimes I - T (\zeta) ^\mrt \otimes T (0) ^*, $$ is invertible for any $\zeta \in \partial \D$. We conclude that $\fb (T (\zeta ) ^\mrt )$ is well--defined for any $\zeta \in \partial \D$. Alternatively, since $T(\zeta)$ is a row contraction, $\mr{spr} (T (\zeta) ) \leq 1$ and Lemma \ref{rowvscol} implies that the closure of the joint similarity orbit of $T(\zeta ) ^\mrt$ contains a row contraction. Since $\fb \in \mult$ is NC rational, Theorem \ref{ncratinH2} implies that $r \cdot \B ^d _\N \subseteq \nbdom \fb$ for some $r>1$ and it follows that $T(\zeta )^\mrt \in \nbdom \fb$. 
 
Recall the concept of vectorization of matrices and completely bounded maps on matrices. If $A \in \C^{m\times m}$ and $B \in \C ^{n\times n}$, then $A \otimes B$ is an $mn \times mn$ matrix, and it can also be identified with a completely bounded linear map on $\C ^{m \times n}$. To describe this correspondence, given  $Z \in \C ^{n\times m}$ let $\wvec{Z}$ denote the column vector of size $m \cdot n$ obtained by stacking the columns of $Z$ one on top of the other (in order from left to right). That is, dividing $Z \in \C ^{n \times m}$ into $m$ columns, $\mbf{z} _k \in \C ^n$ (see for example \cite[Section 4.2]{HornJohnson})
$$ Z = \bpm  \left. \mbf{z} _1 \right| \cdots \left| \mbf{z} _m \right. \epm \quad  \mapsto  \quad \wvec{Z} = \bpm \mbf{z} _1 \\ \vdots \\ \mbf{z} _m \epm \in \C ^{mn}. $$ By \cite[Lemma 4.3.1]{HornJohnson},
\[
\left( A \otimes B \right) \wvec{Z} = \wvec{B Z A^T}.
\]
This \emph{vectorization map} $\mr{vec} : \C ^{m \times n} \rightarrow \C ^{mn}$, $\mr{vec} (A) := \wvec{A}$, is linear and invertible, and for any linear map $\ell \in \scr{L} (\C ^{m \times n} )$, we define the \emph{matrization} of $\ell$, $\wvec{\ell} \in \C ^{mn \times mn}$ by
$$ \wvec{\ell} \wvec{Z} :=  \wvec{\ell (Z)}, \quad \quad \quad  \mbox{\emph{i.e.}} \quad \wvec{\ell} = \mr{vec} \circ \ell \circ \mr{vec} ^{-1}. $$ In particular, if $\ell$ is any completely bounded linear map on the operator space $\C ^{m \times n}$, 
\be \ell (X) = \sum _{j=1} ^d A_j X B_j; \quad \quad A_j \in \C ^{m\times m}, B_j \in \C ^{n \times n}, \ X \in \C ^{m \times n} \label{cb} \ee then
$$\wvec{\ell} = \sum B_j^\mrt \otimes A_j. $$

\begin{prop} \label{spectra}
Let $\fr$ be an NC rational function with $0 \in \nbdom \fr$ and finite Fornasini--Marchesini realization $(A,B,C,D)$. Then for any $\la \in \C \sm \{ 0 \}$ and $Z \in \C ^d _n$ so that $L_A (Z)$ is invertible,
$$ \la ^n \mr{det} \, L_{A ^{(\la )}} (Z) = \mr{det} \,  L_A (Z ) \cdot \mr{det} \left( (\la + \fr (0) ) I_n - \fr (Z) \right), $$
$$ A^{(\la)} _k = A_k + \la ^{-1} B_k C. $$
In particular, if the monic linear pencil $L_A (Z) = I_n \otimes I - Z \otimes A$ is invertible then $\la + \fr (0)$ belongs to the spectrum of $\fr (Z)$ if and only if $L_{A ^{(\la)}} (Z)$ is singular.
If $(A,B,C,D)$ is a minimal realization then this formula holds for all $Z \in \nbdom \fr$ and the characteristic polynomial of $\fr (Z)$ is 
$$ p_{\fr (Z)} ( \la + \fr (0) ) = \la ^n \frac{\mr{det} \, L_{A^{(\la)}} (Z) }{\mr{det} \, L_A (Z)}. $$ 
\end{prop}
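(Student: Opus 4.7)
The plan is a direct application of the matrix determinant lemma (Sylvester's / Weinstein--Aronszajn identity). The key observation is that the shift $A_k \mapsto A_k + \la^{-1} B_k C$ is a rank-at-most-one perturbation of each $A_k$, and after tensoring against $Z$ and summing, $L_{A^{(\la)}}(Z)$ becomes a low-rank perturbation of $L_A(Z)$ whose perturbation factors are precisely the blocks appearing in the FM transfer-function formula for $\fr$.

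First I would unpack $A^{(\la)}_k = A_k + \la^{-1} B_k C$ and use the Kronecker mixed-product identity $(B_k C) \otimes Z_k = (B_k \otimes Z_k)(C \otimes I_n)$ (or its transpose, depending on tensor convention). Summing over the $d$ letters gives the factorization
$$L_{A^{(\la)}}(Z) = L_A(Z) - \la^{-1}\,\mathbf{B}(Z)\,\mathbf{C},$$
where $\mathbf{B}(Z) := \sum_{k=1}^d B_k \otimes Z_k$ is a block-column and $\mathbf{C} := C \otimes I_n$ is a block-row. Direct expansion of the Neumann series for $L_A(Z)^{-1}$ against these blocks matches the FM transfer-function formula term by term, yielding $\mathbf{C} L_A(Z)^{-1} \mathbf{B}(Z) = \fr(Z) - \fr(0) I_n$.

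Next, I would apply the Weinstein--Aronszajn identity $\det(M - UV) = \det(M)\det(I - V M^{-1} U)$, valid since $L_A(Z)$ is invertible by hypothesis, to collapse the large determinant:
$$\det L_{A^{(\la)}}(Z) = \det L_A(Z) \cdot \det\!\left(I_n - \la^{-1}\bigl(\fr(Z) - \fr(0) I_n\bigr)\right) = \la^{-n} \det L_A(Z) \cdot \det\!\left((\la + \fr(0)) I_n - \fr(Z)\right).$$
Multiplying by $\la^n$ gives the stated identity. The spectrum claim is then immediate: since $\det L_A(Z) \neq 0$, the left-hand side vanishes iff $L_{A^{(\la)}}(Z)$ is singular, iff the second factor on the right-hand side vanishes, iff $\la + \fr(0)$ is an eigenvalue of $\fr(Z)$. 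When $(A,B,C,D)$ is minimal, the descriptor/FM translation from Subsection \ref{ss:minFM} combined with the Volcic--Kaliuzhnyi-Verbovetskyi--Vinnikov domain description recalled in the Background identifies $\nbdom \fr$ with the set $\{Z : \det L_A(Z) \neq 0\}$, so the identity extends to the full domain and the characteristic polynomial formula follows by dividing both sides by $\det L_A(Z)$.

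There is no serious obstacle here; the proof is essentially a one-line application of the matrix determinant lemma once the bookkeeping is set up. The only subtlety is notational: choosing tensor-factor orderings consistently so that the ``rank-$n$'' perturbation $\la^{-1}\mathbf{B}(Z)\mathbf{C}$ appears in the correct shape and the FM transfer-function formula is visibly embedded inside the Sylvester reduction.
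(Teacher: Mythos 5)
Your proof is correct and follows essentially the same route as the paper's: writing $L_{A^{(\la)}}(Z) = L_A(Z) - \la^{-1}\,(Z\otimes B)(I\otimes C)$ and applying the generalized matrix determinant lemma (which you call the Weinstein--Aronszajn identity) together with the FM transfer-function identity $C(I - Z\otimes A)^{-1}(Z\otimes B) = \fr(Z) - \fr(0)I_n$ and, for the minimal case, the Vol\v{c}i\v{c} domain description. The paper's proof is a one-display version of your computation; the identification of the domain with $\{Z : \det L_A(Z)\neq 0\}$ under minimality and the read-off of the spectrum are handled exactly as you describe.
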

\begin{proof}
If $(A,B,C,D)$ is a minimal realization then $Z \in \nbdom \fr$ if and only if $I - Z \otimes A$ is invertible by \cite[Theorem 3.10]{Volcic}. By the generalized matrix determinant lemma, 
\ba \mr{det} \left( I - Z \otimes A  - \la ^{-1} Z \otimes BC \right) & = & \mr{det} \left( I - Z \otimes A \right) \cdot \mr{det} \left( I_n - \la ^{-1} I_n \otimes C (I - Z \otimes A ) ^{-1} Z \otimes B \right) \nn \\
& = & \la ^{-n} \mr{det} \left( I_n \otimes I - Z \otimes A \right) \cdot \mr{det} \left( \la I_n - (\fr (Z) - \fr (0) ) I_n ) \right). \nn \ea 
\end{proof}

\begin{prop} \label{evalues}
Let $A_\zeta ^*$ be a column--isometric restriction of $T (\zeta ) ^*$ to an invariant subspace. Then $\zeta \in \partial \D$ is an eigenvalue of $\fb ( A _\zeta ^\mrt )$. 
\end{prop}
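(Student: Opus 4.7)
The strategy is to combine Proposition \ref{spectra} with a short fixed-point argument for a unital completely positive map on matrices.

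First I would reduce to the case $\fb(0) = 0$ via the NC Crofoot transformation introduced earlier in this section. Under this normalisation $K_0^\fb = 1$, $\ip{1}{1}_\fb = 1$, and $T(\zeta)^* = T(0)^* + \bar{\zeta}\, \vfb\, \ip{1}{\cdot}_\fb$ on $\scr{M}(\fb)$. The quadruple
\[
(A, B, C, D) := \bigl(T(0)^*,\; \vfb,\; \ip{1}{\cdot}_\fb,\; 0\bigr)
\]
then defines a finite-dimensional (in general non-minimal) Fornasini--Marchesini realization of $\fb$ whose state space is all of $\scr{M}(\fb)$, with perturbation $A^{(\zeta)} = A + \zeta^{-1} B C = T(\zeta)^*$, since $\zeta^{-1} = \bar{\zeta}$ on $\partial \D$.

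Next I would invoke Proposition \ref{spectra}. Set $Y := A_\zeta^\mrt$. The row-co-isometric property $A_\zeta A_\zeta^* = I$ forces $\|Y\|_{row} = 1$, which places $Y$ inside the open domain $r \B^d_\N \subseteq \nbdom \fb$ guaranteed by Theorem \ref{ncratinH2}. Purity of the finite-dimensional tuple $T(0)$ (equivalently, $\mr{spr}(T(0)) < 1$) then ensures that $L_A(Y) = I - Y \otimes T(0)^*$ is invertible. Proposition \ref{spectra} accordingly reduces the claim to verifying that
\[
L_{A^{(\zeta)}}(Y) \;=\; I - Y \otimes T(\zeta)^*
\]
is singular as an operator on $\cH_\zeta \otimes \scr{M}(\fb)$.

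To exhibit a kernel vector I would restrict to the invariant subspace $\cH_\zeta \otimes \cH_\zeta \subseteq \cH_\zeta \otimes \scr{M}(\fb)$. Since $\cH_\zeta$ is $T(\zeta)^*$-invariant with $T(\zeta)^*|_{\cH_\zeta} = A_\zeta^*$, the operator $Y \otimes T(\zeta)^*$ restricts to $A_\zeta^\mrt \otimes A_\zeta^*$ on this subspace; under the vectorization isomorphism $\cH_\zeta \otimes \cH_\zeta \simeq \C^{n \times n}$ with $n = \nbdim \cH_\zeta$, this corresponds to the completely positive map
\[
\psi(W) \;:=\; \sum_{k=1}^{d} A_{\zeta, k}^*\, W\, A_{\zeta, k}
\]
on $\scr{L}(\cH_\zeta)$. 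Its Hilbert--Schmidt adjoint $\psi^*(W) = \sum_k A_{\zeta, k} W A_{\zeta, k}^*$ is unital, $\psi^*(I) = I$, precisely because of the co-isometric hypothesis $\sum_k A_{\zeta, k} A_{\zeta, k}^* = I$. Hence $1$ belongs to the spectrum of $\psi^*$, and therefore also to the spectrum of $\psi$, yielding a nonzero fixed point $W = \psi(W)$. Its vectorisation $\wvec{W} \in \cH_\zeta \otimes \cH_\zeta$ lies in the kernel of $I - Y \otimes T(\zeta)^*$, and Proposition \ref{spectra} then delivers $\zeta$ as an eigenvalue of $\fb(A_\zeta^\mrt)$.

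The main obstacle I anticipate is a bookkeeping subtlety: $\cH_\zeta$ is only known to sit inside $\scr{M}(\fb)$ and need not be contained in the smaller state space $\scr{M}_0(\fb)$ of the minimal de Branges--Rovnyak realization. This forces the argument above to be run on the non-minimal realization, which is why it is important that the determinantal identity of Proposition \ref{spectra} is stated without a minimality hypothesis; with that understood, the obstruction is essentially notational.
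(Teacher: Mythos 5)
Your proof is correct and lands on the same two key facts the paper uses---Proposition \ref{spectra} as the spectral engine, and the row co-isometry condition $A_\zeta A_\zeta^* = I$ as the source of an eigenvalue $1$ for the relevant pencil---but it organizes the argument more economically. The paper works at level $m = \nbdim \scr{M}(\fb)$ with the padded tuple $\bsm A_\zeta^\mrt & 0 \\ 0 & 0 \esm$, uses Schur complements on $T(\zeta)T(\zeta)^* \leq I$ to extract the auxiliary identity $A_\zeta B^* = 0$, and then exhibits the explicit fixed point $\bsm I & 0 \\ 0 & 0 \esm$ of the associated bimodule map; afterward it must pass from $\fb\bsm A_\zeta^\mrt & 0 \\ 0 & 0 \esm$ back to $\fb(A_\zeta^\mrt)$ by block-diagonality. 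You avoid all of this by evaluating directly at $Y = A_\zeta^\mrt$ (level $n = \nbdim \cH_\zeta$), and by observing that $\cH_\zeta \otimes \cH_\zeta$ is already invariant for $Y \otimes T(\zeta)^*$ because $\cH_\zeta$ is $T(\zeta)^*$-invariant; the restriction is then $A_\zeta^\mrt \otimes A_\zeta^*$, whose matrization $\psi(W) = \sum_k A_{\zeta,k}^* W A_{\zeta,k}$ has $\psi^\dagger(I) = I$ by the co-isometry condition, so $1 \in \sigma(\psi^\dagger) = \ov{\sigma(\psi)}$. The $AB^* = 0$ step becomes unnecessary precisely because restricting to the invariant subspace decouples from the $B,C$ blocks, and no padding is needed. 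Both routes correctly use that Proposition \ref{spectra} does not require minimality, which matters because $\cH_\zeta$ sits in $\scr{M}(\fb)$, not in $\scr{M}_0(\fb)$.

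One small inaccuracy to fix: the claim that $A_\zeta A_\zeta^* = I$ forces $\|Y\|_{row} = 1$ is not correct. The transpose of a row co-isometry is a column co-isometry, so $\|Y\|_{col} = 1$, but $\|Y\|_{row}$ need only satisfy $\|Y\|_{row} \leq \sqrt{d}$. What is actually needed---and what the paper establishes via Lemma \ref{rowvscol} and the argument of Lemma \ref{sprisone}---is $\mr{spr}(Y) \leq 1$, so that $Y$ lies in the joint-similarity-saturated domain $\nbdom \fb \supseteq r\B^d_\N$, $r>1$. The same fact (together with $T(0)^*$ being jointly similar to a strict column contraction, by purity plus Lemma \ref{rowvscol}) is what makes $L_{T(0)^*}(Y)$ invertible, rather than any literal inclusion $Y \in r\B^d_\N$.
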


\begin{proof}
As discussed above, $\fb (A _\zeta ^\mrt )$ is well--defined. To simplify notation we drop the subscript $\zeta$. Identify $\scr{M} (\fb ) \simeq \C ^m$, and suppose that $\cK \subseteq \scr{M} (\fb )$ is $T (\zeta) ^*-$invariant and $A ^* := T (\zeta) ^* | _{\cK}$, $\cK \simeq \C ^k$, $k \leq m$. Then, 
$$ T(\zeta) ^* = \bpm A ^* & B ^* \\ & C ^* \epm. $$ Note that since $T (\zeta )$ is a row contraction and $A$ is a row co-isometry,
\ba  I_m & \geq & T (\zeta ) T (\zeta ) ^* = \bpm A & \\ B & C  \epm \bpm A ^* & B ^* \\ & C ^* \epm \nn \\
& = & \bpm A A ^* & AB^* \\ BA^* & BB^* + C C^* \epm = \bpm I & AB^* \\ BA^* & BB^* + CC^*\epm, \nn \ea so that 
$$ \bpm 0 & -AB^* \\ -BA^* & I - BB^* - CC ^* \epm \geq 0. $$
In the above, we view $A \in \C ^d _n$ as a row $d-$tuple, $A = (A _1 , \cdots , A_d ) : \C ^n \otimes \C ^d \rightarrow \C ^n$, and $A^*, B^*$ as column $d-$tuples so that, for example, 
$$ AB^* = A_1 B_1 ^* + \cdots + A_d B_d ^*. $$ 
It follows, by Schur complement theory, that $AB^* =0$ and that $BB^* + CC^* \leq I$. Observe that
$$ I_m \otimes I_m - \bsm A ^\mrt & 0 \\ 0 & 0 \esm  \otimes T (\zeta ) ^*  \simeq  I \otimes I - T (\zeta ) ^* \otimes \bsm A ^\mrt & 0 \\ 0 & 0 \esm, $$
and the second formula is the matrization of 
$$ \mr{id} _m - \bsm A  & 0 \\ 0 & 0 \esm (\cdot ) \bsm A ^* & B ^* \\ 0 & C  ^* \esm. $$ 
Then,
\ba & & \mr{id} _m \circ \bpm I_k & 0 \\ 0 & 0 \epm - \bpm  A  & 0 \\ 0 & 0 \epm \bpm I_k & 0 \\ 0  & 0 \epm \bpm A ^* & B ^* \\ 0 & C  ^* \epm \nn \\
& = & \bpm I_k & 0 \\ 0  & 0 \epm - \bpm AA ^* & A B^* \\ 0 & 0 \epm = \bpm I_k & 0 \\ 0 & 0 \epm - \bpm I_k & 0 \\  0 & 0 \epm \equiv 0, \nn \ea 
and it follows that $\wvec{\bsm I_k & 0 \\ 0  & 0 \esm} $ is an eigenvector of $T (\zeta ) ^* \otimes \bsm A^\mrt & 0 \\ 0 & 0 \esm$ to eigenvalue $1$, and the image of this eigenvector under the tensor swap unitary is then an eigenvector of $\bsm A^\mrt & 0\\ 0 & 0 \esm \otimes T (\zeta) ^*$ to eigenvalue $1$. Proposition \ref{spectra} then implies that $\zeta$ is an eigenvalue of 
$$ \fb \bpm A^\mrt & 0 \\ 0 & 0 \epm = \bpm \fb (A ^\mrt ) & 0 \\ 0  & 0 \epm, $$ so that $\zeta$ is an eigenvalue of $\fb (A^\mrt )$. Namely, for any $Z \in \mr{Dom} _n \, \fb$, $$  L_{T(\zeta) ^*} (Z)  =  I_n \otimes I_m  - Z \otimes A - \ov{\zeta} Z \otimes BC, $$ where 
$$ A = T(0) ^*, \quad B:= T(1) ^* 1, \quad C := \ip{1}{\cdot} \quad \mbox{and} \quad D:= \fb (0) =0, $$ gives the finite FM realization formula for $\fb$ from Equation (\ref{finFM}). 
\end{proof}

\begin{prop} \label{GMprop}
Let $\cK := \bigoplus _{k=1} ^N \cK _k \subseteq \scr{M} (\fb )$ be the direct sum of minimal and mutually orthogonal $T (\zeta )-$co-invariant subspaces of $\scr{M} (\fb )$ so that each $T(\zeta ) ^* | _{\cK _k}$ is a column isometry. Then $\zeta$ is an eigenvalue of $\fb (T _\zeta ^\mrt )$ of geometric multiplicity at least $N$.
\end{prop}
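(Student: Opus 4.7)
The plan is to reduce the proposition to $N$ applications of Proposition~\ref{evalues}, one for each summand $\cK_k$, and then exploit the block-diagonal structure of $T_\zeta := (T(\zeta)^*|_\cK)^*$ on $\cK = \bigoplus_k \cK_k$ to assemble $N$ linearly independent eigenvectors of $\fb(T_\zeta^\mrt)$ at eigenvalue $\zeta$.

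First I would record that, since each $\cK_k$ is $T(\zeta)^*$-invariant and the $\cK_k$ are mutually orthogonal, the restriction $T_\zeta^* = T(\zeta)^*|_\cK$ decomposes as the block-diagonal operator $\bigoplus_k A_k^*$, where $A_k^* := T(\zeta)^*|_{\cK_k}$. A one-line check shows $T_\zeta^*$ is itself a column isometry on $\cK$: for $x = \sum_k x_k$ with $x_k \in \cK_k$, one has $\sum_j T_{\zeta,j} T_{\zeta,j}^* x = \sum_k \sum_j A_{k,j} A_{k,j}^* x_k = \sum_k x_k = x$, using that each $A_k^*$ is a column isometry. Proposition~\ref{evalues} applied to each $A_k^*$ individually then produces a nonzero eigenvector $v_k \in \cK_k$ of $\fb(A_k^\mrt)$ at the eigenvalue $\zeta$. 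This is the only input from the preceding proposition that the argument uses.

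Next I would exploit the block-diagonal structure of $T_\zeta$. In an orthonormal basis of $\cK$ respecting the decomposition $\cK = \bigoplus_k \cK_k$, $T_\zeta$ is block diagonal with blocks $A_k$, and so $T_\zeta^\mrt$ is block diagonal with blocks $A_k^\mrt$. Since every NC rational expression is built from sums, products and inverses, all of which preserve block-diagonal structure, one obtains $\fb(T_\zeta^\mrt) = \bigoplus_k \fb(A_k^\mrt)$. This requires each $A_k^\mrt$ to lie in $\nbdom \fb$, which follows exactly as in the discussion preceding Proposition~\ref{evalues}: Lemma~\ref{sprisone} gives $\mr{spr}(A_k^\mrt) \leq 1$, Lemma~\ref{rowvscol} then places $A_k^\mrt$ in the closure of a joint similarity orbit of a row contraction, and Theorem~\ref{ncratinH2} furnishes some $r > 1$ with $r \cdot \B ^d _\N \subseteq \nbdom \fb$.

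Finally, the embedded vectors $\widetilde{v}_k \in \cK$ (equal to $v_k$ in the $k$-th summand and zero elsewhere) are mutually orthogonal eigenvectors of $\fb(T_\zeta^\mrt)$ at $\zeta$ by block-diagonality, hence linearly independent, so the geometric multiplicity of $\zeta$ in $\fb(T_\zeta^\mrt)$ is at least $N$. The main technical concern is the preservation of block-diagonal structure through each intermediate inversion in the evaluation of $\fb$; this is handled by a short induction on the length of the NC rational expression for $\fb$, using that a block-diagonal matrix is invertible if and only if each of its diagonal blocks is.
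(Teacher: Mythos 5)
Your core strategy matches the paper's: apply Proposition~\ref{evalues} to each $A_k^* := T(\zeta)^*|_{\cK_k}$ to produce an eigenvector $v_k \in \cK_k$ of $\fb(A_k^\mrt)$ at $\zeta$, note that NC rational operations preserve block structure, and assemble $N$ independent eigenvectors. Where you diverge is in redefining $T_\zeta := (T(\zeta)^*|_\cK)^*$. The paper defines $T(\zeta)^* = X(\zeta)|_{\scr{M}(\fb)}$ in Equation~(\ref{Tzetadef}) on all of $\scr{M}(\fb)$, and its proof sets $T := T(1)$ on $\scr{M}(\fb)$ and displays a $3\times 3$ block decomposition over $\cK_1 \oplus \cK_2 \oplus \cK^\perp$; the object in the conclusion is therefore $\fb(T(\zeta)^\mrt)$ for the full $T(\zeta)$. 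When $\cK \subsetneq \scr{M}(\fb)$ your argument concludes about a strictly smaller matrix and does not prove the stated result.

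The missing step is short. Since $\cK$ is $T(\zeta)^*$-invariant, with respect to $\scr{M}(\fb) = \cK \oplus \cK^\perp$ each $T(\zeta)^*_j$ is block upper-triangular, hence so is each $T(\zeta)^\mrt_j = \ov{T(\zeta)^*_j}$, and NC rational expressions preserve block upper-triangularity just as they preserve block-diagonality, so
$$ \fb\bigl(T(\zeta)^\mrt\bigr) = \begin{pmatrix} \bigoplus_{k=1}^N \fb(A_k^\mrt) & * \\ 0 & \fb(C^\mrt) \end{pmatrix}, $$
with $C^*$ the compression of $T(\zeta)^*$ to $\cK^\perp$. Padding each $v_k$ by zero on $\cK^\perp$ then gives $N$ linearly independent eigenvectors of the full $\fb(T(\zeta)^\mrt)$ at $\zeta$. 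Adding this lift to your proposal recovers the paper's argument.
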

\begin{proof}
To simplify notations, assume without loss in generality that $\zeta =1$ and that $N=2$. We will also write $T:= T(1)$. Then,
$$T^* = \bpm[margin] \cellcolor{blue!15} \begin{array}{cc} A_1 ^* &   \\  & A_2 ^* \end{array} & \cellcolor{green!15} \mbox{\Large $B^*$} \\ \mbox{\Large $ \quad $ } & \cellcolor{red!15} \mbox{\Large $C^*$} \epm, $$ where $A_k ^* := T^* | _{\cK _k}$ are (irreducible) row co-isometries.  Hence, 
$$ \fb (T ^\mrt ) = \bpm[margin] \cellcolor{blue!15} \begin{array}{cc} \fb (A_1 ^\mrt ) &   \\  & \fb (A_2 ^\mrt ) \end{array} & \cellcolor{green!15} \mbox{\Large $*$} \\ \mbox{\Large $ 0 $ } & \cellcolor{red!15} \mbox{\Large $\fb (C^\mrt )$} \epm. $$
For any $x = \bsm x_1  \\ x_2 \\ x_3 \esm$, 
$$ \fb (T ^\mrt ) x = \bpm \fb (A_1 ^\mrt ) x_1 + * x_3 \\ \fb (A_2 ^\mrt ) x_2 + * x_3 \\ \fb (C ^\mrt ) x_3 \epm. $$ 
By the previous proposition, both $\fb (A _1 ^\mrt )$ and $\fb (A_2 ^\mrt )$ have eigenvectors, $y_1$ and $y_2$ to eigenvalue $1$. It follows that 
$$ \bsm y_1 \\ 0 \\ 0 \esm \quad \mbox{and} \quad  \bsm 0 \\ y _2 \\ 0 \esm$$  are two linearly independent eigenvectors of $\fb (T ^\mrt )$ to eigenvalue $1$.
\end{proof}

\begin{thm} \label{boundary}
Let $A (\zeta) ^* := T(\zeta )^* | _{\cK _\zeta}$ be a column--isometric restriction of $T(\zeta ) ^*$ to an invariant subspace $\cK _\zeta \subseteq \scr{M} (\fb )$. Suppose that $\mr{dim} \, \cK _\zeta =n$, identify $\cK _\zeta$ with $\C ^n$, and let $v$ be a unit eigenvector of $\fb (A (\zeta ) ^\mrt ) ^\mrt $ corresponding to the eigenvalue $\zeta$. Then for any $y \in \C ^n$, the limit $K^\fb \{ A (\zeta )  , y , v \} := \lim _{r \uparrow 1} K ^\fb \{ r A (\zeta )  , y , v \} $ exists, and for any $h \in \scr{H} ^\mrt (\fb)$, the limit 
$$  y^* h (A (\zeta)  ) v :=  \lim _{r\uparrow 1} y ^* h (r A (\zeta )  ) v = \ip{ K^\fb \{ A (\zeta ) , y , v \} }{h}_{\scr{H} ^\mrt (\fb ) } \quad \quad \mbox{exists.} $$ 
\end{thm}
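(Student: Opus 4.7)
The plan is to show that the kernel vectors $K^\fb\{rA(\zeta), y, v\}$ are uniformly bounded in $\scr{H}^\mrt(\fb)$ as $r\uparrow 1$ and converge weakly; the boundary-value claim for $h\in\scr{H}^\mrt(\fb)$ then follows from the reproducing identity $y^*h(rA(\zeta))v=\ip{K^\fb\{rA(\zeta),y,v\}}{h}$. Writing $A := A(\zeta)$, the CPNC kernel formula together with the row co-isometry identity $AA^*=I$ gives
\[
\| K^\fb\{rA, y, v\} \|^2 = \sum_{k=0}^{\infty} r^{2k}\, y^* L_A^{(k)}[P_r]\, y, \qquad L_A[Q] := \textstyle\sum_{j} A_j Q A_j^*,
\]
where $P_r := vv^* - \fb^\mrt(rA)\,vv^*\,\fb^\mrt(rA)^*$. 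The eigenvector identity $\fb^\mrt(A)v = \zeta v$ with $|\zeta|=1$ forces $P_1 = 0$, and since the hypotheses put $A$ in $\nbdom\fb^\mrt$ (by exactly the argument used just before Proposition \ref{evalues}), $\fb^\mrt$ is analytic at $A$ and $\|P_r\| = O(1-r)$ in operator norm.

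The operator $L_A$ is a unital completely positive map on the finite-dimensional algebra $\C^{n\times n}$; by the standard Perron--Frobenius theory for unital CP maps, its spectrum is finite and its peripheral part is semisimple, with $1$ always a peripheral eigenvalue having $I$ as eigenvector. The Jordan/spectral decomposition then shows that peripheral eigenvalues $\la\neq 1$ contribute terms uniformly bounded by $\|P_r\|/|1-r^2\la|$, that the strictly contractive part gives a uniformly convergent geometric sum, and that the potentially divergent $1$-eigenspace contribution $E(P_r)/(1-r^2)$ is controlled because every $L_A^*$-invariant state $\pi$ annihilates $P_1=0$ and hence, by radial differentiability of $r\mapsto \fb^\mrt(rA)v$ at $r=1$, $\pi(P_r)=O(1-r)$. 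Therefore $\sup_{r<1}\|K^\fb\{rA,y,v\}\|<\infty$, and weak compactness provides a subsequence $r_n\uparrow 1$ along which $K^\fb\{r_n A, y, v\}\rightharpoonup k_\infty$. To pin down $k_\infty$, pair against an interior kernel vector $K^\fb\{W,x,u\}$ with $W\in\B^d_m$:
\[
\ip{K^\fb\{rA,y,v\}}{K^\fb\{W,x,u\}} = \sum_{\om\in\F^d} x^* W^{\om}\bigl(uv^* - \fb^\mrt(W) uv^* \fb^\mrt(rA)^*\bigr) r^{|\om|}A^{*\om} y.
\]
Strict row-contractivity $\|W\|_{row}<1$ forces uniform convergence in $r\in[0,1]$, so the pairing tends to $x^*K^\fb(W,A)[uv^*]y$ as $r\uparrow 1$. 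Since such interior kernel vectors span a dense subspace of $\scr{H}^\mrt(\fb)$, the weak limit is independent of subsequence and one defines $K^\fb\{A,y,v\}:=\lim_{r\uparrow 1}K^\fb\{rA,y,v\}$; the second assertion is then immediate from the reproducing property.

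The principal obstacle is the uniform norm bound, and specifically the first-order vanishing $\pi(P_r)=O(1-r)$ for every $L_A^*$-invariant state, which rests on radially differentiating $\fb^\mrt(rA)v$ at $r=1$. When $A$ is reducible one can reduce to the irreducible case by decomposing $\cK_\zeta$ into $A$-reducing summands; this collapses the $L_A$-fixed-point space to $\C\cdot I$ and streamlines the spectral decomposition.
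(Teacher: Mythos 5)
Your argument is correct and in fact mirrors the paper's proof in all of its essential structure: the key ingredient is that $P_r := vv^* - \fbt(rA)vv^*\fbt(rA)^*$ satisfies $P_1 = 0$ by the eigenvector hypothesis and depends analytically on $r$ near $r=1$ (because $A$ is a row co-isometry in the interior of $\nbdom\fbt$), so that $\|P_r\| = O(1-r)$; combined with the exact geometric identity $K(rA,rA)[I_n] = I_n/(1-r^2)$ (from $AA^*=I$), this gives a uniform norm bound; and then weak compactness plus convergence of pairings against a dense family of interior kernel vectors identifies the weak limit. The paper organizes the norm bound slightly differently (via a trace estimate and the Gâteaux differentiability of $\fbt^*\fbt$ at the boundary), but this is the same idea.

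Where you genuinely diverge is the insertion of a Perron--Frobenius spectral decomposition of the unital CP map $L_A$ into the norm estimate. That apparatus is superfluous here. Since $L_A$ is unital and completely positive, $\|L_A^{(k)}[Q]\| \leq \|Q\|$ for every $k$ and every $Q$, so once $\|P_r\| = O(1-r)$ is in hand you already have
$$
\left|\sum_{k\geq 0} r^{2k}\,y^*L_A^{(k)}[P_r]\,y\right| \;\leq\; \|y\|^2\,\|P_r\|\,\sum_{k\geq 0} r^{2k} \;=\; \frac{\|y\|^2\,\|P_r\|}{1-r^2} \;=\; O(1),
$$
with no need to split off peripheral eigenvalues, locate the $1$-eigenspace, or invoke semisimplicity. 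Likewise, the closing remark about decomposing $\cK_\zeta$ into $A$-reducing summands when $A$ is reducible is not needed: the estimate $\|E_1[P_r]\| \leq \|E_1\|\,\|P_r\| = O(1-r)$ holds trivially for any spectral projection $E_1$, irreducibility playing no role. The spectral discussion is not wrong, but it adds machinery that the problem does not require and, by appearing to rest on a nontrivial structure theory for $L_A$, obscures the fact that the argument is really just the pair of elementary estimates $\|P_r\| = O(1-r)$ and $\|L_A\|_{\mathrm{cb}} = 1$.

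One small notational slip to flag: with the paper's convention $\ip{k\{Z,y,v\}}{k\{W,x,u\}} = y^*k(Z,W)[vu^*]x$, the pairing you write down is actually the complex conjugate of $\ip{K^\fb\{rA,y,v\}}{K^\fb\{W,x,u\}}$. This does not affect the convergence argument, but it is worth matching conventions to the paper.
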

\begin{proof}
To simplify notations we simply write $A$ in place of $A (\zeta )$. For any $0<r<1$, the NC de Branges--Rovnyak kernel vector $K ^\fb \{ r A (\zeta )  , y , v \}$ is well--defined since $r A$ is a strict row contraction. Observe that the net $K^\fb \{ r A  , y , v \}$ converges pointwise in $\B ^d _\N$. Indeed, given any $W \in \B ^d _m$ and $x,u \in \C ^m$,
\ba x^* K^\fb \{ r A  , y , v \} (W) u & = & x^* K ^\fb (W , r A  ) [ v u^*] y  \nn \\
& = & x^* K (W , r A  ) [ v u^*] y - x^* K (W, r A ) [ \fb ^\mrt (Z ) v u^* \fb ^\mrt (r A  ) ^*] y. \nn \ea Since $R \cdot \ov{\B ^d _\N } \subseteq \nbdom \fbt$ for some $R >1$ and $A$ is a row contraction, the limit
$$ \lim _{r \uparrow 1} \fb ^\mrt (r A  ) = \fbt ( A ), $$ exists. Similarly, since $W \in \B ^d _m$, the entire expression converges to
$$ x^* K (W ,  A  ) [ v u^*] y - x^* K (W ,  A  ) [ \fb ^\mrt ( Z ) v u^* \fb ^\mrt ( A  ) ^*] y. $$
Let $K^{\fb} \{  A  , y ,  v \} (W) $ denote this pointwise limit. Observe that by assumption,
$$ \fb ^\mrt ( A) v = \fb (A^\mrt ) ^\mrt v = \zeta v. $$ 
Now consider,
\ba \| K ^\fb \{ r A  , y , v \} \| _{\bH ^2} ^2 & = &  y^* K (rA , rA) \left[ vv^* - \fbt (r A ) vv^* \fbt (rA )^* \right] y \nn \\
& \leq & y^* K (rA , rA) [I_n ] y  \cdot  \| vv^* - \fbt (rA ) vv^* \fbt (rA) ^* \| \nn \\
& \leq & \tr \left( vv^* - \fbt (rA ) vv^* \fbt (rA) ^*  \right) \cdot y^* \sum _{j=0} ^\infty r^{2j} \underbrace{\mr{Ad} _{A , A^*} ^{(j)} (I_n )}_{=I_n} y \nn \\
& = &  \| y \| ^2 \, \frac{v^* \left( I_n - \fbt (rA ) ^* \fbt (rA)  \right) v}{1-r^2} \nn \\
& = & \| y \| ^2 \, \frac{v^* \left( \fbt (A) ^* \fbt(A) - \fbt (rA ) ^* \fbt (rA ) \right) v}{(1-r) (1+r)}. \label{boundeval}  \ea 
Since $R \cdot \B ^d _\N \subseteq \nbdom \fbt$ for some $R<1$, it follows that $\fbt (Z)$ and hence $\fbt (Z ) ^* \fbt (Z)$ is G\^ateaux differentiable at any point on the boundary, $\partial \B ^d _\N$. We conclude that the limit supremum of Equation (\ref{boundeval}) as $r \uparrow 1$ is finite. By weak compactness, there is a weakly convergent subsequence $K ^\fb \{ r_k  A , y , v \}$ which necessarily converges pointwise to $K ^\fb \{  A , y,  v \} (Z) $. Hence any weakly convergent subsequence has the same limit and the entire net converges weakly to $K ^\fb \{  A , y,  v \} \in \scr{H} ^\mrt (\fb )$. By weak convergence, given any $h \in \scr{H} ^\mrt (\fb )$, 
\ba  \ip{ K^\fb \{ A , y , v \} }{h}_{\scr{H} ^\mrt (\fb ) } & = & \lim _{r \uparrow 1}  \ip{ K^\fb \{ r A , y , v \} }{h}_{\scr{H} ^\mrt (\fb ) } \nn \\
&= & \lim y^* h (r A ) v. \nn \ea It follows that $h (r A )$ is convergent and we let $h( A )$ denote this limit.
\end{proof}

\begin{remark}
If $\mu = \mu _{\fb}$ is the singular NC rational Clark measure of an NC rational inner left multiplier, $\fb \in [ \mult ] _1$, then we can define the \emph{support of $\mu $}, $\mr{supp} (\mu )$, on the boundary, $\partial \B ^d _\N$, as the set of all finite--dimensional row co-isometries, $Z \in \partial \B ^d _\N$, so that the minimal row isometric dilation, $V$, of $Z$, is unitarily equivalent to a direct summand of $\Pi _\mu$. By Theorem \ref{DKSue}, $Z \in \mr{supp} (\mu )$ if and only if there is a $T_{\mu }-$co-invariant subspace $\cJ _\mu \subseteq \cH _{\mu  }$, so that $Z ^*$ is jointly unitarily equivalent to $T_{\mu } ^* | _{\cJ _\mu}$. Proposition \ref{evalues} then implies that $1$ is an eigenvalue of $\fb (Z ^\mrt )$ for any $Z \in \mr{supp} (\mu )$. 

Recall that in one variable, any rational inner function in $H^2$ is a finite Blaschke product:
$$ \fb (z) = \zeta \prod _{k=1} ^N \frac{z- w_k}{1-\ov{w} _k z}; \quad \quad w_k \in \D, \ \zeta \in \partial \D. $$ In this case 
$\mu _\fb$ is a finite, positively weighted sum of exactly $N$ point masses on the unit circle, so that $\mu _\fb$ is singular and $L^2 (\mu _b ) = H^2 (\mu _b )$. Hence $\Pi _\fb \simeq M_\zeta$ is unitary, and $\Pi _\fb = \Pi _{\mu _\fb }$ is irreducible if and only if $N=1$ and $\fb$ is a single Blaschke factor. Moreover, the point masses of $\mu _\fb$ are located precisely at the $N$ points on the unit circle where $\fb (\zeta ) =1$.  Proposition \ref{GMprop} can be viewed as an analogue of this classical fact. 

If $\fb \in H^\infty$ is rational, it extends analytically to a disk of radius $>1$, and so it has finite Carath\'eodory angular derivatives at any point $\zeta \in \partial \D$. That is, $|\fb (\zeta ) | =1$ for all $\zeta \in \partial \D$ and $\fb ' (\zeta )$ has a non-tangential limit at each point on the boundary. By \cite[VI-4]{Sarason-dB}, this is equivalent to saying that every $h \in \scr{H} (\fb )$ has a non-tangential limit at every point on the boundary. Theorem \ref{boundary} can then be viewed as a generalization of this classical result.
\end{remark}

\subsection{Mutual singularity of Clark--Cuntz perturbations}

Let $b \in [ H^\infty ] _1$ be a contractive analytic function in the complex unit disk. Given any $\zeta \in \partial \D$, let $\mu _\zeta := \mu _{b \ov{\zeta}}$ be the one--parameter family of positive Clark measures of the contractive functions $\ov{\zeta} b$. The goal of this subsection is to obtain an analogue of the following classical theorem of Aronszajn and Donoghue \cite{Aronszajn,Dono}, for the case of contractive NC rational multipliers of the Fock space.
\begin{thm*}[Aronszajn--Donoghue]
The singular parts of the family of Clark measures $\{ \mu _\zeta = \mu _{b \ov{\zeta}} | \ \zeta \in \partial \D \}$ are mutually singular,
$$ \mu _{\zeta ; s} \ \perp \ \mu _{\xi ; s} \quad \mbox{for} \quad \zeta , \xi \in \partial \D, \ \zeta \neq \xi. $$ 
\end{thm*}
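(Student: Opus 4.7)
The plan is to exhibit, for each $\zeta \in \partial \D$, a Borel carrier $E_\zeta \subseteq \partial \D$ on which $\mu_{\zeta;s}$ lives, and then to observe that the family $\{E_\zeta\}_{\zeta \in \partial \D}$ is pairwise disjoint. Mutual singularity is then immediate from the definition.

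The key input is the classical boundary theorem of de la Vallée-Poussin: for any positive, finite, regular Borel measure $\nu$ on $\partial \D$ with Herglotz integral
\[ H_\nu(z) = \int_{\partial \D} \frac{1+z\ov{\eta}}{1-z\ov{\eta}}\, \nu(d\eta), \]
the singular part $\nu_s$ is concentrated on the Borel set where the Poisson transform, $\nbre H_\nu(re^{i\theta})$, has radial limit $+\infty$. This is the same theorem that already underlies the single-variable half of \cite{JM-ncFatou} cited in the introduction, so invoking it is legitimate here.

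Applying this with $\nu = \mu_\zeta$, recall that $\mu_\zeta$ is the Clark measure of $\ov{\zeta}b$, so modulo an additive imaginary constant,
\[ H_{\mu_\zeta}(z) = \frac{1+\ov{\zeta}b(z)}{1-\ov{\zeta}b(z)}, \qquad \nbre H_{\mu_\zeta}(z) = \frac{1-|b(z)|^2}{|1-\ov{\zeta}b(z)|^2}. \]
If the right-hand side diverges to $+\infty$ as $r\uparrow 1$ at a boundary point $e^{i\theta}$, then the numerator (which is bounded by $1$) forces the denominator $|1-\ov{\zeta}b(re^{i\theta})|^2 \to 0$, i.e.\ $b(re^{i\theta}) \to \zeta$. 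Setting $E_\zeta := \{e^{i\theta} \in \partial \D : \lim_{r\uparrow 1} b(re^{i\theta}) = \zeta\}$, the previous step yields that $\mu_{\zeta;s}$ is concentrated on $E_\zeta$.

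For distinct $\zeta, \xi \in \partial \D$ the sets $E_\zeta$ and $E_\xi$ are disjoint, since a single function cannot have two different radial limits at the same boundary point. Hence $\mu_{\zeta;s}$ and $\mu_{\xi;s}$ are carried on disjoint Borel subsets of $\partial \D$, which is mutual singularity. The only substantive step is the de la Vallée-Poussin radial-limit theorem; the rest is an algebraic manipulation of the Cayley transform $H_{\mu_\zeta} = (1+\ov{\zeta}b)/(1-\ov{\zeta}b)$. No assumption on $b$ beyond $b \in [H^\infty]_1$ is needed, and the argument works uniformly over the whole circle of parameters $\zeta$.
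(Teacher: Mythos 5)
Your argument is correct as a proof of the classical Aronszajn--Donoghue theorem, and it is the standard textbook route: use the Fatou/de la Vall\'ee-Poussin theorem to locate the singular part on the Borel set where the Poisson extension of the measure blows up radially, then observe that for the Clark family this set is contained in $E_\zeta := \{ e^{i\theta} : b(re^{i\theta}) \to \zeta \}$, and these carriers are disjoint in $\zeta$.

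The paper does not prove this classical statement directly; it states it as motivation and then proves an NC analogue, and the proof given there takes a genuinely different route. Rather than working with boundary values, the paper reformulates mutual singularity in Hilbert-space terms: two singular positive measures are mutually singular if and only if their Herglotz spaces of Cauchy transforms have trivial intersection. Then, for $h \in \scr{H}^+(H_{\ov\zeta b;s}) \cap \scr{H}^+(H_{\ov\xi b;s})$, the weighted Cauchy transforms send $h$ to $h(1-\ov\zeta b)$ and $h(1-\ov\xi b)$, both in the de Branges--Rovnyak space $\scr{H}(b)\subseteq H^2$; subtracting shows $h$ and $hb$ lie in $H^2$, and since a singular measure's Herglotz space meets $H^2$ only in $\{0\}$, $h=0$. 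The contrast is instructive: your approach is more elementary for $d=1$, but it relies on a pointwise boundary-value theory (radial limits, symmetric derivatives, carriers on $\partial\D$) that simply does not exist for NC measures on the free disk system --- there is no boundary on which to carry a singular NC measure, and the NC Fatou theory available in this setting is not pointwise. The paper's argument deliberately avoids all boundary-value considerations and uses only containments of reproducing kernel Hilbert spaces and the weighted Cauchy transform, which is exactly why it transfers verbatim to the Fock-space setting. In short: both proofs are correct, yours is the classical one, and the paper's is chosen for its portability to $d>1$.
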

One can show that two positive, finite and regular Borel measures on the complex unit circle are mutually singular if and only if their Herglotz spaces of Cauchy transforms have trivial intersection \cite[Section 1.1, Corollary 8.5]{JM-ncld}. The following is then an exact NC analogue of the Aronszajn--Donoghue theorem for arbitrary contractive left multipliers of Fock space:
\begin{thm}[NC Aronszajn--Donoghue]
Given $b \in [\mult ] _1$, consider the one--parameter family of NC Clark measures $\mu _\zeta := \mu _{\ov{\zeta} b}$, $\zeta \in \partial \D$. The singular parts of this family of NC measures are mutually singular in the sense that their spaces of NC Cauchy transforms have trivial intersection,
$$ \scr{H} ^+ (H _{\ov\zeta b ; s } ) \bigcap \scr{H} ^+ (H _{\ov\xi b ; s} ) = \{ 0 \}. $$ 
\end{thm}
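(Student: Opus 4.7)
The plan is to reduce the claim to a statement about the Cuntz parts of the GNS row isometries and then exploit the rank--one nature of the difference between the associated Clark--Cuntz perturbations on the common space $\scr{H}^\mrt(b)$.

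First I would appeal to \cite[Section 1.1, Corollary 8.5]{JM-ncld}, which equates trivial intersection of NC Cauchy transform spaces with mutual singularity of the underlying NC measures, which in turn is equivalent to the statement that the associated GNS row isometries share no unitarily equivalent nonzero direct summand. Combined with \cite[Corollary 8.13]{JM-ncld}, identifying $\mu_{\zeta,s}$ with the Cuntz direct summand of $\Pi_{\mu_\zeta}$, the theorem becomes: for $\zeta \neq \xi \in \partial\D$, the Cuntz direct summands of $\Pi_{\mu_{\ov\zeta b}}$ and $\Pi_{\mu_{\ov\xi b}}$ share no nonzero unitarily equivalent component.

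Next, the weighted Cauchy transforms $\scr{F}_\zeta$ and $\scr{F}_\xi$ both land in the common right free de Branges--Rovnyak space $\scr{H}^\mrt(b)$, since $\scr{H}^\mrt(\ov\zeta b) = \scr{H}^\mrt(b)$ as NC-RKHS for every $\zeta \in \partial \D$. Under these transforms, the adjoints of the GNS row isometries become the rank--one Clark--Cuntz perturbations $X(\zeta)$ and $X(\xi)$ of $X := L^*|_{\scr{H}^\mrt(b)}$ from Equation (\ref{CCuntzfam}), and these differ by a single rank--one row operator
\[
X(\zeta) - X(\xi) = (c_\zeta - c_\xi)\, \vfb\, \ip{K_0^b}{\cdot}_b, \qquad c_\zeta := \frac{\ov\zeta}{1 - b(0)\ov\zeta},
\]
with $c_\zeta - c_\xi \neq 0$ whenever $\zeta \neq \xi$. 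Thus it suffices to show the Cuntz parts of $X(\zeta)^*$ and $X(\xi)^*$ on the same ambient space $\scr{H}^\mrt(b)$ share no unitarily equivalent nonzero direct summand.

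The heart of the proof, and the main obstacle, is a rank--one intertwining argument: suppose $\cK_\zeta$ and $\cK_\xi$ are nonzero reducing subspaces for $X(\zeta)^*$ and $X(\xi)^*$ respectively, on which the restrictions are Cuntz, and let $U: \cK_\zeta \to \cK_\xi$ be a unitary intertwiner. The difficulty is that $\cK_\zeta$ and $\cK_\xi$ may be completely unrelated subspaces of $\scr{H}^\mrt(b)$, so that the rank--one difference does not act on a single reducing subspace. I would exploit the distinguished cyclic-vector structure: the cyclic vector $1 + N_{\mu_\zeta}$ of $\Pi_{\mu_\zeta}$ is transported by $\scr{F}_\zeta$ to a scalar multiple of $K_0^b$ (Equation (\ref{freeCTofone})), so the projection of $K_0^b$ onto $\cK_\zeta$ is cyclic for the Cuntz summand there, and symmetrically for $\cK_\xi$. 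Pushing the intertwining relation $U X(\zeta)^* = X(\xi)^* U$ through the rank--one difference, and testing against $K_0^b$ via the cyclic projections, should force $c_\zeta - c_\xi = 0$, contradicting $\zeta \neq \xi$. The argument does not require $b$ to be NC rational, since the rank--one perturbation formula (\ref{CCuntzfam}) and the cyclicity of $K_0^b$ hold for arbitrary $b \in [\mult]_1$.
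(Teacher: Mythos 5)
Your opening reduction is not valid, and that is where the proof breaks down. You invoke \cite[Section 1.1, Corollary 8.5]{JM-ncld} to pass from ``the Cauchy transform spaces have trivial intersection'' to ``the GNS row isometries have no unitarily equivalent direct summand.'' That equivalence is a \emph{classical}, single-variable fact, and the paper is explicit that it does not carry over to the non-commutative setting: immediately after the theorem you are proving, the authors describe the row-isometry/no-unitarily-equivalent-direct-summand version as the ``exact NC analogue'' of the operator-theoretic Aronszajn--Donoghue statement and remark that ``the proof of this general statement eludes us at this time,'' proving only a weaker version for NC rational $\fb$ in Theorem~\ref{ncAD}. So your reduction converts the easier theorem (the one actually being proved, for arbitrary $b \in [\mult]_1$) into the strictly harder open problem, and your subsequent rank--one intertwining argument — which you yourself describe as incomplete (``should force $c_\zeta - c_\xi = 0$'') — would have to settle that open problem to succeed.

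The paper's proof takes an entirely different and much more elementary route, staying purely on the function-theoretic side. Given $h$ in both singular Herglotz spaces, right multiplication by $I - \ov\zeta b^\mrt$ and by $I - \ov\xi b^\mrt$ both carry $h$ into the single space $\scr{H}^\mrt(b)$, so $h - \ov\zeta hb^\mrt$ and $h - \ov\xi hb^\mrt$ both lie in $\scr{H}^\mrt(b)$. Since $\zeta \neq \xi$, linear algebra gives $hb^\mrt \in \scr{H}^\mrt(b)$ and hence $h \in \scr{H}^\mrt(b) \subseteq \hardy$. But $\hardy \cap \scr{H}^+(H_{\ov\zeta b;s}) = \{0\}$ because the singular part is singular (\cite[Corollary 8.13]{JM-ncld}), so $h = 0$. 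No row-isometric decomposition, no intertwiners, no perturbation formula. If you want to salvage your idea, you would need to prove directly that trivial intersection of Cauchy-transform spaces follows from the operator statement \emph{in the NC setting} — but it is cleaner and correct to abandon the reduction and argue as the paper does.
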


\begin{proof}
Suppose that $h \in \scr{H} ^+ (H _{\ov\zeta b ; s } ) \bigcap \scr{H} ^+ (H _{\ov\xi b ; s} )$. Then right multiplication by $I - \ov{\zeta} b^\mrt $ and by $I - \ov\xi b^\mrt$ both take this intersection space into $\scr{H} ^\mrt (b)$. Hence $h - \ov\zeta h b^\mrt$ and $h - \ov\xi h b^\mrt$ both belong to $\scr{H} ^\mrt (b)$, and hence both $h$ and $h b^\mrt$ belong to $\scr{H} ^\mrt (b) \subseteq \hardy$. In particular $h \in \hardy \bigcap \scr{H} ^+ (H_{\ov\zeta b ; s} ) = \{ 0 \}$, since $\mu _{\ov\zeta b ; s}$ is a singular NC measure, by assumption \cite[Corollary 8.13]{JM-ncld}. 
\end{proof}

The classical Aronszajn--Donoghue theorem can also be restated in operator--theoretic language. If $\mu , \la$ are positive measures, both singular with respect to Lebesgue measure on the complex unit circle, consider the measure spaces $L^2 (\mu ) = H^2 (\mu )$ and $L^2 (\la ) = H^2 (\la)$, where as before, $H^2 (\mu )$ denotes the closure of the analytic polynomials. It follows that the isometries of multiplication by the independent variable on $H^2 (\mu )$ and $H^2 (\la)$, $U_\mu := M_\zeta | _{H^2 (\mu )} = M_\zeta$ and $U_\la$, are unitary. To say that the singular measures $\mu, \la$ are mutually singular is then equivalent to the statement that $U_\mu$ and $U_\la$ are mutually singular in the sense that they have no unitarily equivalent restrictions to reducing subspaces.  Similarly, we will say that two Cuntz row isometries, $U$ and $U'$ are \emph{mutually singular}, and we write $U \perp U'$, if they have no unitarily equivalent direct summands, \emph{i.e.} unitarily equivalent restrictions to reducing subspaces. The exact NC analogue of this formulation of the Aronszajn--Donoghue theorem would then state that if $b \in [\mult ]_1$, then the singular Cuntz GNS row isometries $\Pi (\zeta) _s = \Pi _{\mu _{\ov\zeta b ; s}}$ are mutually singular for $\zeta, \xi \in \partial \D$, $\zeta \neq \xi$. While the proof of this general statement eludes us at this time, we can prove the following weaker statement for contractive NC rational multipliers:

\begin{thm}[NC rational Aronszajn--Donoghue] \label{ncAD}
Let $\fb \in [\mult ]_1$ be an NC rational contractive left multiplier of Fock space. For any $\zeta \in \partial \D$, let $\mu _\zeta = \mu _{\ov\zeta \cdot \fb}$ be the finitely--correlated NC Clark measure of $\ov\zeta \cdot \fb$ with GNS representation $\Pi (\zeta )$. 

If $\fb$ is inner so that $\Pi (\zeta) = \Pi (\zeta ) _s$, if $\Pi (\zeta ) _s$ is irreducible for some $\zeta \in \partial \D$ and if $\fb | _{\B ^d _1}$ does not vanish identically, then $\Pi (\zeta ) _s \perp \Pi (\xi ) _s$ for any $\xi \neq \zeta$, $\xi \in \partial \D$. 

If $\fb \in [\mult ] _1$, $\nbdim \cH _\mu = n \in \N$, $\mu = \mu _1$ and $\{ \zeta _1 , \cdots , \zeta _{n+1} \} \subseteq \partial \D$ is any set of $n+1$ distinct points on the circle, then there is a $j \in \{ 1, \cdots , n+1 \}$, so that $\Pi (\zeta _j ) _s \perp \Pi (\zeta _k ) _s$ for all $k \neq j$. 
\end{thm}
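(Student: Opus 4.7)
The strategy in both parts is to prove the statements by contradiction, combining Theorem~\ref{DKSue} with Proposition~\ref{evalues} and the rank-one perturbation identity for the finite-dimensional Clark perturbations $T(\zeta)$ on $\scr{M}(\fb)$. Using the NC Crofoot equivalence established just before this subsection, we may assume without loss of generality that $\fb(0)=0$, in which case $K_0^\fb=1$ and the perturbation identity simplifies to
\[ T(\xi)^{*} - T(\zeta)^{*} = (\ov\xi-\ov\zeta)\,\vfb\,\ip{1}{\cdot}\bigr|_{\scr{M}(\fb)}, \qquad \vfb := L^{*}\fbt. \]

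\textbf{Part 1.} Suppose for contradiction that $\Pi(\zeta)_s$, which is irreducible by hypothesis, is unitarily equivalent to a direct summand of $\Pi(\xi)_s$ for some $\xi\in\partial\D\setminus\{\zeta\}$. Because $\fb$ is inner, Corollary~\ref{singinner} ensures that every $T(\zeta')$ is a row co-isometry on $\scr{M}(\fb)$, and irreducibility of the cyclic row isometry $\Pi(\zeta)_s$ forces $T(\zeta)$ itself to be an irreducible row co-isometry on the full $n$-dimensional space $\scr{M}(\fb)$. Matching dimensions in Theorem~\ref{DKSue} then forces the corresponding $T(\xi)$-co-invariant subspace to be all of $\scr{M}(\fb)$, producing a unitary $U$ with $T(\xi) = UT(\zeta)U^{*}$. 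Since trace polynomials are the joint unitary invariants of $d$-tuples of matrices, we obtain the infinite family
\[ \tr\!\bigl(T(\xi)^{\om}T(\xi)^{*\om'}\bigr) = \tr\!\bigl(T(\zeta)^{\om}T(\zeta)^{*\om'}\bigr),\qquad \om,\om'\in\F^d. \]
Expanding $T(\xi)_k = T(\zeta)_k + (\xi-\zeta)\,|1\rangle\ip{\vfb_k}{\cdot}$ and collecting powers of $(\xi-\zeta)$, each such identity becomes a polynomial equation in the quantities $\ip{\vfb_{k_1}}{T(\zeta)^\sigma\vfb_{k_2}}$, which encode precisely the coefficients of the minimal FM descriptor realization of $\fb$. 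Exploiting the irreducibility of $T(\zeta)$ (so that $T(\zeta),T(\zeta)^{*}$ generate $\scr{L}(\scr{M}(\fb))$), the system of identities translates, after specialization to the abelianization map $\F^d\to\N^d$, into systematic vanishing of the commutative Taylor coefficients $\sum_{\om\mapsto\vec a}\hat\fb_\om$ of $\fb$ at the origin. Power-series expansion on $\B^d_1$ then yields $\fb|_{\B^d_1}\equiv 0$, contradicting the hypothesis. The main obstacle here is organizing the induction cleanly; I expect this can be done by comparing the two FM realizations of $\fb$ arising from the pairs $(T(\zeta),1)$ and $(T(\zeta),U^{*}1)$ and then extracting the commutative identities by specialization.

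\textbf{Part 2.} For each irreducible Cuntz class $[\alpha]$ appearing as a direct summand of some $\Pi(\zeta_j)_s$, pick a representative row co-isometry $A$ of dimension $m=\nbdim\alpha$ and set $J(\alpha) := \{j\in\{1,\ldots,n+1\} : [\alpha]\text{ is a summand of } \Pi(\zeta_j)_s\}$. For each $j\in J(\alpha)$, Theorem~\ref{DKSue} yields an irreducible $T(\zeta_j)$-co-invariant subspace whose compression is jointly unitarily equivalent to $A$, and Proposition~\ref{evalues} then forces $\zeta_j$ to be an eigenvalue of the single $m\times m$ matrix $\fb(A^\mrt)$. This gives the crucial bound
\[ |J(\alpha)| \le \nbdim\alpha \le n, \]
so in particular no single class $[\alpha]$ is shared by all $n+1$ points. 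Suppose for contradiction that no $\zeta_j$ is isolated; then every $j$ lies in some $J(\alpha)$ with $|J(\alpha)|\ge 2$, which together with the dimensional identity $\sum_{[\alpha]\in S_j}\nbdim\alpha = \nbdim\wt\cH(\zeta_j)\le n$ gives the covering inequality $\sum_{|J(\alpha)|\ge 2}|J(\alpha)|\ge n+1$ and the global bound $\sum_\alpha\nbdim\alpha\cdot|J(\alpha)|\le n(n+1)$. I expect to combine these with $|J(\alpha)|\le\nbdim\alpha$, via a Cauchy--Schwarz or power-mean estimate, to derive the required contradiction. The main obstacle is that a purely combinatorial pigeonhole argument leaves a small gap in configurations with two or more shared classes; closing it will require the extra structural input that all $T(\zeta_j)$ are rank-one perturbations of a common $T(0)$ on the same $n$-dimensional space, rigidifying which irreducible classes can be simultaneously shared.
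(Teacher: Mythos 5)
Your proposal identifies the right structural inputs (Theorem~\ref{DKSue}, Proposition~\ref{evalues}, the rank-one perturbation structure, the reduction to $\fb(0)=0$), but neither part is actually completed, and in both cases the paper follows a genuinely different and simpler route.

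\textbf{Part 1.} You correctly reduce, via Theorem~\ref{DKSue} and matching dimensions, to the statement that $T(\xi)$ and $T(\zeta)$ are jointly unitarily equivalent row co-isometries on all of $\scr{M}(\fb)$. But the route from there to $\fb|_{\B^d_1}\equiv 0$ through trace polynomial invariants and abelianization is not carried out, and you yourself flag the gap in ``organizing the induction cleanly.'' The paper's argument sidesteps this entirely: an intermediate lemma (a consequence of Proposition~\ref{spectra} applied to the finite FM realization $(T(0)^*, T(1)^*1, 1^*, 0)$ and the rank-one perturbation identity $T(\zeta)^* = T(0)^* + \ov\zeta\,T(1)^*1\,\ip{1}{\cdot}$) gives the determinant factorization
\[
\mr{det}\,\bigl(I - Z\otimes T(\zeta)^*\bigr) \;=\; \mr{det}\,\bigl(I - Z\otimes T(0)^*\bigr)\cdot \mr{det}\,\bigl(I_n - \ov\zeta\,\fb(Z)\bigr)
\]
for all $Z$ in the closed unit row-ball. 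Joint unitary equivalence of $T(1)$ and $T(\zeta)$ makes the left sides coincide, and cancelling the nonvanishing common factor $\mr{det}(I-Z\otimes T(0)^*)$ yields $\mr{det}(I-\fb(Z))=\mr{det}(I-\ov\zeta\fb(Z))$ identically. Restricting to level one, where $\fb(z)$ is scalar, gives $(1-\ov\zeta)\fb(z)=0$, hence $\fb|_{\B^d_1}\equiv 0$. This is complete and considerably shorter; the trace-polynomial calculation would in effect have to rediscover this factorization in expanded form.

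\textbf{Part 2.} Your bound $|J(\alpha)|\le\nbdim\alpha\le n$ via Proposition~\ref{evalues} is correct and is exactly the eigenvalue count the paper uses. But the combinatorial closing step does not close, and the gap you flag is a real one: for example, with $n=3$ and four points one may have two classes $\alpha_1,\alpha_2$, each of dimension $2$, with $J(\alpha_1)=\{1,2\}$ and $J(\alpha_2)=\{3,4\}$. This satisfies all three of your inequalities ($|J(\alpha_i)|=2\le 2=\nbdim\alpha_i$, $\sum|J(\alpha)|=4=n+1$, $\sum\nbdim\alpha\cdot|J(\alpha)|=8\le 12=n(n+1)$) yet has no isolated point, so no Cauchy--Schwarz or power-mean refinement of those three inequalities alone can work. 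The paper avoids this by working under the (stronger, and asserted but not literally equivalent) hypothesis that the irreducible pieces $F(\zeta_k)$ are pairwise jointly unitarily equivalent for \emph{all} $k$; then all $n+1$ of the $\zeta_k$ are eigenvalues of the single matrix $\fb(F(\zeta_1)^\mrt)$ of size at most $n$, an immediate contradiction. Your intuition that the rank-one perturbation structure on the common $n$-dimensional space must be brought to bear to exclude configurations like the one above is sound, but without it your Part 2 remains incomplete.
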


\subsection{Mutual singularity and disjointness}

Let $U, U'$ be Cuntz row isometries acting on Hilbert spaces $\cK , \cK '$ respectively. These row isometries are said to be \emph{disjoint} if  there is no bounded operator $X : \cK \rightarrow \cK '$ so that $X U_k = U' _k X$ \cite{DHJorg-atom}. Such an $X$ is called an \emph{intertwiner}. The following is well known and can be found in \cite[Lemma 8.9]{JM-ncld}:

\begin{lemma}
Let $U, U'$ be row isometries on $\cK , \cK '$, respectively, and suppose that $X : \cK \rightarrow \cK '$ is a bounded intertwiner, $X U ^\om = U ^{'\om} X$. If $U$ is a Cuntz unitary then also
$$ X^* U ^{'\om} = U ^\om X^*, $$ so that $D := X^* X$ belongs to the commutant of the von Neumann algebra, $\mr{vN} (U)$, generated by $U$, and $D' = X X ^*$ belongs to the commutant of $\mr{vN} ( U' )$. 
\end{lemma}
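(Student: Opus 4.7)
Taking adjoints of the intertwining $X U_k = U'_k X$ gives $U_k^* X^* = X^* (U'_k)^*$ for each $k = 1, \ldots, d$. My plan is to first promote this into the opposite intertwining identity $X^* U'_k = U_k X^*$, and then simply compute on $D$ and $D'$. For the first step I would left-multiply by $U_k$ and sum over $k$: since $U$ is Cuntz, $\sum_{k=1}^d U_k U_k^* = I_{\cK}$, so this yields
\[
X^* = \sum_{k=1}^d U_k X^* (U'_k)^*.
\]
Right-multiplying by $U'_j$ and using the row-isometry relations $(U'_k)^* U'_j = \delta_{jk} I$ collapses the sum to a single term, yielding $X^* U'_j = U_j X^*$. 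Iterating this over the letters of any word $\om \in \F ^d$ produces $X^* (U')^{\om} = U^{\om} X^*$.

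Given this identity, the commutant claims are immediate. For each $k$,
\[
D\,U_k \;=\; X^* (X U_k) \;=\; X^* U'_k X \;=\; U_k X^* X \;=\; U_k D,
\]
where the third equality is the identity just proved. Since $D = X^*X$ is self-adjoint, taking adjoints also yields $D U_k^* = U_k^* D$. Hence $D$ commutes with every $U_k$ and $U_k^*$, and therefore with the $*$-algebra they generate and with its WOT-closure $\mr{vN}(U)$. The verification that $D' = X X^* \in \mr{vN}(U')'$ is symmetric: using the original intertwining in the middle step,
\[
D' U'_k \;=\; X(X^* U'_k) \;=\; X U_k X^* \;=\; U'_k X X^* \;=\; U'_k D',
\]
and self-adjointness of $D'$ closes things up under the adjoint.

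There is no genuine obstacle here; it is a short computation. The only thing worth noting is which Cuntz hypothesis is used where. The first displayed identity requires $U$ to be Cuntz (so that $\sum_k U_k U_k^*$ telescopes to the identity), but only needs $U'$ to be a row isometry (to invoke orthogonality of ranges). Self-adjointness of $D$ and $D'$ is then exactly what permits closure under the adjoint operation without imposing a second surjectivity hypothesis.
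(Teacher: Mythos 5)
Your proof is correct, and since the paper gives no proof of this lemma (it states it as well known and cites Lemma~8.9 of the reference \texttt{JM-ncld}), there is nothing internal to compare against. Your argument is the standard one: adjoint the intertwining, use $\sum_k U_k U_k^* = I$ (the Cuntz hypothesis on $U$) to recover $X^*$, then hit with $U'_j$ on the right and use the row-isometry relations $(U'_k)^* U'_j = \delta_{jk}I$ to collapse the sum; the commutant statements then follow from self-adjointness of $D$ and $D'$. Your closing remark correctly isolates where each hypothesis is used — $U$ Cuntz for the telescoping, $U'$ merely a row isometry for the orthogonality, and self-adjointness of $D, D'$ to get commutation with the adjoints and hence membership in the commutants of the generated von Neumann algebras.
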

In particular, if $U$ is a Cuntz row isometry then the commutant of $U$, \emph{i.e.} the set of all operators $A$ that commute with each $U_k$, $1 \leq k \leq d$, is a von Neumann algebra.
\begin{prop}
Let $U, U'$ be Cuntz row isometries on $\cK , \cK'$, and let $X : \cK \rightarrow \cK '$ be an intertwiner. Then $U' | _{P_{\nbran X}}$ and $U | _{P _{\nbran X^*}}$ are unitarily equivalent sub-representations.
\end{prop}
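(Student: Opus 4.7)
The plan is to use the polar decomposition of the intertwiner, together with the lemma just proved.

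Write $X = V |X|$ where $|X| := \sqrt{X^*X}$ and $V$ is the partial isometry with initial space $\ov{\nbran X^*} = \ov{\nbran |X|}$ and final space $\ov{\nbran X}$. The preceding lemma gives $X^*X \in \mr{vN}(U)'$ and $XX^* \in \mr{vN}(U')'$. Since these commutants are von Neumann algebras, the range projections $P := P_{\ov{\nbran X^*}}$ and $P' := P_{\ov{\nbran X}}$ belong to $\mr{vN}(U)'$ and $\mr{vN}(U')'$ respectively. In particular each $U_k$ commutes with $P$ and each $U'_k$ commutes with $P'$, so $\ov{\nbran X^*}$ is $U$-reducing and $\ov{\nbran X}$ is $U'$-reducing, and $|X|$ commutes with every $U_k$.

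The next step is to show that $V$ itself intertwines $U$ and $U'$. From $X U_k = U'_k X$ and $V|X| = X$, and using that $|X|$ commutes with $U_k$, I get
\[
(V U_k - U'_k V)|X| = V U_k |X| - U'_k V|X| = V |X| U_k - U'_k X = X U_k - U'_k X = 0.
\]
Hence $V U_k - U'_k V$ vanishes on $\nbran |X|$, and by continuity on $\ov{\nbran X^*}$. On the orthogonal complement $\ker X = \ov{\nbran X^*}^\perp$, which is $U$-invariant by the previous paragraph, $V$ is zero, so both $V U_k$ and $U'_k V$ vanish there. Thus $V U_k = U'_k V$ on all of $\cK$.

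Finally, define $W$ to be $V$ regarded as a map $\ov{\nbran X^*} \to \ov{\nbran X}$. By construction of the polar decomposition $W$ is a unitary between these two Hilbert spaces, and the intertwining identity $V U_k = U'_k V$ restricts (using the reducibility established above) to $W \,(U_k|_{\ov{\nbran X^*}}) = (U'_k|_{\ov{\nbran X}}) \, W$ for each $k$. This is precisely the claimed unitary equivalence of the sub-representations $U|_{P_{\nbran X^*}}$ and $U'|_{P_{\nbran X}}$. No step is really an obstacle here; the only point demanding attention is confirming that the range projections of $X^*X$ and $XX^*$ actually lie in the relevant commutants, which follows from the fact that a von Neumann algebra contains the range projection of any of its positive elements.
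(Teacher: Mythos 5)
Your proof is correct and takes essentially the same approach as the paper: the map $W$ the paper defines on the dense set $\nbran\sqrt{X^*X}$ by $W\sqrt{X^*X}h := Xh$ is precisely the partial isometry $V$ in the polar decomposition $X = V|X|$ restricted to its initial and final spaces, and your verification that $(VU_k - U'_kV)|X| = 0$ is the same computation the paper uses to show $WU_k\sqrt{X^*X}h = U'_kW\sqrt{X^*X}h$. The only cosmetic differences are that you invoke polar decomposition rather than checking the isometry property of $W$ by hand, and that you establish the intertwining relation globally on $\cK$ before restricting, whereas the paper works directly on the dense subspace.
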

That is, two Cuntz row isometries are disjoint if and only if they are mutually singular. In the above statement, we are identifying any row isometry, $U$, with a $*-$representation, $\pi _U$ of the Cuntz--Toeplitz $C^*-$algebra, $\mc{E} _d := C^* \{ I , L_1 , \cdots , L_d \}$. To say that $\pi _U$ and $\pi _{U'}$ have no unitarily equivalent sub-representations is equivalent to the statement that $U, U'$ have no unitarily equivalent restrictions to reducing subspaces.
\begin{proof}
By the previous lemma $XX^* \in \mr{vN} (U') '$, the commutant of $\mr{vN} (U')$, so that $P _X := P _{\ov{\nbran X}} \in \mr{vN} (U') '$ is $U'-$reducing, and similarly $P_{X^*}$ is $U-$reducing. Hence $U ' | _{\nbran P_X }$ and $U | _{\nbran P_{X^*}}$ are sub-representations of the Cuntz algebra. Define $W : \nbran P_{X^*} \rightarrow \nbran P_{X}$ by the formula $W \sqrt{X^*X} h = Xh \in \nbran X \subseteq \cK '$. Then $W$ is defined on a dense subset of $\nbran P_{X^*} = \ov{\nbran X^* } \subseteq \cK$, it has dense range in $\nbran P_X$, and
\ba \| W \sqrt{X^*X} h \| ^2 & = &  \ip{Xh}{Xh}_{\cK '} \nn \\
&= & \ip{h}{X^* X h} _{\cK} = \| \sqrt{X^*X} h \| ^2. \nn \ea  The linear map $W : \ov{\nbran X^* } \rightarrow \ov{\nbran X}$ extends by continuity to an onto isometry. Finally,
\ba W U_k \sqrt{X^*X} h & = & W \sqrt{X^* X} U_k h \nn \\
& = & X U_k h = U_k ' Xh \nn \\
& = & U_k ' W \sqrt{X^* X} h. \nn \ea This proves that $W U_k P_{X^*} = U_k' P_X W$ so that these Cuntz sub-representations are unitarily equivalent.
\end{proof}

\subsection{Proof of the NC Aronszajn--Donoghue theorem}

Recall that we are assuming that $\fb (0) =0$ so that $K_0 ^\fb =1$.
\begin{lemma}
If $\fb (0) =0$, then for any $\zeta \in \partial \D$, 
$$ X (\zeta ) _j 1 = \ov{\zeta} L_j ^* \fbt \quad \quad  \mbox{and} \quad \quad
 X(\zeta) = X +  X(\zeta) 1 \ip{1}{\cdot}_\fb. $$ 
\end{lemma}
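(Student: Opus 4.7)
The plan is to reduce both identities to the Clark--Cuntz formula \eqref{CCuntzfam} and to exploit the simplifications caused by the hypothesis $\fb(0)=0$. First I would note that under this hypothesis, $K_0^\fb = 1 - \fbt\,\overline{\fb(0)} = 1$, and the scalar prefactor in \eqref{CCuntzfam} becomes $\overline{\zeta}/(1 - \fb(0)\overline{\zeta}) = \overline{\zeta}$. Thus \eqref{CCuntzfam} reads, componentwise,
\[
X(\zeta)_j h \;=\; X_j h + \overline{\zeta}\,\ip{1}{h}_\fb\, L_j^*\fbt, \qquad h \in \scr{H}^\mrt(\fb).
\]

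For the first identity I would specialize $h = 1$. On one hand $X_j 1 = L_j^* 1 = 0$ since $1 = \mf{z}^\emptyset$ is the vacuum vector and $L_j$ annihilates it under adjunction. On the other hand the reproducing kernel formula for $\scr{H}^\mrt(\fb)$ gives
\[
\|1\|_\fb^2 \;=\; \ip{K_0^\fb}{K_0^\fb}_\fb \;=\; K^\fb(0,0)[1] \;=\; 1 - |\fb(0)|^2 \;=\; 1.
\]
Substituting these into the componentwise formula yields $X(\zeta)_j 1 = \overline{\zeta}\, L_j^*\fbt$, which is exactly the claim (and identifies the column $X(\zeta)\,1 = \overline{\zeta}\,\vfb$).

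For the second identity I would simply reinsert this value of $X(\zeta)\,1$ back into the simplified form of \eqref{CCuntzfam}. Since $\vfb = L^*\fbt = \overline{\zeta}^{-1} X(\zeta)\,1 = \zeta\, X(\zeta)\,1$ when $|\zeta|=1$, we obtain
\[
X(\zeta) \;=\; X + \overline{\zeta}\,\vfb\,\ip{1}{\cdot}_\fb \;=\; X + \overline{\zeta}\cdot\zeta\, X(\zeta)\,1\,\ip{1}{\cdot}_\fb \;=\; X + X(\zeta)\,1\,\ip{1}{\cdot}_\fb,
\]
which is the desired rank-one decomposition. There is no real obstacle here: the content of the lemma is just the repackaging of \eqref{CCuntzfam} in a form convenient for later use, once the normalization $\fb(0)=0$ has trivialized both $K_0^\fb$ and the scalar denominator.
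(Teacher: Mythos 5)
Your proof is correct and follows essentially the same approach as the paper: both arguments reduce directly to the Clark--Cuntz formula (\ref{CCuntzfam}) using the simplifications $K_0^\fb = 1$, $1-\fb(0)\overline{\zeta}=1$, and $L_j^* 1 = 0$ that follow from the normalization $\fb(0)=0$. The paper is more terse because it references the analogous $\zeta=1$ calculation from Section \ref{minreal}, whereas you spell out the intermediate steps (in particular $\|1\|_\fb^2 = 1$), but the underlying argument is identical.
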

\begin{proof}
We performed this calculation for the case $\zeta =1$ in Section \ref{minreal}. Since we are assuming that $\fb (0) =0$, for any $\zeta \in \partial \D$,
$$ X(\zeta ) _j \underbrace{K_0 ^\fb}_{=1} =  0  + \ov{\zeta} L_j ^* \fbt. $$ 
The formula for $X (\zeta )$ becomes
\ba X(\zeta) & = & X + \ov{\zeta}  L^* \fbt \ip{1}{\cdot}_\fb \nn \\
& = & X +  X(\zeta) 1 \ip{1}{\cdot}_\fb. \nn \ea
\end{proof}

Recall that $X(\zeta )_k  = \scr{F} _{\zeta } \Pi (\zeta ) _{k} \scr{F} _{\zeta } ^{-1}$ is the image of a component of the GNS row isometry $\Pi (\zeta ) = \Pi _{\mu _{\ov{\zeta} \cdot \fb}}$ under the unitary weighted free Cauchy transform, $\scr{F} _\zeta : \hardy ( \mu _\zeta ) \rightarrow \scr{H} ^\mrt (\fb )$. Further recall that
$$ T(\zeta) ^* := X(\zeta) | _{\scr{M} (\fb )}, $$ is defined as the restriction of $X(\zeta)$ to the finite--dimensional space $\scr{M} (\fb ) \subseteq \scr{H} ^\mrt (\fb )$, for any $\zeta \in \C$. Each $T(\zeta ) ^*$ is a rank--one perturbation of $T(0) ^* = L^* | _{\scr{M} (\fb )}$, 
\ba T (\zeta) ^* & = & T (0) ^* + \ov{\zeta}  L^* \fbt \ip{1}{\cdot}_{\scr{H} ^\mrt (\fb )} | _{\scr{M} (\fb )} \nn \\
& = & T (0) ^* +  T (\zeta ) ^* 1 \ip{1}{\cdot}. \nn \ea A finite Fornasini--Marchesini realization of $\ov{\zeta} \cdot \fb$, for any $\zeta \in \partial \D$, is then given as the transfer function of the colligation,
\be \bpm A & B \\ C & D \epm = \bpm T (0) ^* &   T (\zeta) ^* 1 \\ 1 ^* &  0 \epm. \label{notquitemin} \ee
Note that the minimal FM realization is obtained by compressing to $\scr{M} _0 (\fb ) := \bigvee _{\om \neq \emptyset} L^{*\om} \fbt \subseteq \scr{M} (\fb )$, so that the above realization is not necessarily minimal, although it is close to it in the sense that the size of this realization is at most one greater than that of the minimal realization.  By construction, $X(\zeta)^* \simeq \Pi (\zeta )$ is the minimal row-isometric dilation of $T (\zeta )$ for every $\zeta \in \partial \D$.

By Theorem \ref{DKSue}, for any $\zeta , \xi \in \partial \D$, $\Pi (\zeta ) _{s}$ and $\Pi (\xi ) _{s}$ will have unitarily equivalent direct summands if and only if there are minimal $T(\zeta)$ and $T (\xi)$ co-invariant subspaces, $\cK _\zeta$ and $\cK _\xi$ of $\scr{M} (\fb )$, so that 
$$ F_\zeta ^* := T (\zeta ) ^* | _{\cK _\zeta} \quad \mbox{and} \quad F_\xi ^* := T (\xi ) ^* | _{\cK _\xi}, $$ are unitarily equivalent and irreducible row co-isometries. That is,
\be T (\zeta ) ^* = \bpm F (\zeta)  ^* & * \\ & G (\zeta ) ^* \epm, \label{blockdecomp} \ee has some block upper triangular decomposition with respect to $K_\zeta$ and $K_\zeta ^\perp$,  $T (\xi ) ^*$ has a similar decomposition with respect to $K_\xi$, and $F(\zeta )$ is unitarily equivalent to $F(\xi)$.  Without loss of generality, we will assume for the remainder of this section that $\xi =1$ and $\zeta \neq 1$. Let $P, P _\zeta$ be the projections onto $\cK = \cK _1 , \cK _\zeta$. 

\begin{lemma}
Assume that $\fb \in [\mult ] _1$ and that $\fb (0) =0$. Then for any $Z \in \ov{\B ^d _\N}$, $I - Z \otimes T(0) ^*$ is invertible and
$$  \mr{det} \left( I_n \otimes I - Z \otimes T (\zeta) ^* \right) = \mr{det} \left( I - Z \otimes T (0) ^* \right) \cdot \mr{det} \left(  I_n - \ov{\zeta} \fb (Z) \right). $$
\end{lemma}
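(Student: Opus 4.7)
The plan is to proceed in two stages: first I would establish that $I - Z \otimes T(0)^*$ is invertible for every $Z \in \ov{\B ^d _\N}$, and then I would read off the determinant identity from Proposition \ref{spectra} applied to a suitable finite Fornasini--Marchesini realization of $\ov\zeta \cdot \fb$.

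For invertibility, recall from the discussion following Equation (\ref{notquitemin}) that $T(0)$ is a finite--dimensional pure row contraction; indeed $T(0)^*$ is the restriction of $L^*$ to the invariant subspace $\scr{M}(\fb)$, and $L$ is pure, so $\mr{spr}(T(0)) < 1$. Lemma \ref{rowvscol} then produces an invertible similarity $S$ on $\scr{M}(\fb)$ with $\| S^{-1} T(0)^* S \| _{col} < 1$. Since $\| Z \| _{row} \leq 1$ for $Z \in \ov{\B ^d _\N}$, the identity
$$ (I \otimes S)^{-1} \left( I - Z \otimes T(0)^* \right) (I \otimes S) = I - Z \otimes (S^{-1} T(0)^* S) $$
exhibits $I - Z \otimes T(0)^*$ as similar to the identity minus an operator of norm strictly less than one, so its inverse exists and is given by a convergent geometric sum.

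For the determinant identity, the (not necessarily minimal) finite FM colligation in (\ref{notquitemin}) realizes $\ov\zeta \cdot \fb$ with data $(A,B,C,D) = (T(0)^*,\, T(\zeta)^* 1,\, 1^*,\, 0)$. I apply Proposition \ref{spectra} to this realization with $\fr = \ov\zeta \cdot \fb$ and parameter $\la = 1$. Since $\fb(0) = 0$, the preceding lemma gives the rank--one update $T(\zeta)^* = T(0)^* + T(\zeta)^* 1 \, \ip{1}{\cdot}$, so
$$ A^{(1)} _k = A_k + B_k C = T(0)^* _k + T(\zeta)^* _k 1 \, \ip{1}{\cdot} = T(\zeta)^* _k. $$
The hypothesis of Proposition \ref{spectra} is exactly the invertibility of $L_A(Z) = I - Z \otimes T(0)^*$ secured in the first stage, and substituting into its conclusion with $\la = 1$ and $\fr(0) = 0$ yields
$$ \mr{det} \, L_{T(\zeta)^*}(Z) = \mr{det} \, L_{T(0)^*}(Z) \cdot \mr{det} \left( I_n - \ov\zeta \fb(Z) \right), $$
which is the asserted identity.

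The only point requiring care is the identification $A^{(1)} = T(\zeta)^*$, which relies on the rank--one formula $T(\zeta)^* = T(0)^* + T(\zeta)^* 1 \, \ip{1}{\cdot}$ established under the assumption $\fb(0) = 0$ in the immediately preceding lemma. Once this identification is in hand, the proof reduces to a single invocation of Proposition \ref{spectra} and presents no further obstacle.
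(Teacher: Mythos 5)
Your argument is correct and follows essentially the same route as the paper: both proofs invoke Proposition \ref{spectra} with $\la = 1$ applied to the finite FM realization $(T(0)^*, T(\zeta)^*1, 1^*, 0)$ of $\ov\zeta \fb$, identify $A^{(1)} = T(\zeta)^*$ from the rank--one update formula valid when $\fb(0)=0$, and reduce invertibility of $I - Z \otimes T(0)^*$ on $\ov{\B^d_\N}$ to the fact that $T(0)$ is a pure, hence jointly similar to a strict row contraction, finite $d$-tuple plus Lemma \ref{rowvscol}. The one place you diverge is the purity of $T(0)$: you observe directly that $T(0)^*$ is the restriction of $L^*$ to an $L^*$-invariant subspace, so $\sum_{|\om|=n}\|T(0)^{*\om}h\|^2 = \sum_{|\om|=n}\|L^{*\om}h\|^2 \to 0$ and $T(0)$ inherits purity from $L$, whereas the paper reaches the same conclusion more indirectly by first showing the minimal de Branges--Rovnyak state operator $A = T(0)^*|_{\scr{M}_0(\fb)}$ is pure via Theorem \ref{ncratinH2} and then lifting to $T(0)^*$ through $\nbran T(0)^* \subseteq \scr{M}_0(\fb)$; your shortcut is cleaner and avoids the appeal to Theorem \ref{ncratinH2} altogether.
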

\begin{proof}
This follows from Proposition \ref{spectra}, the formula 
$$ T(\zeta ) ^* = T(0) ^* + \ov{\zeta} T(1) ^* 1 \ip{1}{\cdot}, $$ the fact that $(T(0) ^* , T(1) ^* 1 , 1 ^* , 0 )$ is a finite (but not necessarily minimal) FM realization of $\fb$ and the fact that $T(0)$ is a pure and finite-dimensional row contraction so that $I - Z \otimes T(0) ^*$ is invertible for any row contraction, $Z$, by Lemma \ref{rowvscol}. Since $\fb \in [ \mult ] _1$, the minimal de Branges--Rovnyak FM realization, $(A,B,C,D)$, of $\fb$ is such that $A = T(0) ^* | _{\scr{M} _0 (\fb )}$ and it follows that $A$ is pure and similar to a strict row contraction by Theorem \ref{ncratinH2} and \cite[Theorem 3.8]{PopRota} or \cite[Proposition 2.3, Remark 2.6]{SSS}. It is not difficult to show that since $\nbran T(0) ^* \subseteq \scr{M} _0 (\fb )$, that $T(0) ^*$ is then itself also pure, hence similar to a strict row contraction. Lemma \ref{rowvscol} then implies that $T(0) ^*$ is jointly similar to a strict column contraction so that $I - Z \otimes T(0) ^*$ is invertible for every $Z \in \ov{\B ^d _\N}$.
\end{proof}

\begin{proof}{ (of Theorem \ref{ncAD})}
First assume that $\fb$ is inner, that $\fb$ does not vanish identically on the first level of the row-ball, that $\Pi (1)$ is irreducible and that $\Pi (1)$ is unitarily equivalent to $\Pi (\zeta )$. By the previous lemma, we obtain that 
$$  \mr{det} (I -  \fb (Z) ) =  \mr{det} (I - \ov{\zeta} \fb (Z) ), $$ for all $Z \in \B ^d _\N$. In particular, choosing $Z =z  \in \B ^d _1$ gives
$$ 1 - \fb (z) = 1 - \ov{\zeta} \fb (z), $$ which implies that $\zeta =1$, a contradiction.

To prove the second part of the theorem statement, assume that $\fb \in [ \mult ] _1$ is an arbitrary and contractive NC rational left multiplier of Fock space, and that the finitely--correlated NC measure $\mu = \mu _\fb$ is such that $\nbdim \cH _\mu = n$. Assume that $\{ \zeta _1 , \cdots , \zeta _{n+1} \}$ are $n+1$ distinct points on the circle and that there is no $1 \leq j \leq n+1$ so that $\Pi (\zeta _j ) \perp \Pi (\zeta _k )$ for all $k \neq j$. Equivalently given any fixed $1 \leq j \leq n+1$ and every $1 \leq k \leq n+1$, $j \neq k$, $\Pi (\zeta _j )$ and $\Pi (\zeta _k )$ have unitarily equivalent singular Cuntz direct summands. 
Hence, for every $1 \leq k \leq n+1$, as described above,
$$ T (\zeta _k ) ^* = \bpm F (\zeta _k ) ^* & * \\ 0 & G(\zeta _k ) ^* \epm,$$
where each $F(\zeta _k)$ is an irreducible row co-isometry, and the $F(\zeta _k)$ are jointly unitarily equivalent for each $1 \leq k \leq n+1$. Since $\fb$ is an NC function, it follows that the matrices $\fb \left( F(\zeta _k ) ^\mrt \right) \in \C ^{n\times n}$ are unitarily equivalent for $1\leq k \leq n+1$, so that each $\zeta _k$ is an eigenvalue of $\fb \left( F(\zeta _k ) ^\mrt \right)$, and hence also of
$$ \fb \left( T (\zeta _k ) ^\mrt \right) = \bpm \fb ( F(\zeta _k ) ^\mrt ) & * \\ & \fb ( G(\zeta _k ) ^\mrt ) \epm, $$ by Proposition \ref{evalues}. This is impossible as $\fb \left( T(\zeta _k ) ^\mrt \right)$ is isomorphic to an $n\times n$ matrix and has at most $n$ distinct eigenvalues. 
\end{proof}

\begin{remark}
Our NC Aronszajn--Donoghue theorem, Theorem \ref{ncAD} shows that `most of' the singular parts of the GNS row isometries, $\Pi (\zeta )$, associated to a contractive NC rational $\fb \in [\mult ] _1$, are mutually singular or disjoint. Although we suspect that it may generally be that $\Pi (\zeta ) _s \perp \Pi ( \xi ) _s$ for any $\zeta \neq \xi$ there are several obstacles to extending our above argument. First, if $\fb$ is inner, vanishes identically on $\B ^d _1$ and $\Pi (\zeta ) = \Pi (\zeta ) _s$ is irreducible, then if $\Pi (\zeta )$ and $\Pi (\xi )$ are not disjoint, then $\Pi (\zeta )$ is unitarily equivalent to the restriction of $\Pi (\xi )$ to a reducing subspace. By Theorem \ref{DKSue}, this happens if and only if $T (\zeta)$ and $T(\xi)$ are unitarily equivalent, or equivalently if and only if $\Pi (\zeta )$ and $\Pi (\xi)$ are unitarily equivalent.
In this case we obtain, as in the above proof, that
$$  \mr{det} (I -  \fb (Z) ) =  \mr{det} (I - \ov{\zeta} \fb (Z) ), $$ for all $Z \in \B ^d _\N$. Hence if 
$$ \mf{f} _\zeta (Z) := (I - \ov{\zeta} \fb (Z) ) (I - \fb (Z) ) ^{-1}, $$ then $\mf{f} _\zeta \in \scr{O} (\B ^d _\N )$ is NC rational and $\mr{det} \, \mf{f} _\zeta (Z) \equiv 1$. However, such NC rational functions exist, one example is:
$$ \mf{f} (x,y) = (1 -xy) (1-yx) ^{-1}, $$ \cite{Jurij}. If we set $\zeta = -1$, $\mf{f} := \mf{f} _{-1}$,
$$ \mf{f} =: (1 - \fr ) ( 1 + \fr ) ^{-1} $$ and solve for $\fr$, we obtain that 
$$ \fr (x,y) = (xy -yx) ( 2 -xy -yx ) ^{-1}. $$ By re-scaling the variables, $x \mapsto r \cdot x$, $y \mapsto r \cdot y$, for some sufficiently small $0<r<1$, we then obtain a contractive NC rational function $\fb \in [\mult ] _1$, $\fb (Z) := \fr (r Z)$, so that 
$$ \wt{\mf{f}} (Z) = (I + \fb (Z) ) (I - \fb (Z) ) ^{-1}, $$ is an NC rational Herglotz function with constant determinant $1$ on its domain.  Hence to prove the NC rational Aronszajn--Donoghue theorem in the case where $\fb$ is inner and $\Pi (\zeta )$ is an irreducible Cuntz row isometry, one would need to argue that these assumptions on $\fb$ imply that the function $\mf{f} _\zeta (Z)$ cannot have constant determinant. The reducible case seems even more difficult: If $\Pi (\zeta) _s$ and $\Pi (\xi ) _s$ are reducible and not mutually singular, then we obtain that 
$$ \mr{det} \,  L_{G (\zeta ) ^*} (Z) \cdot \mr{det} \left( I - \ov{\xi} \fb (Z) \right) = \mr{det} \, L_{G(\xi ) ^*} (Z) \cdot \mr{det} \left( I - \ov{\zeta} \fb (Z) \right). $$ Appendix \ref{constdet} provides a characterization of NC functions with constant determinant.

Another class of examples of NC functions with constant determinant can be constructed as follows: If $f, g \in \mult$ are any two outer or singular inner left multipliers of Fock space, then $f,g$ are pointwise invertible in the NC unit row-ball, $\B ^d _\N$ \cite{JMS-ncBSO}, and 
$h := f g f^{-1} g^{-1} \in \scr{O} (\B ^d _\N )$ will have constant determinant equal to $1$.
\end{remark}

The following two examples illustrate phenomena in the behaviour of the components of $T (\zeta ) ^*$ as a function of $\zeta$. 

\begin{eg}
In this example each $T(\zeta )$ is a row co-isometry, $T (1) ^*$ is reducible, however, for $\zeta \neq 1$, the $T (\zeta) ^*$ are all irreducible. All of the $T (\zeta ) ^*$ are pairwise non-similar. Set
\[
T (1) ^*_{1} = \begin{pmatrix} 1 & 0 \\ 0 & 0 \end{pmatrix}, \quad T (1) ^*_{2} = \begin{pmatrix} 0 & 0 \\ 0 & 1 \end{pmatrix}.
\]
It is immediate to check that $x = \frac{1}{\sqrt{2}} \bsm 1 \\ 1 \esm$ is a cyclic vector for $T^* (1)$. Hence,
\[
T (\zeta ) ^* _{1} = T (1) ^*_{1} (1 - (\zeta - 1)xx^*) = \begin{pmatrix} \frac{\zeta + 1}{2} & \frac{\zeta - 1}{2} \\ 0 & 0 \end{pmatrix}, \quad \quad  T (\zeta ) ^*_{2} = \begin{pmatrix} 0 & 0 \\ \frac{\zeta - 1}{2} & \frac{\zeta+1}{2} \end{pmatrix}.
\]
It is well known that a pair of $2\times 2$ matrices is reducible if and only if the determinant of their commutator is $0$. Therefore, we compute,
\[
\det [T (\zeta ) ^*_{1}, T (\zeta ) ^*_{2}] = \frac{1}{16} \det \begin{pmatrix} (\zeta - 1)^2 & \zeta^2 - 1 \\ 1 - \zeta^2 & - (\zeta - 1)^2 \end{pmatrix} = \frac{1}{4} \zeta (\zeta - 1)^2.
\]
In particular, this polynomial does not vanish for any $\zeta \neq 1$ on the unit circle. Thus, for every $1 \neq \zeta \in \partial \D$, the point $T(\zeta ) ^*$ is irreducible. Moreover, since $\tr T(\zeta ) ^*_{1} = \frac{\zeta + 1}{2}$, we conclude that these matrices are pairwise non-similar.
By \cite[Theorem 6.8]{DKS-finrow}, since $X (\zeta ) ^* \simeq \Pi (\zeta )$ is the minimal row isometric dilation of $T (\zeta )$, where $\Pi (\zeta )$ is the GNS row isometry of $\mu _{\ov{\zeta} \fb }$, each $\Pi (\zeta )$ is a Cuntz row isometry of dilation type, $\Pi (\zeta )$ is irreducible for $\zeta \neq 1$, $\Pi (1 )$ is reducible and $\Pi (\zeta ) \,  \perp \, \Pi (\xi )$ are mutually singular Cuntz row isometries. 
\end{eg}

Recall that the reducible tuples of matrices form an algebraic subvariety of $\C _n^d$. Let $p_1, \ldots, p_k$ be the polynomials in the co-ordinates of $\C _n^d$ that cut out the subvariety of reducible matrices. The map $\zeta \mapsto T (\zeta ) ^*$ is affine in $\zeta$, hence we obtain a family of polynomials in $\zeta$: $q_1(\zeta) = p_1(T (\zeta) ^* ),\ldots, q_k(\zeta) = p_k(T (\zeta ) ^*)$. Since the points where $T (\zeta ) ^*$ is reducible are precisely the common zeroes of $q_1,\ldots,q_k$, there are either at most finitely many of them, or $q_1 = \cdots = q_k = 0$. The following example shows that the second case can occur. 

\begin{eg}
The matrices considered in this example are $4 \times 4$. We will denote by $e_1,e_2,e_3,e_4$ the standard basis for $\C^4$. Consider the row co-isometry
\[
T (1) ^*_{1} = \begin{pmatrix} 0 & 0 & 0 & 0 \\ 1 & 0 & 0 & 0 \\ 0 & 0 & 0 & 0 \\  0 & 0 & 1 & 0 \end{pmatrix} \quad \mbox{and} \quad T(1) ^*_{2} = \begin{pmatrix}  0 & 0 & 0 & 0 \\  0 & 1 & 0 & 0 \\ 0 & 0 & 0 & 1 \\  0 & 0 & 0 & 0 \end{pmatrix}.
\]
We take $x = \frac{1}{2} (e_1 + e_2+e_3 +e_4)$. It is straightforward to check that $x$ is cyclic for both $T(1) ^*$ and $T (1 )$. For example, $T (1) _{1} x = e_1 + e_3$, $T (1) _{2} x = e_2 + e_4$, $T (1) _{2} T (1) _{1} x = e_4$, $T (1) _{2}^2 x = e_2$, and $T (1) _{1} T (1) _{2} T (1) _{1} x = e_3$. Now we set $\omega = \frac{\zeta - 1}{4}$ and calculate,
\[
T(\zeta ) ^*_{1} = \begin{pmatrix} 0 & 0 & 0 & 0 \\ \omega + 1 & \omega & \omega & \omega \\ 0 & 0 & 0 & 0 \\ \omega & \omega & \omega + 1 & \omega \end{pmatrix}, \quad T (\zeta ) ^*_{2}  = \begin{pmatrix} 0 & 0 & 0 & 0 \\ \omega & \omega + 1 & \omega & \omega \\ \omega & \omega & \omega & \omega + 1 \\ 0 & 0 & 0 & 0 \end{pmatrix}.
\]
Note that the subspace $V$ spanned by $\{e_2,e_3,e_4\}$ is always $T (\zeta ) ^*-$invariant. However, it is easy to see that there are two minimal $T(1)^*-$invariant subspaces, the one spanned by $e_2$ and the one spanned by $\{e_3,e_4\}$. Let $C^*_{\zeta} = T(\zeta) ^*|_V$. Since $C^*_{\zeta}$ is $3\times 3$, if it is reducible, then $\det[C^*_{\zeta;1}, C^*_{\zeta;2}] = 0$, since this pair will have either an invariant or a coinvariant one-dimensional subspace. However,
\[
\det[C^*_{\zeta;1}, C^*_{\zeta;2}] = -2 \omega^2( 2 \omega + 1 ).
\]
Hence $C^*_{\zeta}$ is reducible if and only if $\zeta = 1$ or $\zeta = -1$. We understand the former case. In the latter case, there is a minimal $C^*_{-1}$ (and $T^*_{-1}$) invariant subspace spanned by $e_2 + e_4$. This vector is $T (-1)-$cyclic and thus by \cite[Corollary 5.5]{DKS-finrow}, we have that this minimal subspace is unique. Note that $\tr T(\zeta ) ^*_{1} = 2 \omega$ and thus these pairs are pairwise non-similar. Moreover, the Cuntz isometries $X(\zeta)$ are irreducible for all $\zeta \in \partial \D \sm \{ 1  \}$. However, the free semigroup algebra of $X(-1)^*$ is different from those of the $X(\zeta) ^*$ for $\zeta \neq \pm 1$ by \cite[Theorem 5.15]{DKS-finrow}.
\end{eg}

\subsection{Additional NC rational Aronszajn--Donoghue results}

\begin{prop}
Suppose that $T(\zeta)$ is an irreducible row co-isometry. If there is a $Z \in \C ^d _\N$ so that $Z \otimes T(\zeta) ^*$ is not singular, then $\Pi (\zeta ) _s \perp \Pi (\xi ) _s$ for any $\zeta \neq \xi$, $\zeta, \xi \in \partial \D$.
\end{prop}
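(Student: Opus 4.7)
The plan is to argue by contradiction: assume $\Pi(\zeta)_s \not\perp \Pi(\xi)_s$ for some $\xi \in \partial \D$ with $\xi \neq \zeta$. Since $T(\zeta)$ is a row co-isometry, Corollary~\ref{singinner} forces $\fb$ to be an NC rational inner multiplier, so $\Pi(\eta) = \Pi(\eta)_s$ is Cuntz for every $\eta \in \partial \D$, and by Lemma~\ref{irredlemma}, $\Pi(\zeta)$ is itself an irreducible Cuntz row isometry. Non-disjointness combined with this irreducibility implies that $\Pi(\zeta)$ is unitarily equivalent to an entire irreducible Cuntz direct summand of $\Pi(\xi)$. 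By Theorems~\ref{DKSdiltype} and~\ref{DKSue}, this summand is the minimal row isometric dilation of $T(\xi)|_{\wt{\cH}}$ for some minimal $T(\xi)$-co-invariant $\wt{\cH} \subseteq \scr{M}(\fb)$ on which $T(\xi)$ restricts to a row co-isometry, and $T(\zeta)$ is jointly unitarily equivalent to $T(\xi)|_{\wt{\cH}}$. Irreducibility of $T(\zeta)$ on all of $\scr{M}(\fb)$ together with a dimension count forces $\wt{\cH} = \scr{M}(\fb)$ and produces a unitary $U$ on $\scr{M}(\fb)$ with $U T(\zeta)_k^* U^* = T(\xi)_k^*$ for every $k$.

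Next I convert this unitary equivalence into a scalar identity. Conjugation by $I \otimes U$ preserves the pencil determinant, so $\det(I - Z \otimes T(\zeta)^*) = \det(I - Z \otimes T(\xi)^*)$ as NC rational functions of $Z$. Applying the formula
\[
\det(I - Z \otimes T(\eta)^*) = \det(I - Z \otimes T(0)^*) \cdot \det(I - \ov{\eta}\fb(Z))
\]
of the previous lemma for both $\eta = \zeta$ and $\eta = \xi$, and cancelling the common factor $\det(I - Z \otimes T(0)^*)$, which is not identically zero because $T(0)$ is pure and the factor equals $1$ at $Z = 0$, yields the NC rational identity
\[
\det(I - \ov{\zeta}\fb(Z)) = \det(I - \ov{\xi}\fb(Z)).
\]
Specializing to scalar $Z = z \in \B ^d _1$, where $\fb(z) \in \C$, this collapses to $(\ov{\xi} - \ov{\zeta})\fb(z) = 0$, and since $\xi \neq \zeta$ this forces $\fb \equiv 0$ on $\B ^d _1$.

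The remaining step, which I expect to be the main obstacle, is to combine $\fb|_{\B ^d _1} \equiv 0$ with the standing convention $\fb(0) = 0$ and the non-singularity hypothesis on $Z \otimes T(\zeta)^* = \sum _k Z_k \otimes T(\zeta)_k^*$ to produce the contradiction. From the (non-minimal) Fornasini--Marchesini realization $\fb(z) = \ip{1}{(I - \sum _k z_k T(0)_k^*)^{-1} \sum _l z_l b_l}$ on $\scr{M}(\fb)$ with $b_l := T(1)_l^* 1$, vanishing of $\fb$ on $\B ^d _1$ yields an infinite cascade of symmetrised scalar identities, the simplest of which is $\ip{1}{b_l} = 0$ for every $l$. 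Combining these with the rank-one decomposition $T(\zeta)_k^* = T(0)_k^* + \ov{\zeta}\, b_k \ip{1}{\cdot}$ and the purity of $T(0)$, my plan is to bootstrap the constraints into a structural obstruction forcing $\sum _k Z_k \otimes T(\zeta)_k^*$ to be non-invertible at every $Z$, directly contradicting the hypothesis; the irreducibility of $T(\zeta)$ on $\scr{M}(\fb)$ is simultaneously invoked to rule out the alternative escape route in which such pencil singularity would arise from a non-trivial jointly invariant subspace. The delicate point is passing rigorously from the commutatively symmetrised scalar data available at $\B ^d _1$ to a genuinely non-commutative pencil rank obstruction.
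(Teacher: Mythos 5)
Your approach is genuinely different from the paper's, and it is incomplete: by your own admission, the "remaining step" — deducing a contradiction from $\fb|_{\B^d_1}\equiv 0$ together with the non‑singularity hypothesis on $Z\otimes T(\zeta)^*$ — is the crux, and it is left as a plan rather than a proof. This is not a small detail. Your reduction lands you squarely back in the territory of Theorem~\ref{ncAD} (whose first part needs the \emph{hypothesis} $\fb|_{\B^d_1}\not\equiv 0$), and the remark preceding the proposition explains at length why the case $\fb|_{\B^d_1}\equiv 0$ is genuinely hard: there are NC rational Herglotz functions of constant determinant, so one cannot simply conclude from $\det(I-\ov\zeta\fb(Z))=\det(I-\ov\xi\fb(Z))$ that $\fb$ vanishes at all levels. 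The bridge you hope for — from $\ip{1}{b_l}=0$ and the other symmetrised level‑$1$ identities to a rank obstruction on $\sum_k Z_k\otimes T(\zeta)_k^*$ at \emph{every} matrix level — would need to be established carefully, and I do not see how your sketched bootstrap does it.

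The paper's actual argument bypasses all of this and does not reduce to the $\fb|_{\B^d_1}$ question at all. Once you have reached, as you correctly do in steps 1–6, that $T(\zeta)$ and $T(\xi)$ must be jointly unitarily equivalent, the paper exploits the exact algebraic relation between the Clark perturbations: using $\fb(0)=0$, $T(0)^*1=0$ and $\|1\|=1$, one checks that $T(\zeta)^* = T(\xi)^*\,V_{\zeta,\xi}$ for a single \emph{unitary} $V_{\zeta,\xi}$ on $\scr{M}(\fb)$ which is a rank‑one perturbation of the identity, fixing $\{1\}^\perp$ and scaling $1$ by $\xi\ov\zeta$, so that $\det V_{\zeta,\xi} = \xi\ov\zeta\neq 1$. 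Applying the (non‑monic) pencil $Z\otimes(\cdot)$ one gets $Z\otimes T(\zeta)^* = (Z\otimes T(\xi)^*)(I\otimes V_{\zeta,\xi})$, and comparing determinants with the unitary‑equivalence identity $\det(Z\otimes T(\zeta)^*)=\det(Z\otimes T(\xi)^*)$ forces $\det(I\otimes V_{\zeta,\xi})$ to be trivial whenever $\det(Z\otimes T(\zeta)^*)\neq 0$ — which the hypothesis supplies. This is a direct contradiction that never passes through the monic pencil factorisation $\det(I-Z\otimes T(\eta)^*)$ and never has to confront the vanishing of $\fb$ on level one. So: your first half recovers the right unitary equivalence via the DKS machinery, but the determinant step should have been run on $\det(Z\otimes T(\eta)^*)$ using the explicit factorisation $T(\zeta)^* = T(\xi)^* V_{\zeta,\xi}$, not on the monic pencil and its Cayley‑transform lemma — that route leads into a harder, unresolved problem.
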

\begin{proof}
This follows easily from the fact that 
$$ T (\zeta ) ^* = T (\xi ) ^* V_{\zeta, \xi}, \quad \mbox{where} \quad V_{\zeta ,\xi} = I - \xi \cdot \ov{\zeta} \, 1 \ip{1}{\cdot}. $$ 
The matrix $V_{\zeta , \xi}$ is unitary with determinant $\mr{det} \, V_{\zeta ,\xi} = \xi \cdot \ov{\zeta} \neq 1$. Assuming that $T(\zeta), T(\xi )$ are unitarily equivalent gives the contradiction
\ba \mr{det} \, Z \otimes T (\xi ) ^* &  = & \mr{det} \, Z\otimes T(\zeta ) ^*  \nn \\
& = & \mr{det} \, Z \otimes T(\xi ) ^* \cdot \mr{det} \, I \otimes V_{\zeta, \xi } \nn \\
& = & \xi \ov\zeta \cdot \mr{det} \, Z \otimes T(\xi ) ^*. \nn \ea
\end{proof}

\begin{eg}
There are irreducible column isometries/ row co-isometries that violate the condition of the preceding proposition. For example, denote by $E_{ij} \in \C ^{3\times 3}$ the matrix units. Then the tuple $T^* = (E_{12}, \frac{1}{\sqrt{2}} E_{21}, E_{13}, \frac{1}{\sqrt{2}} E_{31})$ is an irreducible column isometry, such that for every $Z$, $Z \otimes T^*$ is singular. However, this tuple does not contradict the general NC Aronszajn-Donoghue conjecture that $T (\xi ) ^*$ and $T(\zeta ) ^*$ have no unitarily equivalent restrictions to invariant subspaces for $\xi \neq \zeta$, $\xi , \zeta \in \partial \D$.
\end{eg}

Let $A = (A_1,\cdots,A_d) \in \C ^d _n$ be an irreducible tuple. Let $x \in \C^n$ and consider the functions $A_j (z) \colon \C \to \C ^{n\times n}$ given by $A_j (z) = A_j(I + z \, x x^*)$ so that $A = A(0)$ and set $A(z) = \left( A_1 (z),\cdots, A_d (z) \right)$. Let 
\[
S_A = \left\{ \left.  z \in \C \setminus \{0\} \right| \ A(z) \text{ is similar to } A\right\}.
\]

\begin{lemma} \label{lem:procesi}
Let $\om$ be a word in $\{1,\cdots,d\}$ with $|\om | = m$. Let $p_\om (z) = \frac{1}{z}\left( \tr A(z)^\om - \tr A^\om \right)$. If $S_A \neq \emptyset$, then $\deg p_\om \leq \lfloor \frac{m}{2} \rfloor - 1$.
\end{lemma}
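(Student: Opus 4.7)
The plan is to extract a single algebraic consequence of $S_A \neq \emptyset$ from the one-letter trace identities and then use it, via a direct combinatorial expansion of $\tr A(z)^\om$, to kill all sufficiently high-order coefficients.

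First I would observe that simultaneous similarity preserves traces on every word: if $P^{-1} A_j(z_0) P = A_j$ for all $j$, then $\tr A(z_0)^\nu = \tr A^\nu$ for every word $\nu$. Thus $S_A \neq \emptyset$ yields a single $z_0 \neq 0$ for which these trace identities hold simultaneously for all $\nu$. Applying this to one-letter words and using $A_j(z) = A_j + z A_j x x^*$ gives $\tr A_j(z_0) = \tr A_j + z_0\, x^* A_j x$, so cancelling $z_0 \neq 0$ produces the crucial condition
\[
x^* A_j x \;=\; 0, \qquad j = 1, \ldots, d, \qquad (\ast)
\]
on the vector $x$.

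Second, I would expand $A(z)^\om$ combinatorially. Writing $\om = i_1 \cdots i_m$ and distributing the product $A_{i_1}(I + z x x^*) \cdots A_{i_m}(I + z x x^*)$, the coefficient of $z^\ell$ is a sum, over $\ell$-element subsets $S = \{s_1 < \cdots < s_\ell\} \subseteq \{1,\ldots,m\}$, of the term obtained by inserting a factor $xx^*$ immediately after each $A_{i_{s_k}}$. Taking trace, using cyclicity, and repeatedly pulling the scalars $x^*(\cdots)x$ out of the matrix product, each such term collapses to
\[
\prod_{k=1}^{\ell} x^*\, A_{i_{s_{k-1}+1}} A_{i_{s_{k-1}+2}} \cdots A_{i_{s_k}}\, x,
\]
where indices are read cyclically modulo $m$ with the convention $s_0 := s_\ell - m$. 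The $\ell$ scalar factors correspond to the $\ell$ cyclic arcs into which $S$ partitions $\om$, whose lengths are positive integers summing to $m$.

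Third, I would combine the two steps: if $\ell > \lfloor m/2 \rfloor$, then $\ell$ positive integers summing to $m$ cannot all be at least $2$, so at least one of the arcs has length exactly $1$ and contributes a factor of the form $x^* A_r x$. By $(\ast)$ this factor vanishes, and with it the entire term. Therefore the coefficient of $z^\ell$ in $\tr A(z)^\om$ is zero for every $\ell > \lfloor m/2 \rfloor$, so $\tr A(z)^\om - \tr A^\om$ is a polynomial in $z$ of degree at most $\lfloor m/2 \rfloor$ with vanishing constant term. Dividing by $z$ gives $\deg p_\om \leq \lfloor m/2 \rfloor - 1$, as desired. The only real subtlety is the cyclic bookkeeping for the arc decomposition, in particular the case $s_\ell = m$ where the trailing matrix block is empty and the first cyclic arc wraps from the end of $\om$ back to its beginning; aside from that, everything is a clean counting argument once $(\ast)$ is in hand.
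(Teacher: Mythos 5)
Your proof is correct and follows essentially the same route as the paper's: extract $x^* A_j x = 0$ from the one-letter trace identities, expand the coefficient of $z^\ell$ combinatorially, and use a cyclic pigeonhole argument to show that for $\ell \geq \lfloor m/2 \rfloor + 1$ some arc between consecutive chosen positions has length one, forcing a vanishing factor. Your "cyclic arc" formulation is just a more explicit way of phrasing the paper's observation that two chosen positions must be cyclically adjacent, with the scalar factorization and wraparound edge case spelled out more carefully.
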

\begin{proof}
Let $z_0 \in S_A$. Since $A(z_0)$ is similar to $A$, we have that $\tr A_j(z_0) = \tr A_j$. But $\tr A_j(z_0) = \tr A_j + z_0 \langle A_j x, x\rangle$. Hence, for all $j=1,\ldots,d$, $\langle A_j x, x \rangle = 0$. Now consider 
\[
\tr A(z)^\om = \tr (A_{\om_1} + z A_{\om_1} x x^*)\cdots (A_{\om_m} + z A_{\om_m} x x^*).
\]
For $\ell \geq \lfloor \frac{m}{2} \rfloor + 1$, we note that the coefficient of $z^{\ell}$ will a sum of traces of products of matrices. Each product will contain a pair of adjacent elements of the form $A_{\om_j} x x^* A_{\om_{j+1}} x x^* $, where if $j+1 > m$, then we reduce it modulo $m$. Hence, this product is going to be $0$. Therefore, all of the coefficients of $\tr A(z)^\om$ of $z^{\ell}$ for $\ell \geq \lfloor \frac{m}{2} \rfloor + 1$ are $0$. The claim follows from the definition of $p_\om$.
\end{proof}

\begin{cor} \label{cor1}
In the setting of the previous lemma, if $|S_A| \geq n^2/2$, then $A(z)$ is similar to $A(0) = A$ for all $z \in \C$.
\end{cor}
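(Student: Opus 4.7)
The plan is to combine Lemma \ref{lem:procesi} with the classical Procesi--Razmyslov theorem on trace invariants of matrix tuples under simultaneous conjugation, together with the fact that irreducible tuples have closed conjugation orbits.

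First I would fix any word $\omega$ with $|\omega| = m \leq n^2$ and note that by Lemma \ref{lem:procesi} the polynomial $p_\omega$ has degree at most $\lfloor m/2 \rfloor - 1 \leq \lfloor n^2/2 \rfloor - 1 < n^2/2$. By the definition of $S_A$, $p_\omega$ vanishes on $S_A$, so it has at least $|S_A| \geq n^2/2$ distinct roots. Since the number of roots strictly exceeds the degree, $p_\omega \equiv 0$, i.e., $\tr A(z)^\omega = \tr A^\omega$ for every $z \in \C$ and every word $\omega$ with $|\omega| \leq n^2$.

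Next I would invoke the Procesi--Razmyslov theorem: the ring of polynomial invariants on $\C^d_n = (\C^{n\times n})^d$ under the simultaneous conjugation action of $\mathrm{GL}_n(\C)$ is generated by the trace monomials $X \mapsto \tr X^\omega$ with $|\omega| \leq n^2$. The conclusion of the previous paragraph then says that $A$ and $A(z)$ agree on every generator, hence on the whole invariant ring. Equivalently, $A$ and $A(z)$ are mapped to the same point of the categorical quotient $(\C^d_n)/\!/\mathrm{GL}_n(\C)$, so their conjugation orbits have the same closure.

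Finally I would use irreducibility to upgrade orbit-closure equality to orbit equality. Since $A$ is irreducible, a classical theorem of Artin (see also the matrix-tuple formulation in Procesi's work) asserts that its $\mathrm{GL}_n(\C)$-orbit is closed and that its stabilizer is $\C^\times I_n$. Therefore $A(z)$ must lie in the (closed) orbit of $A$, meaning that $A(z)$ is simultaneously similar to $A$ for every $z \in \C$, as required. The main obstacle is essentially bookkeeping: one must cite the correct bound ($n^2$) in the Procesi--Razmyslov theorem and invoke the closed-orbit criterion for irreducible tuples; with both in hand the argument reduces to the elementary degree-versus-roots observation already supplied by Lemma \ref{lem:procesi}.
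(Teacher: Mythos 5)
Your proposal is correct and follows essentially the same route as the paper: invoke Lemma \ref{lem:procesi} to bound $\deg p_\omega \leq \lfloor m/2 \rfloor - 1 < n^2/2$ for all $|\omega| \leq n^2$, note that $p_\omega$ vanishes on $S_A$ so that the hypothesis $|S_A| \geq n^2/2$ forces $p_\omega \equiv 0$, and then combine Razmyslov's $n^2$ bound for generating the trace invariants with Artin's closed-orbit theorem for irreducible tuples. The only cosmetic difference is that you phrase the last step through the categorical quotient, whereas the paper simply asserts that for an irreducible tuple equality of all trace monomials is equivalent to similarity; both rely on the same standard fact that an $n$-dimensional tuple sharing all trace invariants with an irreducible $n$-dimensional tuple must itself be irreducible (hence have a closed orbit) and therefore lies in the same orbit.
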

\begin{proof}
Since $A$ is irreducible, the similarity orbit of $A$ is closed \cite{Artin}. Moreover, the ring of similarity invariant functions on $\C ^d _n$ is generated by traces of monomials \cite{Procesi}. Hence, $A(z)$ is similar to $A = A(0)$ if and only if for all words $\om$, $\tr A(z)^\om = \tr A^\om $. By a result of Razmislov \cite{Razmyslov}, taking words with $|\om | \leq n^2$ is enough to generate the algebra of invariants. Since $S_A$ is contained in the zeroes of $p_\om$ for all $\om$ and $|S_A| \geq \deg p_\om$ for all $|\om | \leq n^2$, we obtain that they are identically $0$. Therefore all traces are identically $0$ and we have that $A(z)$ are all similar to $A$.
\end{proof}

\begin{remark}
Kuzmin (see \cite{DrenskyFormanek} and the references therein) has provided a lower bound of $\frac{n(n+1)}{2}$ on the length of words needed to generate the invariant algebra. He has conjectured that the lower bound is always sufficient. Dubnov and Lee have verified the conjecture for $n \leq 4$.
\end{remark}

\begin{cor} \label{cor2}
In the setting of Lemma \ref{lem:procesi}, if $n = 2$ and $S_A \neq \emptyset$,  then $f(z)$ is similar to $A$ for all $z \in \C$.
\end{cor}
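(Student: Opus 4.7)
My plan is to repeat the strategy used in the proof of Corollary \ref{cor1}, but with a sharper length bound on the generating words that is available only for $n = 2$. As the remark just above this corollary points out, Kuzmin conjectured that traces of words of length at most $n(n+1)/2$ already generate the invariant ring $\C[\C^d_n]^{GL_n}$, and Dubnov and Lee verified this for $n \leq 4$; in the case $n = 2$ this gives the bound $3$ (originally due to Sibirskii), strictly better than the Razmislov bound $n^2 = 4$ used in the previous corollary.

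With this improvement, Lemma \ref{lem:procesi} does almost all the work. For any word $\omega$ with $|\omega| = m \leq 3$, the bound $\deg p_\omega \leq \lfloor m/2 \rfloor - 1$ collapses to $\deg p_\omega \leq 0$, so each such $p_\omega$ is a constant polynomial in $z$. A single point $z_0 \in S_A$ now suffices: since $A(z_0)$ is similar to $A$, we have $\tr A(z_0)^\omega = \tr A^\omega$, i.e. $p_\omega(z_0) = 0$, and a constant polynomial with a zero vanishes identically. Consequently $\tr A(z)^\omega = \tr A^\omega$ for every $z \in \C$ and every $\omega$ with $|\omega| \leq 3$.

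Finally I would conclude exactly as in Corollary \ref{cor1}: because $A$ is irreducible its similarity orbit in $\C^d_2$ is closed by Artin's theorem, and by the length-$3$ generation statement these finitely many trace equalities cut out that orbit in the affine variety $\C^d_2$. Hence $A(z)$ is similar to $A(0) = A$ for every $z \in \C$. The only step requiring outside input is locating the precise Sibirskii/Dubnov--Lee statement; the rest of the argument parallels Corollary \ref{cor1} verbatim, with the sole quantitative improvement being that for $n = 2$ the relevant degree of $p_\omega$ is zero rather than one, making the hypothesis $|S_A| \geq n^2/2 = 2$ superfluous.
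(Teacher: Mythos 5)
Your proposal is correct and follows the paper's own proof essentially verbatim: both invoke the $n=2$ generation bound of $3$ (Dubnov/Sibirskii, via the Kuzmin conjecture verified for $n\leq 4$) so that $\deg p_\omega \leq \lfloor 3/2\rfloor - 1 = 0$, and then use a single point of $S_A$ to conclude these constants vanish identically, reducing to the closed-orbit/Procesi argument of Corollary~\ref{cor1}.
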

\begin{proof}
By a result of Dubnov \cite{DrenskyFormanek}, we need traces of words of length at most $3$ to generate the algebra of invariant functions. For all words of length at most $3$, $\deg p_\om \leq 0$. However, $p_\om$ all vanish on $S_A$ and thus these polynomials are identically $0$. 
\end{proof}

Setting $A(z) _j := T(\ov{z} ) ^* _j$, where $T(z) ^*$ is the finite--dimensional Clark perturbation defined in Equation (\ref{bzerofinClark}), corresponding to an NC rational inner, $\fb$, Corollary \ref{cor1} and Corollary \ref{cor2} yield additional Aronszajn--Donoghue-type results:

\begin{cor}
Suppose that $\nbdim \scr{M} (\fb ) =n$ so that $\mr{row} \, T(\ov{z} ) ^*  \simeq A(z) \in \C ^d _n$, where the $T(\ov{z} )$ are the finite--dimensional Clark perturbations corresponding to an NC rational inner. Then there are at most $n^2 /2 -1$ points $\zeta _k \in \partial \D$ so that the row co-isometric $T(\zeta _k )$ are mutually and jointly similar. If $n=2$ then $T(\zeta)$ cannot be jointly similar to $T (\xi )$ for any $\zeta \neq \xi$, $\zeta , \xi \in \partial \D$.
\end{cor}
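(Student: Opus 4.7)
The plan is to recast pairwise joint similarity of $\{T(\zeta_k)\}_{k=1}^N$ as the framework of Lemma~\ref{lem:procesi}. Since $\fb(0)=0$ gives $K_0^\fb = 1$ with $\|1\|_\fb = 1$, and $T(\zeta_1)^* 1 = \bar\zeta_1 L^*\fbt$, the defining formula (\ref{bzerofinClark}) admits the clean algebraic rewriting
\[
T(\zeta)^* \;=\; T(\zeta_1)^*\bigl(I + (\bar\zeta\zeta_1 - 1)\cdot 1 \cdot 1^*\bigr),
\]
which one verifies by expanding the right-hand side and using $T(\zeta_1)^* = T(0)^* + \bar\zeta_1 L^*\fbt\cdot 1^*$. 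Setting $A := T(\zeta_1)^*$ and $w_k := \bar\zeta_k\zeta_1 - 1$, one recognizes $T(\zeta_k)^* = A(w_k)$ exactly as in Lemma~\ref{lem:procesi}, with the unit vector $x = 1 \in \scr{M}(\fb)$. The $w_k$ are distinct nonzero complex numbers, and $T(\zeta_k)\sim T(\zeta_1)$ translates to $A(w_k)\sim A$, whence $\{w_2,\ldots,w_N\} \subseteq S_A$ and $|S_A| \geq N-1$.

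For the general cardinality bound, I would invoke Corollary~\ref{cor1}. Lemma~\ref{lem:procesi} bounds $\deg p_\om \leq \lfloor |\om|/2 \rfloor - 1$, and by Razmyslov's theorem the invariant algebra of $d$-tuples in $\C^d_n$ is generated by traces of words of length at most $n^2$. Hence either some $p_\om \not\equiv 0$, giving $|S_A| \leq \lfloor n^2/2 \rfloor - 1$ and thus $N - 1 \leq n^2/2 - 1$, or all $p_\om \equiv 0$, in which case Corollary~\ref{cor1} -- assuming $A$ irreducible -- yields $A(z) \sim A$ for every $z \in \C$. This degenerate situation must be ruled out using the Fornasini--Marchesini determinant identity of Proposition~\ref{spectra}: applied to the FM realization $(T(0)^*, T(\zeta_1)^*1, 1^*, 0)$ of $\bar\zeta_1 \fb$, joint similarity of $A(z)$ with $A$ for all $z \in \C$ forces $\det(\lambda I_n - \bar\zeta_1 \fb(Z))$ to be constant in $\lambda$, so every eigenvalue of $\fb(Z)$ vanishes and $\fb(Z)$ is nilpotent for every $Z \in \B^d_\N$, contradicting the fact that a nonzero NC rational inner multiplier cannot be universally nilpotent.

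For the sharper $n = 2$ assertion, Corollary~\ref{cor2} upgrades this: a single nontrivial similarity $T(\zeta)\sim T(\xi)$, $\zeta \neq \xi$, is already enough to force $A(z)\sim A$ for every $z\in\C$ (once again under irreducibility of $A$), after which the same nilpotency argument via Proposition~\ref{spectra} yields the contradiction. The residual case where $\fb$ vanishes identically on the scalar level $\B^d_1$ is handled by evaluating Proposition~\ref{spectra} at higher matrix levels $\B^d_m$ and using that an NC rational inner cannot simultaneously be nilpotent on $\B^d_m$ for every $m$.

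The main obstacles are the irreducibility hypothesis in Corollaries~\ref{cor1} and \ref{cor2}, which one addresses by passing to an irreducible direct summand $T^{(j)}$ of the Davidson--Kribs--Shpigel decomposition of the dilation-type part of $\Pi_\mu$ furnished by Theorem~\ref{fincorstructure} and Theorem~\ref{DKSdiltype}, and the subsequent step of ruling out the degenerate conclusion ``$A(z)\sim A$ for all $z \in \C$''. Proposition~\ref{spectra} is the essential bridge, translating this structural statement about $A$ into a spectral statement about $\fb$ itself; the contradiction ultimately relies on the combinatorial impossibility of $\fb$ being simultaneously a nonzero inner multiplier and nilpotent at every matrix level.
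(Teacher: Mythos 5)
Your framework is correct, and the rewriting
\[
T(\zeta)^* \;=\; T(\zeta_1)^*\bigl(I + (\bar\zeta\zeta_1 - 1)\,1\cdot 1^*\bigr)
\]
is exactly the right way to cast the family of finite Clark perturbations in the mold $A_j(z) = A_j(I + z\,xx^*)$ of Lemma \ref{lem:procesi}; the paper's nominal identification $A(z) := T(\bar z)^*$ with base point $A = T(0)^*$ does not literally fit that mold (one has $T(0)^*_j\cdot 1 = 0$, so $T(0)^*_j(I + z\,1\cdot 1^*) = T(0)^*_j$ would be constant in $z$), and your choice of a boundary perturbation $T(\zeta_1)^*$ as the base point is the correct repair. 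You also rightly flag the irreducibility hypothesis of Corollaries \ref{cor1} and \ref{cor2}, which the argument implicitly uses.

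Where your proposal diverges from the paper -- and opens a genuine gap -- is in ruling out the degenerate conclusion ``$A(z)\sim A$ for all $z\in\C$''. The paper's argument is immediate: that conclusion gives in particular $T(0)^*\sim T(\zeta_1)^*$; but $T(0)$ is a pure row $d$-tuple (the lemma preceding the proof of Theorem \ref{ncAD} shows, via Theorem \ref{ncratinH2} and Lemma \ref{rowvscol}, that $\mr{spr}(T(0))<1$), while $T(\zeta_1)$ is a row co-isometry since $\fb$ is inner, so $\mr{spr}(T(\zeta_1))=1$, and joint spectral radius is a joint-similarity invariant. No determinant identity is required. Your route instead pushes the degenerate conclusion through Proposition \ref{spectra} to deduce $\det(\lambda I_m - \bar\zeta_1\fb(Z)) = \lambda^m$ at every level, i.e.\ $\fb(Z)$ is nilpotent for every $Z$, and then asserts -- without proof -- that a nonzero NC rational inner multiplier cannot be universally nilpotent. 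That assertion is true but nontrivial: it requires the theory of the ring of generic $m\times m$ matrices and its skew field of central quotients to pass from $\det\fb\equiv 0$ at level $m$ to $\fb\equiv 0$ at level $m$, and your treatment of the residual case where $\fb$ vanishes on $\B^d_1$ is only gestured at. Replace the entire nilpotency detour with the pure-versus-co-isometry observation and the proof closes cleanly, with no appeal to generic matrix rings at all.
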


\begin{proof}
If either of these statements hold, Corollary \ref{cor1} or Corollary \ref{cor2} imply that every $T(z) ^*$ is jointly similar, for every $z \in \C$. In particular $T(\zeta) ^*$ is similar to $T(0) ^*$ for every $\zeta \in \partial \D$. This is impossible as $T(0)$ is a pure row $d-$tuple and each $T(\zeta )$ is a row co-isometry for $\zeta \in \partial \D$ (since we assume that they are the Clark peturbations corresponding to an NC rational inner). Hence, either $|S_A | < n^2 /2$, or, if $n=2$, then $S_A = \emptyset$. 
\end{proof}

\appendix

\section{$SL(\N )-$valued NC functions} \label{constdet}

Suppose that $f$ is a free NC function so that $\mr{det} \, f(Z)$ is constant on its domain $\nbdom f \subseteq \C ^d _\N$. By taking direct sums, it follows that if $f$ is not identically zero, then $\mr{det} \, f(Z) \equiv 1$. 

\begin{thm}
Let $f$ be a free NC function with uniformly open and connected domain $\nbdom f \subseteq \C ^d _\N$. Further assume that $0 \in \nbdom f$ and that $f^{-1} (Z)$ is defined in a uniformly open neighbourhood of $0 \in \B ^d _1$. If  
$$ f(Z) = \sum _{j=0} ^\infty f_j (Z) \quad \mbox{and} \quad  f(Z) ^{-1} = \sum _{k=0} g_k (Z), $$ are the Taylor--Taylor series expansions of $f, f^{-1}$ at $0 \in \B ^d _1$ so that $f_j, g_j \in \fp$ are homogeneous free polynomials of degree $j$, then $\mr{det} \, f(Z) \equiv 1$ on $\nbdom f$ if and only if 
$$ 0 = \sum _{\substack{j+k = \ell \\ j \in \N, \ k \in \N \cup \{ 0 \}}} j \, \tr f_j (Z) g_k (Z), $$ for every $\ell \in \N$. 
\end{thm}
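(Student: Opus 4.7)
The plan is to reduce the NC/matrix identity to a one-variable scalar calculation via the classical Jacobi formula. Fix $Z \in \nbdom f$ at level $n$; by uniform openness of $\nbdom f$ together with $0 \in \nbdom f$, the ray $tZ$ belongs to $\nbdom f$ for all $t$ in some complex neighborhood $U$ of $0$, and the Taylor--Taylor expansion
\[
M_Z(t) := f(tZ) = \sum_{j \geq 0} t^{j} f_j(Z)
\]
converges there. Similarly, the hypothesis that $f^{-1}$ is defined in an NC neighborhood of $0 \in \B ^d _1$ together with the NC structure (closure under direct sums, local bounded functional calculus in a single matrix variable) gives that $M_Z(t)^{-1} = \sum_{k \geq 0} t^k g_k(Z)$ converges on a (possibly smaller) neighborhood of $0$ in $\C$; shrinking $U$, both series converge on $U$ and $M_Z(t)$ is invertible there.

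Set $\phi_Z(t) := \det M_Z(t)$. By Jacobi's formula,
\be \label{jacobi}
\phi_Z'(t) = \phi_Z(t) \cdot \tr \bigl( M_Z(t)^{-1} M_Z'(t) \bigr).
\ee
Expanding the two power series, $M_Z'(t) = \sum_{j \geq 1} j\, t^{j-1} f_j(Z)$, so the trace factor is
\[
\tr \bigl( M_Z(t)^{-1} M_Z'(t) \bigr) \; = \; \sum_{\ell \geq 1} t^{\ell-1} \sum_{\substack{j+k=\ell \\ j \in \N,\, k \in \N \cup \{0\}}} j \, \tr \bigl( f_j(Z) g_k(Z)\bigr),
\]
using cyclicity of the trace. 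Since $\phi_Z(t)$ is nowhere zero on $U$, equation \eqref{jacobi} gives the equivalence
\[
\phi_Z \equiv \text{const on } U \iff \phi_Z' \equiv 0 \text{ on } U \iff \text{every coefficient of the displayed series vanishes,}
\]
which is exactly the stated family of identities, evaluated at the fixed $Z$.

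For the forward direction, if $\det f \equiv 1$ then $\phi_Z(t) \equiv 1$ on $U$ for each $Z$, so $\phi_Z' \equiv 0$ and the coefficient identities hold for all $Z \in \nbdom f$ (in particular as polynomial identities on each level of $\nbdom f$). For the converse, assuming the identities hold at every $Z \in \nbdom f$ gives $\phi_Z'(t) \equiv 0$ on $U$, so $\phi_Z$ is locally constant at $0 \in \C$, hence $\det f(tZ) = \det f(0_n) = f_0^n$ throughout $U$. Taking $t=1$ and varying $Z$, analyticity of $\det \circ f$ together with connectedness of $\nbdom f$ at each level yields that $\det f$ is constant on $\nbdom f$. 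The preamble to the theorem (the direct-sum argument $\det f(Z \oplus Z) = \det f(Z)^2$, combined with the standing hypothesis that $f$ is not identically zero since $f(0)$ is invertible) then promotes this constant to the value $1$, giving $\det f \equiv 1$ as required.

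The main potential obstacle is the analytic bookkeeping in step one: one must verify that the power series for $f$ and for $f^{-1}$ really do converge on a common disk in $t$, and that $M_Z(t)$ remains invertible there. This is where the hypotheses that $0 \in \nbdom f$, that $\nbdom f$ is uniformly open, and that $f^{-1}$ is defined in a uniformly open NC neighborhood of $0$ are all used simultaneously; once this is in place, the rest is Jacobi's formula and matching coefficients of a power series in a single complex variable.
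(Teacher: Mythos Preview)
Your proof is correct and follows essentially the same route as the paper: reduce to the one-variable function $t\mapsto \det f(tZ)$, apply Jacobi's formula, and read off the coefficient identities from the power-series expansion of $\tr\bigl(f(tZ)^{-1}\tfrac{d}{dt}f(tZ)\bigr)$ (the paper extracts coefficients by integrating against $\bar\lambda^n$ over the unit circle, which is equivalent to your coefficient matching). The only wrinkle is the phrase ``taking $t=1$'' in the converse: for arbitrary $Z$ the disk $U$ need not contain $1$, so what you really obtain is $\det f(W)=f_0^n$ for all $W$ in a small ball about $0$, and then extend by analyticity and connectedness---exactly as the paper does by first restricting to $Z\in r\,\overline{\B^d_\N}$.
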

\begin{proof}
Choose $r>0$ so that $r \ov{\B ^d _\N } \subseteq \nbdom f \bigcap \nbdom f^{-1}$. Fix $Z \in r \ov{\B ^d _\N }$ and define the analytic function on $ \ov{\D}$ by 
$$ h(\la ) := \mr{det} \, f (\la Z ) =1. $$ Taking the derivative and applying Jacobi's formula yields
\ba 0 & = & h' (\la ) = \underbrace{\mr{det} \, f(\la (Z))}_{\equiv 1} \cdot \tr \partial _Z f (\la Z) f(\la Z) ^{-1} \nn \\
& = & \sum _{j=1, \ k=0} ^\infty \la ^{j+k -1} \, j \, \tr f_j (Z) g_k (Z) \nn \\
& = & \sum _{\ell =1} ^\infty \la ^{\ell-1} \sum _{\substack{j+k = \ell \\ j \in \N, \ k \in \N \cup \{ 0 \} }} j \tr f_j (Z) g_k (Z). \nn \ea
Multiplying both sides of this expression by $\ov{\la} ^n$ and integrating with respect to normalized Lebesgue measure over the complex unit circle 
yields 
\ba 0 & = & \sum _\ell \int _{\partial \D} e^{i (\ell -n -1) \theta } d\theta \sum _{\substack{j+k = \ell \\ j \in \N, \ k \in \N \cup \{ 0 \} }} j \tr f_j (Z) g_k (Z) \nn \\
& = & \sum _{\substack{j+k = n +1 \\ j \in \N, \ k \in \N \cup \{ 0 \} }} j \tr f_j (Z) g_k (Z), \nn \ea for any $n \in \N \cup \{ 0 \}$. 

Conversely, if the above condition holds, then it follows that for any fixed $Z \in r \ov{\B ^d _n}$, the function $h(\la) := \mr{det} \, f (\la Z)$ has vanishing derivative. Hence $h(\la)$ is constant so that $h(\la) =1$ since $f$ is NC. In particular $$1 = h(1) = \mr{det} \, f(Z) = h(0) = \mr{det} f(0), $$ and this holds for every $Z \in r\B ^d _\N$, and hence for every $Z \in \nbdom f$ since $\nbdom f$ is connected. 
\end{proof}

\bibliography{Bibs/ncBla}

\Addresses

\pagebreak

\end{document}